\numberwithin{equation}{section}
\newtheorem{theorem}{Theorem}[section]
\newtheorem{proposition}[theorem]{Proposition}
\newtheorem{conjecture}[theorem]{Conjecture}
\newtheorem{corollary}[theorem]{Corollary}
\newtheorem{lemma}[theorem]{Lemma}
\theoremstyle{definition}
\newtheorem{remark}[theorem]{Remark}
\newtheorem{example}[theorem]{Example}
\newtheorem{definition}[theorem]{Definition}
\let\oldmarginpar\marginpar
\renewcommand\marginpar[1]{\-\oldmarginpar[\raggedleft\small\sf
#1]{\raggedright\small\sf #1}}
\newcommand{\ZZ}{\mathbb{Z}}
\newcommand{\CC}{\mathbb{C}}
\newcommand{\QQ}{\mathbb{Q}}
\newcommand{\cA}{\mathcal{A}}
\newcommand{\cT}{\mathcal{T}}
\newcommand{\D}{\mathrm{Dom}}
\newcommand{\Hom}{\mathrm{Hom}}
\def\M{{\mathfrak M}}
\def \v{\mathbf{v}}
\newcommand{\myE}{\mathbf{E}}
\newcommand{\myF}{\mathcal{F}}
\newcommand{\myG}{\mathcal{G}}
\newcommand{\mytildeF}{\tilde{\mathcal{F}}}
\newcommand{\mytildeG}{\tilde{\mathcal{G}}}
\newcommand{\mytildeH}{\tilde{\mathcal{H}}}
\newcommand{\im}{\mathrm{im}}
\newcommand{\rank}{\mathrm{rank}}
\begin{document}

\title[Nakajima's quiver varieties and triangular bases of rank-2 cluster algebras]
{Nakajima's quiver varieties and triangular bases of rank-2 cluster algebras}
\author{Li Li}
\address{Department of Mathematics
and Statistics,
Oakland University, 
Rochester, MI 48309-4479, USA 
}
\email{li2345@oakland.edu}

\subjclass[2010]{Primary 13F60; Secondary 14F06, 16G20, 32S60}

\begin{abstract}
Berenstein and Zelevinsky introduced quantum cluster algebras \cite{BZ1} and the triangular bases \cite{BZ2}. The support conjecture in \cite{LLRZ} asserts that the support of a triangular basis element for a rank-2 cluster algebra is bounded by an explicitly described region that is possibly concave. In this paper, we prove the support conjecture for all skew-symmetric rank-2 cluster algebras. 
\end{abstract}

\maketitle
\tableofcontents

\section{Introduction}

Cluster algebras and quantum cluster algebras were introduced by Fomin-Zelevinsky \cite{fz-ClusterI} and Berenstein-Zelevinsky \cite{BZ1}, respectively. A main goal of introducing quantum cluster algebras is to understand good bases arising from the representation theory of certain non-associative algebras. Mimicking the construction of the dual canonical basis from a PBW basis \cite{Leclerc}, Berenstein and Zelevinsky \cite{BZ2} constructed the triangular basis for acyclic quantum cluster algebras. The triangular basis has many nice properties including (conjecturally) the strong positivity, that is, all structure constants are nonnegative. 
In this paper, we will focus on triangular bases for quantum cluster algebras of rank 2, and will not discuss those of higher ranks;  the definition of rank-2 quantum cluster algebras and their triangular bases will be reviewed in \S2.

In  \cite{LLRZ}, Lee, Rupel, Zelevinsky and the author proposed the following conjecture. For $x\in\mathbb{R}$, denote $[x]_+=\max\{x,0\}$.
Let $b,c$ be positive integers, every triangular basis of the (coefficient-free) rank-2 quantum cluster  algebra $\mathcal{A}_\v(b,c)$ is of the form
\begin{equation}\label{eq:C=e}
C[a_1,a_2]=\sum_{p,q} e(p,q) X^{(bp-a_1,cq-a_2)}
\end{equation}
where $a_1,a_2,p,q\in\mathbb{Z}$, and $0\le p\le [a_2]_+$, $0\le q\le [a_1]_+$. 
Define the set of positive imaginary roots as
$$\Phi^{im}_+=\{(a_1,a_2)\in\mathbb{Z}^2_{>0}: ca_1^2-bca_1a_2+ba_2^2\le0\}.$$

\begin{conjecture}\label{conj:triangular support}\cite[Conjecture 11]{LLRZ}
Let $b,c\in\mathbb{Z}_{>0}$,  $(a_1,a_2)\in\Phi^{im}_+$. Let $C[a_1,a_2]$ be a triangular basis element of the (coefficient-free) rank-2 quantum cluster  algebra $\mathcal{A}_\v(b,c)$.  
For integers $0\le p\le a_2$, $0\le q\le a_1$, the coefficient $e(p,q)$ of $C[a_1,a_2]$ is nonzero if and only if $D(p,q)\ge0$, where
\begin{equation}\label{eq:supp}
D(p,q)=ca_1q+ba_2p-bp^2-bcpq-cq^2.
\end{equation}
\end{conjecture}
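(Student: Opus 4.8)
\medskip

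\noindent\textbf{Strategy.} Since the exchange matrix is skew-symmetric we have $b=c$, so $\mathcal{A}_\v(b,b)$ is the rank-$2$ quantum cluster algebra attached to the $b$-Kronecker quiver $K_b$ (two vertices, $b$ arrows). The plan is to trade the algebraic vanishing problem for $e(p,q)$ for a geometric nonemptiness problem. Building on the quantum Caldero--Chapoton formula for rank $2$ together with Nakajima's dictionary between (quantum) cluster-algebra elements and generating functions over graded quiver varieties, the triangular basis element $C[a_1,a_2]$ at an imaginary root is computed from a generic representation $M$ of $K_b$ of dimension vector $(a_1,a_2)$: under the resulting dictionary, and for a suitable orientation of $K_b$, the monomial $X^{(bp-a_1,\,bq-a_2)}$ corresponds to the quiver Grassmannian $\mathrm{Gr}_{(a_1-q,\,p)}(M)$ --- note that $0\le p\le a_2,\ 0\le q\le a_1$ is exactly $0\le(a_1-q,p)\le(a_1,a_2)$ --- equivalently to the associated graded Nakajima quiver variety, and $e(p,q)$ equals, up to a monomial in $\v$, its Poincaré polynomial (for the imaginary elements this should be read as an IC-stalk polynomial on the associated affine graded quiver variety $\gM_0$). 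A short computation with the Euler form of $K_b$ gives
\begin{equation*}
\bigl\langle (a_1-q,\,p),\,(q,\,a_2-p)\bigr\rangle_{K_b}=\tfrac{1}{b}\,D(p,q),
\end{equation*}
so $D(p,q)$ is $b$ times the expected dimension of this Grassmannian. Because these Poincaré/IC polynomials have nonnegative coefficients, $e(p,q)\ne0$ if and only if the variety is nonempty, and Conjecture~\ref{conj:triangular support} becomes the assertion that $\mathrm{Gr}_{(a_1-q,p)}(M)$ is nonempty precisely when $D(p,q)\ge0$.

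\medskip

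\noindent\textbf{The easy inclusion.} If $\mathrm{Gr}_{(a_1-q,p)}(M)\ne\emptyset$ for generic $M$, then $(a_1-q,p)$ is a general subrepresentation dimension vector of $(a_1,a_2)$, so Schofield's theory gives $\dim\mathrm{Gr}_{(a_1-q,p)}(M)=D(p,q)/b\ge0$; hence $e(p,q)\ne0\Rightarrow D(p,q)\ge0$. This recovers the ``possibly concave region'' bound of \cite{LLRZ}: the boundary $D(p,q)=0$ is a pair of parallel lines when $b=c=2$ (so the region, after intersecting with the box, is a triangle and is convex) and a hyperbola when $b=c\ge3$, in which case it need not be convex.

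\medskip

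\noindent\textbf{The hard inclusion.} The substance is the converse: $D(p,q)\ge0\Rightarrow\mathrm{Gr}_{(a_1-q,p)}(M)\ne\emptyset$. I would establish this through Schofield's and Crawley--Boevey's criterion for general subrepresentations of $K_b$, which in rank $2$ reduces to a finite list of linear inequalities built from the generic ranks $\min(a_1,ba_2)$, $\max(0,a_2-ba_1)$ and their analogues for the candidate subrepresentation; equivalently, in the quiver-variety picture, one applies Nakajima's $\ell$-dominance nonemptiness criterion (or Crawley--Boevey's root-theoretic criterion for the framed quiver $K_b^\heartsuit$). The hypothesis $(a_1,a_2)\in\Phi^{im}_+$ enters exactly here: the inequality $a_1^2-ba_1a_2+a_2^2\le0$ is what makes this criterion collapse to the single inequality $D(p,q)\ge0$ (for real roots, i.e. cluster-monomial cases, the nonemptiness locus is governed instead by the $\mathbf g$-vector/$F$-polytope of the relevant exceptional module, and those cases are in any event covered by prior work). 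The remaining step is a rank-$2$ verification that every $(p,q)$ in the box with $D(p,q)\ge0$ meets the criterion, carried out by analyzing the boundary arc $D(p,q)=0$. A complementary, constructive variant is to exhibit, for each such $(p,q)$, an explicit subrepresentation of $M$ (or a point of the graded quiver variety, or a representation of the deformed preprojective algebra of $K_b^\heartsuit$) of the required dimension vector, obtained by degenerating from a subrepresentation of maximal admissible dimension and tracking which sub-dimension vectors persist.

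\medskip

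\noindent\textbf{Main obstacle.} The crux is the hard inclusion: nonnegativity of the expected dimension $D(p,q)\ge0$ does \emph{not} by itself force the quiver variety to be nonempty, so the general-subrepresentation / root combinatorics of $K_b$ must be pushed all the way through, and the analysis is most delicate along the boundary of the concave region $\{D\ge0\}$, where the admissible root decompositions are least flexible and where the imaginary-root hypothesis must be exploited with care. A logically prior difficulty is pinning down the geometric meaning of $e(p,q)$ itself: the triangular basis is specified algebraically, by bar-invariance together with unitriangularity against the standard-monomial basis, so matching it --- not merely the commutative greedy basis --- with (IC) Poincaré polynomials of graded quiver varieties requires controlling the intersection-cohomology contributions on $\gM_0$ that detect exactly the imaginary elements.
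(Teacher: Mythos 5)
The linchpin of your argument—that $e(p,q)$ is (up to a power of $\v$) the Poincar\'e/IC-stalk polynomial of the quiver Grassmannian $\mathrm{Gr}_{(a_1-q,\,p)}(M)$ of a \emph{generic} representation $M$ of $K_b$, so that positivity turns non-vanishing into nonemptiness—fails for imaginary roots. The CC-type character of a generic module produces the generic (dual semicanonical-type) basis, which is genuinely different from the triangular basis at imaginary roots already for the Kronecker quiver (e.g.\ at $(a_1,a_2)=(2,2)$ the generic character is the square of the $(1,1)$ element, while the triangular element differs from it by a lower-order correction, so the coefficients do not match). What the paper actually establishes in \S7 (Lemma \ref{chiL(w)=C} together with Lemma \ref{lem:e=a}) is that $e(p,q)=a_{v,0;w}(\v^{r})$: the multiplicity of the skyscraper $IC_{\{0\}}$ in the BBDG decomposition of $\pi_*(IC_{\tilde{\mathcal F}_{v,w}})$. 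This is neither a Poincar\'e polynomial of a fiber nor an IC stalk, and nonnegativity of such multiplicities does not reduce their non-vanishing to nonemptiness of any variety: the central fiber $\mathcal F_{v,w}=\pi^{-1}(0)$ is nonempty for \emph{every} $(p,q)$ in the box $0\le p\le a_2$, $0\le q\le a_1$, yet its cohomology can be entirely absorbed by $IC$ summands supported on larger strata. Deciding when the skyscraper multiplicity survives is exactly the support problem for the decomposition theorem—i.e.\ the whole content of the statement—so your claimed equivalence ``$e(p,q)\neq0$ iff $\mathrm{Gr}_{(a_1-q,p)}(M)\neq\emptyset$'' is not available as a starting point.

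Even granting your framework, the hard inclusion is only a plan: Schofield/Crawley--Boevey characterize general sub-dimension vectors by a \emph{family} of inequalities, and the assertion that the hypothesis $(a_1,a_2)\in\Phi^{im}_+$ collapses them to the single inequality $D(p,q)\ge0$ is stated but not proved. By contrast, the paper's mechanism is entirely different and is not replaced by anything in your outline: the direction $e(p,q)\neq0\Rightarrow D\ge0$ comes from the fiber-dimension constraint on point-supported summands (Lemma \ref{BBDG property}(iii), with $2d_0-d=f(v,w)=D/r$), and the converse is obtained by comparing top degrees in \eqref{eq:sumAP-} and proving the strict estimate $\deg\big(a_{v,v^0;w}P_-(v^0,w)\big)<f(v,w)$ for every intermediate stratum $v^0\neq\mathbf 0$ (Lemma \ref{lemma:deg aP}); that estimate in turn needs stalk-degree bounds drawn from three distinct resolutions $\tilde{\mathcal F}_{v,w}$, $\tilde{\mathcal F}^{\rm swap}_{v,w}$, $\tilde{\mathcal G}_{v,w}$ and a careful case analysis along the hyperbola $f(v,w)=0$. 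Finally, note that you silently assume $b=c$ from the first sentence, whereas the conjecture is stated for arbitrary $b,c\in\mathbb{Z}_{>0}$; the paper likewise proves only the skew-symmetric case (plus the real-root cases for general $b,c$ in Theorem \ref{theorem: real root}), but the restriction should be made explicit rather than derived from a skew-symmetry that is not part of the hypothesis.
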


To illustrate, the region of $\{(p,q) \in\mathbb{R}^2 \ |\  p,q\ge0, D(p,q)\ge0\}$ is the ``curved triangle'' OAC in Figure \ref{fig1} (6). The reader is also referred to \S\ref{examp1} for an example where $b=c=3$, $(a_1,a_2)=(3,4)\in\Phi^{im}_+$, and an example where $b=c=3$, $(a_1,a_2)=(2,8)\not\in\Phi^{im}_+$.

Our main theorem is to confirm the above conjecture in the skew-symmetric case, that is, when $b=c$. Throughout the paper except \S3, we assume that $b=c=r$.

\begin{theorem}\label{main theorem}
Assume $(b,c)=(r,r)$ where $r\in\mathbb{Z}_{>0}$,  $(a_1,a_2)\in\Phi^{im}_+$. For integers $0\le p\le a_2$, $0\le q\le a_1$, let $D=D(p,q)= r(a_1q+a_2p-p^2-rpq-q^2)$. Then the following holds:

{\rm(i)} $e(p,q)\neq0$ if and only if $D\ge0$.

{\rm(ii)} Fix $(p,q)$ such that $D\ge 0$. Then $\displaystyle e(p,q)=\sum_{-D\le i\le D} e_i\v^i$, where $e_i$'s are nonnegative integers satisfying the following conditions:

$\bullet$  symmetry: $e_i=e_{-i}$ for every $i$; 

$\bullet$ $e_D=e_{-D}=1$; 

$\bullet$ $e_i\neq0$ if and only if  $i\in\{-D,-D+2r,-D+4r,\dots,D\}$; 

$\bullet$ unimodality:

\quad -- if $2r|D$, then $e_{-D}\le e_{-D+2r}\le\cdots\le e_{-2r}\le e_0\ge e_{2r}\ge\cdots\ge e_{D-2r}\ge e_D$; 

\quad -- if $2r\nmid D$, then $e_{-D}\le e_{-D+2r}\le \cdots\le e_{-3r}\le e_{-r}=e_r\ge e_{3r}\ge\cdots\ge e_{D-2r}\ge e_D$. 
\end{theorem}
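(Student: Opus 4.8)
The plan is to argue geometrically, through Nakajima's graded quiver varieties for the $r$-Kronecker quiver $K_r$. Its Tits form is $x_1^2+x_2^2-rx_1x_2$, so $\Phi^{im}_+$ is exactly the set of positive imaginary roots of $K_r$ and $\alpha=(a_1,a_2)$ is such a root. The heart of the argument is an identification of each coefficient $e(p,q)$ with a renormalized Poincaré polynomial. Using the dictionary between rank-$2$ quantum cluster algebras and graded quiver varieties --- the quantum Caldero--Chapoton character together with Nakajima's realization of canonical/triangular bases by IC sheaves of affine graded quiver varieties, which is exactly what is needed in the imaginary-root case where the character of a single representation orbit no longer suffices --- one writes $C[a_1,a_2]$ so that
\[ e(p,q)=\v^{-D}\,P_{p,q}(\v^{2r}),\qquad P_{p,q}(t)=\sum_{k\ge 0}\dim H^{2k}(V_{p,q};\QQ)\,t^{k}, \]
where $V_{p,q}$ is the geometric object attached to $(p,q)$: a fiber of the semismall resolution $\pi\colon\gM\to\M^\bullet_0$ (for the appropriate framing) over a generic point of the stratum indexed by $(p,q)$; in the classical limit it is computed by a quiver Grassmannian of the relevant $K_r$-representation. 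The substitution $t\mapsto\v^{2r}$, rather than $t\mapsto\v^{2}$, is dictated by the normalization of the quantum cluster character for $K_r$. Granting this, the theorem becomes a list of statements about $V_{p,q}$.

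The main steps are then the following. \emph{(1) Nonemptiness:} $V_{p,q}\ne\emptyset$ if and only if a generic subrepresentation of the required dimension vector exists, which by the theory of general representations of quivers (Schofield, Crawley--Boevey) unwinds to the inequality $D(p,q)\ge 0$; this already gives one direction of (i). \emph{(2) Dimension:} when nonempty, $\dim_{\CC}V_{p,q}=D(p,q)/r$, which one checks by matching Schofield's formula $-\langle\mathbf e,\alpha-\mathbf e\rangle$ for the Euler form of $K_r$ against the explicit expression for $D/r$. \emph{(3) Smoothness and purity:} $V_{p,q}$ is projective and connected, its cohomology is pure of Tate type and concentrated in even degrees, and it satisfies Poincaré duality together with the hard Lefschetz property --- either directly, by proving the quiver Grassmannian is smooth throughout the support region, or by using that $V_{p,q}$ is a fiber of the semismall map $\pi$ and invoking Nakajima's purity and parity results for quiver varieties together with the decomposition theorem. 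In particular $P_{p,q}$ is a genuine polynomial of degree $d:=D/r$ with $P_{p,q}(0)=1$. \emph{(4) Conclusion:} Poincaré duality gives $b_k=b_{d-k}$, hence the symmetry $e_i=e_{-i}$, and with connectedness it gives $b_0=b_d=1$, i.e.\ $e_{\pm D}=1$. Reading off exponents, the $k$-th Betti number contributes the single power $2rk-D$, so the support of $e(p,q)$ lies in $\{-D,-D+2r,\dots,D\}$, and it is all of that set once hard Lefschetz rules out interior gaps ($1=b_0\le b_1\le\cdots$ up to the middle). Finally hard Lefschetz gives the unimodality of $(b_k)_k$, whence the two displayed chains, the dichotomy $2r\mid D$ versus $2r\nmid D$ recording the parity of $d$; and $D\ge 0\Rightarrow V_{p,q}\ne\emptyset\Rightarrow b_0=1\Rightarrow e(p,q)\ne 0$ completes (i).

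The hard part is step (3): ensuring that the cohomology in play really satisfies Poincaré duality and hard Lefschetz when $\alpha$ is imaginary. For a real Schur root the relevant representation is rigid and the smoothness of its quiver Grassmannians is standard; for an imaginary root the generic representation has self-extensions and its quiver Grassmannians may be singular for general dimension vectors, so one must exploit both the very restricted shape of $K_r$ (two vertices, all arrows in one direction) and the precise position of $(p,q)$ in the support region to obtain smoothness of $V_{p,q}$ --- or else pass entirely to the quiver-variety side, where purity and parity of the cohomology of fibers of a semismall resolution are available, at the price of controlling the strata of $\M^\bullet_0$ and their codimensions in terms of $D(p,q)$ and of deriving the Lefschetz property from a global ample class. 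A second, more bookkeeping-type obstacle is to pin down the framing and the assignment $(p,q)\mapsto\mathbf e(p,q)$, and above all to verify that $C[a_1,a_2]$ \emph{is} the IC class and not merely the character of some representation --- that is, to match the Berenstein--Zelevinsky bar-invariance and triangularity characterization of the triangular basis with the perverse-sheaf-theoretic one. A purely combinatorial alternative, via a $\v$-deformation of the maximal-Dyck-path recursion for greedy elements, should recover part (i) and the support statement, but the unimodality in (ii) seems to require precisely the hard Lefschetz input that the quiver-variety approach supplies; this is why I would rely on the geometry.
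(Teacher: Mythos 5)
Your plan founders on its central identification. In the paper's framework (Lemma \ref{lem:e=a} together with \eqref{chiLw}), the coefficient $e(p,q)$ equals $a_{v,0;w}(\v^{r})$, the multiplicity polynomial of the \emph{point-supported} summand $IC_{\{0\}}$ in the BBDG decomposition of $\pi_*(IC_{\mytildeF_{v,w}})$ --- it is not the (renormalized) Poincar\'e polynomial of any fiber of $\pi$, nor of a quiver Grassmannian. The cohomology of a fiber is the \emph{full} stalk of the pushforward, i.e.\ the sum $\sum_{v'} a_{v,v';w}P_-(v',w)$ of \eqref{eq:sumAP-}; your $e(p,q)$ is only the $v'=0$ term of that sum. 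The discrepancy is not hypothetical: for $w=(0,1,1,0)$, $v=(1,i)$ (\S\ref{subsection:geometry0110}) the central fiber is $Gr(i,r)$ with Poincar\'e polynomial ${r\brack i}_t$, while $e(p,q)$ corresponds to $a_{(1,i),0;w}={r-1\brack i}_t$. Relatedly, the map $\pi:\mytildeF_{v,w}\to\myE_{v,w}$ is not semismall in this graded setting --- the point-stratum multiplicities live in degrees up to $f(v,w)=D/r$, which is typically positive --- so the semismall dictionary (multiplicity $=$ top components of generic fibers) you invoke is unavailable, and the duality/Lefschetz input must be applied to the multiplicity spaces of the decomposition (as in Lemma \ref{BBDG property}), not to the cohomology of a single smooth projective $V_{p,q}$, whose existence you have not established.

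Even if the identification were repaired, your steps (1)--(2) do not reach the actual hard point. The "only if" direction of (i) and the degree bound $|i|\le D$ do follow cheaply from the fiber-dimension estimate (Lemma \ref{BBDG property}(iii)); but the "if" direction, together with $e_D=e_{-D}=1$, amounts to showing that the point-stratum multiplicity genuinely attains the top degree $f(v,w)$ in \eqref{eq:sumAP-}. In the paper this is Lemma \ref{lemma:deg aP}: one must prove $\deg\bigl(a_{v,v^0;w}P_-(v^0,w)\bigr)<f(v,w)$ for every nonzero stratum $v^0$, which requires sharp upper bounds on $\deg P_-(v^0,w)$ obtained from three different desingularizations ($\mytildeF$, $\mytildeF^{\rm swap}$, $\mytildeG$) plus a delicate case analysis in the imaginary-root cone. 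Nonemptiness of generic subrepresentations \`a la Schofield--Crawley-Boevey, or a dimension count $\dim V_{p,q}=D/r$, provides no substitute for this cancellation-of-degrees argument, so parts (i) (the $D\ge0\Rightarrow e\neq0$ direction), the normalization $e_{\pm D}=1$, and hence the "no interior gaps" claim remain unproved in your outline.
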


\bigskip

Next, as a corollary of  Theorem \ref{main theorem} and a study of the case of real roots (Theorem \ref{theorem: real root}), we give a complete description for the support of any triangular basis element.  Define ${\bf d}_n=\begin{bmatrix}d'_n\\d''_n\end{bmatrix}$ to be the denominator vector of the cluster variable $X_n$. For $x,y\in\mathbb{R}$, define 
$\begin{bmatrix}x\\y\end{bmatrix}_+=\begin{bmatrix}[x]_+ \\ [y]_+ \end{bmatrix}$. 
Define
$$T_n =\textrm{ the convex hull of } \{(0,0),([d''_n]_+,0),(0,[d'_n]_+)\}.$$ 
For $s\in\mathbb{R}_{\ge0}$ and $T\subseteq\mathbb{R}^2$,  define $sT=\{s z\ | \ z\in T\}$. 


\begin{definition}\label{def:R}
For $(a_1,a_2)\in\mathbb{Z}^2$, define a region $R(a_1,a_2)\subseteq \mathbb{R}^2$ as follows (see Figure \ref{fig1}):

{\rm(a)} If $(a_1,a_2)\notin\Phi^{im}_+$, we can write $(a_1,a_2)=s_1{\bf d}_n+s_2{\bf d}_{n+1}$ for some $n\in\mathbb{Z}$, $s_1,s_2\in\mathbb{Z}_{\ge0}$. Then define $R(a_1,a_2)$ be the following Minkowski sum:\footnote{To verify the shape of $R(a_1,a_2)$ of case (5) in Figure \ref{fig1}: assume $s_1,s_2>0, n\neq0,1,2$. Note that $s_1T_n+s_2T_{n+1}$ is the convex hull of 
$$O=(0,0), A=(s_1d''_n+s_2d''_{n+1},0), B=(s_2d''_{n+1},s_1d'_n), C=(0,s_1d'_n+s_2d'_{n+1}), D=(s_1d''_n,s_2d'_{n+1})$$
We claim that $OABC$ is convex, and $D$ lies inside the triangle $OAC$. To show this, it suffices to show that the vertices  $A,B,C$ lie counterclockwisely, and $A,C,D$ also lie counterclockwisely. The former statement follows from the determinant computation:
$\begin{vmatrix}
s_1d''_n+s_2d''_{n+1}&0&1\\
s_2d''_{n+1}&s_1d'_n&1\\
0&s_1d'_n+s_2d'_{n+1}&1\\
 \end{vmatrix}=s_1s_2
 \begin{vmatrix}
d'_n&d'_{n+1}\\
d''_n&d''_{n+1}\\
 \end{vmatrix}
=s_1s_2>0$. 
The latter statement follows from the fact that $ABCD$ is a parallelogram. 
}
$$ R(a_1,a_2)=s_1T_n+s_2T_{n+1}.$$

{\rm(b)} If $(a_1,a_2)\in\Phi^{im}_+$, then define 
$$R(a_1,a_2)=\{(p,q)\in\mathbb{R}_{\ge0}^2\ | \ D(p,q)\ge0\}. $$ 
\end{definition}

\begin{figure}[h]
\begin{tikzpicture}[scale=.7]
\begin{scope}[shift={(0, 5.5)}]
\usetikzlibrary{patterns}
\draw[->] (0,0) -- (2.5,0)
node[above] {\tiny $p$};
\draw[->] (0,0) -- (0,2.5)
node[left] {\tiny $q$};
\fill [blue]  (0,0) circle (2pt);
\draw (0.5,-.3) node[anchor=north] {\footnotesize (1) $a_1,a_2\le0$};
\draw (1.5,-.7) node[anchor=north] {\footnotesize i.e. $(a_1,a_2)=s_1{\bf d}_1+s_2{\bf d}_{2}$};
\end{scope}
\begin{scope}[shift={(6, 5.5)}]
\usetikzlibrary{patterns}
\draw[->] (0,0) -- (2.5,0)
node[above] {\tiny $p$};
\draw[->] (0,0) -- (0,2.5)
node[left] {\tiny $q$};
\fill (0,0) circle (1.5pt);
\fill (2,0) circle (1.5pt);
 \draw [blue,very thick] (0,0) -- (2,0);
\draw (0.5,-.3) node[anchor=north] {\footnotesize (2) $a_1\le 0< a_2$};
\draw (1.5,-.7) node[anchor=north] {\footnotesize i.e. $(a_1,a_2)=s_1{\bf d}_0+s_2{\bf d}_{1}$};
\draw (2,0) node[anchor=south] {\tiny$A$};
\end{scope}
\begin{scope}[shift={(12,5.5)}]
\usetikzlibrary{patterns}
\draw[->] (0,0) -- (2.5,0)
node[above] {\tiny $p$};
\draw[->] (0,0) -- (0,2.5)
node[left] {\tiny $q$};
\fill (0,0) circle (1.5pt);
\fill (0,2) circle (1.5pt);
\draw [blue, very thick] (0,0) -- (0,2);
\draw (0.5,-.3) node[anchor=north] {\footnotesize (3) $a_2\le 0< a_1$};
\draw (1.5,-.7) node[anchor=north] {\footnotesize i.e. $(a_1,a_2)=s_1{\bf d}_2+s_2{\bf d}_{3}$};
\draw (0,2) node[anchor=east] {\tiny$C$};
\end{scope}
\begin{scope}[shift={(0,0)}]
\usetikzlibrary{patterns}
\fill [blue!10]  (0,3)--(1.2,0)--(0,0)--(0,3);
\draw (0,3)--(1.2,0);
\draw (0,0) node[anchor=south west] {\tiny$sT_n$};
\draw[->] (0,0) -- (3.5,0)
node[above] {\tiny $p$};
\draw[->] (0,0) -- (0,3.5)
node[left] {\tiny $q$};
\draw (1.5,-.5) node[anchor=north] {\footnotesize (4) $(a_1,a_2)=s{\bf d}_n$,};
\draw (1.5,-1) node[anchor=north] {\footnotesize \hspace{10pt} $n\neq0,1,2,3$};
\draw (0,2.7) node[anchor=east] {\tiny$C$};
\draw (1.5,0) node[anchor=south] {\tiny$A$};
\fill (0,3) circle (1.5pt);
\fill (1.2,0) circle (1.5pt);
\end{scope}
\begin{scope}[shift={(6,0)}]
\usetikzlibrary{patterns}
\fill [blue!10]  (0,3)--(1.9,1.2)--(2.5,0)--(0,0)--(0,3);
\draw (0,3)--(1.9,1.2)--(2.5,0);
\draw (0,.3) node[anchor=south west] {\tiny$s_1T_n+s_2T_{n+1}$};
\draw[->] (0,0) -- (3.5,0)
node[above] {\tiny $p$};
\draw[->] (0,0) -- (0,3.5)
node[left] {\tiny $q$};
\fill (0,0) circle (1.5pt);
\fill (1.9,1.2) circle (1.5pt);
\fill (0,3) circle (1.5pt);
\fill (2.5,0) circle (1.5pt);
\draw (1.5,-.5) node[anchor=north] {\footnotesize (5) $(a_1,a_2)=s_1{\bf d}_n+s_2{\bf d}_{n+1}$,};
\draw (1.5,-1) node[anchor=north] {\footnotesize \hspace{10pt} $s_1,s_2>0$, $n\neq0,1,2$};
\draw (0,3) node[anchor=east] {\tiny$C$};
\draw (2.7,-.1) node[anchor=south] {\tiny$A$};
\draw (2,1.2) node[anchor=south] {\tiny$B$};
\end{scope}
\begin{scope}[shift={(12,0)}]
\usetikzlibrary{patterns}
\fill [blue!10] {(0,3) to [out=290, in=160] (2.5,0) to (0,0)};
\draw (0,0) node[anchor=east] {\tiny$O$};
\draw (2.5,0) node[anchor=south] {\tiny$A$};
\draw (.7,.2) node[anchor=south] {\tiny$D\ge0$};
\draw (0,3) node[anchor=east] {\tiny$C$};
\draw[->] (0,0) -- (3.5,0)
node[above] {\tiny $p$};
\draw[->] (0,0) -- (0,3.5)
node[left] {\tiny $q$};
\fill (0,0) circle (1.5pt);
\fill (0,3) circle (1.5pt);
\fill (2.5,0) circle (1.5pt);
\draw [black!70] {(0,3) to [out=290, in=160] (2.5,0)};
\draw (1.5,-.5) node[anchor=north] {\footnotesize (6) $(a_1,a_2)\in\Phi^{im}_+$};
\end{scope}
\end{tikzpicture}
\caption{The region $R(a_1,a_2)$. The cases (1)--(5) are for real roots $(a_1,a_2)\notin\Phi^{im}_+$, and $A=(a_2,0)$, $C=(0,a_1)$, $B=(s_2d''_{n+1},s_1d'_n)$.} 
\label{fig1}
\end{figure}
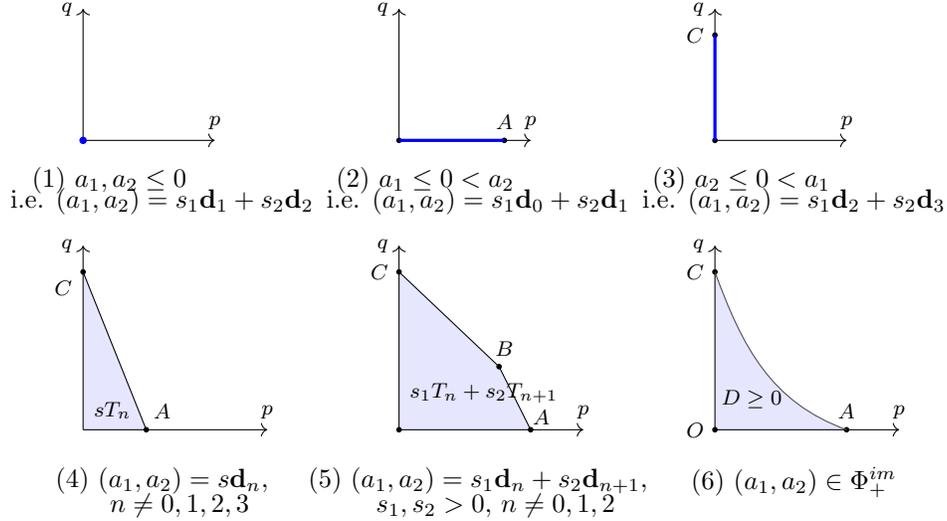

\begin{corollary}\label{cor:precise condition for e(p,q)neq0}
Let $(a_1,a_2)\in\mathbb{Z}^2$, let $\{e(p,q)\}$ be the coefficients of the triangular basis $C[a_1,a_2]$ as in \eqref{eq:C=e}. Then $e(p,q)\neq0$ if and only if $(p,q)\in R(a_1,a_2)$, and in this case we have
 $$\deg e(p,q)=D(p,q). $$ 
\end{corollary}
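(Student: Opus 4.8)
\medskip
\noindent\textit{Proof proposal.} The plan is to read the corollary off from Theorem~\ref{main theorem} (the imaginary-root case) and the real-root analysis of Theorem~\ref{theorem: real root}, by splitting on whether $(a_1,a_2)\in\Phi^{im}_+$, quoting the relevant theorem, and checking that its conclusion matches Definition~\ref{def:R}.

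\emph{Imaginary roots.} Suppose $(a_1,a_2)\in\Phi^{im}_+$, so $a_1,a_2>0$ and, by Definition~\ref{def:R}(b), $R(a_1,a_2)=\{(p,q)\in\mathbb{R}_{\ge0}^2:D(p,q)\ge0\}$. By \eqref{eq:C=e} the coefficient $e(p,q)$ is zero outside the index box $0\le p\le a_2$, $0\le q\le a_1$, so the only gap between Theorem~\ref{main theorem}(i) and the equivalence $e(p,q)\neq0\iff(p,q)\in R(a_1,a_2)$ is the inclusion $R(a_1,a_2)\subseteq[0,a_2]\times[0,a_1]$. I would obtain this from the defining inequality $ca_1^2-bca_1a_2+ba_2^2\le0$: inspecting the larger roots of $ct^2-bct+b$ and of $bt^2-bct+c$ shows it forces $a_1<ba_2$ and $a_2<ca_1$, and feeding these into the groupings $D(p,q)=bp(a_2-p)+cq(a_1-bp-q)=cq(a_1-q)+bp(a_2-cq-p)$ rules out $p>a_2$ or $q>a_1$ for points of $R(a_1,a_2)$. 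The degree formula is then immediate from Theorem~\ref{main theorem}(ii): since $e_D=1$ and $e_i=0$ for $|i|>D$, one has $\deg e(p,q)=D=D(p,q)$.

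\emph{Real roots.} Suppose $(a_1,a_2)\notin\Phi^{im}_+$ and write $(a_1,a_2)=s_1{\bf d}_n+s_2{\bf d}_{n+1}$ with $s_1,s_2\in\mathbb{Z}_{\ge0}$ as in Definition~\ref{def:R}(a); then $C[a_1,a_2]$ is the quantum cluster monomial $X_n^{s_1}X_{n+1}^{s_2}$ and $R(a_1,a_2)=s_1T_n+s_2T_{n+1}$. The degenerate configurations (1)--(3) of Figure~\ref{fig1}---where $C[a_1,a_2]$ is, up to a power of $\v$, a product of the initial variables $x_1,x_2$ and of the explicit $\v$-binomials $X_0,X_3$---I would treat by a direct expansion via the $\v$-binomial theorem in the quantum torus; this shows the support is a point or a coordinate segment ($\overline{OA}$ or $\overline{OC}$), and, since each $\v$-binomial coefficient has degree exactly $bp(a_2-p)$ or $cq(a_1-q)$, also gives $\deg e(p,q)=D(p,q)$. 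For the remaining real roots I would quote Theorem~\ref{theorem: real root} for a single cluster variable, $\operatorname{supp}X_m=T_m\cap\mathbb{Z}^2$ with $\deg e(p,q)=D(p,q)$, and promote it to $X_n^{s_1}X_{n+1}^{s_2}$ by positivity of quantum cluster monomials: multiplying out in the quantum torus cancels no leading term, so $\operatorname{supp}C[a_1,a_2]=s_1(T_n\cap\mathbb{Z}^2)+s_2(T_{n+1}\cap\mathbb{Z}^2)$; this equals $(s_1T_n+s_2T_{n+1})\cap\mathbb{Z}^2$ by an integer-decomposition property (provable through a unimodular lattice triangulation), and that polygon has the shape computed in the footnote to Definition~\ref{def:R}, while the degree emerges as the maximum, over such lattice decompositions, of the sum of the factor degrees together with the quantum-torus twist exponent, which one checks equals $D(p,q)$.

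\emph{Main obstacle.} The substance is in the two cited theorems; here it is essentially bookkeeping, and I expect the only genuinely delicate points to be the box inclusion in the imaginary case and, in the real case, the passage from single cluster variables to cluster monomials---namely the integer-decomposition identity for $s_1T_n+s_2T_{n+1}$ and the control of the antisymmetric twist $\Lambda$ inside the degree computation. (Should Theorem~\ref{theorem: real root} already be phrased for all real roots, i.e.\ for cluster monomials, the real case becomes immediate and only the box inclusion remains to be checked.)
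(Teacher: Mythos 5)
Your proposal takes the same route as the paper, which proves the corollary simply by citing Theorem~\ref{theorem: real root} when $(a_1,a_2)\notin\Phi^{im}_+$ and Theorem~\ref{main theorem} together with Definition~\ref{def:R} when $(a_1,a_2)\in\Phi^{im}_+$; since Theorem~\ref{theorem: real root} is already stated for all cluster monomials ($s_1,s_2\in\mathbb{Z}_{\ge0}$, any $n$), your fallback promotion from single cluster variables (positivity, Minkowski sums, twist bookkeeping) is unnecessary and the real-root case is an immediate citation. Your explicit verification that $R(a_1,a_2)\subseteq[0,a_2]\times[0,a_1]$ in the imaginary case (via $a_1<ba_2$, $a_2<ca_1$ and the regroupings of $D(p,q)$) correctly fills in a small detail that the paper leaves implicit, and the degree claim indeed follows from Theorem~\ref{main theorem}(ii) exactly as you say.
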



\bigskip

Now we would like to put the theorem into a geometric context. 
For an irreducible variety $X$, a local system $L$ on (an open dense subset of) $X$ and an integer $d$,  denote by $IC_X(L)[d]$ the intersection cohomology complex on $X$, with coefficients in $L$ shifted by $d$; for $d=0$,  $IC_X(L)[0]$ is just $IC_X(L)$; if $L=\mathbb{Q}$ is trivial, then we just denote  $IC_X(L)[d]$ by $IC_X[d]$. (Note that $IC_X(L)=\mathcal{IC}_X(L)[\dim V]$ and in particular, $IC_X=\mathbb{Q}[\dim X]$ when $X$ is nonsingular; please see \cite[\S1.6]{deCataldo2015} for the definition of  $\mathcal{IC}$ and the difference between $IC$ and $\mathcal{IC}$.)
The famous BBDG Decomposition theorem \cite{BBD} asserts that, for a proper map of algebraic varieties $f:X\to Y$, the pushforward of the intersection cohomology complex $IC_X$ can be decomposed (non-canonically) into a direct sum of $IC_V(L)[d]$ for a collection of irreducible subvarieties $V\subseteq Y$, local systems $L$ on (an open dense subset of) $V$, and integers $d$. The support problem (that is, to determine which subvarieties $V$ actually occur in the BBDG decomposition) can be quite difficult, as stated in a comment in Ngo's proof of the Fundamental Lemma of Langlands theory:  ``...Their supports constitute an important topological invariant of $f$. In general, it is difficult to explicitly determine this invariant'' (translated from French).
Several recent studies are focusing on the support problem of particular morphisms; the readers are referred to the references \cite{MS,Migliorini2015,deCataldo2017,MSV,deCHM}.

In the above context, we have a corollary of the main theorem, which gives a complete solution to the support problem  for the natural projection from a class of Nakajima's nonsingular graded quiver varieties to the affine  graded quiver varieties. 

First we fix some notations.
Let $v=(v_1,v_2)\in\mathbb{Z}_{\ge0}^2, w=(0,w'_1,w_2,0)\in\mathbb{Z}_{\ge0}^4$. 
Let ${\mytildeF_{v,w}}, \mathbf{E}_{v,w}$ be Nakajima's nonsingular graded quiver variety and affine graded quiver variety, respectively. (See \S3 for more related definitions and notations).  The condition $0\le v_1\le w_1' \textrm{ and }0\le v_2\le rv_1$ guarantees that that ${\mytildeF_{v,w}}$ is nonempty. 
There is a natural projection $\pi: {\mytildeF_{v,w}}\to \mathbf{E}_{v,w}$, and the BBDG decomposition takes the following form (and we will show that the local systems are all trivial):
\begin{equation}\label{eq:BBDG for pi}
\pi_*(IC_{\mytildeF_{v,w}})=\bigoplus_{v'=(v'_1,v'_2)}\bigoplus_{d}a^d_{v,v';w}IC_{\mathbf{E}_{v',w}}[d]. 
\end{equation}

\begin{corollary}\label{cor:BBDG support Nakajima}
Let $v=(v_1,v_2)\in\mathbb{Z}_{\ge0}^2, w=(0,w'_1,w_2,0)\in\mathbb{Z}_{\ge0}^4$ such that $0\le v_1\le w_1' \textrm{ and }0\le v_2\le rv_1$. The coefficients $a^d_{v,v';w}$ in \eqref{eq:BBDG for pi} are related to the triangular basis as follows: let
\begin{equation}\label{a1a2pq}
a_1= w'_1-rv'_2,\; a_2=r[a_1]_++rv'_1-w_2,\; p=v_2-v'_2,\; q=[a_1]_+-v_1+v'_1,
\end{equation}
and let $e(p,q)$ be determined by \eqref{eq:C=e}, then
$e(p,q)=\sum_d a_{v,v';w}^d\v^{rd}.$
\smallskip

As a consequence, a subvariety $\mathbf{E}_{v',w}$ appears in the BBDG decomposition \eqref{eq:BBDG for pi} if and only if `` $0\le v'_1\le v_1,\  0\le v'_2\le v_2$,\  and $(p,q)\in R(a_1,a_2)$ for $a_1,a_2,p,q$ defined in \eqref{a1a2pq}''.
\end{corollary}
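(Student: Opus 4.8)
The plan is to deduce Corollary~\ref{cor:BBDG support Nakajima} from Theorem~\ref{main theorem} (via Corollary~\ref{cor:precise condition for e(p,q)neq0}) by establishing a precise dictionary between the geometry of Nakajima's graded quiver varieties and the combinatorics of the triangular basis $C[a_1,a_2]$. The first step is to recall, in the setting of \S3, the known identification (due to Nakajima, and made explicit for rank-$2$ quantum cluster algebras by Qin and by Kimura--Qin) between the generating function of Poincar\'e polynomials of the fibers of $\pi\colon\mytildeF_{v,w}\to\mathbf{E}_{v,w}$ and a cluster monomial / cluster variable expansion. Concretely, the classes $[IC_{\mathbf{E}_{v',w}}]$ form a basis (the ``canonical basis'' of the Grothendieck group of the relevant category of perverse sheaves), and the pushforward $\pi_*(IC_{\mytildeF_{v,w}})$ expands in this basis with coefficients $\sum_d a^d_{v,v';w}\v^{rd}$; on the cluster-algebra side this same expansion is exactly the one expressing a quantum cluster monomial $X^{(bp-a_1,cq-a_2)}$-type term in the triangular basis. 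I would pin down the bookkeeping so that the substitution \eqref{a1a2pq} is precisely the change of variables matching a pair $(v,v')$ to the quadruple $(a_1,a_2,p,q)$, and verify the nonemptiness/support constraints $0\le v_1\le w'_1$, $0\le v_2\le rv_1$ translate into $(a_1,a_2)\in\Phi^{im}_+$ together with $0\le p\le a_2$, $0\le q\le a_1$ (or the corresponding real-root inequalities).

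The second step is purely formal: once the identification $e(p,q)=\sum_d a^d_{v,v';w}\v^{rd}$ is in place, the ``as a consequence'' clause is immediate. Indeed $\mathbf{E}_{v',w}$ appears in \eqref{eq:BBDG for pi} if and only if some $a^d_{v,v';w}\neq0$, i.e.\ if and only if $e(p,q)\neq 0$; by Corollary~\ref{cor:precise condition for e(p,q)neq0} this holds exactly when $(p,q)\in R(a_1,a_2)$. The inequalities $0\le v'_1\le v_1$ and $0\le v'_2\le v_2$ are the geometric constraints guaranteeing that $\mathbf{E}_{v',w}$ can appear at all (equivalently, $0\le p\le v_2$ and the analogous bound on $q$, coming from $\mytildeF_{v',w}\hookrightarrow \mathbf{E}_{v,w}$ being part of the stratification), so the displayed support condition is just the union of this ``a priori'' range with membership in $R(a_1,a_2)$.

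The third step is to justify the parenthetical claim in the excerpt that the local systems occurring in \eqref{eq:BBDG for pi} are all trivial. For this I would invoke the general structure theory of graded quiver varieties: the strata of $\mathbf{E}_{v,w}$ are (up to the action of the relevant torus) affine spaces bundles over products of $\mathbf{E}_{v',w}$-type pieces, so the relevant fundamental groups are trivial and any local system on an open dense subset of a stratum is constant. Alternatively, one can observe that the rank-$2$ triangular basis elements are, by Theorem~\ref{main theorem}(ii), honest Laurent polynomials in $\v$ with the stated symmetry and unimodality, and matching degrees forces the $IC$ summands to be shifts of constant sheaves $IC_{\mathbf{E}_{v',w}}$ with multiplicity $a^d_{v,v';w}\ge 0$; combined with semisimplicity from BBDG this leaves no room for nontrivial local systems.

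The main obstacle I expect is the first step: making the dictionary \eqref{a1a2pq} \emph{precise and correctly normalized}. The literature contains several sign/shift conventions for graded quiver varieties, for the $t$-grading versus $\v$-grading, and for which cluster variable or cluster monomial corresponds to a given $(v,w)$; getting the four formulas for $a_1,a_2,p,q$ exactly right — including the appearance of $[a_1]_+$, which reflects the piecewise nature of the denominator vectors ${\bf d}_n$ and the distinction between the imaginary-root region and the real-root regions in Figure~\ref{fig1} — requires carefully tracking these conventions through Nakajima's construction and through the recursive definition of the triangular basis recalled in \S2. Once this normalization is fixed and checked against a small example (e.g.\ the $b=c=r$, $(a_1,a_2)=(3,4)$ case referenced in \S\ref{examp1}), the rest of the corollary follows with no further work beyond what Theorem~\ref{main theorem} already provides.
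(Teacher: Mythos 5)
Your overall strategy---turn the multiplicities $a^d_{v,v';w}$ into triangular-basis coefficients and then quote Corollary \ref{cor:precise condition for e(p,q)neq0}---is the right one, and your second step (the ``as a consequence'' clause) is exactly how the paper concludes. But there is a genuine gap at the crucial point: you never supply the mechanism that handles the strata with $v'\neq 0$. The identification coming from \S7 (Lemma \ref{chiL(w)=C} together with \eqref{chiLw}, packaged as Lemma \ref{lem:e=a}) only expresses coefficients of the form $a_{v,0;w}$ as coefficients $e(p,q)$ of the triangular basis element attached to $w$. The multiplicity $a^d_{v,v';w}$ of $IC_{\mathbf{E}_{v',w}}$ with $v'\neq 0$ is \emph{not} a coefficient of that element; the paper first applies the transversal-slice identity \eqref{eq:a=a}, $a_{v,v';w}=a_{v^\perp,0;w^\perp}$ with $v^\perp=v-v'$ and $w^\perp=w-C_qv'$ (a consequence of Lemma \ref{lem:transversal slice}), and only then reads off $e(p,q)$ from the triangular basis element $C[a_1,a_2]$ attached to $w^\perp$---which is precisely why $a_1,a_2$ in \eqref{a1a2pq} depend on $v'$. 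Your description (one expansion ``exactly'' matching another, to be fixed by bookkeeping of conventions) treats the whole decomposition for fixed $(v,w)$ as matching the single expansion \eqref{eq:C=e}; that cannot work, since in \eqref{eq:C=e} only $(p,q)$ varies while in \eqref{eq:BBDG for pi} the pair $(a_1,a_2)$ itself moves with $v'$. Without invoking the slice reduction (or the equivalent statement from Qin/Kimura--Qin, which you cite only in the guise of Poincar\'e polynomials of fibers, i.e.\ the $\chi(M(w))$ side rather than the $IC$-multiplicity side), the dictionary \eqref{a1a2pq} is not a normalization issue but an unproved claim.

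Two smaller points. The hypotheses $0\le v_1\le w'_1$, $0\le v_2\le rv_1$ do not translate into $(a_1,a_2)\in\Phi^{im}_+$: for many $v'$ (and even for $v'=0$, depending on $w$) the resulting $(a_1,a_2)$ is a real root, as in the example of \S\ref{example for 1.5} where $a_1\le 0$; this is why the paper routes the support statement through Corollary \ref{cor:precise condition for e(p,q)neq0}, which combines Theorem \ref{main theorem} with Theorem \ref{theorem: real root}. Also, your step 3 is unnecessary---triviality of the local systems is Theorem \ref{thm:trivial local system} and is already built into \eqref{eq:BBDG for pi}---and the alternative argument you sketch is circular: Theorem \ref{main theorem}(ii) is proved using that triviality, so it cannot be used to establish it, and the blanket claim that the strata are simply connected fails in general (see the exceptional cases in Lemma \ref{lem:E simply connected}).
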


The reader is referred to \S\ref{example for 1.5} for an example of the above corollary.

\smallskip

The paper is organized as follows. 
In \S2 we recall the definition and properties of quantum cluster algebra and triangular basis, and that triangular basis coincides with dual canonical basis.
In \S3 we study the support condition for cluster monomials.
In \S4 we recall some facts on Nakajima's quiver varieties.
In \S5 we prove an algebraic version of the transversal slice theorem, and show that the local systems appeared in the BBDG decomposition theorem are trivial.
In \S6 we introduce two other desingularizations of $\mathbf{E}_{v,w}$. 
In \S7 we study the triangular bases.
In \S8 we prove the above main theorem (Theorem \ref{main theorem}) and the two corollaries (Corolloaries \ref{cor:precise condition for e(p,q)neq0}, \ref{cor:BBDG support Nakajima}). 
In \S9 we discuss an analogue of the Kazhdan-Lusztig polynomial that comes from the geometry of this paper. 
We end the paper with some examples in \S10. 

\smallskip

{\bf Acknowledgments.} he author learned a lot from the collaborated work with Kyungyong Lee, Dylan Rupel, Andrei Zelevinsky, based on which the current work is possible; he also gratefully acknowledges discussions with Mark de Cataldo and Fan Qin; he thanks Pramod Achar for sharing a draft of his book, which is now available in print \cite{Achar}; he greatly appreciates the referee for carefully reading through the paper and providing many helpful comments and suggestions.
Computer calculations were performed using SageMath \cite{Sage}.

\section{Quantum cluster algebras and triangular bases}
In this section we recall the definition and some fundamental facts of rank-2 quantum cluster algebras. See \cite{BZ1,BZ2} for details. 

\subsection{Rank-2 quantum cluster algebras}
First, let $\cT$ be the quantum torus: 
 $$\cT=\ZZ[\v^{\pm 1}]\langle X_1^{\pm1},X_2^{\pm1}: X_2 X_1=\v^2 X_1 X_2\rangle, $$
let $\mathcal{F}$ be the skew-field of fractions of $\cT$.

The \emph{bar-involution} is the $\mathbb{Z}$-linear anti-automorphism of $\cT$ determined by $\overline{f}(\v)=f(\v^{-1})$ for  $f\in\mathbb{Z}[\v^{\pm1}]$ and
\[\overline{fX_1^{a_1}X_2^{a_2}}=\overline{f}X_2^{a_2}X_1^{a_1}=\v^{2a_1a_2}\overline{f}X_1^{a_1}X_2^{a_2}\quad\text{($a_1,a_2\in\ZZ$)}.\]
An element in $\cT$ which is invariant under the bar-involution is said to be \emph{bar-invariant}.

The \emph{rank-2 quantum cluster algebra} $\cA_\v(b,c)$  is the $\mathbb{Z}[\v^{\pm1}]$-subalgebra of $\mathcal{F}$ generated by the \emph{cluster variables} $X_m$ ($m \in\ZZ$) defined recursively from the \emph{exchange relations}
$$X_{m+1}X_{m-1}=\begin{cases}
&\v^{b}X_m^b+1, \textrm{ if $m$ is odd}; \\
&\v^{c}X_m^c+1, \textrm{ if $m$ is even}; \\
\end{cases}
$$
(In notations of \cite{BZ2}, we take $\Lambda=\begin{bmatrix}0&-b\\c&0\end{bmatrix}$, $B=\begin{bmatrix}0&-b\\ c&0\end{bmatrix}$, $d_1=c, d_2=b$, with a linear order $1\vartriangleleft2$.)

One can easily check that
 \begin{equation}\label{eq:commutation-A11}
  X_{m+1} X_m = \v^2 X_m X_{m+1} \quad (m \in \ZZ),
 \end{equation}
and that all cluster variables are bar-invariant.  Equation \eqref{eq:commutation-A11} implies that each \emph{cluster} $\{X_m, X_{m+1}\}$ generates a quantum torus
 $\cT_m=\ZZ[\v^{\pm1}]\langle X_m^{\pm1},X_{m+1}^{\pm1}: X_{m+1} X_m = \v^2 X_m X_{m+1}\rangle$.  
The (bar-invariant) \emph{quantum cluster monomials} in the quantum torus $\cT_m$ as
 $$X^{(a_1,a_2)}_m = \v^{a_1 a_2} X_m^{a_1} X_{m+1}^{a_2} \quad (a_1, a_2 \in \ZZ_{\geq 0}, \,\, m \in \ZZ).$$
Denote $X^{(a_1,a_2)}=X^{(a_1,a_2)}_1$. 
Note that $X^{(a_1,a_2)}X^{(b_1,b_2)}=\v^{a_2b_1-a_1b_2}X^{(a_1+b_1,a_2+b_2)}$.

\subsection{Triangular bases}

The construction of the triangular basis starts with the standard monomial basis $\{M[{\bf a}]\,:\,{\bf a}\in\ZZ^2\}$.
 For every ${\bf a}=(a_1,a_2) \in \ZZ^2$, the \emph{standard monomial} $M[{\bf a}]$ (which is denoted $E_{(-a_1,-a_2)}$ in \cite{BZ2}) is defined as follows, where $X'_1=X_3$, $X'_2=X_0$, and $v({\bf a})\in\mathbb{Z}$ is determined by the condition that the leading term of $M({\bf a})$ is bar-invariant):
 \begin{equation}\label{eq:Ma}
  M[{\bf a}] = \v^{v({\bf a})}X^{([-a_1]_+,[-a_2]_+)}(X_1')^{[a_1]_+}(X_2')^{[a_2]_+}= \v^{a_1 a_2} X_3^{[a_1]_+} X_1^{[-a_1]_+} X_2^{[-a_2]_+} X_0^{[a_2]_+} \ .
 \end{equation}
 It is known 
 that the elements $M[{\bf a}]$ form a $\ZZ[\v^{\pm 1}]$-basis of the cluster algebra $\cA_\v(b,c)$.

The standard monomials are not bar-invariant and do not contain all the cluster monomials, moreover they are inherently dependent on the choice of an initial cluster. These drawbacks provide a motivation to consider the triangular basis
(which is introduced in \cite{BZ2} and recalled below) constructed from the standard monomial basis with a
built-in bar-invariance property.

In this paper, for ${\bf a}'=(a_1',a_2'), {\bf a}=(a_1,a_2)\in\mathbb{Z}^2$, we denote 
$$\aligned
&{\bf a}'\le {\bf a}\  (\textrm{or } {\bf a}\ge{\bf a}'),  \textrm{  if $a_i'\le a_i$ for $i=1,2$;}\\ 
& {\bf a}'< {\bf a}\  (\textrm{or } {\bf a}>{\bf a}'), \textrm{ if ${\bf a}'\le {\bf a}$ and ${\bf a'}\neq {\bf a}$.}\\
\endaligned
$$ 
\begin{definition}\cite{BZ2}
The triangular basis $\{C[{\bf a}]\,:\,{\bf a}\in\ZZ^2\}$ is the unique collection of elements in $\cA_\v(b,c)$ satisfying:

 \begin{enumerate}
  \item[] (P1) Each $C[{\bf a}]$ is bar-invariant.
  \item[] (P2) For each ${\bf a}\in \ZZ^2$,
  $\displaystyle  C[{\bf a}] - M[{\bf a}] \in \bigoplus_{{\bf a}'<{\bf a}} \mathbf{v}\ZZ[\mathbf{v}]M[{\bf a}'].$
  \end{enumerate}
\end{definition}

 \smallskip
The triangular basis has the following nice property:
\begin{theorem}[{\cite[Theorem 1.6]{BZ2}}]
 The triangular basis does not depend on the choice of initial cluster and it contains all cluster monomials.
 \end{theorem}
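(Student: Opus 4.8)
The plan is to accept, as the preceding Definition already posits, that the triangular basis $\{C[\mathbf a]\}$ attached to the initial seed $\{X_1,X_2\}$ exists and is unique, and to deduce the two assertions by exhibiting explicit candidates and invoking that uniqueness. If existence itself has to be argued, I would first run the standard Kazhdan--Lusztig recursion: since the $M[\mathbf a]$ form a $\mathbb Z[\mathbf v^{\pm1}]$-basis and the bar-involution acts on them unitriangularly for the dominance order $\le$ on $\mathbb Z^2$ with only \emph{finitely} many error terms, the usual lemma on bar-involutions produces a unique bar-invariant $C[\mathbf a]$ congruent to $M[\mathbf a]$ modulo $\bigoplus_{\mathbf a'<\mathbf a}\mathbf v\,\mathbb Z[\mathbf v]\,M[\mathbf a']$. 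The delicate point there — which I would settle first — is the finiteness of the support of $\overline{M[\mathbf a]}-M[\mathbf a]$ in the standard basis, obtained by rewriting the bar of a reversed-order monomial via the exchange relations.

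\emph{Cluster monomials lie in the basis.} Fix a quantum cluster monomial $Z=X_m^{(a_1,a_2)}$; it is bar-invariant, as noted for all quantum cluster monomials in \S2, so (P1) is automatic. For (P2) I would first pin down the correct label $\mathbf a$. When $m\in\{0,1,2\}$, i.e.\ $Z$ lives in the initial cluster or one of its two neighbours, $Z$ is literally a standard monomial (for instance $M[(-a_1,-a_2)]=X_1^{(a_1,a_2)}$), so $C[\mathbf a]=M[\mathbf a]=Z$ identically. For general $m$ one expands $Z$ in the initial cluster as an ordered Laurent polynomial in $X_1,X_2$; in rank $2$ it has a unique dominance-order-extremal Laurent term (pointedness of cluster monomials, provable by induction on $|m|$ from the exchange relations), and I would choose $\mathbf a$ so that $M[\mathbf a]$ has the same extremal term. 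It then remains to check that $Z-M[\mathbf a]\in\bigoplus_{\mathbf a'<\mathbf a}\mathbf v\,\mathbb Z[\mathbf v]\,M[\mathbf a']$: after subtracting the matching leading standard monomial, every surviving term is strictly lower in the dominance order and has coefficient in $\mathbf v\mathbb Z[\mathbf v]$. I would prove this by induction on $|m|$, rewriting $X_m$ through $X_{m-1}X_{m+1}=\mathbf v^{\ast}X_m^{\ast}+1$ and tracking how products of standard monomials re-expand. Uniqueness then forces $Z=C[\mathbf a]$.

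\emph{Independence of the initial seed.} Write $\{C[\mathbf a]\}$ for the triangular basis of $\{X_1,X_2\}$ and $\{C'[\mathbf b]\}$ for that of the mutated seed $\{X_2,X_3\}$, with standard monomials $M$ and $M'$. By the symmetry $m\mapsto m+1$ it suffices to match these two bases and then iterate. I would verify that the family $\{C[\mathbf a]\}$ \emph{already} satisfies (P1) and (P2) relative to $\{X_2,X_3\}$ after a relabeling $\mathbf a\mapsto\phi(\mathbf a)$: bar-invariance is already known, so the work is to compare the two systems of standard monomials. Using $X_3X_1=\mathbf v^{b}X_2^{b}+1$ I would expand each $M'[\mathbf b']$ as a $\mathbb Z[\mathbf v^{\pm1}]$-combination of the $M[\mathbf a']$, unitriangularly for the induced change of order, and conversely; substituting this into $C[\mathbf a]-M[\mathbf a]\in\bigoplus_{\mathbf a'<\mathbf a}\mathbf v\mathbb Z[\mathbf v]M[\mathbf a']$ converts it into the same statement with $M$ replaced by $M'$ and $\mathbf a$ by $\phi(\mathbf a)$. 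Uniqueness of the $\{X_2,X_3\}$-triangular basis then gives $C[\mathbf a]=C'[\phi(\mathbf a)]$, which is seed-independence.

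\emph{The main obstacle.} The crux shared by both parts is a positivity-cum-finiteness phenomenon: that the lower-order corrections — of $Z$ against $M[\mathbf a]$, and of one standard-monomial system against the other — have coefficients in $\mathbf v\mathbb Z[\mathbf v]$ rather than merely in $\mathbb Z[\mathbf v^{\pm1}]$, and that all supports involved are finite, so that the triangular recursion and its transport across seeds are well-posed. I expect this to reduce to a multiplicativity estimate — a product of standard monomials (or of cluster variables) equals a leading standard monomial plus strictly lower terms with coefficients in $\mathbf v\mathbb Z[\mathbf v]$ — which one propagates by induction on the distance $|m|$ from the initial seed via the exchange relations. With that estimate in hand, the remainder is bookkeeping with the dominance order on $\mathbb Z^2$.
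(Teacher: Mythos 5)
First, note that the paper does not prove this statement at all: it is imported verbatim from \cite{BZ2} (Theorem 1.6 there) and is later used as an input (e.g.\ in the proof of Lemma \ref{chiL(w)=C}), so there is no in-paper argument to compare yours against; what follows is an assessment of your sketch on its own terms, measured against what a complete proof (such as the one in \cite{BZ2}) actually requires.

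Your skeleton has the right shape — Lusztig-lemma existence, the observation that cluster monomials of the clusters $\{X_0,X_1\},\{X_1,X_2\},\{X_2,X_3\}$ are literally standard monomials and hence trivially triangular, and transport of property (P2) across adjacent seeds followed by uniqueness — and this is close in spirit to the Berenstein--Zelevinsky strategy. But the two statements you defer are not bookkeeping; they are the entire content of the theorem, and as stated your reductions do not close. (i) For a general cluster monomial $Z=X_m^{(a_1,a_2)}$ you claim $Z-M[\mathbf a]\in\bigoplus_{\mathbf a'<\mathbf a}\mathbf{v}\,\mathbb{Z}[\mathbf{v}]\,M[\mathbf a']$ ``by induction on $|m|$ via the exchange relations,'' but the exchange relation expresses $X_{m+1}$ as $(\mathbf{v}^{b}X_m^{b}+1)X_{m-1}^{-1}$, i.e.\ the inductive step involves \emph{division} by a cluster variable, and there is no evident mechanism in your outline that controls the resulting coefficients; bar-invariance plus unitriangularity only yields coefficients in $\mathbb{Z}[\mathbf{v}^{\pm1}]$ symmetric under $\mathbf{v}\mapsto\mathbf{v}^{-1}$, which is strictly weaker than membership in $\mathbf{v}\mathbb{Z}[\mathbf{v}]$ (a bar-invariant element congruent to $M[\mathbf a]$ modulo lower terms with bar-symmetric coefficients need not be $C[\mathbf a]$). (ii) For seed-independence, a unitriangular change of basis between the two standard-monomial systems is not enough to convert (P2) for $\{M[\mathbf a']\}$ into (P2) for $\{M'[\mathbf b']\}$: when you substitute, the strictly-lower corrections get multiplied by the change-of-basis coefficients, so you need those coefficients to lie in $\mathbb{Z}[\mathbf{v}]$ (and the off-diagonal ones effectively in $\mathbf{v}\mathbb{Z}[\mathbf{v}]$), compatibly with a relabeling $\phi$ that respects the componentwise order — none of which is established, and which again hides a division ($X_4=(\mathbf{v}^{c}X_3^{c}+1)X_2^{-1}$). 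These coefficient-control estimates are exactly the delicate part of \cite{BZ2} (where they occupy the technical core of the paper), so as it stands your proposal restates the problem at its hardest point rather than resolving it; to make it a proof you would either have to prove the ``multiplicativity estimate'' you flag, or argue containment differently (e.g.\ prove seed-independence first by a genuine computation and then observe, as you do for $m=0,1,2$, that every cluster monomial is a standard monomial for its own cluster).
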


\section{The case of real roots}
In this section, we study the support of a cluster monomial in a skew-symmetrizable quantum cluster algebra $\mathcal{A}_\v(b,c)$, that is, without assuming $b=c$. The denominator vectors $\{{\bf d}_n\}$ can be determined recursively by
$$
{\bf d}_1=\begin{bmatrix}-1\\0\end{bmatrix},\quad
{\bf d}_2=\begin{bmatrix}0\\-1\end{bmatrix},\quad
{\bf d}_{n+1}+{\bf d}_{n-1}
=\begin{cases}
&b[{\bf d}_n]_+, \textrm{ if $n$ is odd},\\
&c[{\bf d}_n]_+, \textrm{ if $n$ is even}.\\
\end{cases} 
$$
Below are a few values of ${\bf d}_n$:
\begin{center}
 \begin{tabular}{|c|c|c|c|c|c|c|c|c|c|c|c|c|} 
 \hline
 $n$ &$\cdots$ &$-3$&$-2$&$-1$&$0$&$1$&$2$&$3$&$4$&$5$ & $6$&$\cdots$\\ 
 \hline
         $d'_n$ &$\cdots$ & $bc-1$ & $b$  &$1$& $0$  & $-1$ &  $0$ & $1$ & $b$ & $bc-1$ & $b^2c-2b$ & $\cdots$\\
$d''_n$ & $\cdots$ & $bc^2-2c$ & $bc-1$ &$c$ & $1$ & $0$ & $-1$ & $0$ & $1$ & $c$ & $bc-1$&$\cdots$\\
  \hline
\end{tabular}
\end{center}

\begin{theorem}\label{theorem: real root}
Consider quantum cluster algebra $\mathcal{A}_\v(b,c)$ for arbitrary $b,c\in\mathbb{Z}_{>0}$. 
Let $(a_1,a_2)\in\mathbb{Z}^2\setminus \Phi^{im}_+$. 
(Thus $(a_1,a_2)=s_1{\bf d}_n+s_2{\bf d}_{n+1}$ for some $n\in\mathbb{Z}$ and $s_1,s_2\in\mathbb{Z}_{\ge0}$,  and $C[a_1,a_2]=\v^{s_1s_2}X_n^{s_1}X_{n+1}^{s_2}$ is a cluster monomial.)
Define $\{e(p,q)\}$ as in \eqref{eq:C=e}. Then $e(p,q)\neq0$ if and only if $(p,q)\in s_1T_n+s_2T_{n+1}=R(a_1,a_2)$, and for those $(p,q)$, we have $\deg e(p,q)=D(p,q)$. 
\end{theorem}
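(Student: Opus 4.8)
The plan is to reduce the statement, via the explicit form of the cluster monomial $C[a_1,a_2]=\v^{s_1s_2}X_n^{s_1}X_{n+1}^{s_2}$, to a computation of the Newton polygon (support) of a product of powers of cluster variables expanded in the initial cluster $\{X_1,X_2\}$. First I would set up the normalization: write $X^{(bp-a_1,cq-a_2)}$ for the monomials appearing in \eqref{eq:C=e} and translate the condition ``$e(p,q)\neq 0$'' into ``the Laurent monomial $X_1^{bp-a_1}X_2^{cq-a_2}$ (up to a power of $\v$) occurs with nonzero coefficient in the expansion of $X_n^{s_1}X_{n+1}^{s_2}$.'' Since the cluster algebra is skew-symmetrizable of rank $2$ and $X_n, X_{n+1}$ are cluster variables, each $X_n$ has a known Laurent expansion in $\{X_1,X_2\}$; its support (Newton polygon) can be described via the $F$-polynomial / greedy-element description or, more elementarily, via the denominator vector and the known fact that the support of $X_n$ is the convex hull of lattice points attached to $T_n$ (this is essentially the $b=c$ special case of Conjecture \ref{conj:triangular support} degenerating to the real-root situation, but for a single cluster variable it is classical).

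The key steps, in order, would be: (1) establish the base cases $n=0,1,2,3$ (and by symmetry of the exchange pattern, all $n$) where $T_n$ is a point or a segment, so that $C[a_1,a_2]$ is a monomial in two of the initial-type variables and the support is immediate — this matches cases (1), (2), (3) of Figure \ref{fig1}; (2) for general $n$, use the fact that the support (Newton polytope) of a product is the Minkowski sum of the supports of the factors, provided no cancellation occurs on the boundary — here positivity of the Laurent expansion of cluster variables (Lee–Schiffler / Gross–Hacking–Keel–Kontsevich, or in rank $2$ the results of Lee–Li–Zelevinsky on greedy bases) guarantees that all coefficients are nonnegative, hence the support of $X_n^{s_1}X_{n+1}^{s_2}$ is exactly $s_1\,\mathrm{Supp}(X_n)+s_2\,\mathrm{Supp}(X_{n+1})$; (3) identify $\mathrm{Supp}(X_n)$ in the $(p,q)$-coordinates with $T_n=\mathrm{conv}\{(0,0),([d''_n]_+,0),(0,[d'_n]_+)\}$, using the relation between the denominator vector $\mathbf{d}_n$ and the extreme exponents of the Laurent expansion, and using the recursion for $\mathbf{d}_n$ together with the exchange relation to induct; (4) conclude $e(p,q)\neq 0 \iff (p,q)\in s_1T_n+s_2T_{n+1}=R(a_1,a_2)$; (5) for the degree statement, track the $\v$-powers: since every coefficient is a product of $\v$-powers times genuinely positive integers with no cancellation, $\deg e(p,q)$ is computed by the maximal $\v$-exponent among the contributing terms, and a direct (if tedious) computation using the quasi-commutation rule $X_{m+1}X_m=\v^2 X_m X_{m+1}$ and the definition of $X^{(a_1,a_2)}_m$ shows this equals $D(p,q)=ca_1q+ba_2p-bp^2-bcpq-cq^2$. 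A sanity check: on the vertices of $R(a_1,a_2)$ one expects $\deg e=0$, consistent with $D$ vanishing on $O$, $A$, $C$ — indeed $D(0,0)=0$, $D(a_2,0)=ba_2^2-ba_2^2=0$ wait, $D(a_2,0)=ba_2\cdot a_2 - b a_2^2 = 0$, and $D(0,a_1)=ca_1^2-ca_1^2=0$, good.

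I expect the main obstacle to be step (3)/(5): pinning down precisely that the Newton polygon of the single cluster variable $X_n$ is the triangle $T_n$ (and not merely contained in it), together with getting the $\v$-exponent bookkeeping exactly right so that the maximal degree on $s_1T_n+s_2T_{n+1}$ equals the quadratic $D(p,q)$ rather than something that merely has the same zero locus. The first half is handled by the known positivity plus an induction on $|n|$ using the exchange relation $X_{n+1}X_{n-1}=\v^{*}X_n^{*}+1$ to propagate the exact shape of the Newton polygon (the ``$+1$'' is what produces the concave-free, i.e.\ genuinely triangular, shape in the real-root case, in contrast to the imaginary-root case of Theorem \ref{main theorem}); the second half is a generating-function computation in the quantum torus $\cT$, where one writes $X_n^{s_1}X_{n+1}^{s_2}$ as a sum over lattice points of $R(a_1,a_2)$ and reads off the top $\v$-degree of each coefficient from the quasi-commutation relations and the Laurent positivity, verifying it matches $D$ by checking it agrees with the known answer on a spanning set of points (e.g.\ the boundary lattice points, where it is $0$, plus one interior point) and is quadratic with the correct leading form.
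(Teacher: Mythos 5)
Your overall strategy (positivity of Laurent expansions, Minkowski sums of Newton polygons, then $\v$-degree bookkeeping) is genuinely different from the paper's proof, which inducts on $|n-1|$ using the reflection automorphisms $\sigma_\ell$ of \cite{LLRZ2} and the one-variable convolution identity \eqref{eq:22} with quantum binomials to control both the support and the degree simultaneously. However, as written your proposal has a genuine gap precisely at the hard half of the statement, the equality $\deg e(p,q)=D(p,q)$. Your step (5) asserts that a "direct (if tedious) computation" gives the top $\v$-exponent, and proposes to verify it "on a spanning set of points" using that the answer "is quadratic with the correct leading form." That is circular: the fact that the maximal $\v$-degree of $e(p,q)$ is given by a single quadratic polynomial on all of $R(a_1,a_2)$ is exactly what must be proved. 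Concretely, even granting quantum positivity, the degree of the product coefficient is a max-plus (tropical) convolution of the degree functions of the factors over all decompositions $(p,q)=(p_1,q_1)+(p_2,q_2)$ with $(p_i,q_i)$ in the respective supports, shifted by the quasi-commutation exponents; since the Hessian of $D$ is indefinite when $bc>4$, $D$ is not concave, and there is no soft reason why this sup-convolution should again be the quadratic $D$ attached to $(a_1,a_2)$. Establishing that identity (or the equivalent statement for a single cluster variable, the base of your induction in step (3)) is the real content, and it is what the paper's two-case analysis and downward induction after \eqref{eq:ep'q'=sum epq} actually carry out; your sketch does not supply a substitute argument.

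Two secondary gaps: (i) your no-cancellation claims need \emph{quantum} positivity (nonnegativity of the coefficients of $e(p,q)$ as Laurent polynomials in $\v$) for arbitrary $b,c$, whereas the results you cite (Lee--Schiffler, GHKK, the classical greedy-basis positivity) are commutative; classical positivity via the specialization $\v\to1$ does suffice for the support statement, but not for reading off top $\v$-degrees without cancellation, so you would need to invoke and justify a quantum positivity input (or avoid it, as the paper does). (ii) Even for the support statement, "support of the product equals the Minkowski sum of supports" conflates lattice-point sumsets with lattice points of the Minkowski-sum region: in general the sumset of the lattice points of two polygons is strictly smaller than the set of lattice points of their Minkowski sum, so you would additionally need to prove that the supports of $X_n$, $X_{n+1}$ are the full lattice triangles $T_n$, $T_{n+1}$ (not merely contained in them) and that their sumsets saturate $s_1T_n+s_2T_{n+1}$; the paper gets both for free from \eqref{eq:22} and \eqref{eq:maxq'q}, while in your outline these are asserted rather than proved.
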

\begin{proof}

We prove the statement by induction on $|n-1|$. 

The base cases are $n=0,1,2$. 

For $n=1$: $C[a_1,a_2]=C[-s_1,-s_2]=X^{(s_1,s_2)}$; $e(p,q)=1$ if $(p,q)=(0,0)$, and $e(p,q)=0$ otherwise; $s_1T_1+s_2T_2=\{(0,0)\}$; for $(p,q)=0$, $\deg e(0,0)=0=D(0,0)$. So the statement is true in this case.

For $n=2$: $C[a_1,a_2]=C[s_2,-s_1]=X_2^{(s_1,s_2)}=\sum_{q=0}^{s_2}{s_2\brack q}_{\v^c}X^{(-s_2,s_1+cq)}$ (this computation appeared in the proof of \cite[Corollary 3.6]{LLRZ2}); $e(p,q)={s_2\brack q}_{\v^c}$ if $p=0$ and $0\le q\le s_2$, and $e(p,q)=0$ otherwise; $s_1T_2+s_2T_3=\{(0,y)\ |\ 0\le y\le s_2\}$; for $p=0$ and $0\le q\le s_2$, $\deg e(0,q)=(s_2-q)qc=D(0,q)$. So the statement is true in this case.

For $n=0$: similar as the $n=2$ case so we skip.

\smallskip

Recall in \cite{LLRZ2} we defined automorphisms $\sigma_\ell$ ($\ell=1,2$) on the quantum torus such that $\sigma_\ell(X_m)=X_{2\ell-m}$ and $\sigma_\ell(\v)=\v^{-1}$. 

For the inductive step, we shall prove the case $n\ge3$. 

Without loss of generality we assume $s_1\neq0$ (otherwise we can replace $(n,s_1,s_2)$ by $(n-1,s_2,0)$).  Denote $C[a'_1,a'_2]=\sigma_2(C[a_1,a_2])=v^{s_2s_1}X_{3-n}^{s_2}X_{4-n}^{s_1}$ (which is also a cluster monomial) and define $e'(p',q')$ by
$$C[a_1',a_2']=\sum_{p',q'} e'(p',q') X^{(bp'-a_1',cq'-a_2')}.$$ 
Then $a_1'=ba_2-a_1$, $a_2'=a_2$, and by inductive hypothesis, the statement is true for $C[a'_1,a'_2]$; that is, $e'(p',q')\neq0$ if and only if $(p,q)\in R(a'_1,a'_2)\ (=s_2T_{3-n}+s_1T_{4-n})$, and for those $(p',q')$, $\deg e'(p',q')=D'(p',q')$ where $D'(p',q')=ca'_1q'+ba'_2p'-bp'^2-bcp'q'-cq'^2$.

 For a fixed $p$, denote $p'=a_2-p$,  there is the following relation by \cite[Proof of Lemma 2.2]{LLRZ2}:
\begin{equation}\label{eq:22}
\sum_{q'}e'(p',q')t^{q'}=\left(\sum_{\ell\ge0}{{-a_1+bp}\brack{\ell}}_{\v^c}t^\ell\right)\left(\sum_{q}e(p,q)t^q\right)
\end{equation}
which we can also rewrite as
\begin{equation}\label{eq:22'}
\left(\sum_{\ell\ge0}{{a_1-bp}\brack{\ell}}_{\v^c}t^\ell\right)\left(\sum_{q'}e'(p',q')t^{q'}\right)=\sum_{q}e(p,q)t^q
\end{equation}
By inductive hypothesis, $e'(p',q')\neq0\Rightarrow (p',q')\in R(a_1',a_2')$. Considering the highest $t$-degree terms on both sides of \eqref{eq:22} (if $a_1\le bp$) or \eqref{eq:22'} (if $a_1\ge bp$), we see that
\begin{equation}\label{eq:maxq'q}
a_1-bp+\max\{q'\ | \ e'(p',q')\neq0\}=\max\{q\ | \ e(p,q)\neq0\}
\end{equation}

We claim that the convex hull of $\{(p,q)\ | \ e(p,q)\neq0\}$ is $R(a_1,a_2)=s_1T_n+s_2T_{n+1}$. 
Indeed, by inductive hypothesis, the convex hull of $\{(p',q')\ | \ e'(p',q')\neq0\}$ is $R(a'_1,a'_2)=s_2T_{3-n}+s_1T_{4-n}$, a polygon with vertices $(0,0)$ and $(a'_2,0)=(s_2d''_{3-n}+s_1d''_{4-n},0)$, $(s_1d''_{4-n},s_2d'_{3-n})$, $(0,a'_1)=(s_2d'_{3-n}+s_1d'_{4-n})$. Thus by \eqref{eq:maxq'q}, 
the convex hull of $\{(p,q)\ | \ e(p,q)\neq0\}$ has vertices $(0,0)$ and $(0,a_1), (a_2-s_1d''_{4-n}, a_1-b(a_2-s_1d''_{4-n})+s_2d'_{3-n})=(s_2d''_{n+1},s_1d'_n), (a_2,a_1-b(a_2-0)+a'_1)=(a_2,0)$; so the convex hull is exactly $s_1T_n+s_2T_{n+1}$. Another way to see it is to consider the map 
\begin{equation}\label{RR'}
\varphi_R:(x,y)\mapsto (a_2-x,y-(a_1-bp))
\end{equation} 
which sends the top bound of $R(a_1,a_2)$ to the top bound of $R(a'_1,a'_2)$.
\smallskip

Now we prove the theorem by considering two cases separately:

Case 1: if $a_1\ge bp$.  Fix $q$ in \eqref{eq:22'}, we get
$$e(p,q)=\sum_{\ell+q'=q}{{a_1-bp}\brack{\ell}}_{\v^c}e'(p',q').$$
Assume $(p,q)\in R(a_1,a_2)$, we shall show that $\deg e(p,q)=D(p,q)$. It suffices to show that 
$\deg{{a_1-bp}\brack{\ell}}_{\v^c}e'(p',q')\le D(p,q)$ and the equality holds exactly once. Indeed, using $a'_1=ba_2-a_1,a'_2=a_2, p'=a_2-p, q'=q-\ell$, we have
$$\aligned
&D(p,q)-\Big(\deg{{a_1-bp}\brack{\ell}}_{\v^c}\Big)e'(p',q')=D(p,q)-(a_1-bp-\ell)\ell c-D'(p',q')\\
&\quad=ca_1q+ba_2p-bp^2-bcpq-cq^2-(a_1-bp-\ell)\ell c\\
&\quad\quad -c(b_2-a_1)(q-\ell)-ba_2(a_2-p)+b(a_2-p)^2+bc(a_2-p)(q-\ell)+c(q-\ell)^2\\
&\quad=2c(q-\ell)(a_1-bp-\ell)\ge0
\endaligned
$$ 
since $\ell\le \min\{q,a_1-bp)$. Moreover, the equality can be obtained exactly once, when $\ell=\min\{q,a_1-bp\}$; for this we need to verify that the corresponding $(p',q')$ lies in $R(a'_1,a'_2)$: 

-- if $q\le a_1-bp$, then $\ell=q$, $q'=0$, in which case $(p',q')=(p',0)\in R(a'_1,a'_2)$ (note that $(p,q)\in R(a_1,a_2)$ guarantees $0\le p'\le a'_2$);

-- if $q\ge a_1-bp$, then $\ell=a_1-bp$, $(p',q')=(a_2-p,q-(a_1-bp))=\varphi(p,q)$ which is in $R(a'_1,a'_2)$. 

\smallskip

Case 2: if $a_1\le bp$. By \eqref{eq:22}, for any fixed  $q'$ we have
\begin{equation}\label{eq:ep'q'=sum epq}
e'(p',q')=\sum_{\ell+q=q'}{{-a_1+bp}\brack{\ell}}_{\v^c}e(p,q).
\end{equation}
By a computation similar to Case 1, when $\ell+q=q'$, we have
\begin{equation}\label{eq:D'-D}
D(p',q')-(bp-a_1-\ell)\ell c-D(p,q)=2c(q'-\ell)(bp-a_1-\ell).
\end{equation}
We will prove that $\deg e(p,q)=D(p,q)$ for all $q$ with $(p,q)\in R(a_1,a_2)$ by a downward induction.

As the base case, let $q_{\max}$ be the maximal $q$ satisfying  $(p,q)\in R(a_1,a_2)$ and let $\ell=bp-a_1, q'=\ell+q_{\max}$. Then $(p',q')=(a_2-p,\ell+q_{\max})=\varphi_R(p,q_{\max})$ lies in $R(a'_1,a'_2)$, thus $\deg e'(p',q')=D'(p',q')$. Since both $\ell$ and $q_{\max}$ are maximal, there is only one term in the right side of \eqref{eq:ep'q'=sum epq}, thus 
$\deg e(p,q_{\max})=(bp-a_1-\ell)\ell c+\deg e(p,q_{\max})=D'(p',q')=D(p,q_{\max})$.

For the inductive step, assume $\deg e(p,q^*)=D(p,q^*)$ for all $q^*>q$ with $(p,q^*)\in R(a_1,a_2)$. Recall that we already proved $e(p,q^*)=0$ for $(p,q^*)\notin R(a_1,a_2)$. Let $\ell=bp-a_1, q'=\ell+q$. Then $(p',q')=(a_2-p,\ell+q)=\varphi_R(p,q)$ lies in $R(a'_1,a'_2)$, thus $\deg e'(p',q')=D'(p',q')$. Comparing the degrees of the two side of \eqref{eq:ep'q'=sum epq}, we have 
$$D'(p',q')\le \max_{\stackrel{q\le q^*\le q_{\max}}{\ell^*+q^*=q'}}\{(bp-a_1-\ell^*)\ell^* c+\deg e(p,q^*)\}$$ 
However, for $q^*$ satisfying $q+1\le q^*\le q_{\max}$, we already know $\deg e(p,q^*)=D(p,q^*)$, so 
$$\aligned
&(bp-a_1-\ell^*)\ell^* c+\deg e(p,q^*)=(bp-a_1-\ell^*)\ell^* c+D(p,q^*)\\
&\stackrel{\text{by } \eqref{eq:D'-D}}{=}D'(p',q')-2c(q'-\ell^*)(bp-a_1-\ell^*)
=D'(p',q')-2cq^*(q^*-q)< D'(p',q').
\endaligned$$
So for $q^*=q$, $\ell^*=q'-q^*=bp-a_1=\ell$, we must have 
$$D'(p',q')=(bp-a_1-\ell^*)\ell^* c+\deg e(p,q^*)$$ 
It follows that
$$\deg e(p,q)=D'(p',q')-(bp-a_1-\ell^*)\ell^* c\stackrel{\text{by } \eqref{eq:D'-D}}{=}
D(p,q)+2c(q'-\ell^*)(bp-a_1-\ell^*)=D(p,q).$$

This completes the proof for the case $n\ge 3$.  The case $n\le-1$ can be proved similarly by considering $\sigma_1$ instead of $\sigma_2$.
\end{proof}

In the rest of the paper we focus on skew-symmetric rank-2 cluster algebra $\mathcal{A}_\v(r,r)$. 

\section{Nakajima's graded quiver varieties}

In this section we recall Nakajima's graded quiver varieties. For simplicity, we focus to the special case corresponding to rank-2 cluster algebras. 
The references for this section are \cite{Nakajima, Qin}. 
Many statements in this section are known to hold in a much more general setting; but with a focus on the special case, we can present an almost self-contained introduction by proving most of the geometric facts without referring to the aforementioned papers.

Denote nonnegative integer tuples $w=(w_1,w_1',w_2,w_2')\in\mathbb{Z}_{\ge0}^4$ and $v=(v_1,v_2)\in\mathbb{Z}_{\ge0}^2$.
Fix $\mathbb{C}$-vector spaces $W_1,W'_1,W_2,W'_2, V_1,V_2$ with dimensions 
$w_1,w'_1,w_2,w'_2,v_1,v_2$, respectively. Define
$$W=W_1\oplus W_1'\oplus W_2\oplus W_2', \quad V=V_1\oplus V_2.$$
Consider $(x_1,x_2,y_1,\dots,y_r)$, where $x_i:W_i\to W'_i$ ($i=1,2$), $y_h:W_2\to W'_1$ ($h=1,\dots,r$) are  linear maps.
\begin{equation}\label{WWWW}
\xymatrix{
W_2'&& W_2\ar[ll]_-{x_2} \ar@<-.5ex>[lld] \ar@<.5ex>[lld]^(.35){\phantom{XX} y_h\; (h=1,\dots,r)}\\
W_1'&& W_1\ar[ll]^-{x_1}\\
}
\end{equation}
Denote ${\bf x}=(x_1,x_2)$ and ${\bf y}=(y_1,\dots,y_r)$. 
Define two linear maps as below:
$$
\aligned
A=x_1+y_1+\cdots+y_r: &\; W_1\oplus W_2^{\oplus r}\to W_1', \quad (a_1,b_1,\dots,b_r)\mapsto x_1(a_1)+y_1(b_1)+\cdots+y_r(b_r)\\
B=x_2\oplus y_1\oplus \cdots\oplus y_r: &\; W_2\to W_2'\oplus (W_1')^{\oplus r}, \quad b\mapsto (x_2(b), y_1(b),\dots,y_r(b))
\endaligned
$$
Alternatively, we can express these maps by matrices. Fix bases for $W_i$ and $W'_i$ , and use \fbox{$x_1$}, or simply $x_1$ by abuse of notation, to denote the matrix representing the linear map $x_1$. Similar notations are used for $x_2$, $y_h$ ($h=1,\dots,r$). 
 Then the above two maps $A$ and $B$ are represented by the following block matrices of sizes $w_1'\times (w_1+rw_2)$ and $(w_2'+rw_1')\times w_2$, respectively (denote $p=({\bf x},{\bf y})$): 
\begin{equation}\label{AB}
\aligned
&
A=A_p=A({\bf x, y})=
[x_1,y_1,\dots,y_r]_\text{hor}=
\begin{bmatrix} 
\fbox{$x_1$}&\fbox{$y_1$}&\cdots&\fbox{$y_r$}
\end{bmatrix},\quad\\
&B=B_p=B({\bf x, y})=[x_2,y_1,\dots,y_r]_\text{vert}=
\begin{bmatrix} \,\fbox{$x_2$}\, \\ \fbox{$y_1$}\\ \vdots\\ \fbox{$y_r$}
\end{bmatrix}\\
\endaligned
\end{equation}
where we use the following notation for horizontally and vertically stacked matrices:
$$
[M_1,\dots,M_n]_\text{hor}=\begin{bmatrix}M_1 & \cdots & M_n\\
\end{bmatrix},\quad 
[M_1,\dots,M_n]_\text{vert}=\begin{bmatrix}M_1\\ \vdots\\ M_n\\
\end{bmatrix}$$
Below is the list of sizes of various matrices:

\begin{center}
 \begin{tabular}{|c|c|c|c|c|c|} 
 \hline
matrix & $x_1$ & $x_2$ & $y_h$ & $A$ & $B$\\
 \hline
size   & $w_1'\times w_1$ & $w_2'\times w_2$ & $w_1'\times w_2$ & $w'_1\times(w_1+rw_2)$ & 
$(w'_2+rw'_1)\times w_2$\\
 \hline
\end{tabular}
\end{center}
For an $m\times n$ matrix $A$, we denote 
$$A_{[I;J]}=[a_{ij}]_{i\in I, j\in J}=\text{ the submatrix of $A$ with row index set $I$ and column index set $J$},$$
denote
$$A_{[-;J]}=A_{[1,\dots,m;J]}, \quad A_{[I;-]}=A_{[I;1,\dots,n]}$$
%
For a $\mathbb{C}$-vector space $E$ of dimension $e$, denote by $Gr(d,E)$ or $Gr(d,e)$ the Grassmannian space parametrizing all $d$-dimensional linear subspaces of $E$.
\begin{definition}\cite[\S4]{Nakajima} 
Let $w=(w_1,w_1',w_2,w_2')\in\mathbb{Z}_{\ge0}^4$ and $v=(v_1,v_2)\in\mathbb{Z}_{\ge0}^2$.

(a) The variety $\mathbf{E}_w$ is defined as the space of quiver representations of the quiver in \eqref{WWWW}:
$$\mathbf{E}_w=\Hom(W_1, W_1') \oplus\Hom(W_2, W_2') \oplus \Hom(W_2,W_1')^{\oplus r}\cong \CC^{w_1w_1'+w_2w_2'+rw_1'w_2}.$$ 
We denote elements of $\mathbf{E}_w$ in the form $({\bf x}, {\bf y})=(x_1,x_2,y_1,\dots,y_r)$.

(b) The projective variety $\mathcal{F}_{v,w}$ is a subvariety of $Gr(v_1,W'_1)\times Gr(v_2,W'_2\oplus {W'_1}^{\oplus r})$ defined as
$$\mathcal{F}_{v,w}=\{(X_1,X_2)\; |\; \dim X_i=v_i\, (i=1,2), \quad X_1\subseteq W'_1,\quad   X_2\subseteq W_2'\oplus X_1^{\oplus r}\}.$$

(c) \emph{Nakajima's nonsingular graded quiver variety} $\tilde{\mathcal{F}}_{v,w}$ is given by
$$
\tilde{\mathcal{F}}_{v,w}
=\{({\bf x}, {\bf y},X_1,X_2)\;|\;  ({\bf x},{\bf y})\in \mathbf{E}_w, (X_1,X_2)\in\mathcal{F}_{v,w},\; 
 {\im} A({\bf x},{\bf y})\subseteq X_1,\;  {\im} B({\bf x},{\bf y})\subseteq X_2\}.
$$
where ${\im} A({\bf x},{\bf y})$ and ${\im} B({\bf x},{\bf y})$ mean the image of the corresponding linear maps.

(d) \emph{Nakajima's affine graded quiver variety} $\mathbf{E}_{v,w}$ is given by
$$
\aligned
\mathbf{E}_{v,w}
&=\{({\bf x},{\bf y})\in\mathbf{E}_w\, |\, 
\mathrm{rank}A({\bf x,y})\le v_1,\; \mathrm{rank} B({\bf x, y})\le v_2
\}.
\endaligned
$$
\end{definition}

\begin{remark}
For readers who are used to notations in Nakajima's paper \cite{Nakajima}, his notations are related to ours in the following way: $W_1(q^2)$,  $W_1(1)$, $W_2(q^3)$, ,  $W_2(q)$, $\mathcal{F}(v,W)$, $\tilde{\mathcal{F}}(v,W)$, $E_W$, $V_1(q)$, $V_2(q^2)$ are our $W_1$, $W_1'$, $W_2$, $W_2'$, $\mathcal{F}_{v,w}$, $\tilde{\mathcal{F}}_{v,w}$, $\mathbf{E}_w$, $V_1$, $V_2$, respectively. The Cartan matrix  $\mathbf{C}=(c_{ij})=\begin{bmatrix}2&-r\\-r&2\end{bmatrix}$. The formula \cite[(3.2)]{Nakajima} 
$C_q:(v_i(a))\mapsto (v'_i(a)), \text{ where } v'_i(a)=v_i(aq)+v_i(aq^{-1})+\sum_{j\neq i} c_{ij}v_j(a)$
gives 
$$v'_1(q^2)=v_1-rv_2,\; v'_1(1)=v_1,\;   v'_2(q^3)=v_2,\; v'_2(q)=v_2-rv_1.$$
In our notation, we write
\begin{equation}\label{C_qv}
C_qv=(v_1-rv_2,v_1,v_2,v_2-rv_1)
\end{equation}
The affine and graded quiver varieties are defined as algebro-geometric quotient and GIT quotient, respectively:

(a) Affine graded quiver variety
$\M_0^\bullet(V,W)=\{(\alpha_i,\beta_i,b_j)\}// \prod GL(V_i)$ 

(b) Nonsingular graded quiver variety
$\M^\bullet(V,W)=\{(\alpha_i,\beta_i,b_j)\}^s/ \prod GL(V_i)$ 

\noindent where $(\alpha_i,\beta_i,b_j)$ are linear maps with domain and codomain indicated in the following diagram:
\begin{equation}\label{figure:WV}
\xymatrix{
W_2'&V_2\ar[l]_-{\beta_2}\ar@<-.5ex>[d] \ar@<.5ex>[d]^{b_1,\dots,b_r}& W_2\ar[l]_-{\alpha_2}\\
W_1'&V_1\ar[l]_-{\beta_1}& W_1\ar[l]_-{\alpha_1}\\
}
\end{equation}
The relation between $(\alpha_i,\beta_i,b_j)$ and the maps $(x_1,x_2,y_h)$ in \eqref{WWWW} is given by
$x_1=\beta_1\alpha_1$, $x_2=\beta_2\alpha_2$,
$y_h=\beta_1b_h\alpha_2$ (for $h=1,\dots,r$).
\end{remark}

The following statement is proved by Nakajima \cite{Nakajima}. For the reader's convenience, we give a proof here.
\begin{lemma}\label{EF}
Let $w=(w_1,w_1',w_2,w_2')\in\mathbb{Z}_{\ge0}^4$ and $v=(v_1,v_2)\in\mathbb{Z}_{\ge0}^2$.

{\rm(a)} The variety $\myE_{v,w}\neq\emptyset$.   
The variety $\mathcal{F}_{v,w}$ (thus $\tilde{\mathcal{F}}_{v,w}$)  is nonempty if and only if 
\begin{equation}\label{nonempty F}
0\le v_1\le w_1' \textrm{ and }0\le v_2\le w_2'+rv_1.
\end{equation}

{\rm(b)}  If \eqref{nonempty F} holds, then  $\mytildeF_{v,w}$ is (the total space of) a vector bundle over $\mathcal{F}_{v,w}$ of rank  $w_1v_1+w_2v_2$. Meanwhile, the natural projection  
\begin{equation}\label{pi}
\pi: \tilde{\mathcal{F}}_{v,w}\to \mathbf{E}_{v,w},\quad ({\bf x},{\bf y},X_1,X_2) \mapsto ({\bf x},{\bf y})
\end{equation}
has the zero fiber $\pi^{-1}(0)\cong \mathcal{F}_{v,w}$.
\end{lemma}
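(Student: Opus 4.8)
The plan is to prove the two assertions of part (b) by unwinding the definitions, assuming the nonemptiness criterion \eqref{nonempty F} from part (a).

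\medskip

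\noindent\textbf{Step 1: The vector bundle structure.} First I would observe that $\tilde{\mathcal{F}}_{v,w}$ sits inside $\mathbf{E}_w\times\mathcal{F}_{v,w}$ and that the projection $\rho:\tilde{\mathcal{F}}_{v,w}\to\mathcal{F}_{v,w}$, $(\mathbf{x},\mathbf{y},X_1,X_2)\mapsto(X_1,X_2)$, has fiber over a point $(X_1,X_2)$ equal to the set of $(\mathbf{x},\mathbf{y})=(x_1,x_2,y_1,\dots,y_r)$ with $\im A(\mathbf{x},\mathbf{y})\subseteq X_1$ and $\im B(\mathbf{x},\mathbf{y})\subseteq X_2$. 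The key point is that these conditions are \emph{linear} in $(\mathbf{x},\mathbf{y})$ once $(X_1,X_2)$ is fixed; moreover, inspecting the block-matrix descriptions \eqref{AB}, the condition $\im A\subseteq X_1$ says exactly that $x_1$ has image in $X_1$ and each $y_h$ has image in $X_1$, i.e. $x_1\in\Hom(W_1,X_1)$ and $y_h\in\Hom(W_2,X_1)$; and the condition $\im B\subseteq X_2$ says that $(x_2(b),y_1(b),\dots,y_r(b))\in X_2$ for all $b$, i.e. that $x_2\oplus y_1\oplus\cdots\oplus y_r$, viewed as a map $W_2\to W_2'\oplus (W_1')^{\oplus r}$, lands in $X_2$. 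The subtlety is that the $y_h$ appear in \emph{both} constraints. Once I restrict to the fiber, the $y_h$-constraint coming from $A$ is $y_h\in\Hom(W_2,X_1)$, so the relevant ambient space for $(y_1,\dots,y_r)$ is $\Hom(W_2,X_1^{\oplus r})$, and the $B$-constraint becomes $x_2\oplus(y_h)\in\Hom(W_2,X_2)$ where now $X_2\subseteq W_2'\oplus X_1^{\oplus r}$ by the defining condition of $\mathcal{F}_{v,w}$. So the fiber is canonically $\Hom(W_1,X_1)\oplus\Hom(W_2,X_2)$, which has dimension $w_1v_1+w_2v_2$, constant over $\mathcal{F}_{v,w}$. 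To upgrade this fiberwise description to a genuine vector bundle, I would exhibit it as the total space of $\Hom(\underline{W_1},\mathcal{S}_1)\oplus\Hom(\underline{W_2},\mathcal{S}_2)$ where $\mathcal{S}_1,\mathcal{S}_2$ are the tautological subbundles pulled back from the two Grassmannian factors and restricted to $\mathcal{F}_{v,w}$, and $\underline{W_i}$ are trivial bundles; local triviality then follows from local triviality of the tautological bundles on Grassmannians.

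\medskip

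\noindent\textbf{Step 2: The map $\pi$ is well defined and the zero fiber.} Next I would check that $\pi$ as in \eqref{pi} indeed lands in $\mathbf{E}_{v,w}$: if $(\mathbf{x},\mathbf{y},X_1,X_2)\in\tilde{\mathcal{F}}_{v,w}$ then $\im A(\mathbf{x},\mathbf{y})\subseteq X_1$ forces $\rank A(\mathbf{x},\mathbf{y})\le\dim X_1=v_1$, and similarly $\rank B(\mathbf{x},\mathbf{y})\le\dim X_2=v_2$, so $(\mathbf{x},\mathbf{y})\in\mathbf{E}_{v,w}$. For the zero fiber, I would note that $\pi^{-1}(0)$ consists of all $(X_1,X_2)\in\mathcal{F}_{v,w}$ satisfying $\im A(0,0)\subseteq X_1$ and $\im B(0,0)\subseteq X_2$; but $A(0,0)=0$ and $B(0,0)=0$, so both conditions are vacuous, and hence $\pi^{-1}(0)=\{0\}\times\mathcal{F}_{v,w}\cong\mathcal{F}_{v,w}$. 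This also makes clear why $\mathcal{F}_{v,w}$ being nonempty under \eqref{nonempty F} is exactly the hypothesis one needs.

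\medskip

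\noindent\textbf{Expected main obstacle.} The calculations are routine; the one place that requires care is making the identification of the fiber of $\rho$ with $\Hom(W_1,X_1)\oplus\Hom(W_2,X_2)$ precise, because of the double appearance of the maps $y_h$ in the constraints defining $A$ and $B$. The cleanest way around this is to read off from the horizontal/vertical block structure in \eqref{AB} that $\im A\subseteq X_1$ is equivalent to the \emph{conjunction} ``$\im x_1\subseteq X_1$ and $\im y_h\subseteq X_1$ for all $h$'', and that, given this, $\im B\subseteq X_2$ becomes the single condition that the map $b\mapsto(x_2 b, y_1 b,\dots,y_r b)$ factors through $X_2\subseteq W_2'\oplus X_1^{\oplus r}$; the $y_h$ are then the ``$X_1^{\oplus r}$-components'' of a map $W_2\to X_2$ and $x_2$ is its $W_2'$-component. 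Globalizing, this is precisely $\Hom(\underline{W_2},\mathcal{S}_2)$, with no residual interaction, so the bundle rank is $w_1v_1+w_2v_2$ as claimed.
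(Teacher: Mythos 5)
Your argument is correct and is essentially the paper's own proof: the paper likewise identifies the fiber of $(\mathbf{x},\mathbf{y},X_1,X_2)\mapsto(X_1,X_2)$ by observing that membership in $\tilde{\mathcal{F}}_{v,w}$ amounts to the columns of $x_1$ lying in $X_1$ and the columns of $[x_2,y_1,\dots,y_r]_{\mathrm{vert}}$ lying in $X_2$ (the $y_h$-part of the $A$-condition being automatic since $X_2\subseteq W_2'\oplus X_1^{\oplus r}$), and then realizes $\tilde{\mathcal{F}}_{v,w}$ as the pullback to $\mathcal{F}_{v,w}$ of $S_{Gr(v_1,W_1')}^{\oplus w_1}\oplus S_{Gr(v_2,W_2'\oplus (W_1')^{\oplus r})}^{\oplus w_2}$, which is exactly your $\Hom(\underline{W_1},\mathcal{S}_1)\oplus\Hom(\underline{W_2},\mathcal{S}_2)$ of rank $w_1v_1+w_2v_2$. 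The remaining points (that $\pi$ lands in $\mathbf{E}_{v,w}$, the identification of the zero fiber, and part (a), which you take as given) are exactly the items the paper dismisses as obvious, and your brief verifications of them are fine.
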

\begin{proof}
(a) and the second statement of (b) are obvious. For the first statement of (b),  consider the natural projection 
$$\mytildeF_{v,w}\to \mathcal{F}_{v,w}, \quad ({\bf x},{\bf y},X_1,X_2)\mapsto(X_1,X_2).$$ We prove that it gives a vector bundle by identifying it with the pullback of a vector bundle. Indeed, for fixed $(X_1,X_2)$, a tuple $({\bf x},{\bf y},X_1,X_2)$ is in $\mytildeF_{v,w}$ if and only if the $w_1$ column vectors of the matrix $x_1$ are in $X_1$, and the $w_2$ column vectors of the matrix $[x_2,y_1,\dots,y_r]_\text{vert}$ are in $X_2$. (To show that these  conditions implies $\im A({\bf x},{\bf y})\subseteq X_1$, note that $X_2\subseteq W'_2\oplus X_1^{\oplus r}$, thus each ${\rm im}y_i$ is in $X_1$.)  So 
\begin{equation}\label{eq:tildeF is vector bundle}
\mytildeF_{v,w} \cong \iota^*\Big(S_{Gr(v_1,W'_1)}^{\oplus w_1}\bigoplus S_{Gr(v_2,W'_2\oplus {W'_1}^{\oplus r})}^{\oplus w_2}\Big)
\end{equation}
 where $\iota$ is the natural embedding
$\iota:\mathcal{F}_{v,w}\to Gr(v_1,W'_1)\times Gr(v_2,W'_2\oplus {W'_1}^{\oplus r})$,  and $S_{Gr(d,e)}$ stands for the tautological subbundle (i.e., the universal subbundle) on the Grassmannian variety $Gr(d,e)$. Since the pullback of a vector bundle is a vector bundle of the same rank, it follows from \eqref{eq:tildeF is vector bundle} that $\mytildeF_{v,w}$ is a vector bundle over $\mathcal{F}_{v,w}$ of rank $v_1w_1+v_2w_2$. (In particular, it is locally trivial.)
\end{proof}

Now we study a stratification of $\pi$ given in \eqref{pi}. 
Consider a stratification $\myE_w=\bigcup_{v} \myE^\circ_{v,w}$ where
\begin{equation}\label{Ust}
\quad \myE^\circ_{v,w} =\{({\bf x},{\bf y})\in\mathbf{E}_w\, |\, 
(\mathrm{rank}A({\bf x,y}), \mathrm{rank} B({\bf x, y}))=(v_1,v_2)=v \}.
\end{equation}

For $v'=(v'_1,v'_2)$ and $v=(v_1,v_2)$, recall that we denote $v'\le v$ if $v'_1\le v_1$ and $v'_2\le v_2$. (Note that in \cite{Nakajima} this is denoted oppositely as $v'\ge v$.)
Recall the following definition of $l$-dominant condition given in \cite{Nakajima}.

\begin{definition}\label{def:l-dominant}
Let $w=(w_1,w_1',w_2,w_2')\in\mathbb{Z}_{\ge0}^4$ and $v=(v_1,v_2)\in\mathbb{Z}_{\ge0}^2$. We say that
$(v,w)$ is $l$-\emph{dominant} if $w-C_qv\ge0$; or equivalently, by \eqref{C_qv}:
\begin{equation}\label{eq:l-dominant}
w_1-v_1+rv_2\ge0,\quad
w_1'-v_1\ge0,\quad
w_2-v_2\ge0,\quad 
w_2'-v_2+rv_1\ge0. 
\end{equation}
For a fixed $w$, we define
$$\D(w)=\{v\in\mathbb{Z}_{\ge0}^2 \ | \ (v,w) \textrm{ is $l$-dominant} \}.$$
We define 
$$\D=\{(v,w)\in\mathbb{Z}_{\ge0}^2\times\mathbb{Z}_{\ge0}^4 \ | \ (v,w) \textrm{ is $l$-dominant} \}.$$
 
\end{definition}
Note that the condition \eqref{eq:l-dominant} implies \eqref{nonempty F}.

In the following lemma and proposition, we define $\bar{v}$ with the property that $(\bar{v},w)$ is $l$-dominant and $\mathbf{E}_{v,w}=\mathbf{E}_{\bar{v},w}$ (which will be proved in Proposition \ref{prop:fiber}).

\begin{lemma}\label{lemma:vbar}
Let $w=(w_1,w_1',w_2,w_2')\in\mathbb{Z}_{\ge0}^4$, $v=(v_1,v_2)\in\mathbb{Z}_{\ge0}^2$,  $V=\{v'\in\mathbb{Z}_{\ge0}^2 \ | \ v'\le v \}$. 
Then the intersection $V\cap \D(w)=\{v'\in\mathbb{Z}_{\ge0}^2 \ | \ v'\le v \textrm{ and $(v,w)$  is $l$-dominant} \}$ has a unique maximal element $\bar{v}$ in the sense that every $v'\in V\cap \D(w)$ must satisfy $v'\le \bar{v}$. 
More explicitly, 
\begin{equation}\label{eq:barv}
\bar{v}=\big(\min(v_1,w'_1,w_1+rv_2,w_1+rw_2), \min(v_2,w_2,w'_2+rv_1,w'_2+rw'_1)\big)
\end{equation}
In particular, if $(v,w)$ satisfies \eqref{nonempty F}, then
$$\bar{v}=\big(\min(v_1,w_1+rv_2,w_1+rw_2), \min(v_2,w_2)\big);$$
if $v\in\D(w)$, then $\bar{v}=v$. 
\end{lemma}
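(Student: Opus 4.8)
\textbf{Proof proposal for Lemma \ref{lemma:vbar}.}

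The plan is to prove the existence of a unique maximal element of $V\cap\D(w)$ by a coordinate-wise argument: since both the partial order and the defining inequalities decouple into the two coordinates $v'_1$ and $v'_2$ (once we treat the cross-terms carefully), it suffices to find, for each coordinate, the largest value consistent with all the constraints. First I would expand the $l$-dominance condition \eqref{eq:l-dominant} applied to $v'=(v'_1,v'_2)$: it reads $v'_1\le w_1+rv'_2$, $v'_1\le w'_1$, $v'_2\le w_2$, and $v'_2\le w'_2+rv'_1$. Together with $v'_1\le v_1$, $v'_2\le v_2$ and nonnegativity, the constraints on $v'_1$ are $0\le v'_1\le\min(v_1,w'_1,w_1+rv'_2)$ and the constraints on $v'_2$ are $0\le v'_2\le\min(v_2,w_2,w'_2+rv'_1)$.

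The key observation is monotonicity: the upper bound on $v'_1$ is a nondecreasing function of $v'_2$, and the upper bound on $v'_2$ is a nondecreasing function of $v'_1$. Hence if $v',v''\in V\cap\D(w)$, then their coordinate-wise maximum $v'\vee v''$ also lies in $V\cap\D(w)$ — for instance $(v'\vee v'')_1=\max(v'_1,v''_1)\le\max(\min(v_1,w'_1,w_1+rv'_2),\min(v_1,w'_1,w_1+rv''_2))=\min(v_1,w'_1,w_1+r\max(v'_2,v''_2))=\min(v_1,w'_1,w_1+r(v'\vee v'')_2)$, and symmetrically for the second coordinate. Since $V\cap\D(w)$ is nonempty (it contains $(0,0)$, using $w\in\mathbb{Z}_{\ge0}^4$) and finite, closure under $\vee$ forces a unique maximal element $\bar v$, characterized by $v'\le\bar v$ for all $v'\in V\cap\D(w)$.

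To get the explicit formula \eqref{eq:barv}, I would argue that $\bar v$ must saturate the binding constraints. Set $\bar v_1=\min(v_1,w'_1,w_1+r\bar v_2,w_1+rw_2)$ and $\bar v_2=\min(v_2,w_2,w'_2+r\bar v_1,w'_2+rw'_1)$; the extra terms $w_1+rw_2$ and $w'_2+rw'_1$ are there only to handle the degenerate situation where they become the active bound — but in fact, since $\bar v_2\le w_2$ always holds, $w_1+r\bar v_2\le w_1+rw_2$, so the term $w_1+rw_2$ is redundant and can be dropped, and likewise $w'_2+rw'_1$ is redundant since $\bar v_1\le w'_1$; this is exactly the simplification recorded in the ``In particular'' clause under hypothesis \eqref{nonempty F}. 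I would verify that this candidate $\bar v$ satisfies all four inequalities (using the self-referential definition together with nonnegativity and the redundancy just noted to break the apparent circularity — concretely, one checks that the pair $\big(\min(v_1,w'_1,w_1+r v_2),\min(v_2,w_2)\big)$ already satisfies every constraint) and that it dominates every $v'\in V\cap\D(w)$ by the coordinate-wise bounds above. Finally, if $v\in\D(w)$ then $v$ itself satisfies \eqref{eq:l-dominant}, so $v\in V\cap\D(w)$ and maximality gives $\bar v=v$.

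The only mildly delicate point — and the one I would be most careful about — is the apparent circularity in the formula \eqref{eq:barv}, where $\bar v_1$ is defined in terms of $\bar v_2$ and vice versa. The resolution is that the system is not genuinely coupled once we exploit $\bar v_2\le w_2$ and $\bar v_1\le w'_1$: substituting the crude bounds into the cross-terms shows $w_1+r\bar v_2\le w_1+rw_2$ and $w'_2+r\bar v_1\le w'_2+rw'_1$, so the ``$\min$'' over four terms is really governed by the non-recursive ones, and the formula can be read as an honest closed-form expression rather than a fixed-point equation. Everything else is a routine check of finitely many inequalities.
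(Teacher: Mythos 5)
Your opening step is correct, and it is a genuinely different (and cleaner) route to the qualitative part of the lemma than the paper's: $V\cap\D(w)$ contains $(0,0)$, is finite, and is closed under coordinatewise maximum because each inequality in \eqref{eq:l-dominant} is preserved under $\vee$ (e.g.\ $v'_1\le w_1+rv'_2$ and $v''_1\le w_1+rv''_2$ give $\max(v'_1,v''_1)\le w_1+r\max(v'_2,v''_2)$), so a unique maximal element exists. The paper never isolates this lattice argument; it obtains existence and the formula simultaneously by a three-case geometric analysis of where $v$ sits relative to the hexagon $\mathrm{conv}(\D(w))$. Your domination bounds $v'_1\le\min(v_1,w'_1,w_1+rv_2,w_1+rw_2)$ and $v'_2\le\min(v_2,w_2,w'_2+rv_1,w'_2+rw'_1)$ for all $v'\in V\cap\D(w)$ are also fine.

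The gap is in the derivation of the explicit formula \eqref{eq:barv}, which is the substantive content (it is what Proposition \ref{prop:fiber} uses). You have misread the formula: its cross-terms are $w_1+rv_2$ and $w'_2+rv_1$, with the \emph{original} $v$, so there is no circularity to resolve; and the terms $w_1+rw_2$, $w'_2+rw'_1$ are not a degenerate afterthought but are essential precisely when $v_2>w_2$ or $v_1>w'_1$, i.e.\ when \eqref{nonempty F} fails, which is the case the lemma is really about. Your de-circularization is also backwards: from $\bar v_2\le w_2$ you drop the non-recursive term $w_1+rw_2$, which leaves the recursive term $w_1+r\bar v_2$ governing the minimum, so the self-referential system is not turned into a closed form; moreover as a fixed-point system it can have several solutions (for $r=1$, $w=(0,10,1,0)$, $v=(5,3)$ both $(0,0)$ and $(1,1)$ solve it), so it does not determine $\bar v$. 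The concrete candidate you offer instead, $\big(\min(v_1,w'_1,w_1+rv_2),\min(v_2,w_2)\big)$, is simply not $l$-dominant in general: in the same example it equals $(3,1)$, violating $v'_1\le w_1+rv'_2$, whereas the true maximum, given by \eqref{eq:barv}, is $(1,1)$, the active term being exactly $w_1+rw_2$. (Your reading of the ``In particular'' clause is likewise off: under \eqref{nonempty F} the redundant terms are $w'_1$, $w'_2+rv_1$, $w'_2+rw'_1$, while $w_1+rw_2$ is kept.) The repair is easy and in your spirit: take \eqref{eq:barv} verbatim as the candidate, keep your domination argument, and check $l$-dominance of $\bar v$ by cases on which term realizes each minimum, using $r\ge1$ (e.g.\ if $\bar v_2=w'_2+rv_1$ then $\bar v_1\le v_1\le r^2v_1\le w_1+r\bar v_2$); but as written your proposal does not establish the formula.
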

\begin{proof}
It is obvious that if $v\in\D(w)$, then $\bar{v}=v$. So in the rest of the proof we assume $v\in\mathbb{Z}_{\ge0}^2\setminus \D(w)$.

We want to show that the $\bar{v}$ given in \eqref{eq:barv} is the unique maximal element in the intersection $V\cap \D(w)$.

We consider three cases, and only give a proof of Case 1, because the other two cases can be proved similarly. See Figure \ref{figure:l-dominant} for the corresponding figures. 

Case 1. $w_1-w_1'+rw_2\ge0$ and $w_2'-w_2+rw_1'\ge0$. 
In this case, \eqref{eq:barv} becomes
$$\bar{v}=
\begin{cases}
(w_1',w_2), &\textrm{ if $v_1\ge w_1'$ and $v_2\ge w_2$};\\ 
\big(v_1,\min(w_2'+rv_1,w_2)\big), &\textrm{ if $v_1\le w_1'$ and $v_2>\min(w_2'+rv_1,w_2)$};\\
\big(\min(w_1+rv_2,w_1'),v_2\big), &\textrm{ if $v_1> \min(w_1+rv_2,w_1')$ and $v_2\le w_2$}.\\
\end{cases}
$$ 

For simplicity, we assume that $w_1\le w'_1$ and $w'_2\le w_2$. (Other cases are similar). The set $\D(w)$ is shown in Figure \ref{figure:l-dominant} Left. The line $w_1-x+ry=0$ has slope $1/r$ whose $x$-intercept is $w_1\ge0$; the line $w'_2-y+rx=0$ has slope $r$ whose $y$-intercept is $w'_2$. The convex hull of $\D(w)$ is a (possibly degenerated) hexagon $H$ whose upper-right corner is $(w'_1,w_2)$. 

The three regions of $v$ are illustrated in Figure \ref{figure:l-dominant} Left by different filling patterns (diagonal stripes, vertical stripes, horizontal stripes), and there are overlaps on the boundary. It is clear from the figure that  each $v\in\mathbb{Z}_{\ge0}^2\setminus \D(w)$ belongs to at least one of the three regions.

If $v_1\ge w_1'$ and $v_2\ge w_2$,  then $v$ is weakly to the northeast of $(w'_1,w_2)$, which implies $\D(w)\subseteq V$, $V\cap \D(w)=\D(w)$, thus the unique maximal element in   $V\cap \D(w)$ is $(w'_1,w_2)$. 

If $v_1\le w_1'$ and $v_2>\min(w_2'+rv_1,w_2)$, then $v$ is above the hexagon $H$ and lies between the vertical lines $x=0$ and $x=w'_1$. In this case, $\bar{v}$ is the intersection of the vertical line $x=v_1$ with the upper boundary of $H$ (which consists of two sides), which is the point $(v_1,\min(w_2'+rv_1,w_2))$. 

If $v_1>\min(x_1+rv_2,w'_1)$ and $v_2<w_2$, then $v$ is to the right the hexagon $H$ and lies between the horizontal lines $y=0$ and $y=w_2$. In this case, $\bar{v}$ is the intersection of the horizontal line $y=v_2$ with the right boundary of $H$ (which consists of two sides), which is the point $(\min(w_1+rv_2,w'_1),v_2)$. 

This proves the formula \eqref{eq:barv} in this case.

The other two cases are proved similarly:

Case 2. If $w_1-w_1'+rw_2<0$:
$$\bar{v}=
\begin{cases}
(w_1+rw_2,w_2), &\textrm{ if $v_1\ge w_1+rw_2$ and $v_2\ge w_2$};\\ 
\big(v_1,\min(w_2'+rv_1,w_2)\big), &\textrm{ if  $v_2>\min(w_2'+rv_1,w_2)$ and $v_1\le w_1+rw_2$};\\
(w_1+rv_2,v_2), &\textrm{ if $v_1>w_1+rv_2$ and $v_2\le w_2$}.\\
\end{cases}
$$

Case 3. If $w_2'-w_2+rw_1'<0$:
$$\bar{v}=
\begin{cases}
(w_1',w_2'+rw_1'), &\textrm{ if $v_1\ge w_1'$ and $v_2\ge w_2'+rw_1'$};\\ 
(v_1,w_2'+rv_1), &\textrm{ if  $v_1\le w_1$ and $v_2>w_2'+rv_1$};\\
\big(\min(w_1',w_1+rv_2), v_2\big), &\textrm{ if $v_1>\min(w_1',w_1+rv_2)$ and $v_2\le w_2'+rw_1'$}.\\
\end{cases}
$$ 

Thus \eqref{eq:barv} holds for all cases.
\end{proof}

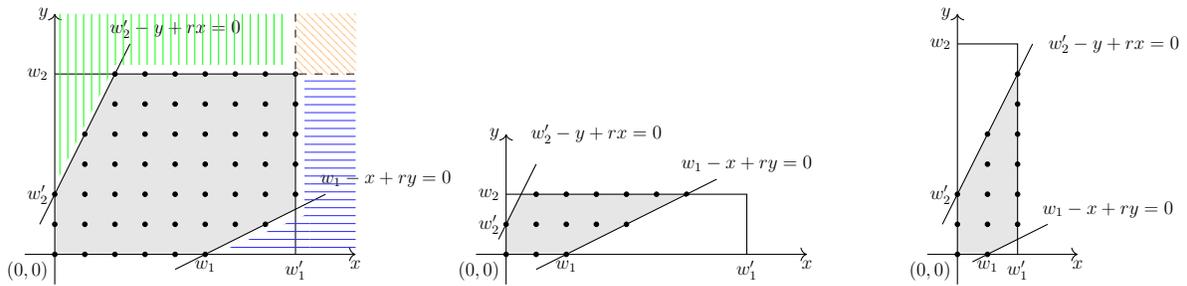
\begin{figure}[h]
\begin{tikzpicture}[scale=.4,every node/.style={scale=.6}]
\fill[gray!20] (0,0)--(5,0)--(8,1.5)--(8,6)--(2,6)--(0,2);
\begin{scope}
\clip (-.2,-.2)--(5,-.2)--(8.5,1.5)--(8.5,6.5)--(2,6.2)--(-.2,2.2);
\foreach \x in {0,...,8} {
  \foreach \y in {0,...,6} {
    \node[draw,circle, inner sep=1pt,fill] at (\x,\y) {};
  }
}
\end{scope}
\draw[->] (-1,0) -- (10,0);
\draw[->] (0,-1) -- (0,8);
\draw(8,0)--(8,6)--(0,6);
\draw(4,-.5)--(9,2);
\draw(-.5,1)--(2.5,7);
\node at (5,0) [below] {$w_1$};
\node at (8,0) [below] {$w'_1$};
\node at (10,0) [below] {$x$};
\node at (0,0) [below left] {$(0,0)$};
\node at (0,2) [left] {$w'_2$};
\node at (0,6) [left] {$w_2$};
\node at (0,8) [left] {$y$};
\node at (11,2.5) {$w_1-x+ry=0$};
\node at (4,7.5) {$w'_2-y+rx=0$};

\draw[dashed] (8,8) -- (8,6)--(10,6);
\fill [ fill opacity=.5,line width=1mm,pattern=north west lines, domain=0:1,pattern color=orange!80]
(8,8)--(8,6)--(10,6)--(10,8)--cycle;
\fill [fill opacity=.5,line width=1mm,pattern=vertical lines, domain=0:1,pattern color=green!90]
(0,2.3)--(2,6.3)--(7.7,6.3)--(7.7,8)--(0,8)--cycle;
\fill [fill opacity=.5,line width=1mm,pattern=horizontal lines, domain=0:1,pattern color=blue!80]
(5.3,0)--(8.3,1.3)--(8.3,5.8)--(10,5.8)--(10,0)--cycle;

\begin{scope}[shift={(15,0)}]
\fill[gray!20] (0,0)--(2,0)--(6,2)--(.5,2)--(0,1);
\begin{scope}
\clip (-.2,-.2)--(2.2,-.2)--(6.5,2.2)--(0.5,2.2)--(-.2,1.2);
\foreach \x in {0,...,8} {
  \foreach \y in {0,...,6} {
    \node[draw,circle, inner sep=1pt,fill] at (\x,\y) {};
  }
}
\end{scope}
\draw[->] (-1,0) -- (10,0);
\draw[->] (0,-1) -- (0,4);
\draw(8,0)--(8,2)--(0,2);
\draw(1,-.5)--(7,2.5);
\draw(-.25,0.5)--(1,3);
\node at (2,0) [below] {$w_1$};
\node at (8,0) [below] {$w'_1$};
\node at (10,0) [below] {$x$};
\node at (0,0) [below left] {$(0,0)$};
\node at (0,1) [left] {$w'_2$};
\node at (0,2) [left] {$w_2$};
\node at (0,4) [left] {$y$};
\node at (8,3) {$w_1-x+ry=0$};
\node at (3,4) {$w'_2-y+rx=0$};
\end{scope}

\begin{scope}[shift={(30,0)}]
\fill[gray!20] (0,0)--(1,0)--(2,.5)--(2,6)--(0,2);
\begin{scope}
\clip (-.2,-.2)--(1,-.2)--(2.2,0.5)--(2.2,6.2)--(1.8,6.2)--(-.2,2.2);
\foreach \x in {0,...,2} {
  \foreach \y in {0,...,6} {
    \node[draw,circle, inner sep=1pt,fill] at (\x,\y) {};
  }
}
\end{scope}
\draw[->] (-1,0) -- (4,0);
\draw[->] (0,-1) -- (0,8);
\draw(2,0)--(2,7)--(0,7);
\draw(.5,-.25)--(3,1);
\draw(-.5,1)--(2.5,7);
\node at (1,0) [below] {$w_1$};
\node at (2,0) [below] {$w'_1$};
\node at (4,0) [below] {$x$};
\node at (0,0) [below left] {$(0,0)$};
\node at (0,2) [left] {$w'_2$};
\node at (0,7) [left] {$w_2$};
\node at (0,8) [left] {$y$};
\node at (5,1.5) {$w_1-x+ry=0$};
\node at (5.2,7) {$w'_2-y+rx=0$};
\end{scope}
\end{tikzpicture}
\caption{$l$-dominant lattices for Cases 1 (Left), 2 (Middle), and 3 (Right).}
\label{figure:l-dominant} 
\end{figure}

\begin{lemma}\label{lem:ab}
Fix $v\in\mathbb{Z}_{\ge0}^2$ and $w\in\mathbb{Z}_{\ge0}^4$. Let
$\pi: \tilde{\mathcal{F}}_{v,w}\to \mathbf{E}_{v,w}$ be the map defined in \eqref{pi}.

{\rm(a)}  For each $v'\le v$,  $\myE^\circ_{v',w}$, defined in \eqref{Ust}, is nonempty if and only if  $(v',w)$ is $l$-dominant. In which case, it is nonsingular and locally closed in $\myE_w$ (so is also locally closed in $\myE_{v,w}$), and is irreducible and rational. 
Thus the variety $\mathbf{E}_{v,w}$ has a stratification 
$$\mathbf{E}_{v,w}=\bigcup_{v'} \myE^\circ_{v',w}\, ,\quad
\textrm{where $v'\le v$ and $(v',w)$ are $l$-dominant}.$$ 
 
{\rm(b)} For each $v'\le v$ with $(v',w)$ being $l$-dominant, the restricted projection $\pi^{-1}(\myE^\circ_{v',w})\to \myE^\circ_{v',w}$ is a Zariski locally trivial $\mathcal{M}$-bundle, 
where $\mathcal{M}$ itself is a 
$Gr(v_2-v'_2,w_2'+rv_1-v'_2)$-bundle over $Gr(v_1-v'_1,w_1'-v'_1)$ defined as
$$\mathcal{M}=\Big\{(X_1',X_2') \Big|
X_1'\in Gr(v_1-v_1',w_1'-v_1'), 
X_2'\in Gr\big(v_2-v_2',\mathbb{C}^{w_2'+rv_1'-v_2'}\oplus(X_1')^{\oplus r}\big)\Big\}.$$
In particular, $\mathcal{F}_{v,w}$ is a 
$Gr(v_2,w_2'+rv_1)$-bundle over $Gr(v_1,w_1')$. 
\end{lemma}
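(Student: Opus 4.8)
\emph{Overview of the plan.} I would prove (a) and (b) by reducing everything to explicit constructions: part (a) exhibits each stratum as an open subvariety of a vector bundle over a product of Grassmannians, and part (b) exhibits $\pi$ over a stratum as an iterated relative Grassmannian bundle. Throughout I use that the image (and kernel, cokernel) of a morphism of vector bundles of \emph{constant} rank over a variety is again a vector bundle.

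\emph{Part (a).} First, $\myE^\circ_{v',w}=\myE_{v',w}\setminus(\myE_{(v'_1-1,v'_2),w}\cup\myE_{(v'_1,v'_2-1),w})$ is the difference of closed determinantal subsets of the affine space $\myE_w$, hence locally closed in $\myE_w$, hence locally closed in $\myE_{v,w}$; and since every $({\bf x},{\bf y})\in\myE_{v,w}$ has $(\rank A,\rank B)\le v$, the strata $\myE^\circ_{v',w}$ with $v'\le v$ cover $\myE_{v,w}$ disjointly. The main point is the following description. On $\myE^\circ_{v',w}$ the bundle maps $A({\bf x},{\bf y})$, $B({\bf x},{\bf y})$ have constant ranks $v'_1,v'_2$, so $\im A$, $\im B$, $\ker B$ are vector subbundles of the relevant trivial bundles; in particular $({\bf x},{\bf y})\mapsto(\im A,\ker B)$ is a morphism to $\mathcal B:=Gr(v'_1,W'_1)\times Gr(w_2-v'_2,W_2)$. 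Transporting the remaining data to these quotients — $x_1$ viewed as a map $W_1\to\im A$, and $B$ viewed as an injection $W_2/\ker B\hookrightarrow W'_2\oplus(\im A)^{\oplus r}$ (legitimate since each $\im y_h\subseteq\im A$, the $y_h$ being horizontal blocks of $A$) — identifies $\myE^\circ_{v',w}$ with the subvariety $\mathcal U$ of the vector bundle
$$\mathcal V=\Hom(\underline{W_1},\mathcal S_1)\oplus\Hom(\underline{W_2}/\mathcal K,\ \underline{W'_2}\oplus\mathcal S_1^{\oplus r})\ \longrightarrow\ \mathcal B$$
($\mathcal S_1,\mathcal K$ the tautological subbundles on the two factors) cut out by the two \emph{open} conditions ``$\bar B$ is injective'' and ``$\im x_1+\sum_h\im\bar y_h=\mathcal S_1$''. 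Granting this, $\mathcal V$ is smooth, irreducible and rational (a vector bundle over a product of Grassmannians), hence so is the open subvariety $\mathcal U\cong\myE^\circ_{v',w}$ as soon as it is nonempty. Nonemptiness: $\mathcal B\neq\emptyset$ iff $v'_1\le w'_1$ and $v'_2\le w_2$; since $GL(W'_1)\times GL(W_2)$ acts transitively on $\mathcal B$ and compatibly on $\mathcal V$, all fibers of $\mathcal U\to\mathcal B$ are isomorphic, so $\mathcal U\neq\emptyset$ iff one fiber is, and a direct dimension count matches the four $l$-dominance inequalities with the two conditions just stated, the existence of an injective $\bar B$ into $W'_2\oplus(\im A)^{\oplus r}$ (possible iff $v'_2\le w'_2+rv'_1$), and the existence of $x_1$ with $\im x_1+\sum_h\im\bar y_h=\im A$ for generic admissible $\bar y_h$ (possible iff $v'_1\le w_1+rv'_2$). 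Thus $\myE^\circ_{v',w}\neq\emptyset$ iff $(v',w)$ is $l$-dominant. (Necessity of $l$-dominance is also immediate from subadditivity of $\rank$ on the block decompositions of $A$ and $B$ together with $\im y_h\subseteq\im A$, giving an independent check.)

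\emph{Part (b).} Fix $({\bf x},{\bf y})\in\myE^\circ_{v',w}$ and put $P=\im A$ (dimension $v'_1$), $R=\im B$ (dimension $v'_2$). The fiber $\pi^{-1}({\bf x},{\bf y})$ is $\{(X_1,X_2): P\subseteq X_1\subseteq W'_1,\ \dim X_1=v_1;\ R\subseteq X_2\subseteq W'_2\oplus X_1^{\oplus r},\ \dim X_2=v_2\}$. Passing to quotients, $X_1\mapsto X_1/P$ identifies the choice of $X_1$ with $Gr(v_1-v'_1,\ W'_1/P)=Gr(v_1-v'_1,w'_1-v'_1)$; for fixed $X_1$, using $W'_2\oplus X_1^{\oplus r}=(W'_2\oplus P^{\oplus r})\oplus(X_1/P)^{\oplus r}$ and $R\subseteq W'_2\oplus P^{\oplus r}$, the choice of $X_2$ is $Gr\big(v_2-v'_2,\ (W'_2\oplus P^{\oplus r})/R\ \oplus\ (X_1/P)^{\oplus r}\big)$, an ambient of dimension $(w'_2+rv'_1-v'_2)+r(v_1-v'_1)$. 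Setting $X'_1:=X_1/P$, this is precisely the description of $\mathcal M$, so $\pi^{-1}({\bf x},{\bf y})\cong\mathcal M$, which is visibly a $Gr(v_2-v'_2,w'_2+rv_1-v'_2)$-bundle over $Gr(v_1-v'_1,w'_1-v'_1)$. To globalize, relativize over $\myE^\circ_{v',w}$: there $\im A$ and $\im B$ are subbundles with $\im B\subseteq\underline{W'_2}\oplus(\im A)^{\oplus r}$, and $\pi^{-1}(\myE^\circ_{v',w})$ is the iterated relative Grassmannian bundle — first $Gr_{v_1-v'_1}(\underline{W'_1}/\im A)$ over $\myE^\circ_{v',w}$, carrying the tautological $\mathcal X_1\supseteq\im A$, then $Gr_{v_2-v'_2}\big((\underline{W'_2}\oplus\mathcal X_1^{\oplus r})/\im B\big)$ over that. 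Relative Grassmannian bundles are Zariski-locally trivial, so the composite $\pi^{-1}(\myE^\circ_{v',w})\to\myE^\circ_{v',w}$ is a Zariski-locally trivial fibration with fiber the tower over a point, i.e.\ $\mathcal M$. For the last assertion, take $v'=0$: then $\myE^\circ_{0,w}=\{0\}$ is a single (trivially $l$-dominant) point, $\pi^{-1}(0)=\mathcal F_{v,w}$ by Lemma~\ref{EF}(b), and (b) specializes to $\mathcal F_{v,w}\cong\mathcal M$, a $Gr(v_2,w'_2+rv_1)$-bundle over $Gr(v_1,w'_1)$ (as is also clear directly from the definition of $\mathcal F_{v,w}$).

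\emph{Main obstacle.} The technical heart is the identification $\myE^\circ_{v',w}\cong\mathcal U$ in (a) and, inside it, the bookkeeping that matches each of the four $l$-dominance inequalities one-to-one with nonemptiness of a Grassmannian factor of $\mathcal B$, existence of an injective $\bar B$, and existence of a sufficiently surjective $x_1$. Once that is in place, irreducibility and rationality in (a), and all of (b), follow formally from standard properties of vector bundles and relative Grassmannian bundles.
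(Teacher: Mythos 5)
Your proposal is correct, and it arrives at the same geometric picture as the paper by a partly different route. For part (a) the paper works chart by chart: it covers $\mathbf{E}^\circ_{v',w}$ by open sets $U_{I,J}$ (indexed by a choice of $v'_1$ independent rows of $A$ and $v'_2$ independent columns of $B$) and identifies each chart with an open subset of an affine space via the matrix data $(P,Q,Z'_1,Z'_2,C_1,\dots,C_r)$; the four $l$-dominance inequalities come from rank subadditivity plus a generic choice, and irreducibility needs a separate argument (a common point of all charts). Your global description --- fibering the stratum over $Gr(v'_1,W'_1)\times Gr(w_2-v'_2,W_2)$ via $({\bf x},{\bf y})\mapsto(\im A,\ker B)$ and realizing it as an open subset of a vector bundle over that base --- is a coordinate-free repackaging of the same geometry (the paper's $P$ and $Q$ are precisely charts on those two Grassmannians), but it yields smoothness, irreducibility and rationality in one stroke and replaces the common-point argument by homogeneity under $GL(W'_1)\times GL(W_2)$. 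Your fiberwise nonemptiness count matches the paper's; you should just make explicit that the two generic requirements on $(\bar y_h)$ (their images together with $\im x_1$ spanning $\im A$, and their common kernel having the expected dimension so that an injective $\bar B$ exists) are open conditions on an irreducible parameter space and hence simultaneously achievable exactly when the two remaining inequalities hold. For part (b) your argument is essentially the paper's: the paper packages the two-step construction as a bundle $\mathcal{X}\to\mathcal{F}_{v',w}$ pulled back along $({\bf x},{\bf y})\mapsto(\im A,\im B)$, whereas you build the iterated relative Grassmannian directly over the stratum; your quotient description of the fiber is the paper's choice-of-complements construction in disguise.

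One step to tighten in (b): ``relative Grassmannian bundles are Zariski-locally trivial, so the composite is'' is not a formal implication --- a composite of Zariski-locally trivial fibrations need not be Zariski-locally trivial with fiber the tower. It does work here because both subbundles $\im A$ (inside the trivial bundle with fiber $W'_1$) and $\im B$ (inside the bundle with fiber $W'_2\oplus(\im A)^{\oplus r}$) live over the base stratum and can be simultaneously straightened to constant subspaces on a Zariski open cover of $\mathbf{E}^\circ_{v',w}$, after which the whole tower becomes a product with $\mathcal{M}$; this is exactly the role played in the paper's proof by trivializing the universal subbundles over an open cover of $\mathcal{F}_{v',w}$. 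With that routine straightening spelled out, your proof is complete.
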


We postpone its technical proof to the end of the paper (\S11) so that it is easier to follow the flow of arguments of the main results. 

\begin{proposition}\label{prop:fiber}
Fix $v\in\mathbb{Z}_{\ge0}^2$ and $w\in\mathbb{Z}_{\ge0}^4$. Let
$\pi: \tilde{\mathcal{F}}_{v,w}\to \mathbf{E}_{v,w}$ be the map defined in \eqref{pi}.
Let $\bar{v}$ be defined as in Lemma \ref{lemma:vbar}. 

{\rm(a)} $\mathbf{E}_{v,w}=\mathbf{E}_{\bar{v},w}$ is irreducible. Moreover, $\mathbf{E}^\circ_{\bar{v},w}$ is its largest stratum; so  $\mathbf{E}_{v,w}=\overline{\mathbf{E}^\circ_{\bar{v},w}}$, the Zariski closure of $\mathbf{E}^\circ_{\bar{v},w}$.

{\rm(b)} Assume \eqref{nonempty F}. Then  $\tilde{\mathcal{F}}_{v,w}$ and $\mathcal{F}_{v,w}$ are irreducible and nonsingular and $\pi$ is surjective. Moreover,
$$\aligned
&\dim \mathcal{F}_{v,w}=d(v,w):= -v_1^2+rv_1v_2-v_2^2+v_1w_1'+v_2w_2'\\
&\dim \tilde{\mathcal{F}}_{v,w}=\tilde{d}(v,w):=\dim \mathcal{F}_{v,w}+w_1v_1+w_2v_2
=  -v_1^2+rv_1v_2-v_2^2+v_1(w_1+w_1')+v_2(w_2+w_2') \\  
&\dim \mathbf{E}_{v,w}
=\dim \mathbf{E}_{\bar{v},w}
=\tilde{d}(\bar{v},w)
=  -\bar{v}_1^2+r\bar{v}_1\bar{v}_2-\bar{v}_2^2+\bar{v}_1(w_1+w_1')+\bar{v}_2(w_2+w_2') \\  
\endaligned$$

{\rm(c)} Further assume that $(v,w)$ is $l$-dominant. Then $\pi$ is birational. In this case, it restricts to an isomorphism $\pi^{-1}({\bf E}^\circ_{v,w})\stackrel{\cong}{\longrightarrow} {\bf E}^\circ_{v,w}$. 

\end{proposition}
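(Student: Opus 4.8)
The plan is to prove the three parts of Proposition~\ref{prop:fiber} by combining the fiber-bundle description of Lemma~\ref{lem:ab} with the dimension count $\tilde d(v,w)$ coming from Lemma~\ref{EF}. For part~(a), I would first use the stratification $\mathbf{E}_{v,w}=\bigcup_{v'\le v,\ (v',w)\ l\text{-dom}}\mathbf{E}^\circ_{v',w}$ from Lemma~\ref{lem:ab}(a), together with Lemma~\ref{lemma:vbar}, which says the index set $V\cap\D(w)$ has a unique maximum $\bar v$. To see $\mathbf{E}_{v,w}=\overline{\mathbf{E}^\circ_{\bar v,w}}$ it suffices to check that every stratum $\mathbf{E}^\circ_{v',w}$ with $v'<\bar v$ lies in the closure of $\mathbf{E}^\circ_{\bar v,w}$: a point of $\mathbf{E}^\circ_{v',w}$ can be deformed, by scaling one block of $({\bf x},{\bf y})$ or adding a generic perturbation, to raise the ranks of $A$ and $B$ up to $\bar v_1,\bar v_2$ (one cannot exceed $\bar v$ precisely because of the size constraints on the blocks, which is exactly the content of $\bar v$ in \eqref{eq:barv}). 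Hence $\mathbf{E}_{v,w}$ is the closure of a single irreducible stratum, so it is irreducible and equals $\mathbf{E}_{\bar v,w}$.

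For part~(b), assuming \eqref{nonempty F}, I would invoke Lemma~\ref{lem:ab}(b): $\mathcal{F}_{v,w}$ is a $Gr(v_2,w_2'+rv_1)$-bundle over $Gr(v_1,w_1')$, hence irreducible and nonsingular with
$$\dim\mathcal{F}_{v,w}=v_1(w_1'-v_1)+v_2(w_2'+rv_1-v_2)=-v_1^2+rv_1v_2-v_2^2+v_1w_1'+v_2w_2'=d(v,w).$$
By Lemma~\ref{EF}(b), $\tilde{\mathcal{F}}_{v,w}$ is a rank-$(w_1v_1+w_2v_2)$ vector bundle over $\mathcal{F}_{v,w}$, so it too is irreducible and nonsingular of dimension $\tilde d(v,w)$. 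Surjectivity of $\pi$ onto $\mathbf{E}_{v,w}$ follows because $\pi$ is proper (its source is projective over $\mathbf{E}_w$), so its image is closed; the image contains $\mathbf{E}^\circ_{\bar v,w}$ by Lemma~\ref{lem:ab}(b) applied with $v'=\bar v$ (whose fiber $\mathcal{M}$ is nonempty), and that stratum is dense in $\mathbf{E}_{v,w}$ by part~(a), so the image is all of $\mathbf{E}_{v,w}$. Finally $\dim\mathbf{E}_{v,w}=\dim\mathbf{E}_{\bar v,w}$ is computed by taking the fiber-bundle dimension over the top stratum $\mathbf{E}^\circ_{\bar v,w}$: over it, $\pi$ is an $\mathcal{M}$-bundle with $v'=\bar v$, so $\mathcal{M}$ is a point, which forces $\dim\mathbf{E}_{v,w}=\dim\tilde{\mathcal{F}}_{\bar v,w}=\tilde d(\bar v,w)$ — this is the clean way to get the formula without a direct computation on $\mathbf{E}_{v,w}$ itself.

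For part~(c), when $(v,w)$ is itself $l$-dominant, Lemma~\ref{lemma:vbar} gives $\bar v=v$, so by part~(a) the stratum $\mathbf{E}^\circ_{v,w}$ is open dense in $\mathbf{E}_{v,w}$. Applying Lemma~\ref{lem:ab}(b) with $v'=v$, the fiber model $\mathcal{M}$ becomes $Gr(0,w_1'-v_1)\times Gr(0,\cdots)$, a single point, so $\pi^{-1}(\mathbf{E}^\circ_{v,w})\to\mathbf{E}^\circ_{v,w}$ is a Zariski-locally trivial bundle with one-point fibers, i.e.\ an isomorphism. Since $\tilde{\mathcal{F}}_{v,w}$ is irreducible (part~(b)) and $\mathbf{E}_{v,w}$ is irreducible (part~(a)) and $\pi$ is an isomorphism over a dense open subset, $\pi$ is birational.

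The main obstacle I expect is part~(a): proving that a lower stratum $\mathbf{E}^\circ_{v',w}$ really lies in the closure of $\mathbf{E}^\circ_{\bar v,w}$, i.e.\ that one can always increase $(\mathrm{rank}\,A,\mathrm{rank}\,B)$ all the way up to $\bar v$ by a deformation while staying in $\mathbf{E}_w$. The subtlety is that $A$ and $B$ share the blocks $y_h$, so raising $\mathrm{rank}\,B$ by perturbing the $y_h$ may also affect $\mathrm{rank}\,A$; one must argue that the combined rank vector that is actually achievable is governed exactly by the coordinatewise minimum formula \eqref{eq:barv}, which is where Lemma~\ref{lemma:vbar}'s explicit description of $\bar v$ does the real work. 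Everything else is bookkeeping with bundle dimensions.
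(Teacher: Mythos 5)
Your parts (b) and (c) follow the paper's route (Lemma \ref{lem:ab} plus Lemma \ref{EF} for the bundle structures and dimension counts, and point-fibers over $\mathbf{E}^\circ_{v,w}$ for the isomorphism in (c)), but part (a) has a genuine gap, and it is the load-bearing step. You propose to prove $\mathbf{E}_{v,w}=\overline{\mathbf{E}^\circ_{\bar v,w}}$ by deforming a point of a lower stratum so as to raise $(\mathrm{rank}\,A,\mathrm{rank}\,B)$ up to $\bar v$, and you yourself flag that this is subtle because $A$ and $B$ share the blocks $y_h$ — but you never give the argument, so irreducibility and the closure statement are not established. This also makes your part (b) circular as structured: your surjectivity argument ("the image is closed and contains the dense stratum $\mathbf{E}^\circ_{\bar v,w}$") invokes the density from (a), which is exactly the unproven step. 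The paper avoids the perturbation entirely and in the opposite logical order: surjectivity of $\pi$ is proved first and stratum-by-stratum, because over \emph{every} stratum $\mathbf{E}^\circ_{v',w}$ with $v'\le v$, $v'\in\D(w)$, the fiber is the nonempty variety $\mathcal{M}$ of Lemma \ref{lem:ab}(b) under \eqref{nonempty F}; then $\mathbf{E}_{v,w}=\mathbf{E}_{\bar v,w}$ is the image of the irreducible variety $\tilde{\mathcal{F}}_{\bar v,w}$, hence irreducible; and the largest stratum is identified by elimination, since for $v'<\bar v$ one has $\overline{\mathbf{E}^\circ_{v',w}}\subseteq\mathbf{E}_{v',w}\subsetneq\mathbf{E}_{v,w}$ (properness of the inclusion because $\mathbf{E}^\circ_{\bar v,w}\neq\emptyset$). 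If you want to salvage your closure claim directly, the efficient fix is essentially this argument: $\pi_{\bar v}^{-1}(\mathbf{E}^\circ_{\bar v,w})$ is a nonempty open subset of the irreducible $\tilde{\mathcal{F}}_{\bar v,w}$, hence dense, and its image $\mathbf{E}^\circ_{\bar v,w}$ is therefore dense in the (already known to be surjective) image $\mathbf{E}_{\bar v,w}$ — no explicit rank-raising deformation is needed.

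One smaller point: in your dimension computation for $\dim\mathbf{E}_{v,w}$ you say that over the top stratum ``$\pi$ is an $\mathcal{M}$-bundle with $v'=\bar v$, so $\mathcal{M}$ is a point.'' For the map $\pi:\tilde{\mathcal{F}}_{v,w}\to\mathbf{E}_{v,w}$ this is false when $v\neq\bar v$: the fiber over $\mathbf{E}^\circ_{\bar v,w}$ is a $Gr(v_2-\bar v_2,\,w_2'+rv_1-\bar v_2)$-bundle over $Gr(v_1-\bar v_1,\,w_1'-\bar v_1)$, which is a point only if $v=\bar v$. The statement you need (and the paper uses, via its part (c) applied to the $l$-dominant pair $(\bar v,w)$) is about the map $\tilde{\mathcal{F}}_{\bar v,w}\to\mathbf{E}_{\bar v,w}$, which is birational and gives $\dim\mathbf{E}_{v,w}=\dim\mathbf{E}_{\bar v,w}=\tilde d(\bar v,w)$; your conclusion is right, but state it for the correct morphism.
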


\begin{proof}
First, we prove the first statement of (b).
The special case $v'=(0,0)$ of Lemma \ref{lem:ab} (b) asserts that  $\mathcal{F}_{v,w}$ is a $Gr(v_2,w_2'+rv_1)$-bundle over $Gr(v_1,w_1')$ and therefore is irreducible and nonsingular. By Lemma \ref{EF} (b), $\tilde{\mathcal{F}}_{v,w}$ is a vector bundle over $\mathcal{F}_{v,w}$, so is also irreducible and nonsingular.
 
For any point in $\mathbf{E}_{v,w}$, it must be in a stratum $\mathbf{E}^\circ_{v',w}$ for some $v'\le v$ such that $v'\in\D(w)$ according to Lemma \ref{lem:ab} (a). By Lemma \ref{lem:ab} (b), the preimage is $\mathcal{M}$, which is nonempty if $0\le v_2-v'_2\le w'_2+rv_1-v'_2$ and $0\le v_1-v'_1\le w'_1-v'_1$; this condition is equivalent to \eqref{nonempty F}. Therefore $\pi$ is surjective since all fibers are nonempty. 

\medskip

(a) The equality $\mathbf{E}_{v,w}=\mathbf{E}_{\bar{v},w}$ follows from the stratifications of both sides, and the fact that and $l$-dominant $v'$ satisfying $v'\le v$ must also satisfy $v'\le \bar{v}$, according to Lemma \ref{lemma:vbar}. 

The projection  $\tilde{\mathcal{F}}_{\bar{v},w}\to \mathbf{E}_{\bar{v},w}$ is surjective and the source variety $\mytildeF_{\bar{v},w}$ is irreducible by the first statement of (b). Therefore the target variety $\mathbf{E}_{\bar{v},w}$ is also irreducible.

Since $\mathbf{E}_{\bar{v},w}$ is irreducible, it has a unique largest stratum. Note $\mathbf{E}_{\bar{v},w}=\bigcup_{v'\le\bar{v},v'\in\D(w)} \mathbf{E}^\circ_{v',w}$. For each $v'< v$ with $v'\in\D(w)$, we have $\mathbf{E}_{v',w}=\bigcup_{v''\le v', v''\in\D(w)} \mathbf{E}^\circ_{v'',w}$  
which is a proper subset of $\mathbf{E}_{v,w}$ because $\mathbf{E}^\circ_{\bar{v},w}\neq\emptyset$,
therefore 
$$\overline{\mathbf{E}^\circ_{v',w}}\subseteq \mathbf{E}_{v',w}\subsetneq \mathbf{E}_{v,w}$$
so $\mathbf{E}^\circ_{v',w}$ is not the largest stratum. Then $\mathbf{E}^\circ_{\bar{v},w}$ is the largest stratum.

\medskip

(c) It suffices to show that $\pi^{-1}({\bf E}^\circ_{v,w})\stackrel{\cong}{\longrightarrow} {\bf E}^\circ_{v,w}$ is bijective, since a bijective morphism between nonsingular complex algebraic varieties is an isomorphism. By Lemma \ref{lem:ab} (b), each fiber is a $Gr(v_2-v_2,w'_2+rv_1-v_2)$-bundle over $Gr(v_1-v_1,w'_1-v_1)$, which is a point since both Grassmannians are points. This implies the bijectivity.

\medskip

Finally, we prove the second statement of (b). Since $\myF_{v,w}$ is isomorphic to the fiber over $0\in \myE_{v,w}$,  it is a $Gr(v_2,w_2'+rv_1)$-bundle over 
$Gr(v_1,w_1')$ by Lemma \ref{lem:ab} (b). Therefore $\dim \myF_{v,w}=v_2(w_2'+rv_1-v_2)+v_1(w_1'-v_1)=
-v_1^2+rv_1v_2-v_2^2+v_1w_1'+v_2w_2'$. 

Then we obtain the formula of $\dim\mytildeF_{v,w}$ because $\mytildeF_{v,w}\to \myF_{v,w}$ is a vector bundle of rank $w_1v_1+w_2v_2$ as observed in Lemma \ref{EF} (b).  
  
The equality $\dim \mathbf{E}_{v,w}=\dim \mathbf{E}_{\bar{v},w}$ follows from (a). The equality 
$\dim \mathbf{E}_{\bar{v},w}=\dim \tilde{\mathcal{F}}_{\bar{v},w}=\tilde{d}(\bar{v},w)$
follows from (c). 
\end{proof}

\section{Decomposition theorem}

The following fact is proved by Nakajima in \cite[Theorem 14.3.2]{Nakajima_JAMS} and generalized by Qin in \cite[Theorem 5.2.1]{Qin}).
Nakajima's proof uses the representation theory of quantum affine algebras in an essential way, and focuses on type $ADE$; Qin's proof uses a similar idea of Nakajima's proof, particularly the analytic transversal slice theorem, but does not provide detailed explanations of why Nakajima's representation-theoretic argument still works in the more general setting. Instead, we will give an elementary proof in our special case, without referring to representation theory. 

\begin{theorem}\label{thm:trivial local system}
Assume ${\mytildeF_{v,w}}\neq\emptyset$. The local system appeared in the BBDG decomposition for  $\pi_*IC_{\mytildeF_{v,w}}$ are all trivial. Thus,
\begin{equation}\label{piICtildeF}
\pi_*(IC_{\mytildeF_{v,w}})=\bigoplus_{v'}\bigoplus_{d}a^d_{v,v';w}IC_{\mathbf{E}_{v',w}}[d]
\end{equation}
where $v'\le v$ satisfies the condition that $(v',w)$ is $l$-dominant, and $a^d_{v,v';w}\in\mathbb{Z}_{\ge0}$. 
\end{theorem}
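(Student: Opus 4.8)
The plan is to establish triviality of the local systems by exhibiting, for each stratum $\myE^\circ_{v',w}$, enough of the local geometry of $\pi$ to force the monodromy to be trivial, and then to invoke the BBDG decomposition theorem in the form already stated. The key structural input is Lemma \ref{lem:ab}(b): over each stratum $\myE^\circ_{v',w}$ the restricted map $\pi^{-1}(\myE^\circ_{v',w}) \to \myE^\circ_{v',w}$ is a \emph{Zariski} locally trivial fiber bundle with iterated-Grassmannian fiber $\mathcal{M}$. Since the local system contributed by a stratum $\myE^\circ_{v',w}$ in the decomposition of $\pi_*(IC_{\mytildeF_{v,w}})$ is a subquotient of the (higher) direct image sheaves $R^k\pi_*\mathbb{Q}$ restricted to that stratum, and these are governed by the cohomology of the fibers together with the monodromy of the fibration, it suffices to show two things: (1) each fiber $\mathcal{M}$ has cohomology concentrated in even degrees and carries no nontrivial monodromy because the fibration is Zariski-locally trivial with structure group reducing to something connected, and (2) $\myE^\circ_{v',w}$ is itself simply connected, or at least that the monodromy representation $\pi_1(\myE^\circ_{v',w}) \to \mathrm{Aut}(H^*(\mathcal{M}))$ is trivial.

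First I would record that $\myE^\circ_{v',w}$ is irreducible and rational (stated in Lemma \ref{lem:ab}(a)); more usefully, I would argue it is actually a Zariski-open dense subset of an affine space or at worst a rational variety for which we can control $\pi_1$. The cleanest route is: the Grassmannian bundle structure from Lemma \ref{lem:ab}(b) is Zariski-locally trivial, so $\pi$ restricted to the stratum is a Zariski-locally trivial fibration; a Zariski-locally trivial fibration with fiber a smooth projective variety whose cohomology is generated by algebraic cycles of even degree (which holds for iterated Grassmannian bundles, by the Leray--Hirsch theorem applied inductively) has the property that the local systems $R^k\pi_*\mathbb{Q}$ over the stratum are \emph{constant}. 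Indeed, the structure group of such a bundle acts on $H^*(\mathcal{M})$ through automorphisms of a cohomology ring freely generated by Chern classes of the tautological bundles, and these Chern classes extend globally over the bundle (again by Leray--Hirsch, using that the total space of the restricted $\mytildeF$ over the stratum carries the relevant tautological subbundles); hence the monodromy action on $H^*(\mathcal{M})$ is trivial regardless of $\pi_1(\myE^\circ_{v',w})$. This is the standard mechanism by which Grassmannian (and more generally, ``cellular-fibration'') bundles produce only trivial local systems in the decomposition theorem.

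Next I would assemble the global statement. By the BBDG decomposition theorem applied to the proper map $\pi$ (proper because $\mytildeF_{v,w}$ is projective over $\mathbf{E}_{v,w}$, as it sits inside a product of a Grassmannian bundle and the affine variety $\mathbf{E}_{v,w}$), we get a (non-canonical) splitting $\pi_*(IC_{\mytildeF_{v,w}}) = \bigoplus_{v'} \bigoplus_{d} IC_{\overline{\myE^\circ_{v',w}}}(L_{v',d})[d]^{\oplus a^d_{v,v';w}}$, where the $v'$ range over strata actually meeting the support; by Proposition \ref{prop:fiber}(a) these closed strata are exactly the $\mathbf{E}_{v',w}$ with $v' \le v$ and $(v',w)$ $l$-dominant, and the $L_{v',d}$ are local systems on (open dense subsets of) these. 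To identify $L_{v',d}$ it is enough to restrict the decomposition to the open dense stratum $\myE^\circ_{v',w}$ of $\mathbf{E}_{v',w}$: there $IC_{\mathbf{E}_{v',w}}(L)[d]$ restricts to $L[\dim \mathbf{E}_{v',w} + d]$, while $\pi_*(IC_{\mytildeF_{v,w}})$ restricts to a shift of $\bigoplus_k R^k\pi_*\mathbb{Q}$ tensored with a shift, which by the previous paragraph is a direct sum of \emph{constant} sheaves. Comparing, each $L_{v',d}$ is forced to be a trivial local system, and nonnegativity $a^d_{v,v';w} \in \mathbb{Z}_{\ge0}$ is automatic since these are multiplicities of direct summands. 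Substituting $L_{v',d} = \mathbb{Q}$ gives precisely \eqref{piICtildeF}.

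The main obstacle, and where I would spend the most care, is step (2)/the monodromy argument: one must be sure that the Zariski-local triviality of the iterated Grassmannian bundle in Lemma \ref{lem:ab}(b) genuinely suffices to trivialize the local systems, i.e. that the tautological subbundles $X_1'$ and $X_2'$ entering the definition of $\mathcal{M}$ extend to honest vector bundles over the \emph{total space} $\pi^{-1}(\myE^\circ_{v',w})$ (not merely fiberwise), so that Leray--Hirsch applies and $H^*$ of the total space is a free module over $H^*(\myE^\circ_{v',w})$ on even-degree algebraic classes. This should follow because the subspaces $X_1 \supseteq X_1'$ and $X_2 \supseteq X_2'$ are cut out tautologically from the universal data over $\mytildeF_{v,w}$ and the rank-drop locus is exactly the stratum, but verifying that the quotients have locally constant rank over the stratum — which is precisely the content of the stratum being defined by fixed ranks of $A$ and $B$ — and that the resulting bundle is algebraic requires unwinding the construction in \S11. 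I would also need to confirm properness of $\pi$ once more and that $IC_{\mytildeF_{v,w}} = \mathbb{Q}[\dim \mytildeF_{v,w}]$ since $\mytildeF_{v,w}$ is nonsingular (Proposition \ref{prop:fiber}(b)), so that the left-hand side of \eqref{piICtildeF} is what we think it is; these are routine but worth stating explicitly.
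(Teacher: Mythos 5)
Your argument is essentially correct, but it proves the theorem by a route genuinely different from both proofs in the paper, and it is worth comparing them. The paper's first proof uses the algebraic transversal slice theorem (Lemma \ref{lem:transversal slice}) together with the behavior of the decomposition under pullback by a smooth factor (Lemma \ref{lem:VXX}): near a general point of any candidate support one factors $\pi$ locally as $\mathrm{id}\times\pi$ over $U^0\times U^\perp$, and uniqueness of the summands forces the support to be $\mathbf{E}_{v^0,w}$ and the local system to be pulled back from a point, hence trivial; the second proof uses only the analytic slice theorem plus explicit simple-connectedness of the strata $\myE^\circ_{v',w}$ (with a separate determinantal-variety argument for the exceptional strata that are not simply connected). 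You instead exploit that $\pi$ is stratified by $\{\myE^\circ_{v',w}\}$ with Zariski-locally trivial iterated Grassmannian fibers $\mathcal{M}$ (Lemma \ref{lem:ab}(b)): the bundles $X_1/\mathrm{im}\,A$ and $X_2/\mathrm{im}\,B$ are honest algebraic vector bundles on $\pi^{-1}(\myE^\circ_{v',w})$ (the ranks of $A,B$ are constant on the stratum), their restrictions to a fiber are the tautological subbundles of $\mathcal{M}$, and Leray--Hirsch/monodromy-invariance of restrictions of global classes makes the sheaves $R^k\pi_*\mathbb{Q}$ constant on each stratum, so that every $L_{v',d}$, being a direct summand of a constant local system, is trivial. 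This handles all strata uniformly, including the non-simply-connected ones that the paper's second proof must treat separately, and it avoids the slice theorem altogether; on the other hand the slice theorem is not dispensable in the paper, since it also yields the factorization $a_{v,v';w}=a_{v^\perp,v'^\perp;w^\perp}$ of \eqref{eq:a=a} used later, so your proof shortens only this theorem, not the overall development. Two small repairs to your write-up: (i) you should not claim that $i_S^*\pi_*(IC_{\mytildeF_{v,w}})$ \emph{is} a direct sum of shifted constant sheaves --- proper base change identifies it with $R\pi_{S,*}\mathbb{Q}$ up to shift, which need not split; the correct comparison is at the level of cohomology sheaves, where ${\mathcal H}^k$ applied to the global decomposition exhibits each $L_{v',d}$ as a direct summand of the constant sheaf $R^{\bullet}\pi_{S,*}\mathbb{Q}$, and a summand of a trivial $\pi_1$-representation is trivial; (ii) the assertion that the supports occurring are exactly the $\mathbf{E}_{v',w}$ needs the (standard) remark that $\pi_*(IC_{\mytildeF_{v,w}})$ is constructible with respect to the stratification of Lemma \ref{lem:ab}, which follows from the same local triviality over the strata, so each support is the closure of a nonempty stratum, i.e.\ some $\mathbf{E}_{v',w}$ with $v'\le v$ and $(v',w)$ $l$-dominant.
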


First, recall the Beilinson--Bernstein--Deligne--Gabber decomposition theorem (also referred as BBDG or BBD decomposition Theorem). 
\begin{theorem}[BBDG Decomposition Theorem]\cite{BBD}\label{BBDthm}
Let $f:X\to Y$ be a proper algebraic morphism between complex algebraic varieties. 
Then there is a finite list of triples $(Y_a,L_a,n_a)$, where for each $a$, $Y_a$ is locally closed smooth irreducible algebraic subvariety of $Y$, $L_a$ is a semisimple local system on $Y_a$, $n_a$ is an integer,  such that:
\begin{equation}\label{eq:decomp thm}
f_* IC_X \cong \bigoplus_{a} IC_{\overline{Y_a}}(L_a)[n_a]
\end{equation}
Moreover, even though the isomorphism is not necessarily canonical, the direct summands appeared on the right hand side are canonical. 
\end{theorem}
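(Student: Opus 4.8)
The statement is the Decomposition Theorem of Beilinson--Bernstein--Deligne--Gabber, so the plan is to follow the arithmetic proof of \cite{BBD}, via reduction to positive characteristic and the yoga of weights; below I sketch the steps and flag where the genuine difficulty lies. (Parallel routes exist --- M.\ Saito's theory of mixed Hodge modules, or the Hodge-theoretic argument of de Cataldo--Migliorini --- in which polarization estimates play the role of Deligne's purity theorem, but the overall shape of the argument is the same.)

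First I would spread out. Choose a finitely generated subring $R\subseteq\mathbb{C}$ over which $f\colon X\to Y$ and the relevant sheaves are defined, so that for all closed points $\mathfrak p$ outside a proper closed subset of $\operatorname{Spec}R$ one obtains a proper morphism $f_{\mathfrak p}\colon X_{\mathfrak p}\to Y_{\mathfrak p}$ over a finite field $\mathbb F_q$, with $IC_{X_{\mathfrak p}}$ matching $IC_X$ under the comparison between the \'etale and classical topologies. It then suffices to prove the decomposition for $\ell$-adic sheaves over $\overline{\mathbb F}_q$ and transport it back; the transport is by a standard specialization argument together with the comparison between $\mathbb Q_\ell$- and $\mathbb Q$-coefficients carried out at the end of \cite{BBD}.

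Second, over the finite field I would use the formalism of mixed $\ell$-adic perverse sheaves and their weight filtration: $Rf_!$ preserves ``weights $\le w$'', dually $Rf_*$ preserves ``weights $\ge w$'', and Verdier duality exchanges the two halves. The essential analytic input --- and the main obstacle --- is Deligne's purity theorem (Weil~II), that $Rf_!$ sends a complex of weights $\le w$ to one of weights $\le w$. Since $f$ is proper, $Rf_!=Rf_*$, so $Rf_*$ of a complex pure of weight $w$ is again pure of weight $w$; and a suitably normalized $IC_{X_{\mathfrak p}}$ is pure, being the intermediate extension of the pure constant sheaf from the smooth locus. Hence $Rf_*(IC_{X_{\mathfrak p}})$ is a pure complex on $Y_{\mathfrak p}$.

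Finally I would extract the two structural consequences of purity. (i) A pure complex $K$ splits as $\bigoplus_i{}^{p}\!H^i(K)[-i]$: the obstruction lies in $\operatorname{Ext}^1$-groups between the (pure) perverse cohomology objects, and these vanish by weight considerations combined with the relative Hard Lefschetz theorem, which fixes the shifts symmetrically. (ii) A pure perverse sheaf on $Y_{\mathfrak p}$ is semisimple, with simple constituents exactly the $IC_{\overline{Y_a}}(L_a)$ for irreducible --- hence, being pure, semisimple --- local systems $L_a$ on smooth locally closed strata $Y_a$. Applying (i) and (ii) to $K=Rf_*(IC_{X_{\mathfrak p}})$ yields \eqref{eq:decomp thm} over $\overline{\mathbb F}_q$, hence over $\mathbb C$, and the canonicity of the summands (as opposed to of the splitting) is the general fact that the isotypic components of a semisimple object are intrinsic. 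I expect essentially all the difficulty to be concentrated in Weil~II --- and, on the geometric side, in the positivity inputs behind relative Hard Lefschetz --- the remaining steps being formal manipulations with the weight and perverse filtrations.
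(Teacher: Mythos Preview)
The paper does not prove this theorem; it is quoted from \cite{BBD} as a black box and then applied. Your sketch is a fair outline of the original arithmetic proof in \cite{BBD}, with the key inputs --- spreading out, Deligne's purity theorem (Weil~II), the weight-based splitting of a pure complex into its perverse cohomology, and Gabber's semisimplicity of geometrically pure perverse sheaves --- correctly identified. One small organizational point: the splitting in your step~(i) comes from weight reasons alone (the relevant $\operatorname{Ext}^1$ groups vanish because they would have to be simultaneously of weight $\le -1$ and $\ge 1$); relative Hard Lefschetz is a further, separate consequence of purity rather than an ingredient in the splitting.
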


In the next two subsections, we shall provide two proofs of Theorem \ref{thm:trivial local system}. The first proof is based on the algebraic version of the Transversal Slice Theorem (Lemma \ref{lem:transversal slice}). The second proof is longer, but only based on the (weaker) analytic version of the Transversal Slice Theorem proved by Nakajima \cite{Nakajima_JAMS}.

\subsection{Algebraic Transversal Slice Theorem}
Nakajima proves an analytic transversal slice theorem \cite[\S3]{Nakajima_JAMS},  and comments that the technique used there ``is based on a work of Sjamaar-Lerman in the symplectic geometry'', so the transversal slice is analytic in nature and is not algebraic; he further comments that ``It is desirable to have a purely algebraic construction of a transversal slice''. We shall prove an Algebraic Transversal Slice Theorem for the varieties studied in this paper. In contrast, by ``Analytic Transversal Slice Theorem'' we mean a weaker statement than Lemma \ref{lem:transversal slice} where we replace ``Zariski open'' by ``open in analytic topology'', and replace the algebraic morphisms/isomorphisms by the analytic ones.

\begin{lemma}[Algebraic Transversal Slice Theorem]\label{lem:transversal slice}
Let $p\in \myE_{v,w}$ be a point in the stratum $\myE^\circ_{v^0,w}$. So $v^0\le v$. Define 
$$w^\perp=w-C_qv^0=(w_1-v_1^0+rv_2^0,w_1'-v_1^0,w_2-v_2^0,w_2'-v_2^0+rv_1^0),\quad
v^\perp=v-v^0=(v_1-v_1^0,v_2-v_2^0).$$ 
Then there exist Zariski open neighborhoods $U\subseteq\myE_{v,w}$ of $p$, $U^0\subseteq \myE^\circ_{v^0,w}$ of $p$, $U^\perp\subseteq\myE^\circ(v^\perp,w^\perp)$ of $0$, and isomorphisms $\varphi,\psi$ making the following diagram commute: 
$$\xymatrix{\mytildeF_{v,w} \ar@{}[r]|{\supseteq} &\pi^{-1}U\ar[d]^\pi\ar[r]^-\varphi_-\cong&U^0\times\pi^{-1}(U^\perp)\ar[d]^{1\times\pi}\ar@{}[r]|{\subseteq}&\myE^\circ_{v^0,w}\times \mytildeF_{v^\perp,w^\perp}\\
\mathbf{E}_{v,w}\ar@{}[r]|{\supseteq}&U\ar[r]^-\psi_-\cong & U^0\times U^\perp\ar@{}[r]|{\subseteq} & \myE^\circ_{v^0,w}\times \mathbf{E}_{v^\perp,w^\perp}}$$
Moreover, $\psi(p)=(p,0)$, the diagram is compatible with the stratifications, in the sense that 
$\psi({\bf E}^\circ_{u,w}\cap U)=U_0\times ({\bf E}^\circ_{u^\perp,w^\perp}\cap U^\perp)$ 
for each $u$ satisfying $v^0\le  u\le v$, where we denote $u^\perp=u-v^0=(u_1-v_1^0,u_2-v_2^0)$. 
\end{lemma}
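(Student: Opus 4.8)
\textbf{Proof plan for the Algebraic Transversal Slice Theorem (Lemma \ref{lem:transversal slice}).}

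The plan is to produce the slice explicitly using linear algebra, exploiting that a point $p=({\bf x}^0,{\bf y}^0)\in{\bf E}^\circ_{v^0,w}$ is precisely a representation whose structure maps $A_p,B_p$ have ranks exactly $v_1^0,v_2^0$. First I would fix, once and for all, direct sum decompositions adapted to $p$: write $W_1'=\im A_p\oplus C_1$ and $W_2'\oplus(W_1')^{\oplus r}=\im B_p\oplus C_2$ (the latter chosen compatibly with the first, using $\im B_p$'s projection to the $(W_1')^{\oplus r}$ factors landing in $(\im A_p)^{\oplus r}$), and correspondingly split the source spaces by choosing complements to $\ker A_p$ and $\ker B_p$. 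With respect to such bases the matrices $A_q,B_q$ for $q$ near $p$ have a distinguished invertible top-left block; this is the standard ``rank is semicontinuous, so nearby maps factor through a pivot'' trick. The complement directions $C_1\cong \mathbb{C}^{w_1'-v_1^0}$, $\mathbb{C}^{w_2'-v_2^0+rv_1^0}$ are exactly the $W_1^\perp{}',W_2^\perp{}'$ of the perp datum $w^\perp=w-C_qv^0$, and the residual maps into them define a morphism $q\mapsto q^\perp\in{\bf E}(v^\perp,w^\perp)$ with $p\mapsto 0$.

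Next I would carry out the bundle version on the covering space $\tilde{\mathcal F}_{v,w}$. Given $(q,X_1,X_2)\in\pi^{-1}(q)$ with $q$ near $p$, the subspace $X_1\subseteq W_1'$ necessarily contains $\im A_q\supseteq \im A_p$ (by semicontinuity, for $q$ in a small enough Zariski neighborhood), so $X_1$ is determined by $\im A_q$ together with a subspace $X_1^\perp\subseteq C_1$ of the appropriate dimension $v_1-v_1^0$; similarly $X_2$ splits off an $X_2^\perp$ inside $C_2$, and one checks the $X_2\subseteq W_2'\oplus X_1^{\oplus r}$ condition translates into the corresponding condition for the perp datum. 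This gives the isomorphism $\varphi$, and commutativity with $\pi$ is built in by construction. The factor ${\bf E}^\circ_{v^0,w}$ (Zariski-locally, near $p$) appears because the choice of the invertible pivot blocks of $A_q,B_q$ — equivalently the ``position'' of $\im A_q,\im B_q$ relative to the fixed complements — varies in a family isomorphic to an open neighborhood of $p$ in its own stratum; here I would invoke Lemma \ref{lem:ab}(b), which already identifies $\pi^{-1}({\bf E}^\circ_{v',w})\to{\bf E}^\circ_{v',w}$ as a Zariski-locally-trivial $\mathcal M$-bundle, to get local triviality of the relevant projection and hence the genuine product structure. Finally, compatibility with stratifications is a rank bookkeeping check: $q\in{\bf E}^\circ_{u,w}$ iff $\rank A_q=u_1,\rank B_q=u_2$, and since $\rank A_q=v_1^0+\rank(q^\perp\text{'s }A)$ and likewise for $B$, this is equivalent to $q^\perp\in{\bf E}^\circ_{u^\perp,w^\perp}$ with $u^\perp=u-v^0$; the factor $U^0$ is unconstrained, giving $\psi({\bf E}^\circ_{u,w}\cap U)=U^0\times({\bf E}^\circ_{u^\perp,w^\perp}\cap U^\perp)$.

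I expect the main obstacle to be algebraicity and the \emph{simultaneous} choice of complements: Nakajima's analytic slice uses a Kähler/moment-map normal form, and the delicate point is to make the splitting $W_1'=\im A_q\oplus C_1$, $W_2'\oplus(W_1')^{\oplus r}=\im B_q\oplus C_2$ depend \emph{regularly and algebraically} on $q$ over a Zariski neighborhood, compatibly with each other (so that the $y_h$-block appearing in both $A$ and $B$ is handled consistently) and compatibly with the subspaces $X_1,X_2$ on the covering variety. The resolution is that on the locus where the fixed pivot minors of $A_q,B_q$ are nonzero (a Zariski-open set), the complements can be taken \emph{constant} (equal to the spans of the non-pivot coordinate directions), and the projection $q\mapsto q^\perp$ becomes a polynomial map given by Schur-complement-type formulas; one then only has to check this open set contains $p$ and that the formulas assemble into the claimed isomorphisms. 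A secondary technical point is verifying that $\varphi$ is an isomorphism of varieties (not just a bijective morphism) — but both sides are smooth by Proposition \ref{prop:fiber}(b), so constructing an explicit algebraic inverse, or invoking Zariski's main theorem, suffices.
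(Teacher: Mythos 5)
Your concluding ``resolution'' paragraph is, in substance, the paper's actual proof: fix pivot minors of $A_p$ and $B_p$, let $U$ be the Zariski-open locus of $\myE_{v,w}$ where these minors stay invertible, keep the coordinate complements constant, and define $q\mapsto(q^0,q^\perp)$ by Schur-complement-type formulas with explicit rational inverses (no appeal to smoothness or Zariski's main theorem is needed). However, two intermediate steps of your sketch, as written, would fail. First, the claim that $X_1\supseteq\im A_q\supseteq\im A_p$ ``by semicontinuity'' is false: rank semicontinuity does not make $\im A_q$ contain $\im A_p$ for $q$ near $p$, and for $q$ in a deeper stratum one has $\dim\im A_q=u_1>v_1^0$, so ``$X_1$ equals $\im A_q$ plus a $(v_1-v_1^0)$-dimensional subspace of the complement'' does not even have consistent dimensions. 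The correct splitting is relative to the $q$-dependent pivot data: the span of the $J$-indexed columns of $A_q$ (equivalently, the graph of the matrix $P$ read off from $(A_q)_{[-;J]}$) lies in $X_1$ and is transverse to the constant coordinate complement, so $X_1^\perp:=X_1\cap\bigl(0\oplus\mathbb{C}^{w_1'-v_1^0}\bigr)$ has dimension exactly $v_1-v_1^0$ on every stratum; for $X_2$ one must first apply the $q$-dependent change of basis $P^*$ built from $\widetilde{P}$ before intersecting with a coordinate subspace, and then verify $X_2^\perp\subseteq W_2'^\perp\oplus(X_1^\perp)^{\oplus r}$.

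Second, invoking Lemma \ref{lem:ab}(b) cannot produce the $U^0$-factor: that lemma only gives local triviality of $\pi$ over the single stratum $\myE^\circ_{v^0,w}$, whereas the slice requires a retraction $U\to U^0$ defined on a neighborhood meeting all strata $u\ge v^0$, together with an identification of its fibers with a neighborhood of $0$ in $\myE_{v^\perp,w^\perp}$. This retraction must be built by hand, as the paper does, by reconstituting $q^0$ from the blocks $(Z_1',Z_2',C_i)$ and the matrices $P,Q$ determined by the pivot columns and rows of $A_q,B_q$. Relatedly, the $A$/$B$ compatibility you flag is not automatic from choosing constant complements: the pivot column set $J$ must be chosen inside the set $M$ of columns coming from $x_1$ and the \emph{first $v_2^0$ columns} of each $y_h$, and the pivot row set $I$ inside the analogous set $N$; this is possible precisely because $p$ lies in $\myE^\circ_{v^0,w}$ (so $\rank B_p=v_2^0$ with its first $v_2^0$ columns independent), and it is exactly what makes the two Schur-complement constructions consistent on the shared $y_h$-blocks and what yields the rank identities $\rank A_q=v_1^0+\rank A_q^\perp$, $\rank B_q=v_2^0+\rank B_q^\perp$ needed for the stratification compatibility. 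With these repairs your plan coincides with the paper's argument.
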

Its proof is delayed to \S11.

\subsection{The first proof of trivial local systems}
In this subsection we give the first proof of Theorem \ref{thm:trivial local system}. Another proof is given in \S\ref{subsection:2nd proof}.

We need the following lemma describing the BBDG decomposition under pullpack. It will be needed in the proof of Theorem \ref{thm:trivial local system}.

\begin{lemma}\label{lem:VXX}
Given $f:X\to Y$, $Y_a,L_a, n_a$ and the decomposition in Theorem \ref{BBDthm}:
$$f_* IC_X \cong \bigoplus_{a} IC_{\overline{Y_a}}(L_a)[n_a]$$
 Let $V$ be a nonsingular variety. Consider the following Cartesian diagram
$$\xymatrix{ V\times X\ar[r]^-{p'}\ar[d]_{1\times f}& X\ar[d]^f\\ V\times Y\ar[r]^-p&Y }$$
where $p$ and $p'$ are the natural projections. Define the pullback $\tilde{L}_a=p^*L_a$, which is a semisimple local system on $V\times Y_a$. Then
$$(1\times f)_* IC_{V\times X} \cong \bigoplus_{a} IC_{V\times\overline{Y_a}}(\tilde{L}_a)[n_a]$$

\end{lemma}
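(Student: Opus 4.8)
\textbf{Proof proposal for Lemma \ref{lem:VXX}.}

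The plan is to reduce the statement to a combination of two standard facts: (1) the decomposition theorem's summands are determined by the perverse cohomology sheaves of $f_* IC_X$, and (2) pullback along a smooth morphism commutes with everything in sight, up to a shift by the relative dimension. First I would recall that for the smooth projection $p: V\times Y\to Y$ (with $V$ nonsingular of pure dimension $\dim V$), the functor $p^*[\dim V]$ is exact for the perverse $t$-structure and sends $IC_Z(L)$ on a locally closed $Z\subseteq Y$ to $IC_{V\times Z}(p^*L)[\dim V]$; this is because $p$ is smooth with connected fibers, so $p^*$ preserves the property of being an intersection complex, and $p^*L = \tilde{L}$ is again semisimple since the fibers of $p$ are connected. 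Likewise $p'^*[\dim V]$ is exact and $IC$-preserving on $D^b_c(X)$.

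Next I would invoke proper base change for the given Cartesian square: since $f$ is proper, $(1\times f)_* p'^* \cong p^* f_*$ as functors $D^b_c(X)\to D^b_c(V\times Y)$. Applying this to $IC_X$ and using $p'^* IC_X \cong IC_{V\times X}[-\dim V]$ (because $V\times X\to X$ is smooth with connected fibers of dimension $\dim V$), we get
$$(1\times f)_* IC_{V\times X} \cong (1\times f)_* p'^*(IC_X[\dim V]) \cong p^*(f_* IC_X)[\dim V].$$
Now plug in the decomposition $f_* IC_X \cong \bigoplus_a IC_{\overline{Y_a}}(L_a)[n_a]$, pull out the shifts $[n_a]$ (which commute with $p^*$ and with direct sums), and use the computation of $p^*[\dim V]$ on each summand:
$$p^*(IC_{\overline{Y_a}}(L_a))[\dim V] \cong IC_{V\times\overline{Y_a}}(p^*L_a) = IC_{V\times\overline{Y_a}}(\tilde{L}_a).$$
Here one should note $\overline{V\times Y_a} = V\times\overline{Y_a}$ since $V$ is a variety (irreducible or not, the closure of a product is the product of closures when one factor is the whole space), and $V\times Y_a$ is again smooth, locally closed and, on each connected component of $V$, irreducible — enough to make $IC_{V\times\overline{Y_a}}(\tilde L_a)$ well-defined as in the paper's conventions. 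Assembling, $(1\times f)_* IC_{V\times X} \cong \bigoplus_a IC_{V\times\overline{Y_a}}(\tilde L_a)[n_a]$, as claimed.

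I expect the only genuine subtlety to be bookkeeping of shifts — whether one uses the normalized $IC$ (shifted to be perverse) or the un-normalized $\mathcal{IC}$, and correspondingly whether $p^*$ or $p^*[\dim V]$ is the $t$-exact functor — together with checking that the paper's running hypotheses on $V$ (``nonsingular'') suffice for $p^*L_a$ to be semisimple and for $IC$ of the product to make sense. If $V$ is not assumed irreducible one simply argues component by component. None of this requires representation theory or the specific quiver-variety geometry; it is a formal consequence of proper base change and the smooth-pullback compatibility of $IC$ sheaves, so I would keep the write-up short and cite \cite{BBD} (or \cite{deCataldo2015}) for the two inputs.
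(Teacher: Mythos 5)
Your proposal is correct and takes essentially the same route as the paper's proof: proper base change applied to the Cartesian square, combined with the smooth-pullback identities $(p')^*[\dim V]\,IC_X \cong IC_{V\times X}$ and $p^*[\dim V]\,IC_{\overline{Y_a}}(L_a)\cong IC_{V\times\overline{Y_a}}(\tilde{L}_a)$ (the paper cites \cite[Lemma 3.6.3 and Corollary 3.6.9]{Achar} for exactly these). The only blemish is the sentence claiming $p^*[\dim V]$ sends $IC_Z(L)$ to $IC_{V\times Z}(p^*L)[\dim V]$ — that extra shift should not be there, as your own displayed formula $p^*(IC_{\overline{Y_a}}(L_a))[\dim V]\cong IC_{V\times\overline{Y_a}}(\tilde{L}_a)$ correctly shows.
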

\begin{proof}
Denote by $d$ the dimension of the fiber of $f$. Then $$(p')^*[d]IC_X=IC_{V\times X},\quad
p^*[d]IC_{\overline{Y_a}}(L_a)=IC_{V\times\overline{Y_a}}(\tilde{L}_a)$$ (for example, see \cite[Lemma 3.6.3 and Corollary 3.6.9]{Achar}).
By proper base change \cite[\S5.8]{deCM}, we have the isomorphism $p^*f_*IC_X\stackrel{\sim}{\to} (1\times f)_*(p')^*IC_X$. Thus,
$$\aligned
(1\times f)_* IC_{V\times X} 
&\cong (1\times f)_* (p')^*[d]IC_{X} 
\cong p^*[d](f_*IC_X)
\cong p^*[d](\bigoplus_{a} IC_{\overline{Y_a}}(L_a)[n_a])\\
&\cong \bigoplus_{a} p^*[d] IC_{\overline{Y_a}}(L_a)[n_a]
\cong \bigoplus_{a} IC_{V\times\overline{Y_a}}(\tilde{L}_a)[n_a]. 
\endaligned
$$
\end{proof}

\begin{proof}[The First Proof of Theorem \ref{thm:trivial local system}]
Let $IC_Z(L)[n]$ be a direct summand that appears in the decomposition of 
$\pi_*(IC_{\tilde{\mathcal{F}}_{v,w}})$. 
Assume a general point $p$ of $Z$ is in ${\bf E}^\circ_{v^0,w}$ (thus $Z\subseteq {\bf E}_{v^0,w}$), and we
adopt the notation in Lemma \ref{lem:transversal slice}. In particular, $U^0$ is a Zariski open subset of ${\bf E}^\circ_{v^0,w}$. 

By uniqueness of the BBDG Decomposition, the restriction $IC_Z(L)[n]|_U$ must coincide with a direct summand of the decomposition of $\pi_*(IC_{\pi^{-1}U})$. Compare with Lemma \ref{lem:VXX} by setting $V=U^0$, $X=\pi^{-1}(U^\perp)$, $Y=U^\perp$, we conclude that $IC_Z(L)[n]|_U\cong IC_{U^0\times\overline{Y_a}}(\tilde{L}_a)[n_a]$ for some $a$.   Thus $Z\cap U=U^0\times \overline{Y_a}$, $L\cong \tilde{L}_a$ on $Z\cap U$, and $n=n_a$. But  $Z\subseteq {\bf E}_{v^0,w}$ implies $Z\cap U\subseteq U^0\times\{0\}$. So we must have $Y_a=\{0\}$ and $Z\cap U=U^0\times\{0\}$, thus $Z={\bf E}_{v^0,w}$.  

Moreover, $L_a$ is the trivial local system $\mathbb{Q}_0$. So the local system $\tilde{L}_a$ on $U^0$ is also trivial since it is the pullback of $L_a$ under the map $U^0\times \{0\}\to \{0\}$. Then $L$ is trivial on $Z\cap U$, and we see that
$$IC_Z(L)[n]\cong IC_{{\bf E}_{v^0,w}}[n]$$
Let $v'=v^0$. Then $v' \le v$ as seen in Lemma \ref{lem:transversal slice}, and $(v',w)$ is $l$-dominant because 
${\bf E}^\circ_{v^0,w}$ is nonempty. 
\end{proof}

\begin{remark} A key fact used in the above proof is that a local system on an irreducible nonsingular variety $X$ is uniquely determined by its restriction on a Zariski open dense subset $U$. This is true because a local system is determined by the action of fundamental group on a stalk   \cite[Lemma 1.7.9]{Achar}, and the following lemma modified from \cite[Lemma 2.1.22]{Achar}. \end{remark}
\begin{lemma}\label{lem:surjective pi1}
Let $X$ be a smooth connected complex variety. For any Zariski open dense subset $U\subseteq X$ and any point $x_0\in U$, the natural map $\pi_1(U,x_0)\to \pi_1(X,x_0)$ is surjective. Moreover, the map is an isomorphism if $X\setminus U$ has complex codimension at least 2. 
\end{lemma}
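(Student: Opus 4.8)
\textbf{Proof proposal for Lemma \ref{lem:surjective pi1}.}

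The plan is to reduce the statement to the classical fact that the fundamental group of a connected complex manifold is unchanged (resp. surjects) after removing a closed analytic subset of real codimension at least $2$ (resp. at least $1$), and then to combine this with resolution of singularities to handle the case where $X$ is only a smooth \emph{variety} rather than compact. First I would set $Z = X\setminus U$, a closed subvariety of $X$. Since $X$ is smooth and irreducible of some dimension $n$ over $\mathbb{C}$, it is a connected complex manifold of real dimension $2n$, and $Z$, being a proper closed subvariety, has complex codimension at least $1$, hence real codimension at least $2$ in $X$. The key topological input is then: if $M$ is a connected smooth manifold and $N\subseteq M$ is a closed subset which is locally a finite union of submanifolds each of real codimension $\ge 2$, then $\pi_1(M\setminus N, x_0)\to\pi_1(M,x_0)$ is surjective; and it is an isomorphism if moreover each piece has real codimension $\ge 3$. (One proves surjectivity by general position: any loop in $M$ based at $x_0\in M\setminus N$ can be homotoped off $N$ since $N$ has codimension $\ge 2$; and injectivity of $\pi_1(M)\to$ (after removing $N$) requires pushing a $2$-disc off $N$, which needs codimension $\ge 3$, i.e.\ complex codimension $\ge 2$.)

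The only subtlety is that $Z$ is a subvariety, so it may itself be singular; but a subvariety of $\mathbb{C}^n$ (locally) is a finite union, via a stratification into smooth locally closed pieces, of submanifolds each of real codimension $\ge 2$ when $Z$ has complex codimension $\ge 1$ (the strata of a subvariety of complex codimension $c$ all have complex codimension $\ge c$, hence real codimension $\ge 2c$). So one stratifies $Z=\bigsqcup_i Z_i$ by smooth locally closed pieces, orders them by dimension, and removes them one at a time, applying the manifold statement at each stage to the open manifold obtained so far; since removing lower-dimensional strata only enlarges the complement and does not decrease connectivity, this telescopes to give surjectivity of $\pi_1(X\setminus Z,x_0)=\pi_1(U,x_0)\to\pi_1(X,x_0)$. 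When every $Z_i$ has complex codimension $\ge 2$, i.e.\ real codimension $\ge 4 > 3$, the same telescoping with the isomorphism statement gives that $\pi_1(U,x_0)\to\pi_1(X,x_0)$ is an isomorphism. This is exactly the modification of \cite[Lemma 2.1.22]{Achar} claimed in the statement, so I would simply cite that reference for the underlying general-position arguments and assemble the stratified version here.

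An alternative, perhaps cleaner, route avoids hands-on general position: embed the problem into the algebraic/analytic setting where one may appeal to a Lefschetz-type or Zariski--van Kampen theorem, or use that $U\hookrightarrow X$ is a homotopy equivalence in low degrees when the complement has high codimension. Concretely, one can use that for a smooth variety $X$ and a closed subvariety $Z$, the restriction map on $\pi_1$ is surjective because $X$ can be covered by affine opens on each of which the statement is the classical one for $\mathbb{C}^n$ minus a hypersurface-or-worse, and van Kampen glues these; the codimension-$\ge 2$ refinement then follows because a codimension-$\ge 2$ subvariety meets a generic affine line in nothing and a generic affine plane in finitely many points, so loops and null-homotopies can both be pushed off. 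I would present whichever of these is shorter given the conventions already fixed in the paper.

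The main obstacle I anticipate is purely bookkeeping rather than conceptual: one must be careful that the intermediate spaces $X\setminus(Z_0\cup\cdots\cup Z_{j})$ remain connected and smooth (they do — they are open in $X$) and that the codimension bounds are preserved stratum by stratum (a stratum of a codimension-$c$ variety has codimension $\ge c$, but one should double-check the edge case where $Z$ has several irreducible components of different dimensions). There is no genuinely hard step here; the result is standard, and the content of the lemma is just recording the precise codimension threshold ($\ge 2$ complex, equivalently $\ge 3$ real after accounting for the $2$-disc argument, but stated as $\ge 2$ complex which is $\ge 4$ real and hence comfortably sufficient) needed for the isomorphism statement used in the preceding remark.
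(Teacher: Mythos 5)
Your proposal is correct and follows essentially the same route as the paper: cite the classical manifold fact (the paper uses Godbillon's theorem that removing a closed submanifold of real codimension $\ge 2$ gives surjectivity and $\ge 3$ gives an isomorphism, plus \cite[Lemma 1.7.9]{Achar} for the surjectivity statement), stratify $Z=X\setminus U$ into smooth locally closed strata, and remove/add the strata one at a time so that each step is an isomorphism when the complex codimension is at least $2$. The mention of resolution of singularities is unnecessary, but the stratified telescoping argument you actually give is exactly the paper's proof.
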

\begin{proof}
The first statement is proved in \cite[Lemma 1.7.9]{Achar}. The second statement can be proved similarly, with the details given below. Recall the following fact.
\smallskip

\noindent {\bf Fact}(\cite[p146, Theorem 2.3]{Godbillon}): \emph{if $M$ is a connected real smooth manifold without boundary, and $N$ is a closed submanifold, $x$ be a point in $M\setminus N$, then $\pi_1(M\setminus N,x)\to \pi_1(M,x)$ is surjective if the real codimension of $N$ is at least 2, and the map is an isomorphism if the real codimension of $N$ is at least 3.}
\smallskip

\noindent Now assume $Z=X\setminus U$ has complex codimension at least 2. So its real codimension is at least 4.  Stratify $Z$ into locally closed smooth subvarieties $Z=\cup_{i=1}^k Z_i$, indexed in decreasing dimensions. Then $\cup_{i=1}^{j} Z_i$ is open in $Z$ for every $1\le j\le k$. Apply the Fact to $(M,N)=(U\cup(\cup_{i=1}^j Z_i),Z_j)$, we conclude that $\pi_1(U\cup(\cup_{i=1}^{j-1} Z_i),x_0)\to \pi_1(U\cup(\cup_{i=1}^j Z_i),x_0)$ is an isomorphism. 
Thus the composition
$$\pi_1(U,x_0)\to \pi_1(U\cup Z_1,x_0)\to \pi_1(U\cup(\cup_{i=1}^2 Z_i),x_0)\to\cdots\to \pi_1(U\cup(\cup_{i=1}^{k} Z_i),x_0)=\pi_1(X,x_0)$$
is an isomorphism.
\end{proof}

\section{Other desingularizations of $\mathbf{E}_{v,w}$}

In this section, we assume $(v,w)$ is $\ell$-dominant. 

\subsection{Variety $\tilde{\mathcal{F}}^{\rm swap}_{v,w}$}\label{subsection:Fswap}
Given $(v,w)$, define 
\begin{equation}\label{eq:swap}
v^{\rm swap}=(v_2,v_1),\ w^{\rm swap}=(w'_2,w_2,w_1',w_1), \   \tilde{\mathcal{F}}^{\rm swap}_{v,w}=\tilde{\mathcal{F}}_{v^{\rm swap},w^{\rm swap}}
\end{equation}
Note that $\iota:\mathbf{E}_{v,w}\to \mathbf{E}_{v^{\rm swap},w^{\rm swap}}$, induced by $(A,B)\mapsto (B^T,A^T)$ is an isomorphism. The composition
$$\pi^{\rm swap}:\ \tilde{\mathcal{F}}^{\rm swap}_{v,w}\to \mathbf{E}_{v^{\rm swap},w^{\rm swap}}\stackrel{\iota^{-1}}{\longrightarrow} \mathbf{E}_{v,w}$$
gives a desingularization of $\mathbf{E}_{v,w}$. Note that $\tilde{\mathcal{F}}^{\rm swap}_{v,w}$ is not isomorphic to  $\tilde{\mathcal{F}}_{v,w}$ in general.

\subsection{Varieties $\tilde{\mathcal{G}}_{v,w}$ and $\tilde{\mathcal{H}}_{v,w}$}\label{subsection:Gvw}

In this section we factor $\pi: \tilde{\mathcal{F}}_{v,w} \to \mathbf{E}_{v,w}$ through a possibly singular variety $\tilde{\mathcal{H}}_{v,w}$, and introduce a nonsingular variety $\tilde{\mathcal{G}}_{v,w}$ such that there is a map  $\tilde{\mathcal{G}}_{v,w}\to \tilde{\mathcal{H}}_{v,w}$. See the following diagram:
\begin{equation}\label{diagram:FGtoE}
\xymatrix{ 
\tilde{\mathcal{F}}_{v,w}\ar[rd]^{p_2}\ar[rdd]_{\pi} && \tilde{\mathcal{G}}_{v,w}\ar[ld]_{p_2'} \ar[ldd]^{\pi'}\\
& \tilde{\mathcal{H}}_{v,w}\ar[d]^{p_1}\\
& \myE_{v,w}\\
}
\end{equation}

Recall that a linear map $f: V\to W$ has a dual (or transpose, or adjoint) $f^T: W^*\to V^*$, $\phi\mapsto \phi\circ f$. If $f$ is represented by the matrix $A$ with respect to fixed bases of $V$ and $W$, then $f^T$ is represented by the transpose matrix $A^T$ with respect to the dual bases.

\begin{definition}\label{myH,myG}
Define the quasi-projective variety 
$$
\tilde{\mathcal{H}}_{v,w}
=\{({\bf x},{\bf y}, X_1)\;|\;  ({\bf x},{\bf y})\in \mathbf{E}_w,\ \dim X_1=v_1,\ X_1\subseteq W'_1,\  {\im} A({\bf x},{\bf y})\subseteq X_1,\  {\rm rank} B({\bf x},{\bf y})\le v_2\}
$$
and define a stratification $\{U'_{t}\subseteq \mytildeH_{v,w}\}_{0\le t\le \min(v_2,w_2)}$ as
 \begin{equation}\label{Us}
 U'_t=\{({\bf x},{\bf y},X_1)\in \tilde{\mathcal{H}}_{v,w} \; | \; {\rm rank} B({\bf x},{\bf y})=t\}.
 \end{equation}
Define the quasi-projective variety 
$$
\aligned
\tilde{\mathcal{G}}_{v,w}
=&\{({\bf x},{\bf y},X_1, X_2')\;|\; \dim X_1=v_1,\dim X_2'=v_2, \\
&{\im}A({\bf x},{\bf y})\subseteq X_1\subseteq W'_1, \; \im (B({\bf x},{\bf y})^T)\subseteq X_2'\subseteq W_2^* \}.
\endaligned
$$
\end{definition}
\begin{remark}
 The strata $U_t'$ is nonempty only if $0\le t\le \min(v_2,w_2)$, because $\rank(B)\le w_2$ (=the number of columns of $B$) and $\rank(B)\le v_2$ by the definition of $\tilde{\mathcal{H}}_{v,w}$.
\end{remark}


\begin{proposition}\label{maps between FGH}
(a) Let $v_1\le w'_1$, $v_2\le w_2$, $\myG_{v,w}=Gr(v_1,w'_1)\times Gr(v_2,w_2)$. The natural projection $$f: \tilde{\mathcal{G}}_{v,w}\to \mathcal{G}_{v,w},\quad ({\bf x},{\bf y},X_1,X'_2)\mapsto(X_1,X'_2)$$ gives a vector bundle of rank 
$w_1v_1+w'_2v_2+rv_1v_2$. So
$$\dim \mytildeG_{v,w}=\dim\myG_{v,w}+(w_1v_1+w'_2v_2+rv_1v_2)=v_1(w_1+w'_1-v_1)+v_2(w_2+w'_2+rv_1-v_2).$$
As a consequence, $\mytildeG_{v,w}$ is nonsingular. 

(b) We can factorize the natural projection $\pi: \tilde{\mathcal{F}}_{v,w} \to \mathbf{E}_{v,w}$ as follows:
$$
\aligned
 &\tilde{\mathcal{F}}_{v,w}\stackrel{p_2}{\longrightarrow} \tilde{\mathcal{H}}_{v,w} \stackrel{p_1}{\longrightarrow} \mathbf{E}_{v,w}\\
&({\bf x},{\bf y},X_1,X_2)\mapsto ({\bf x},{\bf y},X_1)\mapsto ({\bf x},{\bf y})\\
\endaligned
$$
Moreover,  $p_2$ is stratified by $\{U'_{t} \}$, and $p_1$ and $\pi$ are both stratified by $\{ \myE^\circ_{(s,t),w}\}$ with $((s,t),w)$ being $l$-dominant.
The fibers of $p_2$ over $U'_t$ are isomorphic to $Gr(v_2-t,w_2'+rv_1-t)$.
The fibers of $p_1$ over $U_{s,t}$ are isomorphic to $Gr(v_1-s,w_1'-s)$.

(c) We can factorize the natural projection $\pi': \tilde{\mathcal{G}}_{v,w} \to \mathbf{E}_{\bar{v},w}$ as follows:
$$
\aligned
 &\tilde{\mathcal{G}}_{v,w}\stackrel{p_2'}{\longrightarrow} \tilde{\mathcal{H}}_{v,w} \stackrel{p_1}{\longrightarrow} \mathbf{E}_{v,W}\\
&({\bf x},{\bf y},X_1,X_2')\mapsto ({\bf x},{\bf y},X_1)\mapsto ({\bf x},{\bf y})\\
\endaligned
$$
Moreover,  $p_2'$ is stratified by $\{U'_{t} \}$, and $\pi'$ is stratified by $\{U_{s,t}\}$.
The fibers of $p_2'$ over $U'_t$ are isomorphic to $Gr(v_2-t,w_2-t)$.

(d) $U'_t=\bigcup_s\  p_1^{-1}(U_{s,t})$ is nonsingular. 

\end{proposition}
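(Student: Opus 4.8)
The plan is to separate the (formal) set equality from the geometric content. The equality $U'_t=\bigcup_s p_1^{-1}(U_{s,t})$ is immediate from the definitions: every point $({\bf x},{\bf y},X_1)\in\tilde{\mathcal{H}}_{v,w}$ has $\mathrm{rank}\,A({\bf x},{\bf y})\le v_1$ (since $\im A({\bf x},{\bf y})\subseteq X_1$ and $\dim X_1=v_1$), so $p_1$ maps it into the stratum $U_{s,t}=\mathbf{E}^\circ_{(s,t),w}$ with $s=\mathrm{rank}\,A({\bf x},{\bf y})$ and $t=\mathrm{rank}\,B({\bf x},{\bf y})$; hence $\bigcup_s p_1^{-1}(U_{s,t})$ is exactly the locus $\{\mathrm{rank}\,B=t\}$, which is $U'_t$ by \eqref{Us}. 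So the real assertion is that $U'_t$ is nonsingular, and I would prove this by exhibiting $U'_t$, Zariski-locally, as a product of smooth varieties, using the projection to the Grassmannian $Gr(v_1,W'_1)$ rather than the projection $p_1$ to $\mathbf{E}_{v,w}$ (over which the fibre jumps in dimension).

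Concretely, I would work with $\rho:U'_t\to Gr(v_1,W'_1)$, $({\bf x},{\bf y},X_1)\mapsto X_1$, which is a morphism (the restriction of the second projection on $\tilde{\mathcal{H}}_{v,w}\subseteq\mathbf{E}_w\times Gr(v_1,W'_1)$). First I would identify its fibres: the condition $\im A({\bf x},{\bf y})\subseteq X_1$ says precisely that $x_1\in\Hom(W_1,X_1)$ and each $y_h\in\Hom(W_2,X_1)$, while $x_2\in\Hom(W_2,W'_2)$ is unconstrained; since then $\im B({\bf x},{\bf y})\subseteq W'_2\oplus X_1^{\oplus r}$, the rank of $B({\bf x},{\bf y})$ equals the rank of the corresponding element $M\in\Hom(W_2,W'_2\oplus X_1^{\oplus r})$, and the $x_1$-coordinate decouples, so
$$\rho^{-1}(X_1)\;\cong\;\Hom(W_1,X_1)\times\{\,M\in\Hom(W_2,W'_2\oplus X_1^{\oplus r})\ :\ \mathrm{rank}\,M=t\,\}.$$
Next I would trivialize: over a Zariski open $\mathcal{U}\subseteq Gr(v_1,W'_1)$ on which the tautological subbundle $S$ is trivial, a choice $S|_{\mathcal{U}}\cong\mathcal{U}\times\mathbb{C}^{v_1}$ makes this family of fibres constant, so that
$$\rho^{-1}(\mathcal{U})\;\cong\;\mathcal{U}\times\Hom(W_1,\mathbb{C}^{v_1})\times R_t,\qquad R_t:=\{\,M\in\mathrm{Mat}_{(w'_2+rv_1)\times w_2}(\mathbb{C})\ :\ \mathrm{rank}\,M=t\,\}.$$
Finally, $\mathcal{U}$ is smooth (open in a Grassmannian), $\Hom(W_1,\mathbb{C}^{v_1})$ is an affine space, and $R_t$ is smooth by the classical fact that a fixed-rank locus in a matrix space is a smooth locally closed subvariety (a single $GL\times GL$-orbit; this is the same input behind the nonsingularity of $\mathbf{E}^\circ_{v',w}$ in Lemma \ref{lem:ab}(a)). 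Hence each $\rho^{-1}(\mathcal{U})$ is smooth, and since such opens cover $U'_t$ and smoothness is local, $U'_t$ is nonsingular.

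I do not expect a genuine obstacle here; the points that need care are: (i) that $\mathrm{rank}\,B({\bf x},{\bf y})$ is unchanged whether computed into $W'_2\oplus X_1^{\oplus r}$ or into the larger $W'_2\oplus(W'_1)^{\oplus r}$, which holds exactly because $\im A\subseteq X_1$ forces $\im B\subseteq W'_2\oplus X_1^{\oplus r}$; and (ii) that, after trivializing $S$, the family $\{R_t(X_1)\}_{X_1\in\mathcal{U}}$ is genuinely constant, so $\rho$ is Zariski-locally trivial and not merely a map with isomorphic fibres. I would also remark that, under the standing $\ell$-dominance hypothesis together with the constraint $t\le\min(v_2,w_2)$ needed for $U'_t\neq\emptyset$ (the Remark after Definition \ref{myH,myG}), one has $t\le\min(w_2,\,w'_2+rv_1)$, so $R_t\neq\emptyset$ and $\rho$ is surjective; this is not needed for nonsingularity but shows that $U'_t$ is in fact irreducible.
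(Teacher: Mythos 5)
Your proposal only addresses part (d) of Proposition \ref{maps between FGH}; parts (a)--(c) are left entirely unproved, and they carry most of the content. Part (a) requires showing that $f:\mytildeG_{v,w}\to\myG_{v,w}$ is a vector bundle of rank $w_1v_1+w'_2v_2+rv_1v_2$ (the paper realizes $\mytildeG_{v,w}$ as the intersection $E_1\cap E_2$ of two subbundles of a trivial bundle over $\myG_{v,w}$ and checks, after normalizing $(X_1,X'_2)$ by $GL(w'_1)\times GL(w_2)$, that all fibers of the intersection have the same dimension). Parts (b)--(c) require that $p_2$ and $p'_2$ restricted over $U'_t$, and $p_1$ restricted over $U_{s,t}$, are locally trivial fibrations with fibers $Gr(v_2-t,w'_2+rv_1-t)$, $Gr(v_2-t,w_2-t)$ and $Gr(v_1-s,w'_1-s)$ respectively; in the paper each is obtained as a pullback of an explicit Grassmannian bundle (e.g. $\mathcal{X}\to\mathcal{F}_{(v_1,t),w}$ for $p_2$, and analogous bundles over $Gr(s,W'_1)$ and $Gr(v_1,W'_1)\times Gr(t,W_2)$ for $p_1$ and $p'_2$). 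These fibration statements feed into the later dimension counts and the second proof of Theorem \ref{thm:trivial local system}, so omitting them is a genuine gap, not a formality.

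For part (d) itself your argument is correct, and it takes a different route from the paper's. The paper covers $U'_t$ by charts $U'_J$, indexed by $t$-subsets $J$ of columns of $B$ required to be independent, and identifies each chart with an open subset of the total space of $\mathbb{A}^{w'_2t}\times\mathbb{A}^{(w_2-t)t}\times S^{\oplus(w_1+rt)}_{Gr(v_1,W'_1)}$ via $x_2=[Z'_2\ \ Z'_2N]$, $y_i=[C'_i\ \ C'_iN]$. You instead fiber $U'_t$ over $Gr(v_1,W'_1)$, observe that the fiber over $X_1$ is $\Hom(W_1,X_1)$ times the exact-rank-$t$ locus in $\Hom(W_2,W'_2\oplus X_1^{\oplus r})$, and trivialize over standard Grassmannian charts so that the fiber becomes $\Hom(W_1,\mathbb{C}^{v_1})\times R_t$ with $R_t$ smooth as a single $GL\times GL$-orbit; your two cautionary points (i) and (ii) are exactly the ones that need checking and both hold, the trivialization and its inverse being coordinate projections on a standard chart, so your proof of nonsingularity (and the set equality $U'_t=\bigcup_s p_1^{-1}(U_{s,t})$) is fine. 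One small correction: Lemma \ref{lem:ab}(a) is not proved in the paper by citing smoothness of rank loci --- the strata $\myE^\circ_{v',w}$ are not plain rank loci because $A$ and $B$ share the blocks $y_i$, and the paper builds explicit charts there, just as it does for (d) --- but your appeal to the classical fact is legitimate in your setting, since $x_2,y_1,\dots,y_r$ are independent coordinates filling the whole matrix $M$.
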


\begin{proof}
(a) Proposition \ref{prop:fiber} proves that $\mytildeF_{v,w}$ is nonsingular, and we shall show that $\mytildeG_{v,w}$ is nonsingular by the same method. We can think of the ambient vector bundle $E$ to be trivial bundle of rank $(w_1w_1'+w_2w_2'+rw_1'w_2)$ over  $\myG_{v,w}$ where each fiber is the vector space of all possible $({\bf x},{\bf y})$ without restriction. Let $E_1$ be the subbundle of $E$ whose fiber over $(X_1,X'_2)$ is
$$\{({\bf x},{\bf y}) \ | \  \textrm{column vectors of  $x_1,  y_1,\dots, y_r$ are in } X_1\}$$
Let $E_2$ be the subbundle of $E$ whose fiber over $(X_1,X'_2)$ is
$$\{({\bf x},{\bf y}) \ | \  \textrm{row vectors of  $x_2,  y_1,\dots, y_r$ are in } X_2'\}$$
The fact that they are indeed (locally free) subbundles follows the isomorphisms 
$$E_1\cong  \pi_1^*S_{Gr(v_1, w'_1)}^{\oplus(w_1+rw_2)}\oplus \mathbb{C}^{w_2w'_2}, \quad E_2\cong  \pi_2^*S_{Gr(v_2,w_2)}^{\oplus(w'_2+rw'_1)}\oplus\mathbb{C}^{w_1w_1'}.$$
where $\pi_1,\pi_2$ are the natural projections from $\myG_{v,w}=Gr(v_1, w'_1)\times Gr(v_2,w_2)$ to the first and second factor, respectively, and  $\mathbb{C}^{w_2w'_2}$ denotes the trivial bundle whose fibers are the set of $x_2$,  $\mathbb{C}^{w_1w_1'}$ denotes the trivial bundle whose fibers are the set of $x_1$.
 It is well-known that the intersection of two subbundles is a subbundle, provided that the dimensions of all fibers of the intersection are equal. So in order to prove that the total space of 
 $\mytildeG_{v,w}=E_1\cap E_2$ is a subbundle, it suffices to show that the dimensions of the fibers of $E_1\cap E_2$ are a constant ${w_1v_1+w_2'{v}_2+rv_1{v}_2}$. 
Indeed, fix $(X_1,X_2')$. For any nonnegative integer $n$, denote $e_1, \dots, e_n$ the standard basis of $\mathbb{C}^n$. Since $\rank X_1=v_1$, there exists $P\in GL(w_1')$ such that $PX_1={\rm span}(e_1,\dots,e_{v_1})$. Similarly, there exists $Q\in GL(w_2)$ such that $QX_2'={\rm span}(e_1,\dots,e_{{v}_2})$.
Define 
$$\aligned
Y=\{(x_1',x_2',y_1',\dots,y_r')\; |\; &\im(x_1'+y_1'+\cdots+y_r')\subseteq {\rm span}(e_1,\dots,e_{v_1}),\\
&\im(x_2'^T+ y_1'^T+\cdots+ y_r'^T)\subseteq {\rm span}(e_1,\dots,e_{{v}_2})\;\}.
\endaligned$$
Then
$Y\cong \{x_1'\}\times \{x_2'\}\times \prod_{h=1}^r \{y_h'\}\cong \mathbb{C}^{w_1v_1}\times\mathbb{C}^{w_2'{v}_2}\times \prod_{h=1}^r\mathbb{C}^{v_1{v}_2}=\mathbb{C}^{w_1v_1+w_2'{v}_2+rv_1{v}_2}$. 
On the other hand, it is easy to see that the linear map
$$f^{-1}(X_1,X_2')\to Y,\quad (x_1,x_2,y_1,\dots,y_r)\mapsto (Px_1, x_2Q^T,Py_1Q^T,\dots,Py_rQ^T)$$
is a well-defined isomorphism. 
This shows that $f^{-1}(X_1,X_2')$ is a vector space of  dimension same as $\dim Y=w_1v_1+w'_2{v}_2+rv_1{v}_2$. 

Now since $\mytildeG_{v,w}$  is the total space of a vector bundle over a nonsingular variety $Gr(v_1, w'_1)\times Gr({v}_2,w_2)$, it must be nonsingular.

(b) It is easy to check $\pi=p_1\circ p_2$. 
The proof for the statement that $p_2$ is stratified by $\{U_t'\}$ is similar to Proposition \ref{prop:fiber}(b).  
To prove that  $p_2^{-1}(U'_t)\to U'_t$ is a locally trivial bundle,  we show that it is a pullback of another locally trivial  bundle $\mathcal{X}\to \mathcal{F}_{(v_1,t),w}$. Here is the diagram:
$$\xymatrix{p_2^{-1}(U_t') \ar[d]^{p_2}\ar[r]& \mathcal{X}\ar[d]^q\\ U'_t \ar[r]^f & \mathcal{F}_{(v_1,t),w} }$$
where 
$$\aligned
&\mathcal{X}=\{(X_1,X_2, Y_2) | (X_1,Y_2)\in \mathcal{F}_{(v_1,t),w}, Y_2\subseteq X_2\subseteq W'_2\oplus X_1^{\oplus r}, \dim X_2=v_2\}\\
& f: ({\bf x},{\bf y}, X_1)\mapsto (X_1,Y_2)=(X_1,\text{im}B({\bf x},{\bf y}))\\
& q: (X_1,X_2,Y_2) \mapsto (X_1,Y_2)
\endaligned
$$ 
We claim that $q$ gives a locally trivial fiber bundle with fibers isomorphic to $\mathcal{M}$. Indeed, $ \mathcal{F}_{(v_1,t),w}$ can be covered by open subsets over which the direct sum of universal subbundles $S_{Gr(v_1,W'_1)}\oplus S_{Gr(t,W'_2\oplus {W'_1}^{\oplus r})}$ is trivial, so we only need to discuss fiberwisely. For fixed $(X_1,Y_2)$, the set $(X_1,X_2, Y_2)$ satisfying $Y_2\subseteq X_2\subseteq W'_2\oplus X_1^{\oplus r}$ obviously form the variety $$Gr(\dim X_2-\dim Y_2, \dim(W'_2\oplus X_1^{\oplus r})-\dim Y_2)=Gr(v_2-t,w'_2+rv_1-t)$$
So $p_2$ is a locally trivial bundle with fibers isomorphic to $Gr(v_2-t,w'_2+rv_1-t)$.

Similarly,  by considering the diagram 
$$\xymatrix{p_1^{-1}(U_{s,t}) \ar[d]^{p_1}\ar[r]& \mathcal{X}\ar[d]^q\\ U_{s,t} \ar[r]^f &Gr(s,W'_1) }$$
where 
$$\aligned
&\mathcal{X}=\{(X_1, Y_1) | Y_1\in Gr(s,W'_1), Y_1\subseteq X_1\}\\
& f: ({\bf x},{\bf y})\mapsto Y_1=\text{im}A({\bf x},{\bf y})\\
& q: (X_1,Y_1) \mapsto X_1
\endaligned
$$ 
we conclude that $p_1$ is a locally trivial bundle with fibers isomorphic to $Gr(v_1-s,w_1'-s).$

(c) can be proved similarly to (b). The diagram is
$$\xymatrix{p_2'^{-1}(U'_t) \ar[d]^{p'_2}\ar[r]& \mathcal{X}\ar[d]^q\\ U'_t \ar[r]^-f & Gr(v_1,W'_1)\times Gr(t,W_2)}$$
where 
$$\aligned
&\mathcal{X}=\{(X_1,X'_2, Y'_2) | (X_1,Y'_2)\in Gr(v_1,W'_1)\times Gr(t,W_2), Y'_2\subseteq X'_2\subseteq W_2, \dim X'_2=v_2\}\\
& f: ({\bf x},{\bf y}, X_1)\mapsto (X_1,Y'_2)=(X_1,\text{im}(B({\bf x},{\bf y})^T))\\
& q: (X_1,X'_2,Y'_2) \mapsto (X_1,Y'_2)
\endaligned
$$ 

(d) To prove that $U'_t$ is nonsingular, we consider the following open covering
$$U'_t=\bigcup U'_J$$
where $J\subseteq\{1,\dots,w_2\}$, $|J|=t$, $U'_J\subseteq U'_T$ consists of those $({\bf x},\bf{y},X_1)$ such that the columns of $B({\bf x},{\bf y})$ indexed by $J$ are linearly independent. We shall show that $U'_J$ is nonsingular. Without loss of generality, we assume $J=\{1,\dots,t\}$. Similar to the proof of Proposition \ref{prop:fiber} (b), $U'_J$ is isomorphic to the open subset of 
$$U=\mathbb{A}^{w'_2t}\times \mathbb{A}^{(w_2-t)t}\times S^{\oplus(w_1+rt)}_{Gr(v_1,W'_1)}$$
whose elements are denoted $(Z'_2,N,x_1,C'_1,\dots,C'_r,X_1)$, where $X_1\in Gr(v_1,W'_1)$, and matrices: 

$Z'_2$ has size $w'_2\times t$,   

$N$ has size $t\times(w_2-t)$, 

$x_1$ has size $w'_1\times w_1$ and all column vectors are in $X_1$,  

$C'_i$ has size $w'_1\times t$ and all column vectors are in $X_1$,

\noindent The isomorphism is sending  $(Z'_2,N,x_1,C'_1,\dots,C'_r,X_1)$ to $(x_1,x_2,y_1,\dots,y_r,X_1)$ with
$$x_2=[Z'_2\;\; Z'_2N], \quad y_i=[C'_i\;\; C'_iN]$$
So
$$A=\begin{bmatrix}x_1&C'_1&C'_1N&\cdots&C'_r&C'_rN\end{bmatrix},
\quad
B=\begin{bmatrix}Z'_2& Z'_2N\\ C'_1&C'_1N \\ \vdots &\vdots\\ C'_r& C'_rN 
\end{bmatrix} 
$$
The open subset of $U'_J$ is defined by the condition that $[Z'_2,C'_1,\dots,C'_r]_\text{vert}$ must have full column rank $t$. 
\end{proof}

To summarize the notation and computations we introduced so far:
$$
\xymatrix{ 
{\mathcal{F}}_{v,w}&\tilde{\mathcal{F}}_{v,w}\ar[d]_{p_2}\ar@/^1.5pc/@{->}@[][dd]^{\pi}\ar[l]\\
U'_t \;\; \ar@{^{(}->}[r] & \tilde{\mathcal{H}}_{v,w}\ar[d]_{p_1}\\
U_{s,t}\; \ar@{^{(}->}[r] & \myE_{\bar{v},w}\\
}
\quad
\quad
\xymatrix{ 
{\mathcal{G}}_{v,w}&\tilde{\mathcal{G}}_{v,w}\ar[d]_{p'_2}\ar@/^1.5pc/@{->}@[][dd]^{\pi'}\ar[l]\\
U'_t \;\; \ar@{^{(}->}[r] & \tilde{\mathcal{H}}_{v,w}\ar[d]_{p_1}\\
U_{s,t}\; \ar@{^{(}->}[r] & \myE_{\bar{v},w}\\
}
$$

{\tiny
\begin{center}
 \begin{tabular}{|c|c|} 
 \hline
 space & dimension \\ 
 \hline
 $\mathcal{F}_{v,w}$ & $d_{v,w}=v_2(w_2'+rv_1-v_2)+v_1(w_1'-v_1)$  \\ 
 $\tilde{\mathcal{F}}_{v,w}$ & $\tilde{d}_{v,w}=v_2(w_2+w_2'+rv_1-v_2)+v_1(w_1+w_1'-v_1)$\\
 $\mathbf{E}_{v,w}$ & $\bar{v}_2(w_2+w_2'+r\bar{v}_1-\bar{v}_2)+\bar{v}_1(w_1+w_1'-\bar{v}_1)$\\
$\myG_{v,w}$ & $v_1(w'_1-v_1)+v_2(w_2-v_2)$\\
$\mytildeG_{v,w}$ & $v_1(w_1+w'_1-v_1)+v_2(w_2+w'_2+rv_1-v_2)$\\
  \hline
\end{tabular}
\;\; 
 \begin{tabular}{|c|c|} 
 \hline
 map & fiber \\ 
 \hline
 $\mytildeF_{v,w}\to\mathcal{F}_{v,w}$ & $\mathbb{C}^{w_1v_1+w_2v_2}$  \\ 
 $\mytildeG_{v,w}\to\mathcal{G}_{v,w}$ & $\mathbb{C}^{w_1v_1+w_2'v_2+rv_1v_2}$  \\ 
  $p_2|_{U'_t}$ & $Gr(v_2-t,w'_2+rv_1-t)$  \\ 
  $p'_2|_{U'_t}$ & $Gr(v_2-t,w_2-t)$  \\ 
  $p_1|_{U_{s,t}}$ & $Gr(v_1-s,w'_1-s)$  \\ 
  \hline
\end{tabular}
\end{center}
}

\section{Definition of dual canonical basis elements $L(w)$}
The identification between dual canonical basis and triangular basis is explained to us by Fan Qin, using results in \cite{Qin, KQ}. We will give a proof as self-contained as possible after introducing the dual canonical basis.

\subsection{Definition of $L(w)$ and $\mathbf{R}^\mathrm{finite}_t$}

First, recall some definitions from \cite{Nakajima,Qin}: Define 
$\mathcal{D}(\mathbf{E}_{w})$ to be the bounded derived category of constructible sheaves of $\mathbb{Q}$-vector spaces on $\mathbf{E}_{w}$. (The two references ) Define a set
$$\mathcal{P}_w=\{IC_w(v)=IC_{\mathbf{E}_{v,w}} \ | \  v\in\D(w)\}$$
Note that this is a finite set because $v$ must satisfy the condition \eqref{eq:l-dominant}.

Define a full subcategory $\mathcal{Q}_w$ of $\mathcal{D}(\mathbf{E}_{w})$ whose objects are
 finite direct sums of $IC_w(v)[k]$  for various $v\in\D(w)$, $k\in\mathbb{Z}$.
  
Define an abelian group $\mathcal{K}_w$ to be generated by isomorphism classes $(L)$ of objects of $\mathcal{Q}_w$ and quotient by the relations $(L)+(L')=(L'')$ whenever $L\oplus L'\cong L''$. By abuse of notation we denote $(L)$ simply as $L$, as done in \cite{Nakajima}. We can view $\mathcal{K}_w$ as a free $\mathbb{Z}[t^\pm]$-module with a basis $\mathcal{P}_w$, by defining $t^iL=L[i]$ for $i\in\mathbb{Z}$. 
The duality on $\mathcal{D}(\mathbf{E}_{w})$ induces the bar involution on $\mathcal{K}_w$ satisfying
$\overline{tL}=t^{-1}\overline{L},\; \overline{IC_w(v)}=IC_w(v)$. By the BBDG decomposition and Theorem \ref{thm:trivial local system}, for arbitrary $v,w$, 
\begin{equation}\label{pi=sum aIC}
\pi_w(v)=\pi_*(IC_{\widetilde{\mathcal{F}}_{v,w}})
=\sum_{v'\in \D(w)}\sum_{d}a^d_{v,v';w}IC_w(v')[d]
=\sum_{v'\in \D(w)}a_{v,v';w}IC_w(v')
\end{equation}
where $a_{v,v';w}=\sum_d a^d_{v,v';w}t^d$. It follows from the (Algebraic or Analytic) Transversal Slice Theorem that
\begin{equation}\label{eq:a=a}
a_{v,v';w}=a_{v^\perp,v'^\perp;w^\perp} \textrm{ for any $v^0\in\D(w)$}.
\end{equation} 
Note that  $\{ \pi_w(v)\ | \  v\in\D(w)\}$ is also a  $\mathbb{Z}[t^\pm]$-basis for $\mathcal{K}_w$, and for $(v,w)\in \D$ we have $a_{v,v;w}=1$.

Define the dual $\mathcal{K}_w^*={\rm Hom}_{\mathbb{Z}[t^\pm]}(\mathcal{K}_w,\mathbb{Z}[t^\pm])$, which is a free  $\mathbb{Z}[t^\pm]$-module with a basis
$$\{L_w(v)\ |\ v\in\D(w)\}=\textrm{the dual basis to }\mathcal{P}_w,$$
that is, 
$\langle L_w(v), IC_w(v')\rangle = \delta_{v,v'}$. The pairing $\langle-,-\rangle:\mathcal{K}_w^*\times \mathcal{K}_w\to \mathbb{Z}[t^\pm]$ satisfies the condition 
$$\langle L,C[1]\rangle=\langle L[-1],C\rangle=t\langle L,C\rangle, \quad \textrm{ for all } L\in \mathcal{K}_w^*, C\in \mathcal{K}_w.$$

Note that $L_w(v)$ satisfies the property that
$$\langle L_w(v), IC_w(v')\rangle = \delta_{v,v'}=\delta_{v^\perp,v'^\perp}=\langle f_{w^\perp}(v^\perp),IC_{w^\perp}(v'^{\perp})\rangle  \textrm{ for any $v^0\le v$},$$
where $v^0$ is used to define $w^\perp=w-C_qv^0, v^\perp=v-v^0, v'^\perp=v'-v^0$. 
Define
$$\mathbf{R}_t=\{(f_w)\in \prod_w \mathcal{K}_w^* \ | \ \langle f_w,IC_w(v)\rangle=\langle f_{w^\perp}, IC_{w^\perp}(v^\perp)\rangle \textrm{ whenever }v\in\D(w), v^0\le v\}.$$
For $(f_w)\in \prod_w \mathcal{K}_w^*$,
 write $f_w=\sum_{v\in \D(w)} c_{wv}L_w(v)$ where $c_{wv}\in\mathbb{Z}[t^\pm]$. Then $c_{wv}=\langle f_w,IC_w(v)\rangle$, and 
$$ (f_w)\in \mathbf{R}_t \textrm{ if and only if }c_{wv}=c_{w-C_qv,0} \textrm{ for every $l$-dominant pair $(w,v)$}.$$
So an element in $\mathbf{R}_t$ is uniquely determined by $\{c_{w0}\}_w$.

Define $L(w)\in\mathbf{R}_t$ to be induced by $L_w(0)$,
that is, for any $(v',w')\in\D$, 
$$\langle L(w),IC_{w'}(v')\rangle
=\delta_{w,w'-C_qv'}
$$ 

Define $\mathbf{R}^\mathrm{finite}_t$ to be the $\mathbb{Z}[t^\pm]$-submodule of $\mathbf{R}_t$ spanned by $\{L(w)\}_w$:
\footnote{Note that $\{L(w)\}$ is not a basis for $\mathbf{R}_t$: for example, let all $c_{w0}=1$, that is, $f_w=\sum_{v\in\D(w)} L_w(v)$; this gives an element in $\mathbf{R}_t$ corresponding to the infinite sum $\sum_w L(w)$, which is not in the span of $\{L(w)\}$. Nevertheless, each element in $\mathbf{R}_t$ can be uniquely written as a possibly infinite linear combination of $\{L(w)\}$. 

We can think of the projective limit $\lim\limits_{\leftarrow} \mathcal{K}_w$ which identify $IC_w(v)$ with $IC_{w^\perp}(v^\perp)$ 
whenever $v\in\D(w), v^0\le v$. Then $\mathbf{R}_t$ is the dual of $\lim\limits_{\leftarrow} \mathcal{K}_w$, and $\mathbf{R}^\mathrm{finite}_t$ consists of those linear functionals induced from ones defined on finite rank. 
}{}
$$\aligned
\mathbf{R}^\mathrm{finite}_t
&=\textrm{ the free $\mathbb{Z}[t^{\pm}]$-module with the basis $\{L(w)\}_w$ }\\
&=\{(f_w)\in \mathbf{R}_t \ | \ \langle f_w, IC_w(0)\rangle=0  \textrm{ for all but finitely many $w$} \} \\
\endaligned
$$

\subsection{Definition of $M(w)$}

Next, define $\{M_w(v)\ | \ v\in\D(w)\}\in \mathcal{K}_w^*$ to be the functional 
$$(L)\mapsto \sum_k t^{\dim {\bf E}^\circ_{v,w}-k} \dim H^k(i^!_{x_{v,w}}L)$$
where $x_{v,w}$ is a point in ${\bf E}^\circ_{v,w}$, and $i_{x_{v,w}}: x_{v,w}\to {\bf E}_{w}$ is the natural embedding. 

Define $M(w)\in \mathbf{R}_t$ to be induced by $M_w(0)$, that is, for any $(v',w')\in \D$, 
{\small
\begin{equation}\label{MIC}
\aligned
&\langle M(w),IC_{w'}(v')\rangle
\\
&=\begin{cases}
&\langle M_w(0),IC_{w}(v)\rangle
=\sum\limits_k t^{-k} \dim H^k(i^!_0IC_w(v)), 
\textrm{ if $\exists v\in\mathbb{Z}_{\ge0}^2: w-w'=C_q(v-v')$;}\\
&0, \textrm{ otherwise}.\\
\end{cases}
\endaligned
\end{equation}
}
where $i_0: \{0\}\to {\bf E}_w$ is the natural embedding. 
We claim that $M(w)\in \mathbf{R}^\mathrm{finite}_t$. Write $M(w)=\sum_{w''} b_{ww''}L(w'')$, then for any $(w',v')\in\D$: 
$$\langle M(w),IC_{w'}(v')\rangle =\sum_{w''} b_{ww''}\delta_{w'',w'-C_qv'}=b_{w,w'-C_qv'}$$
Together with \eqref{MIC}, we have 
$$b_{ww''}=\begin{cases}
& b_{w,w-C_qv}=\sum\limits_k t^{-k} \dim H^k(i^!_{0}IC_w(v)), \textrm{ if $w''=w-C_qv$ for some $v\in\mathbb{Z}_{\ge0}^2$}; \\
& 0, \textrm{ otherwise}.\\
\end{cases}$$

As a consequence, $b_{w,w-C_qv}\neq0 \ \Rightarrow \  w-C_qv\ge0 \ \Rightarrow \ 0\le v_1\le w'_1$ and $0\le v_2\le w_2$.
So for each fixed $w$, $b_{ww''}=0$ for all but finitely many $w''$, thus $M(w)\in \mathbf{R}^\mathrm{finite}_t$. 

We then claim that $\{M(w)\}$ is a basis of $\mathbf{R}^\mathrm{finite}_t$.  Indeed, denote a partial order  
$$\textrm{ $w'\le_{\rm w} w\Longleftrightarrow w-w'=C_qu$ for some $u\ge0$}$$ 
and denote $w'<_{\rm w}w$ if $w'\le_{\rm w}w$ and $w'\neq w$.
Consider the $\mathbb{Z}[t^{\pm}]$-submodule $V$ of  $\mathbf{R}^\mathrm{finite}_t$ spanned by $\{ L(w')\}_{w'\le_{\rm w} w}$. By the previous paragraph,  the transition matrix $B_w=[b_{w'w''}]$ (where $w',w''\le_{\rm w}w$)  from $\{L(w')\}_{w'\le_{\rm w}w}$ to $\{M(w')\}_{w'\le_{\rm w}w}$ is a (finite) triangular matrix. The diagonal entries $b_{w'w'}=\sum_k t^{-k} \dim H^k(i^!_0IC_{w'}(0))=1$ since $IC_{w'}(0)=\mathbb{Q}_{\{0\}}$ and 
$$H^k(i^!_0IC_{w'}(0))=H^k_{\{0\}}({\bf E}_{w'}, \mathbb{Q}_{\{0\}})=H^k({\bf E}_{w'},\mathbb{Q}_{\{0\}})
=\begin{cases}
&\mathbb{Q}, \textrm{ if $k=0$};\\
&0, \textrm{ otherwise}. \\
\end{cases}$$
where the first equality uses the fact that if $f:X\to Y$ is a closed embedding then $H^k(X,f^!K)=H_X(Y,K)$ (\cite[4.1.12]{deCM}), the second equality uses the long exact sequence of cohomology with supports, \cite[III, Ex 2.3]{Hartshorne}. Thus the matrix $B_w$ is invertible (and we denote its inverse by $B_w^{-1}=[b'_{w'w''}]$) and $L(w)$ is in the span of $\{M(w')\}_{w'\le_{\rm w}w}$. So $\{M(w)\}$ is a basis of $\mathbf{R}^\mathrm{finite}_t$.

Moreover, by definition of the intersection cohomology complex $IC_w(v)=IC_{{\bf E}_{v,w}}(\mathbb{Q})$, if $v>(0,0)$ and $\dim H^k(i^!_0IC_w(v))\neq0$, then $k\ge1$ (\cite[p142, Lemma 3.3.11]{Achar}). So $b_{ww''}\in t^{-1}\mathbb{Z}[t^{-1}]$, and 
$$M(w)=\sum_{w''\le_{\rm w} w}b_{ww''}L(w'')\in L(w)+\sum_{w''<_{\rm w} w}t^{-1}\mathbb{Z}[t^{-1}]L(w''), $$ 
which implies
\begin{equation}\label{L=sum M}
L(w)=\sum_{w''\le_{\rm w} w}b'_{ww''}M(w'')\in M(w)+\sum_{w''<_{\rm w} w}t^{-1}\mathbb{Z}[t^{-1}]M(w'').
\end{equation}

\medskip

\subsection{Definition of $\chi$}
Define a map $\phi: \mathbb{Z}^4\mapsto\mathbb{Z}^2:\  (u_1,u_1',u_2,u'_2)\mapsto (u_1-u_1',u_2-u_2')$.
Define a map $\chi: \mathbf{R}^\mathrm{finite}_t\to \mathcal{T}$ by defining it on the basis $\{M(w)\}$ (note that we do not require $v\in\D(w)$):
\begin{equation}\label{chiMw}
\aligned
\chi(M(w))
&=\sum_{v\in\mathbb{Z}_{\ge0}^2}   \big(\langle M(w),\pi_w(v)\rangle_{t\to\v^r}\big)   X^{\phi(w-C_qv)}\\
&=\sum_{v\in\mathbb{Z}_{\ge0}^2}  \sum_k \v^{-kr}\dim H^k(i_0^! \pi_w(v))   X^{(w_1-w'_1+rv_2,w_2-w'_2-rv_1)}
\endaligned
\end{equation}
and extend it by the rules $\chi(L_1+L_2)=\chi(L_1)+\chi(L_2)$ and $\chi(tL)=\v^r\chi(L)$, 

We claim the following is true: (again, we do not require $v\in\D(w)$)
\begin{equation}\label{chiLw}
\chi(L(w))=\sum_{v\in\mathbb{Z}_{\ge0}^2} a_{v,0;w}(\v^r)X^{(w_1-w'_1+rv_2,w_2-w'_2-rv_1)}
\end{equation}
Indeed,
$$\aligned
\chi(L(w))
&=\sum_{w''\le_{\rm w} w}b'_{ww''}\chi(M(w''))
=\sum_{w''\le_{\rm w} w}b'_{ww''}\sum_{v''}   \big(\langle M(w''),\pi_{w''}(v'')\rangle_{t\to\v^r}\big)   X^{\phi(w''-C_qv'')}\\
&=\sum_{w''\le_{\rm w} w} \sum_{v'\in \D(w'')}\sum_{v''} (a_{v'',v';w''} b'_{ww''}b_{w'',w''-C_qv'})_{t\to\v^r} X^{\phi(w''-C_qv'')}\\
\endaligned
$$
Now change the variables: let $u\in\mathbb{Z}_{\ge0}^2$ satisfy $w=w''+C_qu$, and let $v=v''+u$, $u'=v'+u$. Then $w-C_q=w''-C_qv''$, $a_{v,u';w}=a_{v'',v';w''}$, and
$$
\chi(L(w))=\sum_{u\in \D(w)} \sum_{\stackrel{u'\in \D(w)}{ u'\ge u}}\sum_{v\ge u} (a_{v,u';w} b'_{w,w-C_qu}b_{w-C_qu,w-C_qu'})_{t\to\v^r} X^{\phi(w-C_qv)}
$$
In the above expression, all the conditions under the summation symbols can be dropped: if $v\not\ge u$, then $v\not\ge u'$, and thus $a_{v,u';w}=0$; if $u\notin \D(w)$ or $u'\notin \D(w)$ or $u'\not\ge u$, then $b_{w-C_qu,w-C_qu'}=0$. So we drop the conditions and swap the orders of the variables $u,u',v$ in the iterated sums:
$$\aligned
\chi(L(w))&=\sum_{u} \sum_{u'}\sum_{v} (a_{v,u';w} b'_{w,w-C_qu}b_{w-C_qu,w-C_qu'})_{t\to\v^r} X^{\phi(w-C_qv)}\\
&=\sum_{v} X^{\phi(w-C_qv)} \sum_{u'} (a_{v,u';w})_{t\to\v^r}  \sum_{u} (b'_{w,w-C_qu}b_{w-C_qu,w-C_qu'})_{t\to\v^r} 
\\
\endaligned
$$
Since if $w''$ is not of the form $w-C_qu$ then $b_{w-C_qu,w-C_qu'}=0$, the last summation is
$$\sum_{u} (b'_{w,w-C_qu}b_{w-C_qu,w-C_qu'})_{t\to\v^r}=\sum_{w''} (b'_{w,w''}b_{w'',w-C_qu'})_{t\to\v^r}=\delta_{w,w-C_qu'} $$
 thus
$
\chi(L(w))=\sum_{v} a_{v,0;w}(\v^r)X^{\phi(w-C_qv)}  
$, \eqref{chiLw} is proved.

\medskip

\subsection{$\{\overline{\chi(M(w))}\}$ and the standard monomial basis} 

Denote $P_t(X)=\sum_i\dim H^i(X,\mathbb{Q}) t^i$. Denote
$d_{v,w}=\dim \mathcal{F}_{v,w}$, $\tilde{d}_{v,w}=\dim \widetilde{\mathcal{F}}_{v,w}$. 
We shall prove an explicit formula for $\chi(M(w))$: 
\begin{equation}\label{chiMw}
\aligned
\chi(M(w))
&=\sum_{v\in\mathbb{Z}_{\ge0}^2}   \v^{r(d_{v,w}-\tilde{d}_{v,w})} {w'_2+rv_1 \brack v_2}_{\v^{r}} {w'_1\brack v_1}_{\v^{r}} X^{(w_1-w'_1+rv_2,w_2-w'_2-rv_1)}\\
&=\sum_{v\in\mathbb{Z}_{\ge0}^2}   \v^{-r \dim \widetilde{\mathcal{F}}_{v,w} }  P_{t}(\mathcal{F}_{v,w})  X^{(w_1-w'_1+rv_2,w_2-w'_2-rv_1)}\\
\endaligned
\end{equation}
Indeed, apply the base change isomorphism $i_{0}^!\pi_*=\pi'_*j^!$ for the following Cartesian square 
$$
\begin{tikzcd}
\mathcal{F}_{v,w}\ar[r, hook,"j"]\ar[d,"\pi' "'] & \widetilde{\mathcal{F}}_{v,w} \ar[d,"\pi"]\\
\{0\}\ar[r, hook,"i_{0}"] & \myE_{v,w} 
\end{tikzcd}
$$
we have 
$$i_{0}^!\pi_*(IC_{ \widetilde{\mathcal{F}}_{v,w} })
=\pi'_*j^!(\mathbb{Q}_{ \widetilde{\mathcal{F}}_{v,w} }[\tilde{d}_{v,w} ])
=\pi'_*j^*(\mathbb{Q}_{ \widetilde{\mathcal{F}}_{v,w} }[\tilde{d}_{v,w}-2(\tilde{d}_{v,w}-d_{v,w})])
=\pi'_*(\mathbb{Q}_{ {\mathcal{F}}_{v,w} }[2d_{v,w}-\tilde{d}_{v,w}]),
$$ 
where the second equality is because $j^!(C)=j^*[-2c](C) $ if $j$ is a closed embedding of pure codimension $c$ transversal to all strata of a stratification $\Sigma$ and the complex $C$ is $\Sigma$-constructible \cite[\S4.1.12]{deCM}, here we just take the trivial stratification for $ \widetilde{\mathcal{F}}_{v,w} $. 
Then
\begin{equation}\label{Mpi}
\aligned
&\langle M(w),\pi_{w}(v)\rangle=\sum_k t^{-k}H^k(i_{0}^!\pi_*(IC_{ \widetilde{\mathcal{F}}_{v,w} }))\\
&=\sum_k t^{-k}H^{k+2d-\tilde{d}_{v,w}}( \{0\}, \pi'_*(\mathbb{Q}_{ {\mathcal{F}}_{v,w} }))
=\sum_k\dim t^{-k}H^{k+2d_{v,w}-\tilde{d}_{v,w}}( { {\mathcal{F}}_{v,w} }, \mathbb{Q}) \\
&
=t^{2d_{v,w}-\tilde{d}_{v,w}}\sum_k\dim t^{-k}H^{k}( { {\mathcal{F}}_{v,w} }, \mathbb{Q})\\
&
\stackrel{(*)}{=}t^{2d_{v,w}-\tilde{d}_{v,w}}\Big(t^{-\dim Gr(v_2,w_2'+rv_1)}{w'_2+rv_1 \brack v_2}_{t}\Big) 
\Big(t^{-\dim Gr(v_1,w_1')}{w'_1\brack v_1}_{t}\Big)\\
&=t^{d_{v,w}-\tilde{d}_{v,w}}{w'_2+rv_1 \brack v_2}_{t}
{w'_1\brack v_1}_{t} \\
\endaligned
\end{equation}
where (*) is because ${\mathcal{F}}_{v,w}$  is a $Gr(v_2,w_2'+rv_1)$-bundle over $Gr(v_1,w_1')$, 
and in the last equality we use the fact that $\dim Gr(v_2,w_2'+rv_1)+\dim Gr(v_1,w_1')=\dim {\mathcal{F}}_{v,w}=d_{v,w}$. 
Also note that 
$$P_{t}(\mathcal{F}_{v,w})=t^{d_{v,w}}{w'_2+rv_1 \brack v_2}_{t}
{w'_1\brack v_1}_{t}, $$
thus we have proved \eqref{chiMw}.

Now we choose the initial seed to be $({\bf X}',B')=\mu_2({\bf X}, B)$. Thus $B'=\begin{bmatrix}0&r\\-r&0\end{bmatrix}$, the linear order is $2\vartriangleleft 1$, and the initial cluster is $(X_1,X_0)$.  The standard monomial basis elements with respect to this initial seed is
$$
  M'[{\bf b}] = \v^{b_1b_2} X_2^{[-b_2]_+} X_0^{[b_2]_+} X_1^{[b_1]_+} X_{-1}^{[-b_1]_+} 
$$
More generally, define
$$
  M^*[w] = \v^{(w_1-w'_1)(w'_2-w_2)} X_2^{w_2} X_0^{w_2'} X_1^{w_1} X_{-1}^{w'_1} 
$$

Each $w\in\mathbb{Z}_{\ge0}^4$ can be uniquely written as $w={}^fw+{}^\phi w$ where 
$${}^fw=\big(\min(w_1,w'_1),\min(w_1,w'_1), \min(w_2,w'_2),\min(w_2,w'_2) \big)$$
thus ${}^\phi w$ satisfies ${}^\phi w_1{}^\phi w'_1={}^\phi w_2 {}^\phi w'_2=0$. 

We have
\begin{equation}\label{M'=M'}
\textrm{ if $w={}^\phi w$ (that is, $w_1w'_1=w_2w'_2=0$), then  $M^*[w]=  M'[w_1-w'_1, w_2'-w_2]$. }
\end{equation} 

\begin{lemma}\label{chiMbar=M''}
For any $w\in\mathbb{Z}_{\ge0}^4$, we have 
$$M^*[w]=\overline{\chi(M(w))}=\sum_{v\in\mathbb{Z}_{\ge0}^2}   \v^{r(-d_{v,w}+\tilde{d}_{v,w})} {w'_2+rv_1 \brack v_2}_{\v^{r}} {w'_1\brack v_1}_{\v^{r}} X^{(w_1-w'_1+rv_2,w_2-w'_2-rv_1)}.$$ 
In particular, if $w={}^\phi w$, then $\overline{\chi(M(w))}$ is the standard basis element $M'[w_1-w'_1,w'_2-w_2]$ with the initial seed $\mu_2({\bf X},B)$.
\end{lemma}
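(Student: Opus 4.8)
The plan is to deduce the lemma from the explicit formula for $\chi(M(w))$ already obtained in \eqref{chiMw} in two moves: first apply the bar-involution to that formula to get the displayed expression for $\overline{\chi(M(w))}$, and then verify by a direct computation in the quantum torus $\cT_1=\langle X_1,X_2\rangle$ that this expression equals $M^*[w]$. For the first move, recall that $\overline{\v^m}=\v^{-m}$, that the balanced Gaussian binomial ${n\brack k}_{\v^r}$ is invariant under $\v\mapsto\v^{-1}$ (each quantum integer $[m]_{\v^r}=(\v^{rm}-\v^{-rm})/(\v^r-\v^{-r})$ is), and that every monomial $X^{(a_1,a_2)}=\v^{a_1a_2}X_1^{a_1}X_2^{a_2}$ is bar-invariant, since $\overline{\v^{a_1a_2}X_1^{a_1}X_2^{a_2}}=\v^{2a_1a_2}\cdot\v^{-a_1a_2}X_1^{a_1}X_2^{a_2}=\v^{a_1a_2}X_1^{a_1}X_2^{a_2}$ by the defining formula of the bar-involution (valid for all $a_1,a_2\in\ZZ$). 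Applying $\overline{\ \cdot\ }$ term by term to \eqref{chiMw} and using $\tilde{d}_{v,w}-d_{v,w}=w_1v_1+w_2v_2$ (Proposition \ref{prop:fiber}) then gives immediately
$$\overline{\chi(M(w))}=\sum_{v\in\ZZ_{\ge0}^2}\v^{r(-d_{v,w}+\tilde{d}_{v,w})}{w'_2+rv_1\brack v_2}_{\v^{r}}{w'_1\brack v_1}_{\v^{r}}X^{(w_1-w'_1+rv_2,\,w_2-w'_2-rv_1)},$$
which is the first claimed equality.

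For the second move I would expand $M^*[w]=\v^{(w_1-w'_1)(w'_2-w_2)}X_2^{w_2}X_0^{w'_2}X_1^{w_1}X_{-1}^{w'_1}$ in $\cT_1$ using the exchange relations $X_1X_{-1}=\v^rX_0^r+1$ and $X_2X_0=\v^rX_1^r+1$ (both with $b=c=r$), which give $X_{-1}=X_1^{-1}(1+\v^rX_0^r)$ and $X_0=X_2^{-1}(1+\v^rX_1^r)$. From the first relation, commuting all the $X_1^{-1}$ factors to the left and applying the $\v^r$-quantum binomial theorem yields $X_{-1}^{w'_1}=X_1^{-w'_1}\sum_{v_1}(\text{monomial in }\v)\,{w'_1\brack v_1}_{\v^r}X_0^{rv_1}$, which produces the factor ${w'_1\brack v_1}_{\v^r}$; substituting, reordering the resulting $X_1$-powers past $X_0^{rv_1}$ (these $\v$-commute), and collecting turns $M^*[w]$ into a sum over $v_1$ of $\v$-monomials times ${w'_1\brack v_1}_{\v^r}$ times $X_2^{w_2}X_0^{w'_2+rv_1}X_1^{w_1-w'_1}$. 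The same manoeuvre applied to $X_0^{w'_2+rv_1}$ via $X_0=X_2^{-1}(1+\v^rX_1^r)$ gives $X_0^{w'_2+rv_1}=X_2^{-(w'_2+rv_1)}\sum_{v_2}(\text{monomial in }\v)\,{w'_2+rv_1\brack v_2}_{\v^r}X_1^{rv_2}$, supplying the remaining factor ${w'_2+rv_1\brack v_2}_{\v^r}$. After one last reordering and the conversion $X_1^{a_1}X_2^{a_2}=\v^{-a_1a_2}X^{(a_1,a_2)}$ with $a_1=w_1-w'_1+rv_2$, $a_2=w_2-w'_2-rv_1$, the monomial part is exactly $X^{(a_1,a_2)}$, and the claim reduces to checking that all the accumulated powers of $\v$ — from the prefactor $\v^{(w_1-w'_1)(w'_2-w_2)}$, the two quantum binomial expansions, the several reorderings, and the conversion factor $\v^{-a_1a_2}$ — collapse to $\v^{r(w_1v_1+w_2v_2)}=\v^{r(-d_{v,w}+\tilde{d}_{v,w})}$. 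Once this is done, the ``in particular'' statement is immediate: when $w={}^\phi w$, \eqref{M'=M'} identifies $M^*[w]$ with the standard monomial $M'[w_1-w'_1,w'_2-w_2]$ of the seed $\mu_2(\mathbf{X},B)$.

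The main obstacle is precisely the $\v$-exponent bookkeeping in the second move: because $X_1,X_2,X_0,X_{-1}$ only $\v$-commute, each reordering contributes $\v$-powers that depend on $v_1,v_2$, and one must pin down which shifted/balanced version of the quantum binomial theorem applies to each of the two nested expansions so that the total telescopes correctly. A lower-risk alternative is to prove $M^*[w]=\overline{\chi(M(w))}$ by induction on ${}^fw_1+{}^fw_2=\min(w_1,w'_1)+\min(w_2,w'_2)$: the base case $w={}^\phi w$ needs only \eqref{M'=M'} together with a single quantum binomial expansion of $X_0^{w'_2}$ (or $X_{-1}^{w'_1}$), while the inductive step peels off one extra factor $X_2X_0$ or $X_1X_{-1}$ at a time using the exchange relations and matches it against the evident recursion ${w'_2+rv_1\brack v_2}$, ${w'_1\brack v_1}$ satisfied by the right-hand side. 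Either route, the essential content is the same quantum binomial / $\v$-power computation.
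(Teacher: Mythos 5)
Your proposal is correct and follows essentially the same route as the paper: the paper likewise identifies $\overline{\chi(M(w))}$ with the displayed sum via bar-invariance of the $\v^r$-binomials and of the monomials $X^{(a_1,a_2)}$, and then proves the identity by expanding $X_{-1}^{w'_1}$ and $X_0^{w'_2+rv_1}$ in the quantum torus with the $\v^r$-quantum binomial theorem, checking that the accumulated $\v$-exponents collapse to $r(w_1v_1+w_2v_2)=r(\tilde d_{v,w}-d_{v,w})$. The exponent bookkeeping you defer is exactly the computation carried out explicitly in the paper's proof, so no new idea is missing.
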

\begin{proof}
Note that $X_0=X^{(0,-1)}+X^{(r,-1)}$, $X_{-1}=X^{(-1,0)}+\sum_i{r \brack i}_{\v^r} X^{(ri-1,-r)}=X_0^{(r,-1)}+X_0^{(0,-1)}$, $X_{m-1}^n=\sum {n\brack i}_{\v^r} X_m^{(ri,-n)}=\sum {n\brack i}_{\v^r}\v^{-rin} X_m^{ri}X_{m+1}^{-n}$. So
$$\aligned
&M^*[w]
=\v^{(w_1-w'_1)(w'_2-w_2)} X_2^{w_2} X_0^{w_2'} X_1^{w_1} \sum_{v_1}{w'_1\brack v_1} \v^{-rv_1w'_1}X_0^{rv_1}X_1^{-w'_1}\\
&=\sum_{v_1}\v^{(w_1-w'_1)(w'_2-w_2)+2rv_1w_1}X_2^{w_2}  {w'_1\brack v_1} \v^{-rv_1w'_1}X_0^{w'_2+rv_1}X_1^{w_1-w'_1}\\
&=\sum_{v_1, v_2}\v^{(w_1-w'_1)(w'_2-w_2)+2rv_1w_1-rv_1w'_1-rv_2(w'_2+rv_1)}   {w'_1\brack v_1} {w'_2+rv_1\brack v_2} X_2^{w_2}X_1^{rv_2}X_2^{-w'_2-rv_1}X_1^{w_1-w'_1}\\
\endaligned
$$ 
Since 
$$X_2^{w_2}X_1^{rv_2}X_2^{-w'_2-rv_1}X_1^{w_1-w'_1}=\v^{rv_2w_2+rv_2(w'_2+rv_1)+(w_2-w'_2-rv_1)(w_1-w'_1)}X^{(w_1-w'_1+rv_2,w_2-w'_2-rv_1)}$$
we have
$$\aligned
M^*[w]
&=\v^{r(v_1w_1+v_2w_2)}   \sum_{v_1, v_2}{w'_1\brack v_1} {w'_2+rv_1\brack v_2} X_2^{w_2}X_1^{rv_2}X_2^{-w'_2-rv_1}X_1^{w_1-w'_1}=\overline{\chi(M(w))}
\endaligned
$$ 
where the last equality is because $v_1w_1+v_2w_2=\tilde{d}_{v,w}-d_{v,w}$. 
\end{proof}

\begin{lemma}\label{M'' in M'}
$M^*[w]\in M'[w_1-w'_1,w'_2-w_2]+ \sum_{\bf b} \v\mathbb{Z}[\v]M'[{\bf b}]$.  
\end{lemma}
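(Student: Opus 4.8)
\emph{Plan of proof.} Since the $M'[{\bf b}]$ form a basis, the statement amounts to this: in the expansion $M^*[w]=\sum_{{\bf b}}c_{{\bf b}}(\v)\,M'[{\bf b}]$ the coefficient of $M'[w_1-w_1',\,w_2'-w_2]$ equals $1$ and every other $c_{{\bf b}}$ lies in $\v\mathbb{Z}[\v]$. I would prove it by computing that expansion directly: starting from $M^*[w]=\v^{c(w)}X_2^{w_2}X_0^{w_2'}X_1^{w_1}X_{-1}^{w_1'}$, where $c(w)=(w_1-w_1')(w_2'-w_2)$ depends only on the two differences, I will repeatedly contract the redundant occurrences of the pairs $X_2X_0$ and $X_1X_{-1}$ by means of the exchange relations $X_2X_0=\v^rX_1^r+1$ and $X_1X_{-1}=\v^rX_0^r+1$, until only tuples $w'$ with $w_1'w_1''=w_2'w_2''=0$ survive; by \eqref{M'=M'} each of those is literally a standard monomial.

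The first ingredient is a pair of auxiliary identities, valid in $\cT$ for every $a\ge 0$:
$$X_2^aX_0^a=\sum_{j\ge 0}c_{a,j}(\v)\,X_1^{rj},\qquad X_1^aX_{-1}^a=\sum_{i\ge 0}c_{a,i}'(\v)\,X_0^{ri},$$
with $c_{a,0}=c_{a,0}'=1$ and $c_{a,j},c_{a,j}'\in\v^{r}\mathbb{Z}[\v]$ for $j\ge 1$. These follow by induction on $a$: multiplying $X_2^aX_0^a=\sum_j c_{a,j}X_1^{rj}$ on the left by $X_2$ and on the right by $X_0$ and using $X_2X_1^{rj}=\v^{2rj}X_1^{rj}X_2$ and $X_2X_0=\v^rX_1^r+1$ gives the recursion $c_{a+1,j}=\v^{2rj}c_{a,j}+\v^{r(2j-1)}c_{a,j-1}$, whence $c_{a+1,0}=1$ and $c_{a+1,j}\in\v^r\mathbb{Z}[\v]$ for $j\ge 1$; the second identity is obtained the same way from $X_1X_0=\v^2X_0X_1$, $X_0X_{-1}=\v^2X_{-1}X_0$ and $X_1X_{-1}=\v^rX_0^r+1$.

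Next, in the case $w_1\ge w_1'$ and $w_2'\ge w_2$ (so $(w_1-w_1',w_2'-w_2)$ has nonnegative entries and $M'[w_1-w_1',w_2'-w_2]$ is a cluster monomial in $(X_0,X_1)$): substitute $X_2^{w_2}X_0^{w_2'}=(X_2^{w_2}X_0^{w_2})X_0^{w_2'-w_2}=\bigl(\sum_j c_{w_2,j}X_1^{rj}\bigr)X_0^{w_2'-w_2}$ into $M^*[w]$ and commute each $X_1^{rj}$ past $X_0^{w_2'-w_2}$, which costs the \emph{nonnegative} power $\v^{2rj(w_2'-w_2)}$; since $c(\cdot)$ is a function of the differences only, this yields
$$M^*[w]=\sum_{j\ge 0}\lambda_j(\v)\,M^*\!\bigl[(w_1+rj,\,w_1',\,0,\,w_2'-w_2)\bigr],\qquad \lambda_0=1,\ \lambda_j\in\v^r\mathbb{Z}[\v]\ (j\ge1).$$
Each term has no $X_2$ and has $X_1$-exponent $w_1+rj\ge w_1'$, so one contracts all $w_1'$ redundant $X_1X_{-1}$ pairs the same way, using $X_1^{w_1'}X_{-1}^{w_1'}=\sum_i c_{w_1',i}'X_0^{ri}$ and commuting the leftover $X_1^{w_1+rj-w_1'}$ to the right (again only nonnegative powers of $\v$ occur). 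Applying \eqref{M'=M'} to each resulting tuple $(w_1+rj-w_1',\,0,\,0,\,w_2'-w_2+ri)$ gives
$$M^*[w]=\sum_{j,i\ge 0}\mu_{j,i}(\v)\,M'\!\bigl[(w_1-w_1'+rj,\,w_2'-w_2+ri)\bigr]$$
with $\mu_{0,0}=1$ and $\mu_{j,i}\in\v^r\mathbb{Z}[\v]\subseteq\v\mathbb{Z}[\v]$ for $(j,i)\ne(0,0)$; as the index vectors are pairwise distinct and only $(j,i)=(0,0)$ produces $(w_1-w_1',w_2'-w_2)$, the lemma follows in this case. The other three sign patterns of $(w_1-w_1',w_2-w_2)$ are treated identically, contracting against $X_0$ instead of $X_2$ when $w_2>w_2'$ and against $X_{-1}$ instead of $X_1$ when $w_1<w_1'$, so that in each case the variables left over at the end lie in the one cluster among $(X_{-1},X_0)$, $(X_0,X_1)$, $(X_1,X_2)$ that contains $M'[w_1-w_1',w_2'-w_2]$.

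The delicate point, and what I expect to be the main obstacle, is termination of this contraction process. When $M'[w_1-w_1',w_2'-w_2]$ lives in a boundary cluster (say $(X_1,X_2)$), contracting the redundant $X_1X_{-1}$ pairs re-creates $X_0$'s, and with $X_2$'s still present this remakes $X_2X_0$ pairs, so a naive induction on $\min(w_1,w_1')+\min(w_2,w_2')$ stalls. One must either find a genuine monovariant that decreases at every step, or argue that the re-created pairs only occur inside branches already carrying a positive power of $\v^r$ and that, the standard-monomial expansion of $M^*[w]$ being a finite sum, the recursion must stop; either way one has to check, through each commutation performed in $\cT$, that only nonnegative powers of $\v$ are introduced, so that the $\v^r$ supplied by every exchange relation is never cancelled.
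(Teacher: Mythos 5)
Your computation in the quadrant $w_1\ge w_1'$, $w_2'\ge w_2$ is correct, but the proposal has a genuine gap exactly where you flag it: the remaining sign patterns are \emph{not} ``treated identically'', and you never supply the termination argument they require. With your two-stage scheme, whenever the surviving monomial still contains $X_2$'s while $X_1X_{-1}$ contractions remain to be done (or still contains $X_{-1}$'s while $X_2X_0$ contractions remain), the relations $X_1X_{-1}=\v^rX_0^r+1$ and $X_2X_0=\v^rX_1^r+1$ recreate contractible pairs; for $w_1<w_1'$ and $w_2>w_2'$ this happens no matter in which order you run the two stages, so the process does not stop after your prescribed finite list of substitutions. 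You end by saying one must ``either find a genuine monovariant \dots or argue \dots'', i.e.\ you leave the decisive point open (and you also leave unchecked, outside the first quadrant, that the commutation factors and the re-normalization of the $\v$-prefactor of $M^*$ never cancel the $\v^r$ coming from the exchange relations). As written, the lemma is therefore not proved for those $w$.

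The missing monovariant is simple: the total exponent $w_2+w_1'$ of the variables $X_2$ and $X_{-1}$. Each application of $X_2X_0=\v^rX_1^r+1$ or $X_1X_{-1}=\v^rX_0^r+1$ consumes one $X_2$ or one $X_{-1}$ and creates only $X_0$'s and $X_1$'s, so $w_2+w_1'$ drops by exactly one at every step, and the process can only stop at tuples with $w_1w_1'=w_2w_2'=0$, which are standard monomials by \eqref{M'=M'}. This is in effect what the paper does, one pair at a time: it records the single-step identities $M^*[w]=M^*[w_1,w_1',w_2-1,w_2'-1]+\v^{r(w_2+w_2'-1)}M^*[r+w_1,w_1',w_2-1,w_2'-1]$ when $\min(w_2,w_2')>0$, and the analogous identity decreasing $\min(w_1,w_1')$, in which the split-off term manifestly carries a coefficient in $\v\mathbb{Z}[\v]$ for \emph{every} $w$, with no case analysis on sign patterns; iterating, the unique branch that always takes the first summand preserves the differences $(w_1-w_1',w_2'-w_2)$ and terminates at $M'[w_1-w_1',w_2'-w_2]$ with coefficient $1$, while every other leaf carries at least one factor $\v^{r(\cdot)}$ and hence a coefficient in $\v\mathbb{Z}[\v]$. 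If you add the monovariant $w_2+w_1'$ and carry out the $\v$-bookkeeping in the remaining quadrants, your closed-form variant can be completed; in its present form it is incomplete.
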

\begin{proof}
If $\min(w_2,w'_2)>0$, then 
$$\aligned
&M^*[w] = \v^{(w_1-w'_1)(w'_2-w_2)} X_2^{w_2-1}(X_2X_0) X_0^{w_2'-1} X_1^{w_1} X_{-1}^{w'_1}\\
&=\v^{(w_1-w'_1)(w'_2-w_2)} X_2^{w_2-1}(1+\v^r X_1^r) X_0^{w_2'-1} X_1^{w_1} X_{-1}^{w'_1}\\
&= \v^{(w_1-w'_1)(w'_2-w_2)} X_2^{w_2-1}X_0^{w_2'-1} X_1^{w_1} X_{-1}^{w'_1} +\v^{(w_1-w'_1)(w'_2-w_2)+r} X_2^{w_2-1}X_1^r X_0^{w_2'-1} X_1^{w_1} X_{-1}^{w'_1}\\
&=  M^*[w_1,w'_1,w_2-1,w'_2-1]\\
&\quad+\v^{(w_1-w'_1)(w'_2-w_2)+r+2r(w'_2-1)-(r+w_1-w'_1)(w'_2-w_2)}M^*[r+w_1,w'_1,w_2-1,w'_2-1]\\
&= M^*[w_1,w'_1,w_2-1,w'_2-1]+\v^{r(w_2+w'_2-1)}M^*[r+w_1,w'_1,w_2-1,w'_2-1] \\
&\in M^*[w_1,w'_1,w_2-1,w'_2-1]+\bigoplus_{w'} \v\mathbb{Z}[\v] M^*[w']\\
\endaligned
$$
Similarly, if $\min(w_1,w'_1)>0$, then 
$$\aligned
&M^*[w] = \v^{(w_1-w'_1)(w'_2-w_2)} X_2^{w_2}X_0^{w_2'} X_1^{w_1-1}(X_1X_{-1}) X_{-1}^{w'_1-1}\\
&= \v^{(w_1-w'_1)(w'_2-w_2)} X_2^{w_2}X_0^{w_2'} X_1^{w_1-1}(1+\v^rX_0^r) X_{-1}^{w'_1-1}\\
&= \v^{(w_1-w'_1)(w'_2-w_2)} X_2^{w_2}X_0^{w_2'} X_1^{w_1-1} X_{-1}^{w'_1-1}+\v^{(w_1-w'_1)(w'_2-w_2)+r} X_2^{w_2}X_0^{w_2'} X_1^{w_1-1} X_0^r X_{-1}^{w'_1-1}\\
&= M^*[w_1-1,w'_1-1,w_2,w'_2]+\v^{r(w_1+w'_1-1)}M^*[w_1-1,w'_1-1,w_2,w'_2+r] \\
&\in M^*[w_1-1,w'_1-1,w_2,w'_2] + \bigoplus_{w'} \v\mathbb{Z}[\v] M^*[w']\\
\endaligned
$$
Apply the above two formulas recursively, we obtain the result.
\end{proof}

\subsection{$\{\chi(L(w))\}$ and the triangular basis}
\begin{lemma}\label{chiL(w)=C}
For any $w\in\mathbb{Z}_{\ge0}^4$, we have ${\chi(L(w))}={C[w'_1-w_1, w_2'-w_2+r[w'_1-w_1]_+]}$, which is a triangular basis element. In particular, $\chi(L({}^\phi w))=\chi(L(w))$ (therefore $a_{v,0;w}=a_{v,0;{}^\phi w }$ for any $v\in\mathbb{Z}_{\ge0}^2$) and $\chi(L({}^f w))=1$. 
\end{lemma}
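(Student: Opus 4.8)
The plan is to prove that $\chi(L(w))$ satisfies the two defining properties (P1), (P2) of a triangular basis element — using the standard monomial basis $\{M'[{\bf b}]\}$ of the mutated seed $\mu_2({\bf X},B)$, which is legitimate because the triangular basis is independent of the initial cluster and because \eqref{M'=M'}, Lemma \ref{chiMbar=M''} and Lemma \ref{M'' in M'} are all phrased for this seed — and then to read off its index from \eqref{chiLw}.

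\emph{Bar-invariance.} By \eqref{chiLw} we have $\chi(L(w))=\sum_v a_{v,0;w}(\v^r)\,X^{(w_1-w_1'+rv_2,\,w_2-w_2'-rv_1)}$; each $X^{(m,n)}$ is bar-invariant (immediate from the definition of the bar-involution), and each $a_{v,0;w}(t)$ is bar-invariant because $IC_{\tilde{\mathcal F}_{v,w}}$ is Verdier self-dual and $\pi$ is proper, so $\pi_w(v)$ is self-dual in $\mathcal K_w$ and comparison of coefficients in the basis $\{IC_w(v')\}$ gives $\overline{a_{v,v';w}}=a_{v,v';w}$. \emph{Triangular expansion.} Applying $\chi$ (which is additive with $\chi(tL)=\v^r\chi(L)$) to \eqref{L=sum M} and using Lemma \ref{chiMbar=M''} in the form $\chi(M(w''))=\overline{M^*[w'']}$ gives $\chi(L(w))\in\overline{M^*[w]}+\sum_{w''<_{\mathrm w}w}\v^{-1}\ZZ[\v^{-1}]\,\overline{M^*[w'']}$; applying the bar-involution and the previous point, also $\chi(L(w))\in M^*[w]+\sum_{w''<_{\mathrm w}w}\v\ZZ[\v]\,M^*[w'']$. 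Expanding each $M^*$ in the $M'$-basis by Lemma \ref{M'' in M'}, the recursion in its proof shows the $M'[{\bf b}]$ occurring in $M^*[w]$ all satisfy ${\bf b}\ge\phi(w):=(w_1-w_1',\,w_2'-w_2)$ coordinatewise, with the $M'[\phi(w)]$-coefficient equal to $1$ and the rest in $\v\ZZ[\v]$; and $w''<_{\mathrm w}w$ forces $\phi(w'')=\phi(w)+(ru_2,ru_1)$ with $w-w''=C_qu$, $u\ge0$, $u\ne0$, so $\phi(w'')\ge\phi(w)$ and $\phi(w'')\ne\phi(w)$. Hence
$$\chi(L(w))\in M'[\phi(w)]+\bigoplus_{{\bf b}\ge\phi(w),\ {\bf b}\ne\phi(w)}\v\ZZ[\v]\,M'[{\bf b}].$$

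For the seed $\mu_2({\bf X},B)$ the cluster exponents in $M'[{\bf b}]$ are $([b_1]_+,[b_2]_+)$ — the sign opposite to the exponents $([-a_1]_+,[-a_2]_+)$ in $M[{\bf a}]$ — so the partial order entering the analogue of (P2) for this seed is the reverse of the coordinatewise order on ${\bf b}$; the two displayed facts are then exactly (P1), (P2) for $\chi(L(w))$ relative to $M'[\phi(w)]$, and by the uniqueness in the definition of the triangular basis, $\chi(L(w))$ is the triangular basis element with $M'$-leading term $M'[\phi(w)]$. To name this element by its index in the original cluster, compare the monomial support of \eqref{chiLw} with the known form $C[a_1,a_2]=\sum_{p,q}e(p,q)X^{(rp-a_1,\,rq-a_2)}$, $0\le p\le[a_2]_+$, $0\le q\le[a_1]_+$, of \eqref{eq:C=e}: since $a_{0,0;w}=1$ (as $\tilde{\mathcal F}_{0,w}=\mathbf E_{0,w}=\{0\}$ with $\pi$ the identity) and $e(0,[a_1]_+)\ne0$ (the extreme vertex of the Newton polygon, forced by $C[{\bf a}]-M[{\bf a}]\in\bigoplus_{{\bf a}'<{\bf a}}\v\ZZ[\v]M[{\bf a}']$ together with the explicit formula for $M[{\bf a}]$), the substitution $p=v_2$, $q=[a_1]_+-v_1$ matches the supports only for $a_1=w_1'-w_1$ and $a_2=w_2'-w_2+r[w_1'-w_1]_+$. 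Thus $\chi(L(w))=C[w_1'-w_1,\,w_2'-w_2+r[w_1'-w_1]_+]$. The remaining claims are immediate: this index depends only on $w_1'-w_1$ and $w_2'-w_2$, which are unchanged by $w\mapsto{}^\phi w$ and vanish for $w={}^fw$, so $\chi(L({}^\phi w))=\chi(L(w))$ and $\chi(L({}^fw))=C[0,0]=1$; and since the monomials in \eqref{chiLw} for $w$ and ${}^\phi w$ agree termwise, comparing coefficients gives $a_{v,0;w}=a_{v,0;{}^\phi w}$ for all $v$.

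The main obstacle is the middle step: handling the bar-involutions correctly and, above all, confirming that the $M'$-indices produced by the recursion of Lemma \ref{M'' in M'} all dominate $\phi(w)$ in exactly the direction of the triangular-basis partial order for $\mu_2({\bf X},B)$. The index determination in the third paragraph is routine but needs care to check that $p=v_2$, $q=[a_1]_+-v_1$ really is a bijection between the two monomial supports.
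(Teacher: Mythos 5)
Your first two steps track the paper's own proof: bar-invariance of $\chi(L(w))$ from \eqref{chiLw} (via self-duality of $\pi_w(v)$), and the unitriangular expansion of $\chi(L(w))$ over the $M'$-basis obtained by combining \eqref{L=sum M}, Lemma \ref{chiMbar=M''} and Lemma \ref{M'' in M'}; your additional bookkeeping that all correction indices ${\bf b}$ strictly dominate $(w_1-w'_1,w'_2-w_2)$ coordinatewise, and that $w''<_{\rm w}w$ shifts the index by $(ru_2,ru_1)$, is correct (the paper does not even need the order condition at this point). Where you genuinely diverge is the final identification of the index $(a_1,a_2)$: the paper reads it off from the denominator vector of the leading term $M'[w_1-w'_1,w'_2-w_2]$, computed from the denominator vectors of $X_{-1},X_0,X_1,X_2$, together with the known fact that $C[{\bf a}]$ has denominator vector ${\bf a}$; you instead match Newton-polygon corners of \eqref{chiLw} against the general shape \eqref{eq:C=e}.

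That corner-matching step has a real gap. Matching the minimal first exponent only uses pointedness ($e(0,0)=1$, $a_{(0,0),0;w}=1$) and does give $a_1=w'_1-w_1$; but to get $a_2$ you need the maximal second exponent of $C[a_1,a_2]$ to equal $r[a_1]_+-a_2$, i.e.\ $e(p,[a_1]_+)\neq0$ for some $p$ (you assert $e(0,[a_1]_+)\neq0$). This is not an admissible input: for $(a_1,a_2)\in\Phi^{im}_+$ the nonvanishing of exactly this corner coefficient (where $D(0,[a_1]_+)=0$) is part of Conjecture \ref{conj:triangular support} and of Theorem \ref{main theorem}(i), whose proof in \S8 relies on Lemma \ref{chiL(w)=C} through Lemma \ref{lem:e=a}, so invoking it here is circular. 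Nor does your parenthetical derivation from (P2) and \eqref{eq:Ma} work: standard monomials $M[{\bf a}']$ with $a'_1=a_1$ and $a'_2<a_2$ (e.g.\ ${\bf a}'=(a_1,a_2-r)$, taking the branch $X^{(0,-1)}$ of $X_0$ and the top branch of $X_3$) also contain the monomial $X^{(-a_1,\,r[a_1]_+-a_2)}$, and their coefficients in the expansion of $C[{\bf a}]$ lie in $\v\ZZ[\v]$ with no positivity available, so cancellation at that corner cannot be excluded by inspecting $M[{\bf a}]$ alone. Your route therefore pins down $a_1$ but not $a_2$; to close the argument you should replace the corner claim by an identification that does not presuppose the support statement, e.g.\ the paper's denominator-vector comparison.
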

\begin{proof}
It follows from \eqref{chiLw} that $\chi(L(w))$ is bar-invariant. 
It follows from \eqref{L=sum M}, Lemma \ref{chiMbar=M''} and Lemma \ref{M'' in M'} that 
$$
\aligned
&\chi(L(w))-M'[w_1-w'_1,w_2'-w_2]\in \chi(L(w))-M^*[w]+ \sum_{\bf b} \v\mathbb{Z}[\v]M'[{\bf b}]\quad\text{ (by Lemma  \ref{M'' in M'}) }\\
&\quad =\chi(L(w))-\overline{\chi(M(w))}+ \sum_{\bf b} \v\mathbb{Z}[\v]M'[{\bf b}]\quad\text{ (by Lemma  \ref{chiMbar=M''}) }\\
&\quad \subseteq \overline{\chi(M(w))}+\sum\v\mathbb{Z}[\v]\overline{\chi(M(w''))}-\overline{\chi(M(w))}+ \sum_{\bf b} \v\mathbb{Z}[\v]M'[{\bf b}]\quad\text{ (by \eqref{L=sum M})}\\
&\quad = \sum\v\mathbb{Z}[\v] M^*[w'']+\sum_{\bf b} \v\mathbb{Z}[\v]M'[{\bf b}]\quad\text{ (by  Lemma \ref{chiMbar=M''})}\\
&\quad \subseteq \sum\v\mathbb{Z}[\v] (\sum \mathbb{Z}[\v] M'[{\bf b}'])+\sum_{\bf b} \v\mathbb{Z}[\v]M'[{\bf b}]
=\sum_{\bf b} \v\mathbb{Z}[\v]M'[{\bf b}]
\endaligned
$$ 

By the definition of triangular basis, $\chi(L(w))$ is a triangular basis element for the seed $({\bf X}',B')$. Since the triangular basis does not depend on the chosen acyclic seed (proved in \cite{BZ2}), $\chi(L(w))$ is also a triangular basis element for the seed $({\bf X},B)$. Its denominator vector is the same as the denominator vector ${\bf d}$ of $M'[w_1-w'_1,w'_2-w_2]=M^*[{}^\phi w]$. Since the denominator vectors of $X_2, X_0, X_1, X_{-1}$ are $(0,-1), ((0,1), (-1,0), (1,r)$ respectively, we denote ${\bf b}=(b_1,b_2)=(w_1-w'_1,w'_2-w_2)$ and get
${\bf d}=[-b_2]_+(0,-1)+[b_2]_+(0,1)+[b_1]_+(-1,0)+[-b_1]_+(1,r)=(-b_1,b_2+r[-b_1]_+)= (w'_1-w_1,w'_2-w_2+r[w'_1-w_1]_+)$. So $\chi(L(w))=C[ w'_1-w_1,w'_2-w_2+r[w'_1-w_1]_+]$. 
\end{proof}

\section{The proof of main results}
This section is devoted to the proof of the main theorem and its corollaries.

\subsection{Some facts on the BBDG Decomposition}
We have the following two lemmas.

\begin{lemma}\label{BBDG property}
Let $f: Y\to X$ be a proper morphism between complex algebraic varieties, $Y$ be nonsingular,  let $0$ be a point in $X$. 
Let $d=\dim Y$, $Y_0=f^{-1}(0)$ and $d_0=\dim Y_0$. Write the BBDG decomposition in the form
\begin{equation}\label{decomposition 0}f_*IC_Y=\bigoplus_b IC_{0}^{\oplus s_{0,b}}[b]\oplus\bigoplus_{V,L,b} IC_{V}^{\oplus s_{V,L,b}}(L)[b]
\end{equation}
where $V\neq 0$ are subvarieties of $X$,  each $L$ is a local system on an open dense subset of $V$, $s_{0,b}, s_{V,L,b}\in\mathbb{Z}_{\ge0}$ are multiplicities of the corresponding $IC$-sheaves. Then  $\{s_{0,b}\}$ satisfy the following conditions:

{\rm i)}  $s_{0,b}=s_{0,-b}$ for every $b\in\mathbb{Z}$.

{\rm ii)} $s_{0,b}\ge s_{0,b+2}$ for  every $b\in\mathbb{Z}_{\ge 0}$.

{\rm iii)} $s_{0,b}=0$ if $|b|>2d_0-d$. In particular, if $2d_0<d$, then $s_{0,b}=0$ for all $b$.
\end{lemma}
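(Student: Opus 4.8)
## Proof proposal for Lemma 8.1

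The plan is to extract the three properties from the basic structural facts about the BBDG decomposition applied to the proper map $f\colon Y\to X$ with $Y$ nonsingular, combined with the defect of semismallness estimate for the fiber over $0$. Throughout I would work with the normalization $IC_Y=\mathbb{Q}_Y[d]$ (as in the conventions set up in the introduction), so that $f_*IC_Y$ is a self-dual complex in the bounded constructible derived category of $X$.

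First I would establish the symmetry (i). The Verdier duality operator $\mathbb{D}$ fixes $IC_Y=\mathbb{Q}_Y[d]$ since $Y$ is nonsingular (hence $\mathbb{Q}_Y[d]$ is self-dual), and $f$ is proper so $f_*=f_!$ commutes with $\mathbb{D}$; therefore $f_*IC_Y$ is Verdier self-dual. On the right-hand side of \eqref{decomposition 0}, $\mathbb{D}$ sends $IC_0[b]$ to $IC_0[-b]$ (a point is nonsingular, so $IC_0=\mathbb{Q}_0$ is self-dual) and sends each $IC_V(L)[b]$ to $IC_V(L^\vee)[-b]$. Matching the isotypic component supported at $0$ on both sides and using the canonicity of the summands in Theorem \ref{BBDthm}, we get $s_{0,b}=s_{0,-b}$ for all $b$.

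Next, for (ii) I would invoke the Relative Hard Lefschetz theorem, which is part of the decomposition package of \cite{BBD}: a choice of relatively ample line bundle on $Y$ gives, for each $i\ge 0$, an isomorphism between the degree $+i$ and degree $-i$ perverse cohomology summands of $f_*IC_Y$, and more precisely the operator $L$ (cup product) induces a "hard Lefschetz" structure on $\bigoplus_b {}^pH^b(f_*IC_Y)$. Restricting this $\mathfrak{sl}_2$-action to the isotypic piece with support $\{0\}$ shows that the multiplicity sequence $b\mapsto s_{0,b}$ is the dimension sequence of a finite-dimensional $\mathfrak{sl}_2$-representation, hence is symmetric and unimodal; in particular $s_{0,b}\ge s_{0,b+2}$ for $b\ge 0$. (Equivalently, one can cite the fact that the $s_{0,b}$ are obtained from the stalk/costalk cohomology of a self-dual complex at $0$, on which relative Hard Lefschetz acts.)

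Finally, for the support/vanishing bound (iii): the summand $IC_0[b]=\mathbb{Q}_0[b]$ occurs in $f_*IC_Y$ with multiplicity $s_{0,b}$, which can be read off from the costalk (or stalk) at $0$, namely from $H^\ast(i_0^! f_*IC_Y)$ together with the fact that the other summands $IC_V(L)[b]$ with $V\neq 0$ contribute only in a range controlled by the (co)dimension of their supports. Concretely, $\mathcal{H}^k(i_0^* f_*IC_Y)=\mathcal{H}^k(i_0^* \mathbb{Q}_Y[d])\otimes\text{(fiber cohomology)}=H^{k+d}(Y_0,\mathbb{Q})$, which vanishes for $k+d<0$ and for $k+d>2d_0$, i.e. for $|k|$ large the stalk is concentrated in $k\le 2d_0-d$; dually the costalk is concentrated in $k\ge d-2d_0$. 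Since each $\mathbb{Q}_0[b]$ contributes to the stalk in degree $-b$ and to the costalk in degree $b$, and the $V\neq 0$ summands contribute to the stalk at $0$ only in degrees strictly bounded by the support condition $\mathcal{H}^k(i_0^*IC_V(L)[b])=0$ for $k> b-1$ (perversity) while to the costalk only in degrees $k< -b+1$, one gets that $s_{0,b}$ can be nonzero only when both $-b\le 2d_0-d$ and $b\le 2d_0-d$, i.e. $|b|\le 2d_0-d$. In particular if $2d_0<d$ there is no such $b$.

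The main obstacle I anticipate is getting the bound in (iii) cleanly: one must argue that the contributions of the summands $IC_V(L)[b]$ with $V\neq 0$ to the stalk and costalk of $f_*IC_Y$ at the point $0$ do not interfere with reading off $s_{0,b}$ in the relevant (extreme) degrees. This is handled by the standard support and cosupport inequalities for perverse sheaves (for $IC_V(L)$ with $\dim V=\delta>0$, the stalk at a point of $V$ lives in degrees $\le \delta-1$ after the shift, strictly less than the top degree $\delta$ where a skyscraper would sit), so comparing against the fiber cohomology vanishing range isolates the skyscraper multiplicities. Everything else is a formal consequence of self-duality and relative Hard Lefschetz.
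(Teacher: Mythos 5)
Your proposal is correct and follows essentially the same route as the paper: Verdier self-duality (Poincar\'e duality) for (i), relative Hard Lefschetz for (ii), and proper base change identifying the stalk cohomology of $f_*IC_Y$ at $0$ with $H^{*+d}(Y_0)$, which vanishes above degree $2d_0$, for (iii). One small remark: the worry about "isolating" the skyscraper multiplicities from the $V\neq 0$ summands is unnecessary, since the decomposition is a direct sum, so $\mathbb{Q}^{\oplus s_{0,b}}$ is a direct summand of $H^{d-b}(Y_0)$ and vanishes with it — which is exactly how the paper argues (and note the skyscraper $\mathbb{Q}_0[b]$ contributes to both stalk and costalk in degree $-b$, not $b$; combining the stalk bound with the symmetry (i) then gives $|b|\le 2d_0-d$).
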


\begin{proof}
See \cite{deCM} for i) Poincare duality and ii) Relative Hard Lefschez. For iii),  let $b\ge 0$. From \eqref{decomposition 0} we get
$$ R^{d+b}f_*\mathbb{Q}_Y=R^bf_*\mathbb{Q}_Y[d]=\mathbb{Q}_0^{s_{0,-b}} \oplus \textrm{other terms}$$
Since the proper base change theorem implies $(R^{d+b}f_*\mathbb{Q}_Y)_0=H^{d+b}(Y_0)$, the above implies (we denote the cohomology to be $\mathbb{Q}$-coefficient)
$$H^{d+b}(Y_0)=\mathbb{Q}_0^{s_{0,-b}} \oplus \textrm{other terms}$$
If $d+b>2\dim Y_0=2d_0$, then $H^{d+b}(Y_0)=0$, and thus
its direct summand $\mathbb{Q}_0^{s_{0,-b}}$ must vanish. Thus $s_{0,b}=s_{0,-b}=0$ if $b>2d_0-d$.
\end{proof}

\begin{lemma}\label{max-degree IC}
Let $f: Y\to X$ be a birational proper morphism between complex algebraic varieties, $Y$ be nonsingular,   let $0$ be a point in $X$.  
Let $d=\dim Y=\dim X$, $Y_0=f^{-1}(0)$ and $d_0=\dim Y_0$. 
Then 
$$H^k(i_0^*IC_X)=0, \textrm{ if } k>2d_0-d$$
\end{lemma}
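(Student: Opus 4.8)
\textbf{Proof proposal for Lemma \ref{max-degree IC}.}

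The plan is to deduce this from the decomposition theorem applied to $f$, using that $f$ is birational to pin down the summand supported at $0$. Since $f:Y\to X$ is proper and birational with $Y$ nonsingular and $\dim Y=\dim X=d$, the BBDG decomposition has the shape
$$f_*IC_Y=IC_X\oplus\bigoplus_b IC_0^{\oplus s_{0,b}}[b]\oplus\bigoplus_{V\neq 0, L, b} IC_V^{\oplus s_{V,L,b}}(L)[b],$$
where the leading summand $IC_X$ appears with multiplicity one and shift zero because $f$ is birational (on the open dense locus where $f$ is an isomorphism, $f_*IC_Y$ and $IC_X$ agree, and by the uniqueness/support considerations the $IC_X$-summand occurs exactly once with no shift). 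Here I have separated out the summands supported precisely at the point $0$; this is the same bookkeeping as in Lemma \ref{BBDG property}, whose conclusion iii) tells us $s_{0,b}=0$ whenever $|b|>2d_0-d$.

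Next I would apply $i_0^*$ to both sides and take cohomology in degree $k$. On the left, proper base change gives $H^k(i_0^*f_*IC_Y)=H^k(i_0^*(\mathbb{Q}_Y[d]))=H^{k+d}(Y_0,\mathbb{Q})$ (using $IC_Y=\mathbb{Q}_Y[d]$ since $Y$ is nonsingular). For $k>2d_0-d$ we have $k+d>2d_0=2\dim Y_0$, so this vanishes. On the right, the summands $IC_0[b]$ contribute $H^{k}(i_0^*IC_0[b])=H^{k+b}(\mathrm{pt})$, which vanishes unless $k=-b$; but for those $b$ with $|b|>2d_0-d$ we already know $s_{0,b}=0$, so in the range $k>2d_0-d$ (equivalently $-b=k>2d_0-d$, i.e. $b<-(2d_0-d)$, i.e. $|b|>2d_0-d$) these terms are absent. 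The summands $IC_V(L)[b]$ with $V\neq 0$ and $0\in V$ contribute, after $i_0^*$, complexes concentrated in degrees $\geq -\dim V +1 \geq -(d-1)+1$... more precisely, by the support condition defining intersection cohomology, $H^k(i_0^*IC_V(L)[b])=0$ for $k> -\!\dim V+b+(\text{codim stuff})$; the cleanest statement is that each such summand, restricted to $0$ and placed in the right degree, contributes to $H^{k+d}(Y_0)$ and thus also vanishes for $k+d>2d_0$. The point is: every summand on the right contributes to $H^{k+d}(Y_0,\mathbb{Q})$ via proper base change applied summand-by-summand, and for $k>2d_0-d$ that group is zero.

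So the cleanest route avoids dissecting the right-hand side at all: $i_0^*IC_X$ is a \emph{direct summand} of $i_0^*f_*IC_Y$ (since $IC_X$ is a direct summand of $f_*IC_Y$, and $i_0^*$ is additive), hence $H^k(i_0^*IC_X)$ is a direct summand of $H^k(i_0^*f_*IC_Y)=H^{k+d}(Y_0,\mathbb{Q})$, which vanishes for $k>2d_0-d$. Therefore $H^k(i_0^*IC_X)=0$ for $k>2d_0-d$, as claimed. The only genuine input is the birationality of $f$, which guarantees $IC_X$ really is a summand of $f_*IC_Y$; I expect that verification — that the birational map forces the $IC_X$-summand to appear with shift $0$ and multiplicity $1$ — to be the main point to get right, and it follows from the uniqueness of the decomposition together with the fact that $f$ is an isomorphism over a dense open subset of $X$ (so restricting the decomposition there isolates $IC_X$).
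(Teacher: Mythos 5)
Your proposal is correct and, in its final "cleanest route" form, is essentially the paper's own argument: birationality forces $IC_X$ to appear as a direct summand of $f_*IC_Y$, then proper base change identifies $H^k(i_0^*f_*IC_Y)$ with $H^{k+d}(Y_0,\mathbb{Q})$, which vanishes for $k+d>2d_0$, so the direct summand $H^k(i_0^*IC_X)$ vanishes too. The earlier summand-by-summand bookkeeping in your write-up is unnecessary, as you yourself note, but it does no harm.
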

\begin{proof}
Since $f$ is birational, we can write the BBDG decomposition in the form
\begin{equation}\label{decomposition V}
f_*IC_Y=IC_{X}\oplus\bigoplus_{V,L,b} IC_{V}^{s_{V,L,b}}(L)[b]
\end{equation}
where $V \subsetneq X$ are proper subvarieties and $L$ are local system on an open dense subset of $V$.
Similar to the proof of Lemma \ref{BBDG property}, we have 
$$ H^{d+k}(Y_0)=H^k(i_0^*IC_X) \oplus \textrm{other terms}$$
If $k>2d_0-d$, then $d+k>2d_0$, thus the left side vanishes, thus the direct summand  
$H^k(i_0^*IC_X)$ of the right side also vanishes. 
\end{proof}

\subsection{$P_-(v,w)$}
We see in  \eqref{pi=sum aIC} that
$$
\pi_*(IC_{\widetilde{\mathcal{F}}_{v,w}})=\sum_{v'\in \D(w)}a_{v,v';w}IC_w(v')
$$
Apply $\sum_k t^{-k} \dim H^k(i^!_0(-))$ to both sides and then apply \eqref{Mpi} to the left side, we obtain
\begin{equation}\label{eq:tbb=atH}
t^{d_{v,w}-\tilde{d}_{v,w}}{w'_2+rv_1 \brack v_2}_{t}{w'_1\brack v_1}_{t}
=\sum_{v'\in \D(w)}a_{v,v';w}\sum_k t^{-k}\dim H^k(i^!_0 IC_w(v')).
\end{equation}

\begin{remark}
Note that  $H^k(i_0^! IC_w(v))=H^{-k}(i_0^* IC_w(v))$. 
Indeed, let $x$ be a point in $X$, \; $i:x\to X$ be the natural injective morphism. 
Let $\mathbb{D}_X$ be the Verdier duality functor (see \cite[\S2.8]{Achar}). 
Let $\mathcal{F}\in D_c^b(X)$ be self-dual: $\mathbb{D}_X(\mathcal{F})=\mathcal{F}$. (For example, 
the intersection cohomology complex $\mathcal{F}=IC_V\in D_c^b(X)$ supported on $V$ with a trivial local system satisfies the property; for a proof of this property, see \cite[Lemma 3.3.13]{Achar}).)
Then by \cite[(2.8.3)]{Achar},  $i^!\mathcal{F}=\mathbb{D}_X(i^*\mathbb{D}_X(\mathcal{F}))=\mathbb{D}_X(i^*\mathcal{F})$. From here we see
$H^k(i^!F)=H^{-k}(i^*F)$, thus 
$$
\sum_k t^{-k}\dim H^k(i^!\mathcal{F})=
\sum_k t^{k}\dim H^k(i^*\mathcal{F})
$$
\end{remark}

Based on this remark, we define a Laurent polynomial  
$$P_-(v,w)=\sum_k t^{k}\dim H^k(i_0^* IC_w(v))\in\mathbb{Z}[t^\pm]$$
and rewrite \eqref{eq:tbb=atH} as 
\begin{equation}\label{eq:sumAP-}
\sum_{v'\in \D(w)} a_{v,v';w} P_-(v',w)=t^{d_{v,w}-\tilde{d}_{v,w}}{w'_2+rv_1 \brack v_2}_{t}{w'_1\brack v_1}_{t}
\end{equation}
In the special case when $v\in\D(w)$, using the fact $a_{v,v;w}=1$ we get 
\begin{equation}\label{eq:variation of h=something-h}
P_-(v,w)=
t^{d_{v,w}-\tilde{d}_{v,w}}{w'_2+rv_1 \brack v_2}_{t}{w'_1\brack v_1}_{t}
-\sum_{v'\in \D(w)\setminus\{v\}} a_{v,v';w} P_-(v',w),\textrm{ if $v\in\D(w)$}.
\end{equation}

Recall 
$v^{\rm swap}=(v_2,v_1), w^{\rm swap}=(w'_2,w_2,w_1',w_1)$. Define
$$\aligned
&f(v,w)=2d(v,w)-\tilde{d}(v,w) = -v_1^2+rv_1v_2-v_2^2+v_1(w'_1-w_1)+v_2(w'_2-w_2)\\
&f^{\rm swap}(v,w)=f(v^{\rm swap},w^{\rm swap})=
-v_1^2+rv_1v_2-v_2^2+v_1(w_1-w'_1)+v_2(w_2-w'_2)\\
&g(v,w)=-v_1^2-rv_1v_2-v_2^2+v_1(w_1'-w_1)+v_2(w_2-w_2')\\
\endaligned
$$

\begin{lemma}\label{lem:upperbound for P}
Assume $v\in\D(w)$.

If $\tilde{d}(v,w)=0$, then $P_-(v,w)=1$. This happens if and only if $v=(0,0)$.

If $\tilde{d}(v,w)>0$, then the min-degree term of $P_-(v,w)$ is $t^{-\tilde{d}(v,w)}$ (with coefficient 1); the max-degree of $P_-(v,w)$ is $\le \min\big(-1,f(v,w),f^{swap}(v,w), g(v,w)\big)$. 

\end{lemma}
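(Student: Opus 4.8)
The plan is to read both assertions off from the support conditions for the intersection cohomology complex $IC_w(v)=IC_{\mathbf{E}_{v,w}}$ together with Lemma \ref{max-degree IC}, applied to the three desingularizations of $\mathbf{E}_{v,w}$ built in \S\ref{subsection:Fswap}--\S\ref{subsection:Gvw}. Write $X=\mathbf{E}_{v,w}$ and $D=\tilde d(v,w)$; since $(v,w)$ is $\ell$-dominant we have $\bar v=v$, so by Proposition \ref{prop:fiber} the variety $X$ is irreducible of dimension $D$, and $0$ lies in the stratum $\mathbf{E}^\circ_{(0,0),w}=\{0\}$. Recall $IC_X$ is normalized so that $IC_X=\mathbb{Q}_X[D]$ on the smooth locus; in particular $\mathcal{H}^k(IC_X)=0$ for $k<-D$. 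If $D=0$ then $X=\{0\}$, $IC_X=\mathbb{Q}_{\{0\}}$ and $P_-(v,w)=1$; and using the dimension formula of Proposition \ref{prop:fiber} together with $\ell$-dominance---which forces each of the four summands $v_1w_1,\ v_1(w_1'-v_1),\ v_2w_2,\ v_2(w_2'+rv_1-v_2)$ of $\tilde d(v,w)$ to be nonnegative---a short case analysis shows $D=\tilde d(v,w)=0$ exactly when $v=(0,0)$. For the minimal degree in general, I would use that $IC_X$ is a direct summand of $\pi_*IC_{\tilde{\mathcal{F}}_{v,w}}$, and that $\pi$ is proper and surjective with connected fibres (Lemma \ref{lem:ab}(b)), so $\mathcal{H}^{-D}(\pi_*IC_{\tilde{\mathcal{F}}_{v,w}})=\pi_*\mathbb{Q}_{\tilde{\mathcal{F}}_{v,w}}=\mathbb{Q}_X$; since $\mathbb{Q}_X$ is indecomposable as a sheaf ($X$ irreducible) while $\mathcal{H}^{-D}(IC_X)$ is nonzero on the smooth locus, this forces $\mathcal{H}^{-D}(IC_X)=\mathbb{Q}_X$. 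Taking stalks at $0$ gives $H^{-D}(i_0^*IC_w(v))=\mathbb{Q}$, while $\mathcal{H}^k(IC_X)=0$ for $k<-D$; hence $P_-(v,w)\ne0$ and its minimal-degree term is $t^{-D}$ with coefficient $1$.

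Now assume $D>0$, so $v>(0,0)$ and $0$ lies in a stratum of positive codimension. The bound $-1$ is the support axiom for $IC_X$: by \cite[Lemma 3.3.11]{Achar} together with the identity $H^k(i_0^!IC_X)=H^{-k}(i_0^*IC_X)$ recorded in the remark before the lemma, $H^k(i_0^*IC_w(v))=0$ for $k\ge0$. The three remaining bounds all come from Lemma \ref{max-degree IC}, applied to three proper birational morphisms onto $X$, each from a nonsingular variety of dimension $D=\dim X$, for which one only needs the dimension of the fibre over $0$: namely $\pi:\tilde{\mathcal{F}}_{v,w}\to X$ (birational by Proposition \ref{prop:fiber}(c), with $\pi^{-1}(0)=\mathcal{F}_{v,w}$ of dimension $d(v,w)$ by Lemma \ref{EF}(b)); $\pi^{\rm swap}:\tilde{\mathcal{F}}_{v^{\rm swap},w^{\rm swap}}\stackrel{\iota^{-1}}{\longrightarrow}\mathbf{E}_{v^{\rm swap},w^{\rm swap}}=X$ (which makes sense because $(v^{\rm swap},w^{\rm swap})$ is again $\ell$-dominant, is birational, has zero fibre $\mathcal{F}_{v^{\rm swap},w^{\rm swap}}$ of dimension $d(v^{\rm swap},w^{\rm swap})$, and satisfies $P_-(v,w)=P_-(v^{\rm swap},w^{\rm swap})$ since the isomorphism $\iota$ fixes $0$); and $\pi':\tilde{\mathcal{G}}_{v,w}\to X$ (nonsingular source of dimension $\tilde d(v,w)$ by Proposition \ref{maps between FGH}(a), birational because by Proposition \ref{maps between FGH}(c) all its fibres over the top stratum $\mathbf{E}^\circ_{v,w}$ are single points, and with $\dim(\pi')^{-1}(0)=v_1(w_1'-v_1)+v_2(w_2-v_2)=\dim\mathcal{G}_{v,w}$ from the fibre formulas for $p_1$ and $p_2'$). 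Lemma \ref{max-degree IC} then bounds the maximal degree of $P_-(v,w)$ by $2d(v,w)-\tilde d(v,w)=f(v,w)$, by $2d(v^{\rm swap},w^{\rm swap})-\tilde d(v^{\rm swap},w^{\rm swap})=f^{\rm swap}(v,w)$, and by $2\dim\mathcal{G}_{v,w}-\tilde d(v,w)=g(v,w)$, respectively; combined with the bound $-1$ this gives the asserted inequality.

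The one genuinely non-formal point is verifying that $\pi'$ is \emph{birational} rather than merely a dimension-preserving surjection: this uses the stratified fibre description of Proposition \ref{maps between FGH}(c) to see that the fibres over the open stratum collapse to points, together with the identity $\dim\tilde{\mathcal{G}}_{v,w}=\tilde d(v,w)$. Once the three zero-fibre dimensions are correctly identified, checking that $2d(v,w)-\tilde d(v,w)=f(v,w)$, $2d(v^{\rm swap},w^{\rm swap})-\tilde d(v^{\rm swap},w^{\rm swap})=f^{\rm swap}(v,w)$, and $2\dim\mathcal{G}_{v,w}-\tilde d(v,w)=g(v,w)$ is a routine computation with the quadratic dimension formulas.
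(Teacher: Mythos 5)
Your argument is correct and follows essentially the same route as the paper: the $\tilde d(v,w)=0$ case, the normalization of $IC$ for the minimal-degree term, the support condition for the bound $-1$, and Lemma \ref{max-degree IC} applied to the three desingularizations $\pi$, $\pi^{\rm swap}$, $\pi'$ to get the bounds $f(v,w)$, $f^{\rm swap}(v,w)$, $g(v,w)$. The only (harmless) deviations are that you derive $\mathcal{H}^{-\tilde d(v,w)}(IC_{\mathbf{E}_{v,w}})\cong\mathbb{Q}_{\mathbf{E}_{v,w}}$ through the decomposition of $\pi_*IC_{\tilde{\mathcal{F}}_{v,w}}$ rather than quoting this standard fact directly, and that you spell out the birationality of $\pi'$ and the zero-fibre dimensions, which the paper leaves implicit.
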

\begin{proof}
First note that $\bar{v}=v$ by Lemma \ref{lemma:vbar} and the assumption that $(v,w)$ is $l$-dominant. So $\dim \mathbf{E}_{v,w}=\tilde{d}(v,w)$ by Proposition \ref{prop:fiber}.

If $\tilde{d}(v,w)=0$, then  $\mathbf{E}_{v,w}=\{0\}$ is a point,
$P_-(v,w)=\sum_k t^{k}\dim H^k(i_0^*\mathbb{Q}_0)=1$. Solving $\tilde{d}(v,w)=0$, that is, $v_1(w'_1-v_1)+v_2(w_2+rv_1-v_2)=0$, and using the condition that $(v,w)$ is $l$-dominant, we must have $v=(0,0)$.

For the rest of the proof we assume $\tilde{d}(v,w)>0$.


For the min-degree of $P_-(v,w)$,  \cite[Exercise 3.10.2]{Achar}  asserts that for an irreducible variety $X$ of dimension $d$, $\tau^{\le-d} IC_X\cong \mathbb{Q}_X[d]$; thus $H^{k}(IC_X)=\tau^{\ge k}\tau^{\le k} IC_X=0$ for $k<-d$ and
$$H^{-d}(IC_X)[d]\cong\tau^{\ge-d}\tau^{\le-d} IC_X\cong \tau^{\ge-d}(\mathbb{Q}_X[d]) \cong \mathbb{Q}_X[d]$$
Thus the min-degree of $P_-(v,w)$ is $-\tilde{d}(v,w)$, and 
since $\mathbf{E}_{v,w}$ is irreducible, we get
$$\text{`` The coefficient of $t^{-\tilde{d}(v,w)}$ in $P(v,w)$ '' $ = \dim H^{-d(v,w)}(i_0^*(IC_w(v))=\dim (i_0^*\mathbb{Q}_{\mathbf{E}_{v,w}})=1$}.$$ 

For the max-degree of $P_-(v,w)$, it is $<0$ by the support condition of intersection cohomology complex \cite[\S3.1 (20)]{deCM}.

Applying Lemma \ref{max-degree IC} to the morphism $\pi: \tilde{\mathcal{F}}_{v,w}\to \mathbf{E}_{v,w}$,  we conclude the max-degree is no larger than $2d(v,w)-\tilde{d}(v,w)=f(v,w)$.

Applying Lemma \ref{max-degree IC} to the morphism $\pi^{\rm swap}: \tilde{\mathcal{F}}^{\rm swap}_{v,w}\to \mathbf{E}_{v,w}$  introduced in \S\ref{subsection:Fswap},  we conclude the max-degree is no larger than $2d(v^{\rm swap},w^{\rm swap})-\tilde{d}(v^{\rm swap},w^{\rm swap})=f^{\rm swap}(v,w)$. 

Applying Lemma \ref{max-degree IC} to the morphism  $\pi': \tilde{\mathcal{G}}_{v,w} \to \mathbf{E}_{{v},w}$ introduced in \S\ref{subsection:Gvw},  we conclude the max-degree is no larger than $2\dim\mathcal{G}_{v,w}-\dim\tilde{\mathcal{G}}(v,w)=2(v_1(w'_1-v_1)+v_2(w_2-v_2))-(v_1(w_1+w'_1-v_1)+v_2(w_2+w'_2+rv_1-v_2))=g(v,w)$. 
\end{proof}

\subsection{Proof of the main theorem (Theorem \ref{main theorem}) and its corollaries}
Recall that in \eqref{eq:C=e} we use $e(p,q)$ to denote the coefficients in the triangular basis element $C[a_1,a_2]=\sum e(p,q)X^{rp-a_1,rq-a_2}$.  
We can express $e(p,q)$ in terms of $a_{v,0;w}$ as follows.
\begin{lemma}\label{lem:e=a}
Fix $(a_1,a_2)\in\mathbb{Z}^2$, \  $(p,q)\in\mathbb{Z}_{\ge0}^2$. 
Assume that $w,v$ satisfy 
$$w'_1-w_1=a_1,\;  w'_2-w_2=a_2-r[a_1]_+, \; v=([a_1]_+-q,\, p)$$ 
Then
$$e(p,q)=a_{v,0;w}(\v^r).$$
Moreover, if $a_1\ge0$, then 
$$D(p,q)=rf(v,w)$$
\end{lemma}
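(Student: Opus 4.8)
The plan is to establish Lemma \ref{lem:e=a} by translating the identity $\chi(L(w))=C[w'_1-w_1,\,w'_2-w_2+r[w'_1-w_1]_+]$ from Lemma \ref{chiL(w)=C} into the coordinates of \eqref{eq:C=e}, and then to verify the degree identity $D(p,q)=rf(v,w)$ by a direct computation. First I would note that under the hypotheses $w'_1-w_1=a_1$ and $w'_2-w_2=a_2-r[a_1]_+$, Lemma \ref{chiL(w)=C} gives $\chi(L(w))=C[a_1,a_2]$ (the second slot becomes $(a_2-r[a_1]_+)+r[a_1]_+=a_2$). Then I would compare the monomial expansion \eqref{chiLw}, namely $\chi(L(w))=\sum_{v'} a_{v',0;w}(\v^r)X^{(w_1-w'_1+rv'_2,\,w_2-w'_2-rv'_1)}$, against the defining expansion \eqref{eq:C=e}, $C[a_1,a_2]=\sum_{p,q} e(p,q)X^{(rp-a_1,\,rq-a_2)}$. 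Matching exponents coordinate-by-coordinate: $w_1-w'_1+rv'_2 = -a_1+rv'_2 = rp-a_1$ forces $v'_2=p$, and $w_2-w'_2-rv'_1 = -(a_2-r[a_1]_+)-rv'_1 = r[a_1]_+-a_2-rv'_1 = rq-a_2$ forces $v'_1=[a_1]_+-q$. Hence the summand of \eqref{chiLw} indexed by $v'=([a_1]_+-q,\,p)=v$ is exactly the summand of \eqref{eq:C=e} indexed by $(p,q)$, so $e(p,q)=a_{v,0;w}(\v^r)$, as claimed. (One should remark that when $v'\notin\mathbb{Z}_{\ge0}^2$ the term $a_{v',0;w}$ vanishes by convention, which is consistent with the constraints $0\le p\le a_2$, $0\le q\le a_1$.)

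For the second assertion, assume $a_1\ge 0$, so $[a_1]_+=a_1$ and $v=(a_1-q,\,p)$, $v_1=a_1-q$, $v_2=p$. I would substitute into the formula $f(v,w)=-v_1^2+rv_1v_2-v_2^2+v_1(w'_1-w_1)+v_2(w'_2-w_2)$. Using $w'_1-w_1=a_1$ and $w'_2-w_2=a_2-ra_1$, this becomes
\[
f(v,w)=-(a_1-q)^2+r(a_1-q)p-p^2+(a_1-q)a_1+p(a_2-ra_1).
\]
Expanding $-(a_1-q)^2+(a_1-q)a_1 = -(a_1-q)(a_1-q-a_1) = (a_1-q)q = a_1q-q^2$, and $r(a_1-q)p - rpa_1 = -rpq$, the whole expression collapses to $a_1q-q^2-p^2-rpq+pa_2 = a_1q+a_2p-p^2-rpq-q^2$, which is precisely $D(p,q)/r$ by \eqref{eq:supp} with $b=c=r$. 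Multiplying by $r$ gives $D(p,q)=rf(v,w)$.

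I do not anticipate any serious obstacle here: the content of the lemma is entirely bookkeeping, converting between the quiver-variety indexing $(v,w)$ and the cluster-algebra indexing $(a_1,a_2,p,q)$, and the degree identity is a one-line algebraic simplification. The only point requiring mild care is the consistent handling of $[a_1]_+$ in the general (possibly $a_1<0$) case of the first assertion versus the simplification in the second assertion where $a_1\ge 0$ is assumed; I would make sure the exponent-matching argument is written so it applies verbatim with $[a_1]_+$ in place of $a_1$ throughout the first part, and only specialize to $[a_1]_+=a_1$ when computing $D(p,q)=rf(v,w)$. The result then feeds directly into the proof of Theorem \ref{main theorem}(i): combining $e(p,q)=a_{v,0;w}(\v^r)$ with Lemma \ref{lem:upperbound for P} and the decomposition \eqref{eq:sumAP-} will pin down exactly when $a_{v,0;w}\ne 0$ and its top degree.
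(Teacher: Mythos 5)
Your proposal is correct and follows essentially the same route as the paper: invoke Lemma \ref{chiL(w)=C} together with the expansion \eqref{chiLw}, match exponents against \eqref{eq:C=e} to read off $e(p,q)=a_{v,0;w}(\v^r)$ with $v=([a_1]_+-q,p)$, and verify $D(p,q)=rf(v,w)$ by direct substitution. The exponent-matching and the algebraic simplification both check out, so no changes are needed.
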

\begin{proof}
By Lemma \ref{chiL(w)=C} and \eqref{chiLw}, 
$$\aligned
&C[a_1,a_2]=C[w'_1-w_1,w'_2-w_2+r[w'_1-w_1]_+]\\
&=\chi(L(w))=\sum_{v\in\mathbb{Z}_{\ge0}^2} a_{v,0;w}(\v^r)X^{(w_1-w'_1+rv_2,w_2-w'_2-rv_1)}
\endaligned
$$
Therefore,
$$
\sum_{p,q} e(p,q)X^{(rp-a_{1},rq-a_{2})}=\sum_{v_1,v_2} a_{v,0;w}(\v^r)X^{(rv_2-a_1,r([a_1]_+-v_1)-a_2)}
$$
Comparing the coefficients gives us the first statement.

To prove the second statement: for $a_1\ge0$, $D(p,q)=D(v_2,a_1-v_1)
=r(-v_1^2+rv_1v_2-v_2^2+a_1v_1+(a_2-ra_1)v_2)
=r(-v_1^2+rv_1v_2-v_2^2+v_1(w'_1-w_1)+v_2(w'_2-w_2))=rf(v,w)$. 
\end{proof}

\begin{lemma}\label{lem:v1>a1 then a=0}
If $v_1>[w'_1-w_1]_+$,  then $a_{v,0;w}=0$.
\end{lemma}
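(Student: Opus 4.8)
The plan is to translate the claim into geometry via Lemma~\ref{lem:e=a} and then invoke Lemma~\ref{lem:v1>a1 then a=0} needs... wait, that IS the lemma.

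The plan is to identify the multiplicity $a_{v,0;w}$ with a single coefficient of a triangular basis element, and then invoke the already-known coarse support of that element. First I would set $a_1=w'_1-w_1$ and $a_2=w'_2-w_2+r[w'_1-w_1]_+$, so that Lemma~\ref{chiL(w)=C} gives $\chi(L(w))=C[a_1,a_2]$, and I would record the identities $w_1-w'_1=-a_1$ and $w_2-w'_2=-a_2+r[a_1]_+$. Substituting these into \eqref{chiLw} expresses $\chi(L(w))$ as
\[
\sum_{v\in\mathbb{Z}_{\ge0}^2} a_{v,0;w}(\v^r)\,X^{\bigl(rv_2-a_1,\;r([a_1]_+-v_1)-a_2\bigr)}.
\]

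Next I would compare this with the expansion coming from \eqref{eq:C=e} (with $b=c=r$),
\[
C[a_1,a_2]=\sum_{0\le p\le[a_2]_+}\;\sum_{0\le q\le[a_1]_+} e(p,q)\,X^{(rp-a_1,\;rq-a_2)},
\]
the key point being that the $(p,q)$-sum is restricted to the ``triangular box'' $0\le p\le[a_2]_+$, $0\le q\le[a_1]_+$ --- this is the established shape of the triangular basis from \cite{LLRZ}, which is weaker than, and logically prior to, Theorem~\ref{main theorem}. Since $\{X^{(m,n)}:(m,n)\in\mathbb{Z}^2\}$ is a $\mathbb{Z}[\v^{\pm1}]$-basis of $\cT$ and the assignment $v\mapsto\bigl(rv_2-a_1,\,r([a_1]_+-v_1)-a_2\bigr)$ is injective on $\mathbb{Z}^2$, comparing coefficients yields $a_{v,0;w}(\v^r)=e(v_2,[a_1]_+-v_1)$, with the convention that the right-hand side is $0$ whenever $(v_2,[a_1]_+-v_1)$ falls outside that box. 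When $v_1>[w'_1-w_1]_+=[a_1]_+$ one has $[a_1]_+-v_1<0$, so the coefficient vanishes; hence $a_{v,0;w}(\v^r)=0$ and therefore $a_{v,0;w}=0$.

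I do not expect a genuine obstacle: the argument is essentially bookkeeping, and the only points that need care are the exponent dictionary translating the $v$-indexing in \eqref{chiLw} into the $(p,q)$-indexing in \eqref{eq:C=e}, and the routine observation that coefficient comparison is legitimate because we work in the monomial $\mathbb{Z}[\v^{\pm1}]$-basis of the quantum torus. A purely geometric alternative would be to note that $a_{v,0;w}$ is the multiplicity of $IC_{\{0\}}$ in $\pi_*(IC_{\widetilde{\mathcal{F}}_{v,w}})$ and apply Lemma~\ref{BBDG property}(iii) to force its vanishing once the zero fibre $\mathcal{F}_{v,w}$ is small enough; but this only covers part of the relevant range of $w$, so the coefficient-comparison route above is the clean one and the one I would write up.
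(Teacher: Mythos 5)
Your argument is correct and is essentially the paper's own proof: the identity $a_{v,0;w}(\v^r)=e(v_2,[a_1]_+-v_1)$ that you re-derive by comparing coefficients in the quantum-torus basis is exactly the content of Lemma~\ref{lem:e=a}, and the paper then concludes, as you do, that $v_1>[w'_1-w_1]_+$ forces $q=[a_1]_+-v_1<0$, which lies outside the support range $0\le q\le[a_1]_+$ imposed by \eqref{eq:C=e}, so $e(p,q)=0$ and hence $a_{v,0;w}=0$. The only difference is presentational (you redo the coefficient comparison inline rather than citing Lemma~\ref{lem:e=a}), so no further changes are needed.
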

\begin{proof}
By Lemma \ref{lem:e=a}, $a_{v,0;w}(\v^r)=e(p,q)$ for $v=([a_1]_+-q,p)=([w'_1-w_1]_+-q,p)$. For $v_1>[w'_1-w_1]_+$, we have $q<0$, thus $e(p,q)=0$.  
\end{proof}

If $r\ge3$, 
we define $$r_{\rm small}=(r-\sqrt{r^2-4})/2,\quad r_{\rm big}=(r+\sqrt{r^2-4})/2$$ to be the two roots of $x^2-rx+1=0$. Note that $1/r<r_{\rm small}<r_{\rm big}<r$. 

\begin{lemma}\label{lem:f=0}
Let $r\ge3$, $w=(0,w'_1,w_2,0)$ where $r_{\rm small}<w_2/w'_1<r_{\rm big}$. Consider the curve $C_f=\{v\in\mathbb{R}_{\ge0}^2\ |\ v_1\le w'_1, f(v,w)=0, v\neq(w'_1,0)\}$. Then $C_f$ is concave upward and with two endpoints $(0,0)$ and $(w'_1,rw'_1-w_2)$. The slope of tangent of $C_f$ is increasing as $v_1$ increases from $0$ to $w'_1$; the slope at $(0,0)$ is $w'_1/w_2 \ (>r_{\rm small})$, and the slope   $(w'_1,rw'_1-w_2)$ is $r-\frac{w'_1}{rw'_1-w_2} \ (<r_{\rm big})$.
\end{lemma}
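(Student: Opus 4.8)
\textbf{Proof proposal for Lemma \ref{lem:f=0}.}

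The plan is to treat $f(v,w) = 0$ as defining $v_2$ implicitly as a function of $v_1$ on the interval $[0,w'_1]$, and to extract geometric information by differentiating. With $w = (0,w'_1,w_2,0)$ the formula $f(v,w) = -v_1^2 + rv_1v_2 - v_2^2 + v_1w'_1 - v_2w_2$ simplifies, so that $f(v,w) = 0$ becomes the quadratic relation
\begin{equation}\label{eq:quadratic-in-v2}
v_2^2 + (w_2 - rv_1)v_2 + (v_1^2 - v_1w'_1) = 0
\end{equation}
in $v_2$. First I would solve \eqref{eq:quadratic-in-v2} by the quadratic formula, check that the discriminant $(rv_1 - w_2)^2 - 4(v_1^2 - v_1 w'_1) = (r^2-4)v_1^2 + (4w'_1 - 2rw_2)v_1 + w_2^2$ is nonnegative on $[0, w'_1]$ (using $r \ge 3$ so $r^2 - 4 > 0$, and checking the value at the endpoints and, if needed, at the vertex of this downward-opening... actually upward-opening in $v_1$ parabola), and identify the branch of the curve $C_f$: the endpoints $(0,0)$ and $(w'_1, rw'_1 - w_2)$ are obtained by substituting $v_1 = 0$ and $v_1 = w'_1$ into \eqref{eq:quadratic-in-v2}, which gives $v_2(v_2 + w_2) = 0$ and $v_2^2 - (rw'_1 - w_2)v_2 = 0$ respectively; the relevant roots are $v_2 = 0$ and $v_2 = rw'_1 - w_2$, while the other root $v_2 = -w_2 < 0$ at $v_1 = 0$ forces us to pick the correct branch (and the excised point $(w'_1, 0)$ is the other root at $v_1 = w'_1$, explaining the condition $v \ne (w'_1,0)$ in the statement).

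Next, to compute the slope of the tangent I would differentiate $f(v,w) = 0$ implicitly: $(-2v_1 + rv_2 + w'_1) + (rv_1 - 2v_2 - w_2)\frac{dv_2}{dv_1} = 0$, so $\frac{dv_2}{dv_1} = \frac{2v_1 - rv_2 - w'_1}{rv_1 - 2v_2 - w_2}$. Evaluating at $(0,0)$ gives slope $\frac{-w'_1}{-w_2} = w'_1/w_2$, and at $(w'_1, rw'_1 - w_2)$ gives $\frac{2w'_1 - r(rw'_1 - w_2) - w'_1}{rw'_1 - 2(rw'_1-w_2) - w_2} = \frac{w'_1 - r^2w'_1 + rw_2}{-rw'_1 + w_2} = \frac{(r^2-1)w'_1 - rw_2}{rw'_1 - w_2}$; a short manipulation rewrites this as $r - \frac{w'_1}{rw'_1 - w_2}$. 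To see $w'_1/w_2 > r_{\rm small}$, note this is equivalent to $w_2/w'_1 < 1/r_{\rm small} = r_{\rm big}$, which is exactly the right-hand hypothesis; to see $r - \frac{w'_1}{rw'_1 - w_2} < r_{\rm big}$, i.e. $\frac{w'_1}{rw'_1 - w_2} > r - r_{\rm big} = r_{\rm small}$, i.e. $w'_1 > r_{\rm small}(rw'_1 - w_2)$, i.e. $(1 - rr_{\rm small})w'_1 > -r_{\rm small}w_2$; using $r r_{\rm small} = r_{\rm small}^2 + 1$ (since $r_{\rm small}$ is a root of $x^2 - rx + 1$), we get $1 - rr_{\rm small} = -r_{\rm small}^2$, so the inequality reads $r_{\rm small}^2 w'_1 < r_{\rm small}w_2$, i.e. $r_{\rm small} < w_2/w'_1$, which is the left-hand hypothesis.

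For concavity I would show $\frac{d^2v_2}{dv_1^2} > 0$ along $C_f$. Differentiating $\frac{dv_2}{dv_1} = \frac{2v_1 - rv_2 - w'_1}{rv_1 - 2v_2 - w_2}$ with respect to $v_1$ and substituting the first derivative back in, the numerator of the second derivative should simplify to a constant multiple of the discriminant-type expression $rv_1v_2 - v_1^2 - v_2^2 + (\text{lower order})$; a cleaner route is to note that on $C_f$ we have $v_1^2 - rv_1v_2 + v_2^2 = v_1w'_1 - v_2w_2$, and to show the denominator $rv_1 - 2v_2 - w_2$ has constant sign on the curve (it should be negative at $v_1 = 0$, namely $-w_2 < 0$, and one checks it never vanishes on the branch, perhaps by relating its vanishing to the vertex of the parabola \eqref{eq:quadratic-in-v2}). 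Then monotonicity of the tangent slope follows from concavity plus connectedness of $C_f$. \textbf{The main obstacle} I anticipate is the second-derivative computation: getting the sign of $\frac{d^2 v_2}{dv_1^2}$ cleanly requires carefully tracking that the denominator $rv_1 - 2v_2 - w_2$ does not change sign on the relevant branch, which in turn hinges on the hypothesis $r_{\rm small} < w_2/w'_1 < r_{\rm big}$ guaranteeing that the curve stays in the region where this linear form is negative — this is exactly the geometric content that makes $C_f$ a single concave arc rather than something that wraps around. An alternative that sidesteps the messy differentiation: parametrize \eqref{eq:quadratic-in-v2} and show directly that the map $v_1 \mapsto \frac{dv_2}{dv_1}$ is increasing by comparing the slope expression at two points, using \eqref{eq:quadratic-in-v2} to eliminate quadratic terms; I would pursue whichever of the two is shorter once the sign of the denominator is pinned down.
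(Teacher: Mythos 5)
Your endpoint computations are fine and agree with the paper: the implicit-differentiation formula $\frac{dv_2}{dv_1}=\frac{2v_1-rv_2-w'_1}{rv_1-2v_2-w_2}$ is exactly the slope formula the paper uses, and your verification of the parenthetical inequalities $w'_1/w_2>r_{\rm small}$ and $r-\frac{w'_1}{rw'_1-w_2}<r_{\rm big}$ is correct (the paper does not even spell these out). The genuine gap is in the part that is the actual content of the lemma: concavity and monotonicity of the slope. You explicitly leave open both the strict positivity of the discriminant on $[0,w'_1]$ (endpoint values alone do not suffice, since the vertex of $\Delta(v_1)=(r^2-4)v_1^2+(4w'_1-2rw_2)v_1+w_2^2$ can lie inside the interval) and the sign of the denominator $rv_1-2v_2-w_2$ along the branch, and you stop at "pursue whichever route is shorter once the sign is pinned down." But this is precisely where the hypothesis $r_{\rm small}<w_2/w'_1<r_{\rm big}$ enters, so as written the proof of the main assertion is missing.

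The gap is fillable inside your own framework, and the missing observation is short. On $[0,w'_1]$ the product of the two roots of your quadratic is $v_1(v_1-w'_1)\le 0$, so $C_f$ is the graph of the larger root $v_2=\tfrac12\big((rv_1-w_2)+\sqrt{\Delta(v_1)}\big)$; along this branch the troublesome denominator is identically $rv_1-2v_2-w_2=-\sqrt{\Delta(v_1)}$, so its sign question reduces to positivity of $\Delta$. Now $\Delta$ has discriminant $(4w'_1-2rw_2)^2-4(r^2-4)w_2^2=16\big(w_1'^2-rw'_1w_2+w_2^2\big)$, which is negative exactly because $r_{\rm small}<w_2/w'_1<r_{\rm big}$; hence $\Delta(v_1)>0$ for all $v_1$ (no vertical tangents at all), the branch is a smooth graph, and
$$\frac{d^2v_2}{dv_1^2}=\frac{4(r^2-4)w_2^2-(4w'_1-2rw_2)^2}{8\,\Delta(v_1)^{3/2}}=\frac{2\big(rw'_1w_2-w_1'^2-w_2^2\big)}{\Delta(v_1)^{3/2}}>0,$$
giving upward concavity and strictly increasing slope in one stroke. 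Note that this is the same algebraic fact the paper uses by a different route: the paper rotates coordinates to put $f=0$ in the standard form $(1+\tfrac r2)x^2+(1-\tfrac r2)y^2=c$ and checks $c>0$, where $c>0$ is again equivalent to $w_1'^2-rw'_1w_2+w_2^2<0$, and then reads concavity and monotone slope off the geometry of the northwest branch of the hyperbola. Your route, once completed as above, is a legitimate and somewhat more computational alternative; the comparison of the two is essentially "explicit branch plus calculus" versus "classification of the conic plus a picture."
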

\begin{proof}
The equation $\{v\in\mathbb{R}^2\ | \ f(v,w)=0\}$ is a hyperbola. Let $x'=(-v_1+v_2)/\sqrt{2}$, $y'=(v_1+v_2)/\sqrt{2}$, $x=x'+\frac{w_2+w'_1}{(2+r)\sqrt{2}}$, $y=y'+\frac{w_2-w'_1}{(2-r)\sqrt{2}}$. We can rewrite the equation $f(v,w)=0$ as
$$(1+\frac{r}{2})x^2+(1-\frac{r}{2})y^2=\frac{1}{4}\bigg( \frac{(w_2+w'_1)^2}{r+2}-\frac{(w_2-w'_1)^2}{r-2}\bigg)$$
It is easy to see that the right side is positive since $r_{\rm small}<w_2/w'_1<r_{\rm big}$. Thus the hyperbola is of the shape in Figure \ref{fig:f=0}, whose northwest branch passes $(0,0)$ and $(w'_1,rw'_1-w_2)$. 

\begin{figure}[h]
\def\tikzscale{0.25}
\begin{tikzpicture}[scale=\tikzscale]

\tikzset{
    elli/.style args={#1:#2and#3}{
        draw,
        shape=ellipse,
        rotate=#1,
        minimum width=2*#2,
        minimum height=2*#3,
        outer sep=0pt,
    }
}

%
%
\newcommand\tikzhyperbola[6][thick]{%
    \draw [#1, rotate around={#2: (0, 0)}, shift=#3]
        plot [variable = \t, samples=1000, domain=-#6:#6] ({#4 / cos( \t )}, {#5 * tan( \t )});
    \draw [#1, rotate around={#2: (0, 0)}, shift=#3]
        plot [variable = \t, samples=1000, domain=-#6:#6] ({-#4 / cos( \t )}, {#5 * tan( \t )});
}

\def\angle{35}
\def\bigaxis{3cm}
\def\smallaxis{1.5cm}
\coordinate (center) at (-6, 2);

\pgfmathsetmacro\axisratio{\smallaxis / \bigaxis}

\def\lengthofasymptote{13}
\draw [color=black!40, line width = 0.4pt, rotate around={\angle + atan( \axisratio ): (center)}]
    ($ (-\lengthofasymptote, 0) + (center) $) -- ++(2*\lengthofasymptote, 0) ;
\draw [color=black!40, line width = 0.4pt, rotate around={\angle - atan( \axisratio ): (center)}]
    ($ (-\lengthofasymptote, 0) + (center) $) -- ++(2*\lengthofasymptote, 0) ;

\tikzhyperbola[line width = 1.2pt, color=blue]{90+\angle}{(center)}{\smallaxis}{\bigaxis}{76}

    \draw [thick] [->] (-18,1.5)--(8,1.5) node[right, below] {$v_1$};
    \draw [thick] [->] (-11.5,-10)--(-11.5,14) node[above, left] {$v_2$};
    \draw [dashed]  (-11.5,7)--(-3.8,7)--(-3.8,1.5);
    \node[below] at (-3.8,1.5) {$w'_1$};
    \node[left] at (-11.5,7) {$rw'_1-w_2$};
    \node[right] at (-2,6) {$f(v,w)\ge0$};
    \node[left] at (-7,4) {$C_f$};
    \fill (-11.5,1.5) circle (8pt);
    \fill (-3.8,7) circle (8pt);
\end{tikzpicture}
  \caption{$f(v,w)\ge0$.}
\label{fig:f=0}
  \end{figure}

The slope of tangent at point $(v_1,v_2)$ is $(-2v_1+rv_2+w'_1)/(-rv_1+2v_2+w_2)$, so the value 
is $w'_1/w_2$ when $(v_1,v_2)=(0,0)$, 
is $r-\frac{w'_1}{rw'_1-w_2}$ when $(v_1,v_2)=(w'_1,rw'_1-w_2)$.
\end{proof}

\begin{lemma}\label{lemma:deg aP}
Assume $w=(0,w'_1,w_2,0)$, $(w'_1,w_2)\in \Phi^{im}_+$,  $f(v,w)\ge0$, $v^0\in \D(w)\setminus\{{\bf 0}\}$ and $v^0\le v$. Then 
$$\deg(a_{v,v^0;w} P_-(v^0,w)) < f(v,w).$$
\end{lemma}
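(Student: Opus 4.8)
The plan is to bound the degree of each factor separately and combine, using the auxiliary quantities $f$, $f^{\rm swap}$, and $g$ already at hand. First I would recall from Lemma~\ref{lem:upperbound for P} that, since $v^0\in\D(w)\setminus\{\mathbf 0\}$, we have $\tilde d(v^0,w)>0$, hence
$$\deg P_-(v^0,w)\le \min\bigl(-1, f(v^0,w), f^{\rm swap}(v^0,w), g(v^0,w)\bigr).$$
In particular $\deg P_-(v^0,w)\le f^{\rm swap}(v^0,w)$ and $\deg P_-(v^0,w)\le g(v^0,w)$, and also $\deg P_-(v^0,w)\le -1$. On the other hand, by the Transversal Slice Theorem and the identity \eqref{eq:a=a} we have $a_{v,v^0;w}=a_{v^\perp,0;w^\perp}$ where $w^\perp=w-C_qv^0$ and $v^\perp=v-v^0$; applying \eqref{eq:sumAP-} with $v=v^\perp$, $w=w^\perp$, or more directly \eqref{chiLw} together with Lemma~\ref{lem:e=a} and Corollary~\ref{cor:precise condition for e(p,q)neq0} (for the \emph{real-root or already-proved} cases of smaller $\tilde d$), I expect
$$\deg a_{v,v^0;w}(\v^r)=\deg a_{v^\perp,0;w^\perp}(\v^r)\le \max\{f(v^\perp,w^\perp),0\}$$
whenever the relevant degree is known; and since $w^\perp$ need not have $(w'_1{}^\perp,w_2{}^\perp)\in\Phi^{im}_+$, this is exactly the situation covered by the real-root analysis. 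Then the sum-degree of the product is at most $\deg a_{v,v^0;w}+\deg P_-(v^0,w)$, and the task reduces to the numerical inequality
$$f(v^\perp,w^\perp)+\min\bigl(-1, f^{\rm swap}(v^0,w), g(v^0,w)\bigr) < f(v,w).$$

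The main step is this last inequality, which I would prove by a direct but careful quadratic computation. Writing $v=v^0+v^\perp$ and $w=C_qv^0+w^\perp$, I would expand $f(v,w)$ as a quadratic form in $(v^0,v^\perp)$ using the definition $f(v,w)=-v_1^2+rv_1v_2-v_2^2+v_1(w'_1-w_1)+v_2(w'_2-w_2)$, and compare with $f(v^\perp,w^\perp)$ (which uses $w^\perp$ in place of $w$) plus the cross-terms. The cross-terms will organize into an expression involving $v^0$ and $v^\perp$ that I claim is bounded above by $-\min(-1, f^{\rm swap}(v^0,w), g(v^0,w))$; the key algebraic input is that $C_q$ is (up to sign) the Cartan-type pairing, so $f(v^0+v^\perp,\,C_qv^0+w^\perp)-f(v^\perp,w^\perp)=f(v^0,w)+\langle v^0,v^\perp\rangle_{\text{sym}}$ for the symmetrized bilinear form $\langle u,u'\rangle=-2u_1u'_1+r(u_1u'_2+u_2u'_1)-2u_2u'_2$, and that $f(v^0,w)$, $g(v^0,w)$, $f^{\rm swap}(v^0,w)$ differ only in the linear term. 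I would then use $v^0\in\D(w)$ (so $v^0$ satisfies \eqref{eq:l-dominant}) and $v^0\le v$, together with $f(v,w)\ge0$, to control the sign of the bilinear cross-term; the hypothesis $(w'_1,w_2)\in\Phi^{im}_+$ (i.e.\ $w_2^2-r w'_1 w_2+w'^2_1\le 0$, equivalently $r_{\rm small}\le w_2/w'_1\le r_{\rm big}$) enters to guarantee that the relevant quadratic in $v^0$ is negative semidefinite on the cone of $l$-dominant vectors, which forces strict inequality because $v^0\ne\mathbf 0$.

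The hard part will be keeping the three competing upper bounds $f^{\rm swap}(v^0,w)$, $g(v^0,w)$, and $-1$ straight: depending on where $v^0$ sits relative to the boundary of $\D(w)$ and to the curve $C_f$ of Lemma~\ref{lem:f=0}, a different one of the three is sharpest, and I would split into the corresponding cases (e.g.\ $v^0$ near the $v_1$-axis, near the $v_2$-axis, or interior). In the interior case I expect $g(v^0,w)$ to do the work since $-v_1^2-rv_1v_2-v_2^2$ decays fastest; near the axes the linear terms of $f^{\rm swap}$ or the crude bound $-1$ suffice. A secondary subtlety is justifying $\deg a_{v^\perp,0;w^\perp}(\v^r)\le \max\{f(v^\perp,w^\perp),0\}$ without circularity: this holds by Theorem~\ref{theorem: real root} when $(w'^\perp_1,w^\perp_2)\notin\Phi^{im}_+$, and otherwise $\tilde d(v^\perp,w^\perp)<\tilde d(v,w)$ (strictly, because $v^0\ne\mathbf 0$ shrinks the relevant dimensions), so it is available by the downward induction on $\tilde d$ in which the whole Theorem~\ref{main theorem} is being proved. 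I would flag at the outset that this lemma is used precisely inside that induction, so the inductive hypothesis legitimately supplies the needed degree bound on $a_{v^\perp,0;w^\perp}$.
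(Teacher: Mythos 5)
Your skeleton matches the paper's (bound $\deg P_-(v^0,w)$ by Lemma \ref{lem:upperbound for P}, convert $a_{v,v^0;w}$ to $a_{v^\perp,0;w^\perp}$ by the slice theorem, and reduce to a numerical inequality), but two of your key inputs do not hold up. First, your bound on $\deg a_{v^\perp,0;w^\perp}$ is obtained circularly: Corollary \ref{cor:precise condition for e(p,q)neq0} is a consequence of Theorem \ref{main theorem}, which in turn relies on this very lemma, and the ``downward induction on $\tilde d$'' you invoke is not set up anywhere (neither in the paper nor in your proposal), so it cannot legitimately supply the bound. No induction is needed: applying Lemma \ref{BBDG property}(iii) to $\pi:\tilde{\mathcal F}_{v^\perp,w^\perp}\to\mathbf E_{v^\perp,w^\perp}$ gives directly that $a_{v^\perp,0;w^\perp}=0$ when $f(v^\perp,w^\perp)<0$ and $\deg a_{v^\perp,0;w^\perp}\le f(v^\perp,w^\perp)$ otherwise, which is what the paper uses. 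Second, your ``key algebraic input'' is wrong: since $w^\perp=w-C_qv^0$ twists the linear part asymmetrically, one computes
\begin{equation*}
f(v,w)-f(v^\perp,w^\perp)=f(v^0,w)-2v^0_1v^\perp_1+2r\,v^0_2v^\perp_1-2v^0_2v^\perp_2,
\end{equation*}
not $f(v^0,w)$ plus the symmetrized form $-2u_1u'_1+r(u_1u'_2+u_2u'_1)-2u_2u'_2$; a semidefiniteness argument built on the symmetric pairing will not reproduce the correct quantities $D_1,\dots,D_4$.

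Beyond these, the reduction you display drops $f(v^0,w)$ from the minimum, which strengthens the target inequality beyond what is true or needed: in several regimes (the paper's Cases 1b and 2a) it is precisely the bound $\deg P_-(v^0,w)\le f(v^0,w)$, i.e.\ the quantity $D_2=f(v,w)-f(v^\perp,w^\perp)-f(v^0,w)$, that saves the day, and neither $f^{\rm swap}$, $g$, nor $-1$ suffices there. Finally, the numerical claim is the actual content of the lemma and cannot be dispatched by ``a careful quadratic computation with cases'': the paper's proof parametrizes $v^0=(t,bt)$, splits on the sign of $1-rb+b^2$, and crucially uses the extra arithmetic constraint $v_1-v^0_1\le[w'_1-rv^0_2]_+$ coming from Lemma \ref{lem:v1>a1 then a=0} (nonvanishing of $a_{v^\perp,0;w^\perp}$ forces $q\ge0$), without which the cases $w'_1\ge rv^0_2$ and $w'_1<rv^0_2$ (where it forces $v^0_1=v_1$) cannot be closed; your list of available hypotheses omits this constraint entirely, and the case $r=2$ (where $r_{\rm small}=r_{\rm big}$) also needs separate treatment. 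As it stands, the proposal identifies the right quantities but leaves the decisive inequality unproved and rests on an incorrect identity and a circular degree bound.
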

\begin{proof}
If $r=1$ then $\Phi^{im}_+$ is empty. So $r\ge2$. The case $r=2$ is degenerate (and much simpler than the $r\ge3$ case) so we treat it at the very end.
 
(I) Assume $r\ge3$.

First we show that the inequality is true if $v^0=v$. Because in this case, $a_{v,v'; w}=1$, $\deg P_-(v^0,w)\le 0$, so $\deg(a_{v,v^0;w} P_-(v^0,w)) < f(v,w)$, and the equality holds only if  $\deg P_-(v^0,w)=0$, which is impossible since we assume $v^0\neq{\bf 0}$. 

For the rest of the proof of (I), we assume $v^0<v$. 
As before, denote
$$w^\perp=w-C_qv^0=(w_1-v_1^0+rv_2^0,w_1'-v_1^0,w_2-v_2^0,w_2'-v_2^0+rv_1^0),\quad
v^\perp=v-v^0=(v_1-v_1^0,v_2-v_2^0).$$ 
We can assume $a_{v,v^0;w}(=a_{v^\perp,{\bf 0};w^\perp})$ is nonzero, otherwise there is nothing to prove. Then $\deg a_{v,v^0;w}=f(v^\perp;w^\perp)$ if $a_{v,v^0;w}\neq0$. 
So it suffices to prove that, assuming $f(v^\perp, w^\perp)\ge0$, the following inequality holds:
$$f(v^\perp,w^\perp)+\deg P_-(v^0,w)<f(v,w).$$ 
By Lemma \ref{lem:upperbound for P}, 
it suffices to show the following claim:

\noindent{\bf Claim}. If $0\le v_1\le w'_1$,  $w_2'-v_2+rv_1\ge0$, $v^0\in \D(w)\setminus\{(0,0), (w'_1,rw'_1-w_2)\}$, $v^0< v$, $f(v,w)\ge0, f(v^\perp,w^\perp)\ge0$, then 
$$\min\big(-1,f(v^0,w),f^{swap}(v^0,w), g(v^0,w)\big)<f(v,w)-f(v^\perp,w^\perp).$$

We allow them to be real numbers. Denote $Q(v^0)=Q(v^0_1,v^0_2)=(v^0_1)^2-rv^0_1v^0_2+(v^0_2)^2$. Then 
$$\aligned
&D_1=f(v,w)-f(v^\perp,w^\perp)=Q(v^0)+(w'_1-2v_1)v^0_1+(2rv_1-2v_2-w_2)v^0_2\\
&D_2=f(v,w)-f(v^\perp,w^\perp)-f(v^0,w)=2\big(Q(v^0)-v_1v^0_1+(rv_1-v_2)v^0_2\big)\\
&D_3=f(v,w)-f(v^\perp,w^\perp)-f^{\rm swap}(v^0,w)=2\big(Q(v^0)+(w'_1-v_1)v^0_1+(rv_1-v_2-w_2)v^0_2\big)\\
&D_4=f(v,w)-f(v^\perp,w^\perp)-g(v^0,w)=2\big( (v^0_1)^2+(v^0_2)^2-v_1v^0_1+(rv_1-v_2-w_2)v^0_2\big)\\
\endaligned
$$
To show the claim, it suffices to show that at least one of the following is true: $D_1\ge0$, $D_2>0$, $D_3>0$, or $D_4>0$.  

Now fix $(v,w)$ and let $v^0=(at,bt)$ along a line with a fixed direction $(a,b)\in\mathbb{R}_{\ge0}^2\setminus\{(0,0)\}$ and parameter $t$.

 The condition $v^0\in \D(w)\setminus\{0\}$ implies $rb-a\ge0$, $at\le w'_1$, $bt\le w_2$, $ra-b\ge0$. 
Thus $a,b>0$. (If $a=0$, then $ra-b\ge0$ forces $b=0$, contradicting to the assumption $(a,b)\neq(0,0)$; if $b=0$, then $rb-a\ge0$ forces $a=0$, contradicting to the same assumption.) So we assume for the rest of the proof, that 
$$a=1, b\in\mathbb{Q} \textrm{ and } 1/r\le b\le r.$$ 

Denote 
$$\aligned
&Q_0=Q(a,b)=1-rb+b^2,\\
&A_2=-v_1+(rv_1-v_2)b,\\
&A_3=(w'_1-v_1)+(rv_1-v_2-w_2)b.\\
&A_1=(w'_1-2v_1)+(2rv_1-2v_2-w_2)b=A_2+A_3.\\  
\endaligned
$$
Then 
$$D_1=Q_0t^2+A_1t,\quad D_2=2(Q_0t^2+A_2t),\quad D_3=2(Q_0t^2+A_3t)$$
Note that $A_3=A_2+w'_1-w_2b$. 
Also note that the condition $v^0\le v$ implies $t\le v_1$ and $t\le v_2/b$. 
Then 
$t$ satisfies
$$0<t\le \min(v_1,v_2/b,w'_1, w_2/b)$$

Note that  $Q_0\neq0$, otherwise $b=\frac{r\pm\sqrt{r^2-4}}{2}\notin\mathbb{Q}$ since we assume $r\ge 3$ in part (I). Thus, we only need to consider two cases  $Q_0>0$ (which consists of two subcases 1a and 1b) and $Q_0<0$ (which consists of four subcases 2a--2d) below. 

\medskip

\noindent{\bf Case 1.} If $Q_0>0$, we claim that either $A_2\ge0$, or $A_3\ge0$. It then immediately follows that either $D_2>0$ for all $t>0$, or $D_3>0$ for all $t>0$.  

We consider two subcases: Cases 1a and 1b. See Figure \ref{fig:Q>0,case1}. 

\begin{figure}[h]
\begin{tikzpicture}[scale=5]
    \draw (1,2/3)--(1,0);
    \draw [dashed] (0,0)--(1,2/3) (0,2/3)--(1,2/3);
    \draw [thick] [->] (0,0)--(1.2,0) node[right, below] {$v_1$};
    \draw [thick] [->] (0,0)--(0,.8) node[above, left] {$v_2$};

    \draw [domain=0:1, variable=\x]
      plot ({\x}, {\x*\x*2/3}) node[right] at (1,1){};
    \fill [fill opacity=.3, blue!50, domain=0:1, variable=\x]
      (0, 0)
      -- plot ({\x}, {\x*\x*2/3})
      -- (1, 0)
      -- cycle;
    \node at (0.7,.1) {$f(v,w)\ge0$};
	
    \draw (0,1/4)--(1,2/3);
    \fill [fill opacity=.7,line width=1mm,pattern=north west lines, domain=0:1, path fading = north]
      (0, 1)--(0,1/4)--(1,2/3)--(1,1)--cycle;
    \node at (0.3,.55) {$A_3\le0$};

    \node[below] at (1,0) {$w'_1$};
    \node[left] at (0,2/3) {$rw'_1-w_2$};

  \begin{scope}[shift={(1.5,0)}]
    \draw (1,2/3)--(1,0);
    \draw [dashed] (0,0)--(1,2/3) (0,2/3)--(1,2/3);
    \draw [thick] [->] (0,0)--(1.2,0) node[right, below] {$v_1$};
    \draw [thick] [->] (0,0)--(0,.8) node[above, left] {$v_2$};

    \draw [domain=0:1, variable=\x]
      plot ({\x}, {\x*\x*2/3}) node[right] at (1,1){};
    \fill [fill opacity=.3, blue!50, domain=0:1, variable=\x]
      (0, 0)
      -- plot ({\x}, {\x*\x*2/3})
      -- (1, 0)
      -- cycle;
    \node at (0.7,.1) {$f(v,w)\ge0$};
	
    \draw (0,0)--(1,10/12);
    \fill [fill opacity=.7,line width=1mm,pattern=north west lines, domain=0:1, path fading = north]
      (0, 0)--(0,1)--(1,1)--(1,10/12)--cycle;
    \node at (0.3,.55) {$A_2\le0$};

    \node[below] at (1,0) {$w'_1$};
    \node[left] at (0,2/3) {$rw'_1-w_2$};
  \end{scope}
  
  \end{tikzpicture}
  \caption{Left: Case 1a. Right: Case 1b.}
\label{fig:Q>0,case1}
  \end{figure}
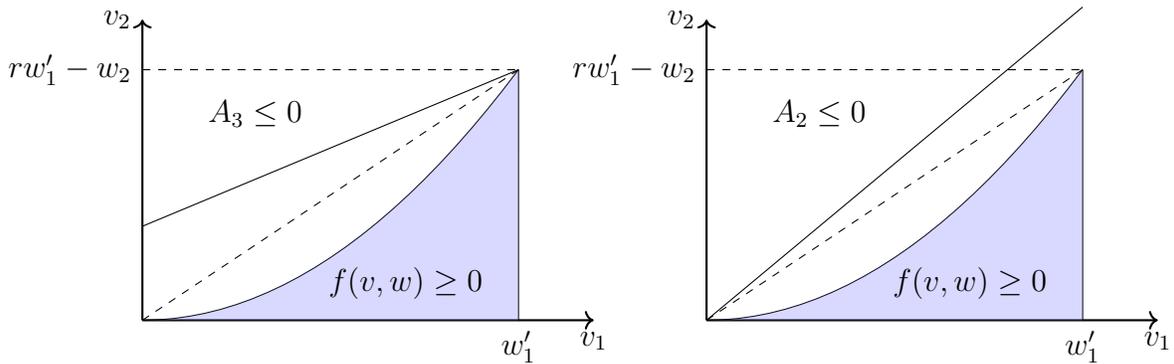
  
\noindent{\bf Case 1a.} $1/r\le b< r_{\rm small}$. We claim that $A_3\ge0$. Indeed, by looking at Figure \ref{fig:Q>0,case1} (Left), it suffices to show that the slope of $A_3=0$ is less than the slope of the diagonal, that is, 
$$r-1/b< r-w_2/w'_1$$
and the above inequality follows from $1/b > 1/r_{\rm small}=r_{\rm big} > w_2/w'_1$. 
So there is no $(v_1,v_2)$ such that $f(v,w)\ge0$ and $A_3<0$ simultaneously.

\noindent{\bf Case 1b.} $r_{\rm big}<b<r$. We claim that $A_2\ge0$. Indeed, by looking at Figure \ref{fig:Q>0,case1} (Right), it suffices to show that the slope of $A_2=0$ is greater than the slope of the diagonal, that is, 
$$r-1/b > (rw'_1-w_2)/w'_1$$
and the above inequality follows from the inequality
$1/b<1/r_{\rm big} = r_{\rm small}< w_2/w'_1$.
So there is no $(v_1,v_2)$ such that $f(v,w)\ge0$ and $A_2<0$ simultaneously. 
\bigskip

\noindent{\bf Case 2.} If $Q_0<0$. Equivalently, if $r_{\rm small}<b<r_{\rm big}$. 
Then for $i=1,2,3$, we have $D_i\ge0$ for $0\le t\le -A_i/Q_0$. We shall consider 4 subcases, Case 2a--2d. 

For convenience, we introduce the following notation 
 (and similar notation for $\ge$ replaced by $\le,>,<$):

For $i=1,2,3$, $H^{\ge0}_i=\{v \ |\  A_i \ge 0\}$, 
$H_{i1}^{\ge0}=\{v\ |\  A_i/(-Q_0)-v_1\ge 0\}$, 
$H_{i2}^{\ge0}=\{v\ |\  A_i/(-Q_0)-v_2/b\ge 0\}$, 
$H_{i3}^{\ge0}=\{v\ |\ A_i/(-Q_0)-(w'_1-v_1)/(rb-1)\ge 0\}$. Define their boundary lines by $\ell_i, \ell_{i1}, \ell_{i2},\ell_{i3}$, respectively. 

Denote $P_{i,jb}=\ell_i\cap\ell_{jb}$.
We collect some easy-to-be-verified observations to be used later. 
For $v=P_{2,33}, P_{3,12}, P_{3,22}, P_{2,13}, P_{12,13}$, we have $f(v,w)=0$, and the coordinates of these points are explicitly computed below: (we use the notation $P=(P^x,P^y)$ to denote the two coordinates for $P$)
$$\aligned
&P_{2,13}=(\frac{b(b(w'_1-rw_2)+w_2)}{b^2-br+1},\frac{(br-1)(b(w'_1-rw_2)+w_2)}{b^2-br+1})\\
&P_{2,33}=(P_{2,33}^x,P_{2,33}^y)=(\frac{b^2w'_1+b(1-rb)w_2}{b^2-rb+1}, \frac{ (rb-1)(bw'_1+(1-rb)w_2) }{b^2-rb+1})\\
&P_{3,12}=(\frac{w'_1-bw_2}{b^2-br+1},\frac{r(w'_1-bw_2)}{b^2-br+1})\\
&P_{3,22}=\big((bw_2-w'_1)/(-Q_0), b(bw_2-w'_1)/(-Q_0)\big) \\
&P_{12,13}=(\frac{b(bw'_1+(br-1)w_2)}{2b^2r^2+b^2-3br+1},\frac{b((2br-1)w'_1-bw_2)}{2b^2r^2+b^2-3br+1}) \\
\endaligned$$ 


$\bullet$ (slope of $\ell_2$) = (slope of $\ell_3=0$) $=r-\frac{1}{b}>b$.

$\bullet$ the line $\ell_2$ passes $O$. 

$\bullet$ the line $\ell_3$ passes through the point $(w'_1,rw'_1-w_2)$, and intersects with the north boundary of $R_{f\ge0}$ at the point $\big(\frac{w'_1-bw_2}{b^2-rb+1}, \frac{b(w'_1-bw_2)}{b^2-rb+1}\big)$.

$\bullet$ (slope of $\ell_{13}$) $=(2r^2b^2-3rb-b^2+1)/(2rb^2-2b)=r+(-b^2-rb+1)/(2rb^2-2b) > $ (slope of $\ell_3$).  So $\ell_{13}$ has positive slope. 

\smallskip

\noindent{\bf Case 2a.} If $A_2\ge A_1,A_3$. We claim that $D_2>0$. See Figure \ref{fig:Q<0,case2a}.

Indeed, $A_2\ge A_3$ implies $w_2b\ge w'_1$; $A_2\ge A_1=A_2+A_3$ implies $A_3\le 0$, 

To prove $D_2\ge0$, we need $Q_0t+A_2\ge0$, that is, $t\le A_2/(-Q_0)$. Since $t\le v_2/b$, it suffices to show $v_2/b\le A_2/(-Q_0)$, or equivalently $v\in H^{\ge0}_{22}$, or equivalently
\begin{equation}\label{eq:2a}
\frac{v_2}{b}\le \frac{-v_1+(rv_1-v_2)b}{-(1-rb+b^2)}
\end{equation}
Note that $b>r_{\rm small}>1/r$, so after simplifying, \eqref{eq:2a} becomes
$$v_2 \le v_1b.$$
This is self-clear from Figure \ref{fig:Q<0,case2a}.  
The inequality $v_2\le v_1b$ is equivalent to saying that the intersection of the curved triangle $OCB$ with the half plane $H^{\le0}_3$, which is the curved triangle $OP_{3,22}F$, is below the line $\ell_{22}$. 

If $P_{3,22}^x<w'_1$, then the intersection of the region $\{A_3<0\}$ with the region 
$$R_{f\ge0}=\{v | f(v,w)\ge0, 0\le v_1\le w'_1, v_2\ge0\}$$ 
is below the line $\ell_{22}$ as shown in the figure.

If $P_{3,22}^x\ge w'_1$, then the whole region $R_{f\ge0}$ is under the line $OE$. 

In both cases, we get the expected inequality $v_2\le v_1b$. Thus $D_2\ge0$. 

Now we need to show that $D_2>0$. If not, then $D_2=0$. We inspect the above computation and get $P_{3,22}=(v_1,v_2)=(v_1,bv_1)$, $t=v_1^0=v_1=A_2/(-Q_0)$, then $v^0_2=bt=bv_1^0=v_2$. Thus $v^0=v$, contradicting to the assumption $v^0<v$.

\begin{figure}[h]
\begin{tikzpicture}[scale=5]
    \draw (1,2/3)--(1,0);
    \draw [dashed] (0,2/3)--(1,2/3);
    \draw [thick] [->] (0,0)--(1.2,0) node[right, above] {$v_1$};
    \draw [thick] [->] (0,0)--(0,.8) node[above, left] {$v_2$};

    \draw [domain=0:1, variable=\x]
      plot ({\x}, {\x*\x*2/3}) node[right] at (1,1){};
    \fill [fill opacity=.3, blue!50, domain=0:1, variable=\x]
      (0, 0)
      -- plot ({\x}, {\x*\x*2/3})
      -- (1, 0)
      -- cycle;
    \node at (0.7,.1) {$R_{f\ge0}$};
	
    \draw (0,-.33)--(1,2/3);
    \fill [fill opacity=.7,line width=1mm,pattern=north west lines, domain=0:1, path fading = north]
      (0,1)--(0,-.33)--(1,2/3)--(1,1)--cycle;

    \draw (0,0)--(1,.34);
    \node[below] at (1,0) {$B=(w'_1,0)$};
    \node[right] at (.8,.35) {$\ell_{22}$};
    \node[left] at (0,2/3) {$rw'_1-w_2$};
   \node[left] at (.5,.4) {$H^{\le0}_3$};
   \node[left] at (0,0) {$O$};
   \node[right] at (0.1,-.25) {$\ell_3$};
   \node[below] at (.35,0) {$F$};
   \node[left] at (.52,.21) {$P_{3,22}$};
   \node[right] at (1,.66) {$C$};

  \end{tikzpicture}
  \caption{Case 2a. If $P_{3,22}^x<w'_1$.}
  \label{fig:Q<0,case2a}
  \end{figure}

\medskip

For the rest, since $a_{v^\perp,{\bf0};w^\perp}\neq0$, by Lemma \ref{lem:v1>a1 then a=0} we have 
\begin{equation}\label{eq:v_1-v^0_1}
v_1-v^0_1=v^\perp_1\le[(w^\perp)'_1-w^\perp_1]_+=[w'_1-rv^0_2]_+
\end{equation}

For Case 2b and 2c below, we assume $w'_1\ge rv^0_2$; for Case 2d, we assume $w'_1<rv^0_2$. 

If $w'_1\ge rv^0_2$, then by \eqref{eq:v_1-v^0_1}, $v^0_1-rv^0_2+w'_1-v_1\ge0$, $(rb-1)t\le w'_1-v_1$, $t\le (w'_1-v_1)/(rb-1)$. 
Therefore 
$t$ satisfies
\begin{equation}\label{eq:case 2,t}
0<t\le \min(v_1,v_2/b,w'_1, w_2/b,(w'_1-v_1)/(rb-1))
\end{equation}

\noindent{\bf Case 2b.} If $w'_1\ge rv^0_2$, $A_3> A_1$, $A_3\ge A_2$. We claim that $D_3>0$. See Figure \ref{fig:Q<0,case2b}.

$A_3>A_1$ implies $A_2<0$; $A_3\ge A_2$ implies $w_2b\le w'_1$. 

To prove $D_3\ge0$, we need $Q_0t+A_3\ge0$, that is, $t\le A_3/(-Q_0)$. Since $t\le (w'_1-v_1)/(rb-1)$, it suffices to show $(w'_1-v_1)/(rb-1)\le A_3/(-Q_0)$, or equivalently $v\in H^{\ge0}_{33}$, or equivalently
\begin{equation}\label{eq:2b}
\frac{w'_1-v_1}{rb-1}\le \frac{(w'_1-v_1)+(rv_1-v_2-w_2)b}{-(1-rb+b^2)}
\end{equation}
After simplifying, \eqref{eq:2b} becomes
$$(r+b-r^2b)(w'_1-v_1)+(rb-1)(rw'_1-w_2-v_2)\ge0$$
Note that $w'_1-v_1,rb-1,rw'_1-w_2-v_2$ are all nonnegative. So if $r+b-r^2b\ge0$, then \eqref{eq:2b} holds. If $r+b-r^2b<0$, then the above defines a half plane $H^{\ge0}_{33}$ below the line 
$$\ell_{33}=\{(v_1,v_2)\ | \ (r+b-r^2b)(w'_1-v_1)+(rb-1)(rw'_1-w_2-v_2)=0\}$$ 
which passes through $C=(w'_1,rw'_1-w_2)$ with slope $(r^2b-r-b)/(rb-1)>0$. 

Note that $A_2<0$ defines the upper half plane $H^{<0}_2$ with boundary line $\ell_2$. 
$$\textrm{( slope of $\ell_{33}$ )} =  r-\frac{b}{rb-1}> r-\frac{1}{b}=\textrm{( slope of $\ell_2$ )}$$

If $P_{2,33}^x\le0$, then $H^{<0}_{33}\cap R_{f\ge0}=\emptyset$. 

If $0\le P_{2,33}^x\le w'_1$: let $F$ be the intersection of $\ell_2$ with $BC$. Then 
$$H^{<0}_2\cap H^{<0}_{33}\cap R_{f\ge0}=\emptyset$$

If $P_{2,33}^x> w'_1$, then 
$$H^{<0}_2\cap R_{f\ge0}=\emptyset\subset H^{\ge0}_{33}.$$ 

In all cases, we get the expected inequality \eqref{eq:2b}. Thus $D_3\ge0$. 

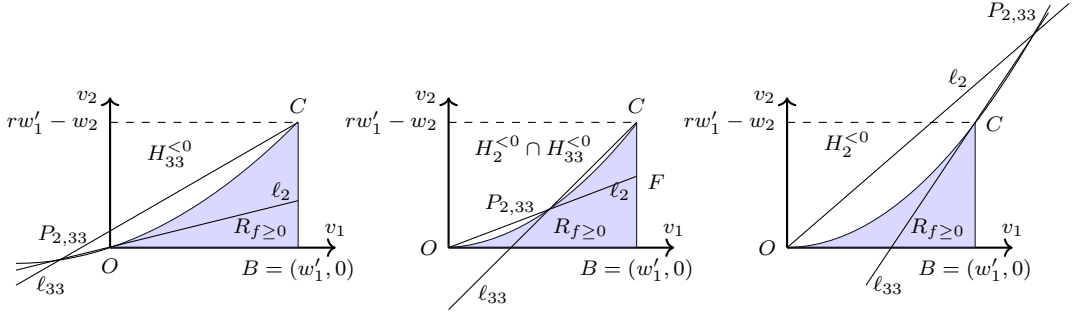
\begin{figure}[h]
\begin{tikzpicture}[scale=2.5]
    \draw (1,2/3)--(1,0);
    \draw [dashed] (0,2/3)--(1,2/3);
    \draw [thick] [->] (0,0)--(1.2,0) node[right, above] {\tiny$v_1$};
    \draw [thick] [->] (0,0)--(0,.8) node[above, left] {\tiny$v_2$};

    \draw [domain=-.5:1, variable=\x]
      plot ({\x}, {\x*(\x+1)*.335}) node[right] at (1,1){};
    \fill [fill opacity=.3, blue!50, domain=0:1, variable=\x]
      (0, 0)
      -- plot ({\x}, {\x*(\x+1)*.335})
      -- (1, 0)
      -- cycle;
    \node at (0.8,.1) {\tiny$R_{f\ge0}$};
	
    \draw (-.5,-.2)--(1,2/3);
    \fill [fill opacity=.7,line width=1mm,pattern=north west lines, domain=0:1, path fading = north]
      (-.5,1)--(-.5,-.2)--(1,2/3)--(1,1)--cycle;

    \draw (-.5,-.12)--(1,.25);
    \node[below] at (1,0) {\tiny$B=(w'_1,0)$};
    \node[right] at (.8,.3) {\tiny$\ell_{2}$};
    \node[left] at (0,2/3) {\tiny$rw'_1-w_2$};
   \node[left] at (.5,.5) {\tiny$H^{<0}_{33}$};
   \node[below] at (0,0) {\tiny$O$};
   \node[right] at (-.45,-.2) {\tiny$\ell_{33}$};
   \node[above] at (-.25,-.05) {\tiny$P_{2,33}$};
   \node[above] at (1,.66) {\tiny$C$};

  \begin{scope}[shift={(1.8,0)}]
    \draw (1,2/3)--(1,0);
    \draw [dashed] (0,2/3)--(1,2/3);
    \draw [thick] [->] (0,0)--(1.2,0) node[right, above] {\tiny$v_1$};
    \draw [thick] [->] (0,0)--(0,.8) node[above, left] {\tiny$v_2$};

    \draw [domain=0:1, variable=\x]
      plot ({\x}, {\x*(\x+.1)*.6}) node[right] at (1,1){};
    \fill [fill opacity=.3, blue!50, domain=0:1, variable=\x]
      (0, 0)
      -- plot ({\x}, {\x*(\x+.1)*.6})
      -- (1, 0)
      -- cycle;
    \node at (0.7,.1) {\tiny$R_{f\ge0}$};
	
    \draw (0,-.33)--(1,2/3);
    \fill [fill opacity=.7,line width=1mm,pattern=north west lines, domain=0:1, path fading = north]
      (0,1)--(0,0)--(0.52,0.2)--(1,.66)--(1,1)--cycle;

    \draw (0,0)--(1,.38);
    \node[below] at (1,0) {\tiny$B=(w'_1,0)$};
    \node[right] at (.8,.3) {\tiny$\ell_{2}$};
    \node[left] at (0,2/3) {\tiny$rw'_1-w_2$};
   \node[above] at (.45,.4) {\tiny$H^{<0}_2\cap H^{<0}_{33}$};
   \node[left] at (0,0) {\tiny$O$};
   \node[right] at (0.1,-.25) {\tiny$\ell_{33}$};
   \node[right] at (1,0.35) {\tiny$F$};
   \node[left] at (.52,.23) {\tiny$P_{2,33}$};
   \node[above] at (1,.66) {\tiny$C$};
  \end{scope}
  
  \begin{scope}[shift={(3.6,0)}]
    \draw (1,2/3)--(1,0);
    \draw [dashed] (0,2/3)--(1,2/3);
    \draw [thick] [->] (0,0)--(1.2,0) node[right, above] {\tiny$v_1$};
    \draw [thick] [->] (0,0)--(0,.8) node[above, left] {\tiny$v_2$};

    \draw [domain=0:1.4, variable=\x]
      plot ({\x}, {\x*(\x+.02)*.65}) node[right] at (1,1){};
    \fill [fill opacity=.3, blue!50, domain=0:1, variable=\x]
      (0, 0)
      -- plot ({\x}, {\x*(\x+.02)*.65})
      -- (1, 0)
      -- cycle;
    \node at (0.8,.1) {\tiny$R_{f\ge0}$};
	
    \draw (0.42, -.2)--(1.39,1.25);
    \fill [fill opacity=.7,line width=1mm,pattern=north west lines, domain=0:1, path fading = north]
      (0,1.25)--(0,0)--(1.39,1.25)--cycle;

    \draw (0,0)--(1.5,1.3);
    \node[below] at (1,0) {\tiny$B=(w'_1,0)$};
    \node[right] at (.8,.9) {\tiny$\ell_{2}$};
    \node[left] at (0,2/3) {\tiny$rw'_1-w_2$};
   \node[left] at (.5,.55) {\tiny$H^{<0}_{2}$};
   \node[left] at (0,0) {\tiny$O$};
   \node[right] at (0.4,-.2) {\tiny$\ell_{33}$};
   \node[left] at (1.39,1.25) {\tiny$P_{2,33}$};
   \node[right] at (1,.66) {\tiny$C$};
  \end{scope}
  
   \end{tikzpicture}
  \caption{Case 2b. Left: $P_{2,33}^x1\le0$. Middle: $0\le P_{2,33}^x\le w'_1$. Right: $P_{2,33}^x>w'_1$.}
  \label{fig:Q<0,case2b}
  \end{figure}

Now we need to show that $D_3>0$. If not, then $D_3=0$. We inspect the above computation and get that $v=C$ (if $P_{2,33}^x\le w'_1$) or $v=P_{2,33}$ (if $0<P^x_{2,33}<w'_1$). 

If $v=C$, then by \eqref{eq:case 2,t}, $v^0_1=t\le (w'_1-v_1)/(rb-1)=0$, thus $v^0_1=0$, $v^0_2=0$, contradicting to the assumption $v^0\neq{\bf0}$.

If $v=P_{2,33}$, then $A_2=0$, $A_3=A_1\ge0$. This is not Case 2b but rather Case 2c, which will be discussed in the next case.

\smallskip

\noindent{\bf Case 2c.} If $w'_1\ge rv^0_2$ and $A_1\ge A_2, A_3$. We claim that $D_1\ge0$.

In this case, $A_2\ge0$, $A_3\ge0$. 

To prove $D_1\ge0$, we need $Q_0t+A_1\ge0$, that is, $t\le A_1/(-Q_0)$. 

Since $t\le \min(v_2/b,(w'_1-v_1)/(rb-1))$, it suffices to show ``$v_2/b\le A_1/(-Q_0)$ or $(w'_1-v_1)/(rb-1)\le A_1/(-Q_0)$'',  or equivalently ``$v\in H^{\ge0}_{12}\cup H^{\ge0}_{13}$'', or equivalently,
$$
\frac{v_2}{b}\le \frac{(w'_1-2v_1)+(2rv_1-2v_2-w_2)b}{-(1-rb+b^2)} \textrm{ or }
\frac{w'_1-v_1}{rb-1}\le \frac{(w'_1-2v_1)+(2rv_1-2v_2-w_2)b}{-(1-rb+b^2)}. 
$$

Note that $\ell_{13}$ intersects the curve $f=0$ at $P_{2,13}$ and $P_{12,13}$, both of which are in $\mathbb{R}_{>0}^2$. 
\medskip

If $w_1'\le w_2b$, then $A_3\le A_2$. The point $C$ satisfies $A_{13}\ge0$ (because $A_{13}(C)=(w_2b-w'_1)/(-b^2+rb-1)\ge0$). See Figure \ref{fig:Q<0,case2c}. 

(i) If $P_{3,12}$ is to the southwest of $P_{12,13}$,  then 
$$H^{<0}_{12}\cap H^{<0}_{13}\cap H^{\ge0}_3=\textrm{(curved triangle)}P_{3,12}P_{12,13}P_{3,13},$$ which has no intersection with $R_{f\ge0}$ except $P_{3,12},P_{12,13}$. 

(ii) If $P_{3,12}$ is to the northeast of $P_{12,13}$, then similarly, $H^{<0}_{12}\cap H^{<0}_{13}\cap H^{\ge0}_3=\emptyset$. 

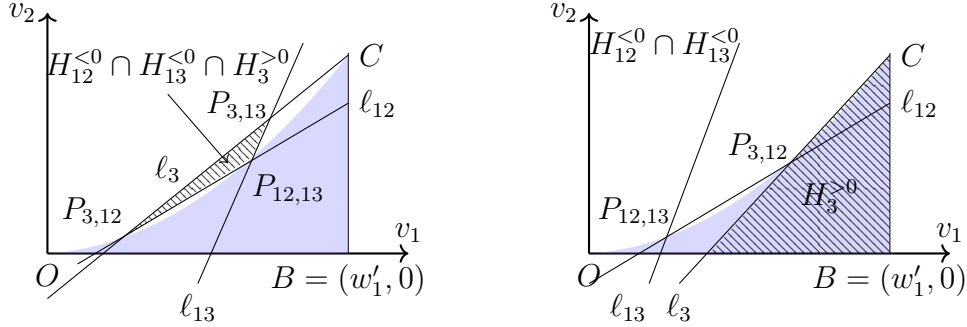
\begin{figure}[h]
\begin{tikzpicture}[scale=4]
    \draw (1,2/3)--(1,0);
    \draw [thick] [->] (0,0)--(1.2,0) node[right, above] {$v_1$};
    \draw [thick] [->] (0,0)--(0,.8) node[above, left] {$v_2$};

    \fill [fill opacity=.3, blue!50, domain=0:1, variable=\x]
      (0, 0)
      -- plot ({\x}, {\x*(\x+.1)*.6})
      -- (1, 0)
      -- cycle;

    \fill [fill opacity=.7,line width=1mm,pattern=north west lines, domain=0:1]
      (.25,.05)--(.68,.32)--(.73,.44)--cycle;
    \draw[<-] (.6,.3)--(.4,.53) node[right, above] {$H^{<0}_{12}\cap H^{<0}_{13}\cap H^{>0}_3$};
	
    \draw (.1,-.035)--(1,.5);
    \node[right] at (1,.5) {$\ell_{12}$};

    \draw (0,-.15)--(1,.66);
    \node[above] at (.4,.2) {$\ell_{3}$};

    \draw (0.5,-.1)--(.85,.7);
    \node[below] at (.5,-.1) {$\ell_{13}$};

    \node[below] at (1,0) {$B=(w'_1,0)$};
   \node[below] at (0,0) {$O$};
   
   \node[above] at (.15,.05) {$P_{3,12}$};
   \node[below] at (.8,.3) {$P_{12,13}$};
   \node[above] at (.63,.4) {$P_{3,13}$};
   \node[right] at (1,.66) {$C$};

  \begin{scope}[shift={(1.8,0)}]
    \draw (1,2/3)--(1,0);
    \draw [thick] [->] (0,0)--(1.2,0) node[right, above] {$v_1$};
    \draw [thick] [->] (0,0)--(0,.8) node[above, left] {$v_2$};

    \fill [fill opacity=.3, blue!50, domain=0:1, variable=\x]
      (0, 0)
      -- plot ({\x}, {\x*(\x+.1)*.6})
      -- (1, 0)
      -- cycle;

    \fill [fill opacity=.7,line width=1mm,pattern=north west lines, domain=0:1, path fading = north]
      (0,.7)--(0,-.1)--(.25,.05)--(.5,.7)--cycle;
   \node[above] at (.25,.6) {$H^{<0}_{12}\cap H^{<0}_{13}$};
         	
    \fill [fill opacity=.7,line width=1mm,pattern=north west lines, domain=0:1]
      (0.4,0)--(1,0)--(1,.66)--cycle;
   \node[above] at (.8,.1) {$H^{>0}_3$};
	
    \draw (0,-.1)--(1,.5);
    \node[right] at (1,.5) {$\ell_{12}$};

    \draw (.3,-.1)--(1,.66);
    \node[below] at (.3,-.1) {$\ell_{3}$};

    \draw (0.2,-.1)--(.5,.7);
    \node[below] at (.13,-.1) {$\ell_{13}$};

    \node[below] at (1,0) {$B=(w'_1,0)$};
   \node[below] at (0,0) {$O$};
   
   \node[above] at (.15,.05) {$P_{12,13}$};
   \node[left] at (.7,.35) {$P_{3,12}$};
   \node[right] at (1,.66) {$C$};
  
  \end{scope}

   \end{tikzpicture}
  \caption{Case 2c, $w_1'\le w_2b$. Left: $P_{3,12}^x\le P_{12,13}^x$. Right: $P_{3,12}^x> P_{12,13}^x$.}
  \label{fig:Q<0,case2c}
  \end{figure}

\medskip

If $w_1'> w_2b$, then $A_3> A_2$. The point $C$ satisfies $A_{13}<0$, the origin $O$ satisfies $A_{12}>0$. See Figure \ref{fig:Q<0,case2c part2}.

(i) If $P_{12,13}$ is to the southwest of $P_{2,13}$, then $H^{<0}_{12}\cap H^{<0}_{13}\cap H^{>0}_2\cap R_{f\ge0}=\emptyset$. 

(ii) If $P_{12,13}$ is to the northeast of $P_{2,13}$, then we even have $H^{<0}_{12}\cap H^{<0}_{13}\cap H^{>0}_2=\emptyset$. 

Thus $D_1\ge0$. 

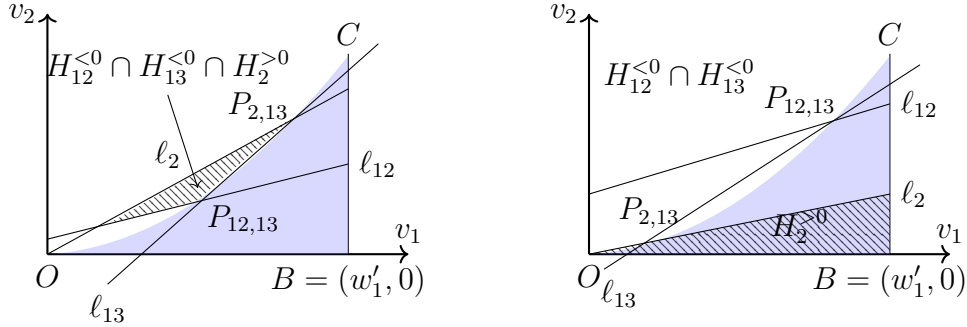
\begin{figure}[h]
\begin{tikzpicture}[scale=4]
    \draw (1,2/3)--(1,0);
    \draw [thick] [->] (0,0)--(1.2,0) node[right, above] {$v_1$};
    \draw [thick] [->] (0,0)--(0,.8) node[above, left] {$v_2$};

    \fill [fill opacity=.3, blue!50, domain=0:1, variable=\x]
      (0, 0)
      -- plot ({\x}, {\x*(\x+.1)*.6})
      -- (1, 0)
      -- cycle;
	
    \fill [fill opacity=.7,line width=1mm,pattern=north west lines, domain=0:1]
      (.5,.18)--(.81,.45)--(.2,.1)--cycle;
    \draw[<-] (.5,.22)--(.4,.53) node[right, above] {$H^{<0}_{12}\cap H^{<0}_{13}\cap H^{>0}_2$};
	
    \draw (0,.05)--(1,.3);
    \node[right] at (1,.3) {$\ell_{12}$};

    \draw (0,0)--(1,.55);
    \node[above] at (.4,.25) {$\ell_{2}$};

    \draw (0.2,-.1)--(1.1,.7);
    \node[below] at (.2,-.1) {$\ell_{13}$};

    \node[below] at (1,0) {$B=(w'_1,0)$};
   \node[below] at (0,0) {$O$};
   
   \node[below] at (.65,.2) {$P_{12,13}$};
   \node[above] at (.7,.4) {$P_{2,13}$};
   \node[above] at (1,.66) {$C$};

  \begin{scope}[shift={(1.8,0)}]
    \draw (1,2/3)--(1,0);
    \draw [thick] [->] (0,0)--(1.2,0) node[right, above] {$v_1$};
    \draw [thick] [->] (0,0)--(0,.8) node[above, left] {$v_2$};

    \fill [fill opacity=.3, blue!50, domain=0:1, variable=\x]
      (0, 0)
      -- plot ({\x}, {\x*(\x+.1)*.6})
      -- (1, 0)
      -- cycle;

    \fill [fill opacity=.7,line width=1mm,pattern=north west lines, domain=0:1, path fading = north]
      (0,.8)--(0,.2)--(.8,.45)--(1,.56)--(1,.8)--cycle;
   \node[above] at (.3,.5) {$H^{<0}_{12}\cap H^{<0}_{13}$};
         	
    \fill [fill opacity=.7,line width=1mm,pattern=north west lines, domain=0:1]
      (0,0)--(1,0)--(1,.2)--cycle;
   \node[above] at (.7,0) {$H^{>0}_2$};

    \draw (0,.2)--(1,.5);
    \node[right] at (1,.5) {$\ell_{12}$};

    \draw (0,0)--(1,.2);
    \node[right] at (1,.2) {$\ell_{2}$};

    \draw (0.05,-.05)--(1.1,.63);
    \node[below] at (.1,-.05) {$\ell_{13}$};

    \node[below] at (1,0) {$B=(w'_1,0)$};
   \node[below] at (0,0) {$O$};
   
   \node[above] at (.2,0.05) {$P_{2,13}$};
   \node[above] at (.7,.42) {$P_{12,13}$};
   \node[above] at (1,.66) {$C$};
  \end{scope}
    
   \end{tikzpicture}
  \caption{Case 2c, $w_1'> w_2b$. Left: $P_{12,13}^x\le P_{2,13}^x$. Right: $P_{12,13}^x> P_{2,13}^x$.}
  \label{fig:Q<0,case2c part2}
  \end{figure}
\medskip

 \noindent{\bf Case 2d.} If $w'_1< rv^0_2$. We assert that $D_1\ge0$ or $D_4>0$. Indeed, \eqref{eq:v_1-v^0_1} forces $v_1^0=v_1$, so the assertion follows from the following Claim.

\noindent {\bf Claim.}
 If $r_{\rm small}\le w_2/w'_1\le r_{\rm big}$,  $0\le v^0_2\le v_2$, $(v^0,w)$ is $l$-dominant (that is, $rv^0_2\ge v^0_1$, $v^0_1\le w'_1$, $v^0_2\le w_2$, $rv^0_1\ge v^0_2$), $v^0_1=v_1$, $f(v,w)\ge0$, then either $D_1\ge0$ or $D_4>0$.

\noindent Proof of Claim. 
Let $\bar{f}:[0,w'_1]\to\mathbb{R}_{\ge0}$ be the function with graph $C_f$. 
For each fixed $v_1$, if we increase $v_2$ then both $D_1$ and $D_2$ will decrease. 
So we can assume $v_2=\bar{f}(v_1)$. 

Denote $x=v_1,y=v^0_2$. Then
$$f(v,w)=-x^2+rx\bar{f}(x)-\bar{f}(x)^2+w'_1x-w_2\bar{f}(x)=0$$
$$\aligned
D_1&=(x^2-rxy+y^2)+(w'_1-2x)x+(2rx-2\bar{f}(x)-w_2)y\\
&=-x^2+rxy+y^2+w'_1x-2\bar{f}(x)y-w_2y\\
&=f(v,w)+(w_2-rx)(\bar{f}(x)-y)+(\bar{f}(x)-y)^2=(w_2-rx+\bar{f}(x)-y)(\bar{f}(x)-y)
\endaligned
$$
So $D_1\ge0$ if $y\le w_2-rx+\bar{f}(x)$.
On the other hand, if $y>w_2-rx+\bar{f}(x)$, then 
$$
D_4/2=x^2+y^2-x^2+(rx-\bar{f}(x)-w_2)y=y(y+rx-\bar{f}(x)-w_2)\ge0, 
$$
and the equality holds if and only if $y=0$, that is, $v_2^0=0$. But then $v_1^0\le rv^0_2=0$ forces $v^0_1=0$. This contradicts the assumption that $v^0=(v^0_1,v^0_2)\neq{\bf 0}$.  
So either $D_1\ge0$ or $D_4>0$. The claim is proved.

\smallskip

(II) Assume $r=2$. Then $(w'_1,w_2)\in \Phi^{im}_+$ implies $a_1=a_2=w'_1=w_2>0$; $f(v,w)\ge0$ together with other conditions on $v$ imply that $v$ lies in the triangle $OBC$ where $O=(0,0)$, $B=(a_1,0)$, $C=(a_1,a_1)$. In particular, $v_1\ge v_2$.

\begin{figure}[h]
\def\tikzscale{1}
\begin{tikzpicture}[scale=\tikzscale]
   \draw [thick] [->] (0,0)--(3,0) node[right, above] {$v_1$};
   \draw [thick] [->] (0,0)--(0,3) node[above, left] {$v_2$};
    \draw [dashed]  (2,0)--(2,2)--(0,2);
    \draw[thick,blue]  (-.5,-.5)--(2.5,2.5);
   \draw[thick,blue] (1,-1)--(4,2);
    \node[below right] at (2,0) {\tiny$B=(a_1,0)$};
   \node[right] at (2,2) {\tiny$C=(a_1,a_1)$};
   \node[left] at (0,2) {$rw'_1-w_2=a_1$};
   \node[right] at (2.5,2.5) {\tiny$f(v,w)\ge0$};
   \node[left] at (0,0) {$O$};
  \end{tikzpicture}
\label{fig:f=0,r=2}
  \end{figure}
  
Like (I), it suffices to prove one of the following: $D_1\ge0$, $D_2>0$, $D_3>0$, or $D_4>0$. Define $v^0=(at,bt)$ (with $a=1$), $Q_0,A_2,A_3,A_4$ as in (I). 

If $b=a(=1)$, then $Q_0=(a-b)^2=0$, $A_2=-v_1+(2v_1-v_2)=v_1-v_2$, $A_3=(a_1-v_1)+(2v_1-v_2-a_1)=v_1-v_2$, $A_1=A_2+A_3=2(v_1-v_2)\ge0$.  So $D_1=A_1t\ge0$, we are done.

If $b\neq a(=1)$, then $Q_0=(a-b)^2>0$, it suffices to prove that $A_2\ge0$ or $A_3\ge0$. The proof of Case 1 of (I) applies: we consider two subcases.

Case 1a: $b<1$. We claim that $A_3\ge0$. It suffices to show that the slope of $A_3=0$ is less than the slope of the diagonal, that is, $2-1/b<1$. This is obviously true.

Case 1b: $b>1$. We claim that $A_2\ge0$. It suffices to show that the slope of $A_2=0$ is greater than the slope of the diagonal, that is,
$2-1/b>1$. This is again true.
\end{proof}

Now we can complete the proof of the main theorem. 
\begin{proof}[Proof of Theorem \ref{main theorem}]
Let $w=(0,a_1,ra_1-a_2,0)$ (so $w={}^\phi w$ and satisfies $w'_1-w_1=a_1$, $w'_2-w_2=a_2-r[a_1]_+$). Let $v=(a_1-q,\, p)$.

Apply Lemma \ref{BBDG property} to the morphism $\pi: \mytildeF_{v,w}\to \myE_{v,w}$. Note that
$$
2\pi^{-1}(0)-\dim\mytildeF_{v,w}=2d(v,w)-\tilde{d}(v,w)=f(v,w)
$$
Note that $a_{v,0;w}(t)=\sum s_{0,b}t^b$ using the notation in \eqref{decomposition 0}. By Lemma \ref{BBDG property},  if $f(v,w)<0$, 
then $a_{v,0;w}(t)=0$, and there is nothing to prove. 

For the rest of the proof, we assume $f(v,w)\ge 0$. Then the degrees of the Laurent monomials in $a_{v,0;w}(t)$ must be in the interval $[-f(v,w),f(v,w)]$. Therefore, the degrees of the Laurent monomials in corresponding $e(p,q)=a_{v,0;w}(\v^r)$ must lie in the set $$\{-rf(v,w),-rf(v,w)+r,-rf(v,w)+2r,..., rf(v,w)\}.$$

Compare the max-degrees of both sides of \eqref{eq:sumAP-}, we see that the degree of right side is
$-v_1w_1-v_2w_2+v_2(w'_2+rv_1-v_2)+v_1(w'_1-v_1)
=-v_1^2+rv_1v_2-v_2^2+v_1(w'_1-w_1)+v_2(w'_2-w_2)=f(v,w)
$. 
On the other hand, by Lemma \ref{lemma:deg aP}, the summand $a_{v,v';w}P_-(v',w)$ on the left side of \eqref{eq:sumAP-} has degree less than $f(v,w)$ for all $v'\in \D(w)\setminus\{{\bf0}\}$. This forces $\deg(a_{v,{\bf0};w}P_-({\bf0},w))=f(v,w)$. Since $P_-({\bf0},w)=1$, we conclude that  $\deg a_{v,{\bf0};w}=f(v,w)$. All conclusions then follow from Lemma \ref{BBDG property}.
\end{proof}

\begin{proof}[Proof of Corollary \ref{cor:precise condition for e(p,q)neq0}]
If $(a_1,a_2)\notin\Phi_+^{im}$, the statement follows from Theorem \ref{theorem: real root}.
If $(a_1,a_2)\in\Phi_+^{im}$, the statement follows from Theorem \ref{main theorem} and the definition of $R(a_1,a_2)$. 
\end{proof}

\begin{proof}[Proof of Corollary \ref{cor:BBDG support Nakajima}]
By \eqref{eq:a=a}, $a_{v,v';w}=a_{v^\perp,0;w^\perp}$ where
 $v^\perp=(v_1-v'_1,v_2-v'_2)$, $w^\perp=(-v'_1+rv'_2,w_1'-v'_1,w_2-v'_2,-v'_2+rv'_1)$. Correspondingly, 
$a_1=(w^\perp)'_1-w^\perp_1 = w'_1-rv'_2$, 
$a_2=r[a_1]_+ +(w^\perp)'_2-w^\perp_2=r[a_1]_++rv'_1-w_2$,
$p=v_2^\perp=v_2-v'_2$, $q=[a_1]_+-v_1^\perp=[a_1]_+-v_1+v'_1$. 
Then $\mathbf{E}_{v',w}$ appears in the decomposition if 
the condition in Corollary \ref{cor:precise condition for e(p,q)neq0} is satisfied.
\end{proof}


\section{Analogue of Kazhdan-Lusztig polynomials}

We define $P(v,w)$ similar to the definition of Kazhdan-Lusztig polynomial:
$$P(v,w)=t^{\tilde{d}(v,w)}P_-(v,w)=\sum_k t^{\tilde{d}(v,w)+k}\dim H^k(i_0^* IC_w(v))\in\mathbb{Z}[t^\pm]$$

\begin{proposition}
Assume $(v,w)$ is $l$-dominant. Then $P(v,w)$ is a polynomial in $\mathbb{Z}[t]$ such that:

{\rm(i)} all coefficients are nonnegative; 

{\rm(ii)} the constant term is $1$; 

{\rm(iii)} $\deg P(v,w)\le \max(0,\tilde{d}(v,w)-1)$. More precisely, either $P(v,w)=1$, or $\deg P(v,w)\le \tilde{d}(v,w)+\min\big(-1,f(v,w),f^{swap}(v,w), g(v,w)\big)$.
\end{proposition}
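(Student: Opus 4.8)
The plan is to translate the three claimed properties of $P(v,w)$ directly into statements about $P_-(v,w)=\sum_k t^k\dim H^k(i_0^*IC_w(v))$, which we have already studied in Lemma \ref{lem:upperbound for P}. Since $P(v,w)=t^{\tilde d(v,w)}P_-(v,w)$, property (ii) (constant term $1$) is equivalent to saying that the minimal-degree term of $P_-(v,w)$ is exactly $t^{-\tilde d(v,w)}$ with coefficient $1$; and property (iii) amounts to an upper bound on the maximal degree of $P_-(v,w)$. Both of these were established in Lemma \ref{lem:upperbound for P}: when $\tilde d(v,w)=0$ we get $P_-(v,w)=1$, hence $P(v,w)=1$; when $\tilde d(v,w)>0$ the min-degree term of $P_-(v,w)$ is $t^{-\tilde d(v,w)}$ with coefficient $1$, and the max-degree is $\le\min(-1,f(v,w),f^{\rm swap}(v,w),g(v,w))$. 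Multiplying through by $t^{\tilde d(v,w)}$ gives exactly (ii) and (iii); in particular the degree bound in (iii) shows $\deg P(v,w)\le\tilde d(v,w)-1$ when $P(v,w)\ne1$, and trivially $\deg P(v,w)=0\le\max(0,\tilde d(v,w)-1)$ in the degenerate case, so $P(v,w)$ is an honest polynomial (no negative powers of $t$ survive).

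For property (i), nonnegativity of the coefficients, I would argue that $\dim H^k(i_0^*IC_w(v))\ge 0$ trivially, so every coefficient of $P_-(v,w)$ — hence of $P(v,w)$ — is a nonnegative integer. This is immediate from the definition and requires no further input; the only subtlety is that one must know $IC_w(v)$ makes sense, i.e. that ${\bf E}_{v,w}$ is irreducible, which is Proposition \ref{prop:fiber}(a), and that $\bar v = v$ under the $l$-dominance hypothesis, which is Lemma \ref{lemma:vbar}. These were already invoked in the proof of Lemma \ref{lem:upperbound for P}, so they carry over verbatim.

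The only remaining point is bookkeeping: one must check that the support-condition direction of the intersection cohomology complex forces $H^k(i_0^*IC_w(v))=0$ for $k>-1$ when $v\ne(0,0)$ — this is precisely the $<0$ statement on the max-degree of $P_-(v,w)$ from the proof of Lemma \ref{lem:upperbound for P}, citing \cite[\S3.1 (20)]{deCM} (or \cite[p142, Lemma 3.3.11]{Achar}) — and that $H^k=0$ for $k<-\tilde d(v,w)$, which is the $\tau^{\le -d}IC_X\cong\mathbb{Q}_X[d]$ fact used there. Together these pin $P_-(v,w)$ inside $[-\tilde d(v,w),-1]$ in degree (when $v\ne 0$), so $P(v,w)$ lies in $\mathbb{Z}_{\ge0}[t]$ with constant term $1$ and degree $\le\tilde d(v,w)-1$, and the finer bound comes from the three desingularizations $\pi,\pi^{\rm swap},\pi'$ via Lemma \ref{max-degree IC} exactly as in Lemma \ref{lem:upperbound for P}.

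I do not expect any genuine obstacle here: the proposition is essentially a repackaging of Lemma \ref{lem:upperbound for P} together with the trivial observation that dimensions of cohomology groups are nonnegative. The one thing to be careful about is stating (iii) in the two-case form ``either $P(v,w)=1$, or $\deg P(v,w)\le\tilde d(v,w)+\min(\dots)$'' rather than a single uniform bound, since when $v=(0,0)$ one has $\tilde d(v,w)=0$ and the $\min(\dots)$ term would be $-1$, giving a vacuous/negative right-hand side; the split into cases mirrors the statement of Lemma \ref{lem:upperbound for P} precisely and sidesteps this.
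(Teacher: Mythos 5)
Your proposal is correct and follows essentially the same route as the paper: the paper's proof simply notes that (i) follows from the definition (coefficients of $P_-(v,w)$ are dimensions of cohomology groups) and that (ii) and (iii) follow immediately from Lemma \ref{lem:upperbound for P}, which is exactly the repackaging you carry out. Your additional bookkeeping on polynomiality and the two-case form of (iii) is just an expanded version of the same argument.
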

\begin{proof}
(i) follows from the definition. (ii) and (iii) follow immediately from Lemma \ref{lem:upperbound for P}.
\end{proof}

Use \eqref{eq:variation of h=something-h} we can determine $P_-(v,w)$, thus $P(v,w)$, as follows: since
$$P_-(v,w)=
t^{d_{v,w}-\tilde{d}_{v,w}}{w'_2+rv_1 \brack v_2}_{t}{w'_1\brack v_1}_{t}
-\sum_{v^0\in \D(w)\setminus\{v\}} a_{v,v^0;w} P_-(v^0,w)
$$
we can compute $P_-(v,w)$ recursively on $v$, if we can compute $a_{v,v^0;w}$. On the other hand,  $a_{v,v^0;w}=a_{v^\perp,0;w^\perp}=a_{v^\perp,0; {}^\phi(w^\perp)}$ (the first equality follows from the slice theorem, the second equality follows from Lemma \ref{chiL(w)=C}), which are coefficients of certain triangular basis elements according to Lemma \ref{lem:e=a}, and the triangular basis can be recursively computed as described in \cite{BZ2}.

\begin{example}
Assume $w=(0,w_1',w_2,0)\in\mathbb{Z}_{\ge0}^4$. Using the recursive method described in the previous paragraph, we compute a few examples of $P_-(v,w)$ and $P(v,w)$:

{\tiny
For $r=2$: 

\begin{center}
 \begin{tabular}{|c|c|c|c|} 
 \hline
 $v$ &$w$&$P_-(v,w)$ & $P(v,w)$ \\ 
 \hline
 $(1,1)$ & any & $1$ & $1$\\
 \hline
 $(1,1)$&$(0,1,1,0)$ &$t^{-2}$ &$1$\\
 \hline
 $(1,1)$&$(0,1,i,0), i\ge2$&$t^{-i-1}+t^{-i+1}$ &$1+t^2$\\
 \hline
 $(1,1)$&$(0,2,2,0)$&$t^{-4}+2t^{-2}$ &$1+2t^2$\\
 \hline
 $(1,1)$&$(0,2,i,0), i\ge 3$&$t^{-i-2}+2t^{-i}+t^{-i+2}$ & $1+2t^2+t^4$\\
 \hline
 $(1,1)$&$(0,3,3,0)$&$t^{-6}+2t^{-4}+2t^{-2}$ &$1+2t^2+2t^4$\\
 \hline
 $(1,1)$&$(0,3,i,0), i\ge 4$&$t^{-i-3}+2t^{-i-1}+2t^{-i+1}+t^{-i+3}$ & $1+2t^2+2t^4+t^6$\\
  \hline
\end{tabular}
\end{center}

For $r=3$: 

\begin{center}
 \begin{tabular}{|c|c|c|c|} 
 \hline
 $v$ &$w$&$P_-(v,w)$ & $P(v,w)$ \\ 
 \hline
 $(1,1)$&$(0,2,3,0)$ &$t^{-6}+2t^{-4}+2t^{-2}$ &$1+2t^2+2t^4$\\
 \hline
 $(1,1)$&$(0,3,3,0)$ &$t^{-7}+2t^{-5}+3t^{-3}+t^{-1}$ &$1+2t^2+3t^4+t^6$\\
 \hline
 $(1,1)$&$(0,4,4,0)$&$t^{-9}+2t^{-7}+3t^{-5}+3t^{-3}+t^{-1}$ &$1+2t^2+3t^4+3t^6+t^8$\\
 \hline
 $(1,2)$&$(0,4,4,0)$&$t^{-13}+2t^{-11}+3t^{-9}+3t^{-7}+2t^{-5}+t^{-3}$ &$1+2t^2+3t^4+3t^6+2t^8+t^{10}$\\
 \hline
 $(2,2)$&$(0,4,4,0)$&\makecell{$t^{-20}+2t^{-18}+5t^{-16}+6t^{-14}+8t^{-12}+6t^{-10}$\\ $+5t^{-8}+2t^{-6}+t^{-4}$}&\makecell{$1+2t^2+5t^4+6t^6+8t^8+6t^{10}$\\ $+5t^{12}+2t^{14}+t^{16}$}\\
  \hline
\end{tabular}
\end{center}
}

We expect that $P(v,w)$ to always be unimodal. Note that the last example shows it is not necessarily symmetric or log-concave. It would be interesting to look for a combinatorial model for $P(v,w)$. 
\end{example}

\section{Examples}
The examples in \S\ref{examp1}  are the triangular basis element $C[3,4]$ and $C[2,8]$ in $\mathcal{A}_{\v}(3,3)$. In \S\ref{subsection:geometry0110} we use the geometry of the Nakajima's quiver varieties to compute $\chi(L(w))$ for $w=(0,1,1,0)$ and $r\ge2$ to show that it coincides with a triangular basis element. In \S\ref{eg:main thm} we use an example to illustrate the idea of the proof of the main theorem. In \S\ref{example for 1.5}, we give an example of Corollary \ref{cor:BBDG support Nakajima}.

\subsection{Two examples of triangular basis elements}\label{examp1}
Consider the skew-symmetric cluster algebra $\mathcal{A}_{\v}(3,3)$.
 
(a) $C[3,4]$. Using computer, we get
{\tiny
\noindent$C[3,4]=X^{\left(-3, -4\right)} + \left(\v^{6} + 1+\v^{-6}\right)X^{\left(-3, -1\right)} +  \left(\v^{6} + 1+\v^{-6}\right)X^{\left(-3, 2\right)} +  X^{\left(-3, 5\right)} +  \left(\v^{9} + \v^3+  \v^{-3}+\v^{-9}\right)X^{\left(0, -4\right)} +  \left(\v^{6} + 1+\v^{-6}\right)X^{\left(0, -1\right)} + \left(\v^{12} + \v^{6} + 2 + \v^{-6} + \v^{-12} \right)X^{\left(3, -4\right)} + X^{\left(3, -1\right)}\\ + \left(\v^{9} + \v^3+\v^{-3}+\v^{-9}\right)X^{\left(6, -4\right)} + X^{\left(9, -4\right)}$. 
}

Below the table for the coefficients $e(p,q)$. We see that the max-degrees of the coefficients match the corresponding values of $D(p,q)$ in Figure \ref{fig:C34}. 

{\tiny
\begin{center}
 \begin{tabular}{|c|ccccc|} 
 \hline
 \tikz{\node[below left, inner sep=1pt] (def) {$q$};%
      \node[above right,inner sep=1pt] (abc) {$p$};%
      \draw (def.north west|-abc.north west) -- (def.south east-|abc.south east);}
&
$0$ &$1$&$2$ &$3$& $4$\\ 
 \hline
3& $1$ &  & & &\\
2& $\left(\v^{6} + 1+\v^{-6}\right)$ &  &  & &\\
1& $\left(\v^{6} + 1+\v^{-6}\right)$ & $ \left(\v^{6} + 1+\v^{-6}\right)$ & $1$ & &\\
0& $1$ & $\left(\v^{9} + \v^3+\v^{-3}+\v^{-9}\right)$ & $ \left(\v^{12} + \v^{6} + 2 + \v^{-6} + \v^{-12} \right)$ & $ \left(\v^{9} + \v^3+\v^{-3}+\v^{-9}\right)$&$1$\\
  \hline
\end{tabular}
\end{center}
}

\begin{figure}[h]
\centering
\begin{minipage}{.3\textwidth}
\begin{tikzpicture}
    [
        dot/.style={circle,draw=black, fill,inner sep=1pt},
        scale=.7
    ]
\draw [black!70] {(0,3) to [out=308, in=160] (4,0)};
\fill [blue!10] {(0,3) to [out=308, in=160] (4,0) to (0,0)};

\foreach \x in {0,...,5}{
    \node[dot] at (\x,0){ };
    \node[dot] at (\x,1){ };
    \node[dot] at (\x,2){ };
    \node[dot] at (\x,3){ };
    \node[dot] at (\x,4){ };
}

\node[anchor=south west] at (0,0) {\tiny0};
\node[anchor=south ] at (1,0) {\tiny9};
\node[anchor=south ] at (2,0) {\tiny12};
\node[anchor=south ] at (3,0) {\tiny9};
\node[anchor=south ] at (4,0) {\tiny0};
\node[anchor=south ] at (5,0) {\tiny$-15$};

\node[anchor=south west] at (0,1) {\tiny6};
\node[anchor=south ] at (1,1) {\tiny6};
\node[anchor=south ] at (2,1) {\tiny0};
\node[anchor=south ] at (3,1) {\tiny$-12$};

\node[anchor=south west] at (0,2) {\tiny6};
\node[anchor=south ] at (1,2) {\tiny$-3$};
\node[anchor=south ] at (2,2) {\tiny$-18$};

\node[anchor=south west] at (0,3) {\tiny0};
\node[anchor=south] at (1,3) {\tiny$-18$};

\node[anchor=south west] at (-.2,4) {\tiny$-12$};

\draw[->,thick,-latex] (0,0) -- (0,5);
\node[anchor=west] at (0,5) {\tiny$q$};
\draw[->,thick,-latex] (0,0) -- (6,0);
\node[anchor=south] at (6,0) {\tiny$p$};  
\end{tikzpicture}
\end{minipage}
\begin{minipage}{.3\textwidth}
$\phantom{X}$\\
$\phantom{X}$\\
\includegraphics[width=5cm]{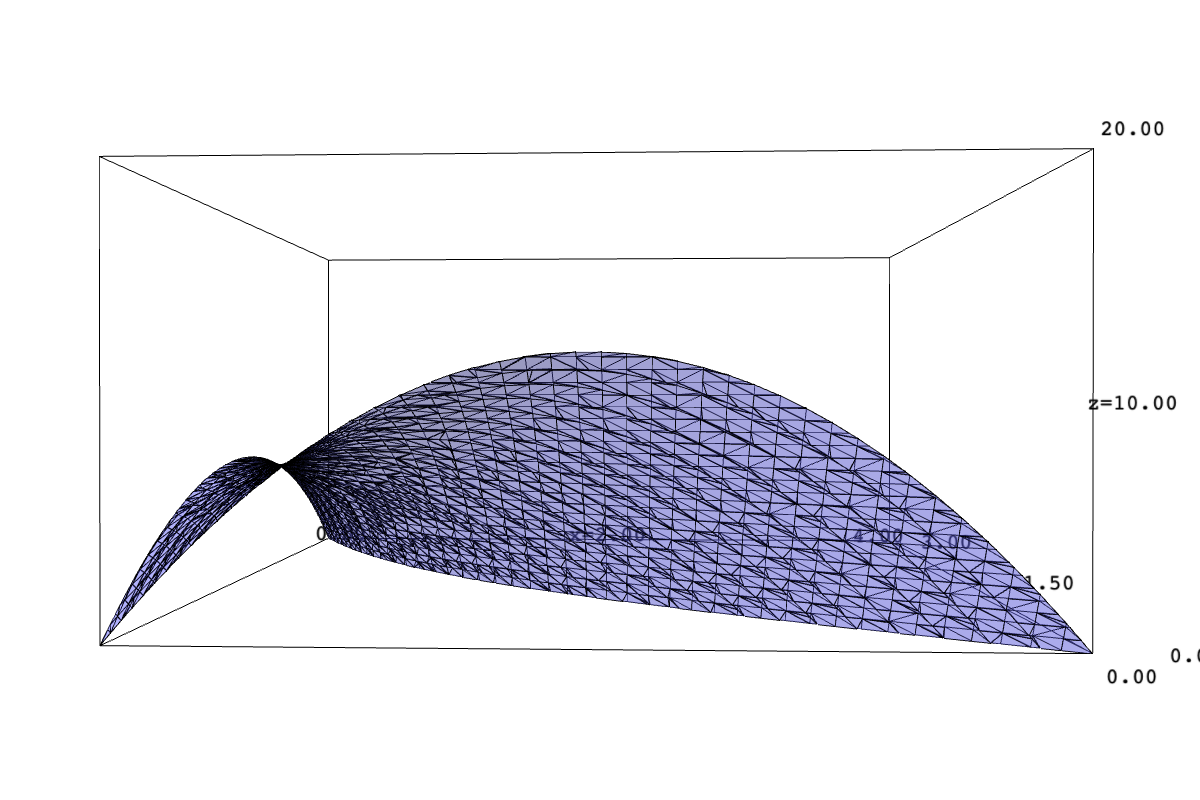}
\end{minipage}
\begin{minipage}{.3\textwidth}
$\phantom{X}$\\
\includegraphics[width=5cm]{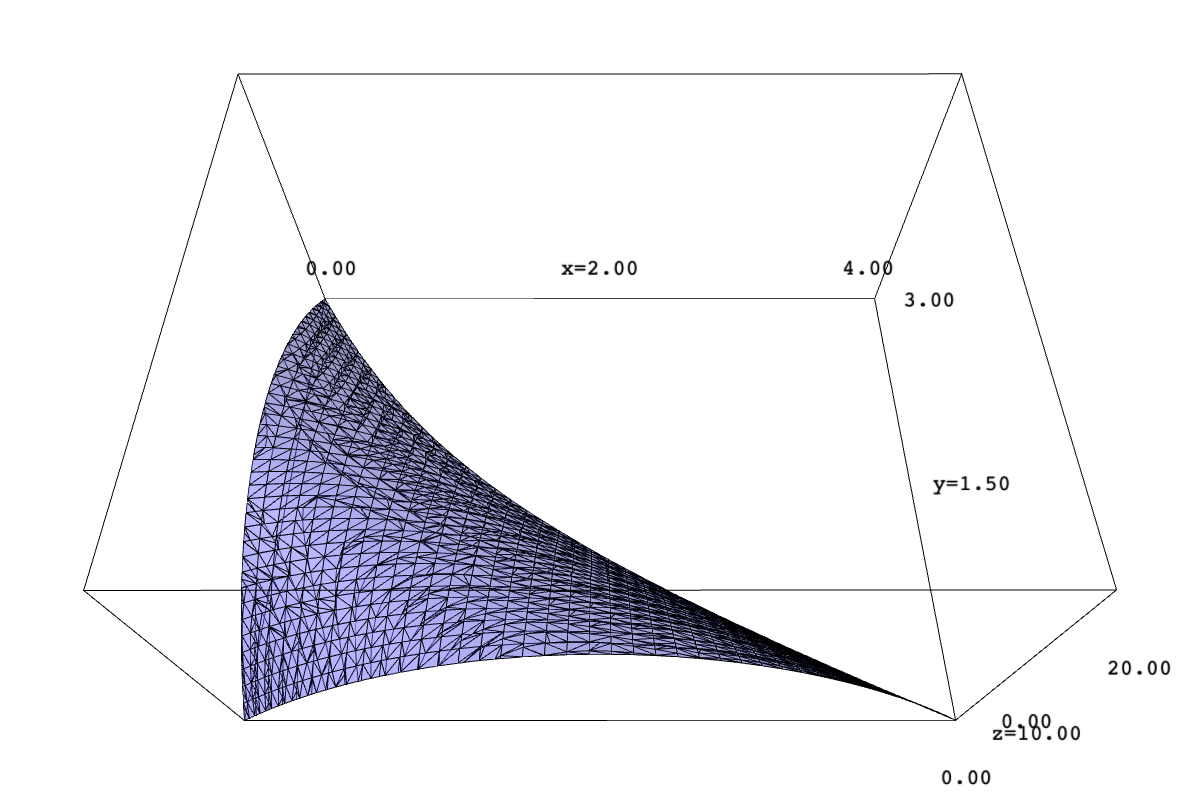}
\end{minipage}
\caption{Left: some values of $D(p,q)$; the shaded region is $\{(p,q)\in[0,4]\times [0,3]\ | \ D(p,q)\ge0\}$. Middle: the front view of $D(p,q)$ for $(p,q)\in R(3,4)$. Right: the top view of the the graph of $D(p,q)$ for $(p,q)\in R(3,4)$. }
\label{fig:C34}
\end{figure}

(b) $C[2,8]$. Using computer, we get
{\tiny
\noindent$C[2,8]=X^{\left(-2, -8\right)} + \left(\v^{3} +\v^{-3}\right)X^{\left(-2, -5\right)}  + \cdots+X^{\left(22, -8\right)}$
}.

Again, the max-degrees of the coefficients match the corresponding values of $D(p,q)$; see Figure \ref{fig:C28}. 

\begin{figure}[h]
\centering
\begin{minipage}{.35\textwidth}
\begin{tikzpicture}
    [
        dot/.style={circle,draw=black, fill,inner sep=1pt},
        scale=.6
    ]
\draw [black!70] {(0,2) to (2,2) to (8,0)};
\fill [blue!10] {(0,0) to (0,2) to (2,2) to (8,0) to (0,0)};

\foreach \x in {0,...,8}{
    \node[dot] at (\x,0){ };
    \node[dot] at (\x,1){ };
    \node[dot] at (\x,2){ };
}

\node[anchor=south west] at (0,0) {\tiny0};
\node[anchor=south ] at (1,0) {\tiny21};
\node[anchor=south ] at (2,0) {\tiny36};
\node[anchor=south ] at (3,0) {\tiny45};
\node[anchor=south ] at (4,0) {\tiny48};
\node[anchor=south ] at (5,0) {\tiny45};
\node[anchor=south ] at (6,0) {\tiny36};
\node[anchor=south ] at (7,0) {\tiny21};
\node[anchor=south ] at (8,0) {\tiny0};

\node[anchor=south west] at (0,1) {\tiny3};
\node[anchor=south ] at (1,1) {\tiny15};
\node[anchor=south ] at (2,1) {\tiny21};
\node[anchor=south ] at (3,1) {\tiny21};
\node[anchor=south ] at (4,1) {\tiny15};
\node[anchor=south ] at (5,1) {\tiny3};

\node[anchor=south west] at (0,2) {\tiny0};
\node[anchor=south ] at (1,2) {\tiny$3$};
\node[anchor=south ] at (2,2) {\tiny$0$};

\draw[->,thick,-latex] (0,0) -- (0,3);
\node[anchor=west] at (0,3) {\tiny$q$};
\draw[->,thick,-latex] (0,0) -- (9,0);
\node[anchor=south] at (9,0) {\tiny$p$};  
\end{tikzpicture}
\end{minipage}
\begin{minipage}{.3\textwidth}
$\phantom{X}$\\
\includegraphics[width=5cm]{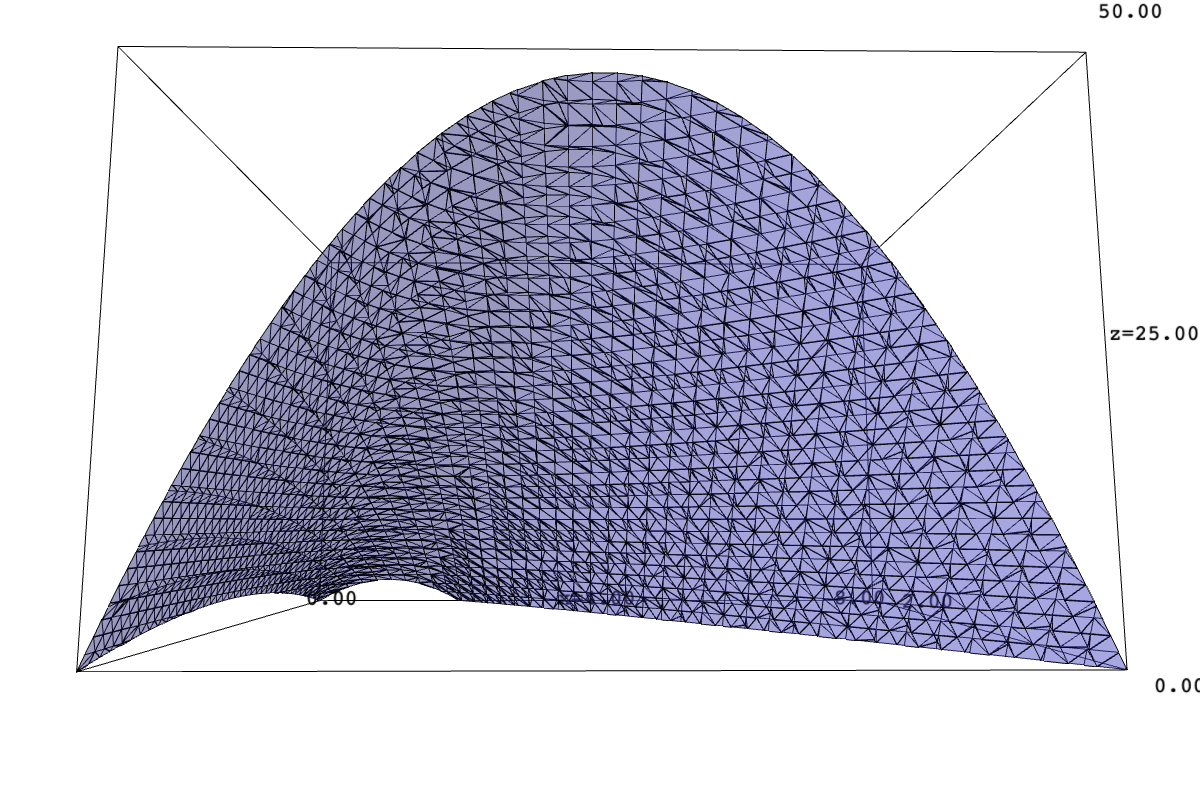}
\end{minipage}
\begin{minipage}{.3\textwidth}
\includegraphics[width=6cm]{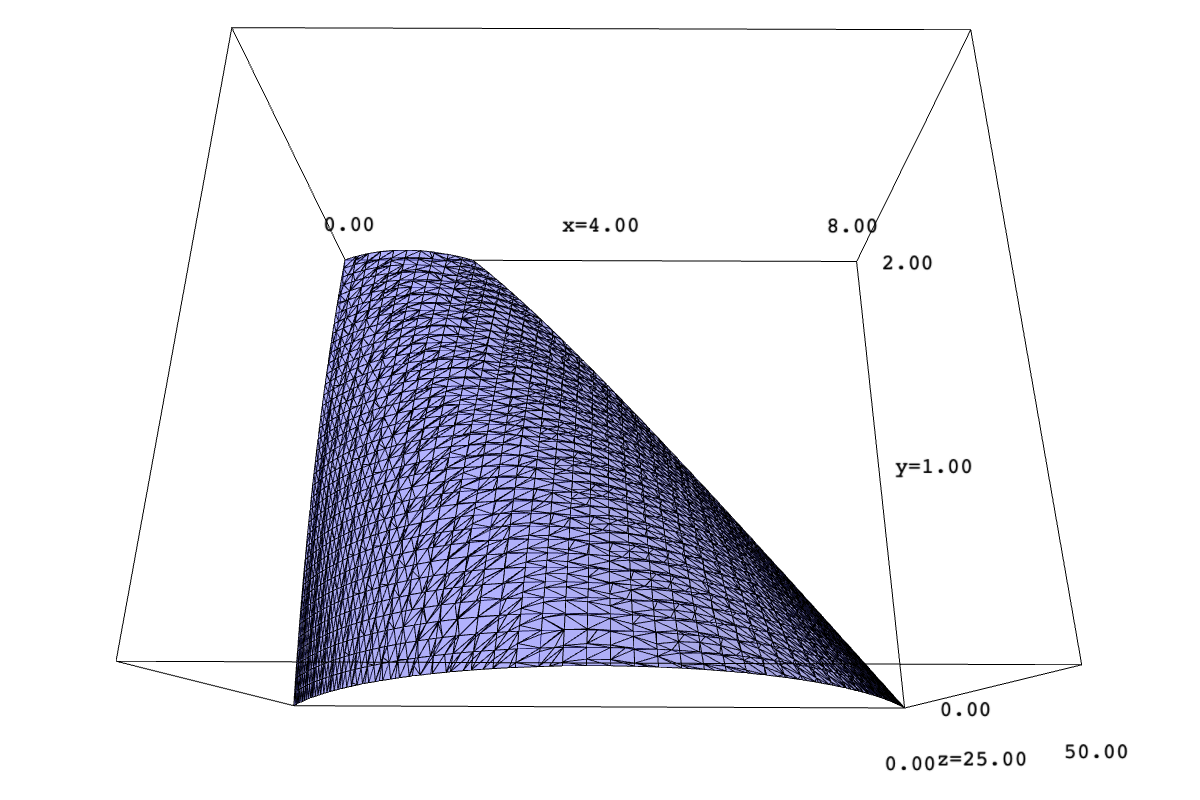}
\end{minipage}
\caption{Left: some values of $D(p,q)$; the shaded region is $\{(p,q)\in[0,8]\times [0,2]\ | \ D(p,q)\ge0\}$. Middle: the front view of $D(p,q)$ for $(p,q)\in R(2,8)$. Right: the top view of the the graph of $D(p,q)$ for $(p,q)\in R(2,8)$. }
\label{fig:C28}
\end{figure}

\subsection{An example to illustrate that $\{\chi(L(w))\}$ is identical with the triangular basis}\label{subsection:geometry0110} 
This example is to support Lemma \ref{chiL(w)=C}.
Let  $r\ge2$, $w=(0,1,1,0)$. For this $w$, there are only two possibilities for $v^0$, namely $(0,0)$ and $(1,1)$, such that $(v^0,w)$ is $l$-dominant. The stratification of $\mathbf{E}_{v,w}$ is:
$$\mathbf{E}_{v,w}=\mathbf{E}^\circ_{(0,0),w}\cup \mathbf{E}^\circ_{(1,1),w}=\{0\}\cup(\mathbb{C}^r\setminus\{0\}).$$ 
Since both strata are simply connected (if $r\ge 2$), the local systems appeared in the BBDG decomposition must be trivial.
We can easily check that $a_{v,0;w}$ is nonempty for 
$$v=(0,0),(1,0),(1,1),(1,2),\dots,(1,r),$$
and the corresponding $\tilde{\mathcal{F}}(v,w)$ are 
$$\textrm{a point, a point, $\widetilde{\CC^r}$, the tautological bundle  ${\rm Taut}(Gr_2(\CC^r))$,$\dots$, ${\rm Taut}(Gr_r(\CC^r))$. }$$

Consider $v=(1,i)$. Then $w-C_qv=(-1+ri,0,1-i,-i+r)$.

For $i\ge 1$, the BBDG decomposition is of the following form (note that $\dim {\rm Taut}(Gr_i(\CC^r))=\dim (Gr_i(\CC^r))+r=i(r-i)+i$): 
$$\pi_*(\QQ_{{\rm Taut}(Gr_i(\CC^r))}[-i(r-i)-i])=(A)\QQ_{\CC^r}[-r]\oplus (B)\QQ_{0}.$$ 
Since $\pi^{-1}(x)\cong Gr_{i-1}(\CC^{r-1})$ for $x\neq 0$, we get $A={r-1 \brack i-1}_\v$. Take cohomology over the point $0$, we get 
$${r \brack i}_\v \v^{i}={r-1 \brack i-1}_\v\v^r+B.$$
Therefore
$$a_{(1,i),0;w}=B= {r \brack i}_\v \v^{i} - {r-1 \brack i-1}_\v\v^r = {r-1\brack i}_\v.$$

Then
$$\chi(L(w))=X^{(-1,1)}+X^{(-1,-r+1)}+{r-1\brack 1}_{\v^r} X^{(r-1,r+1)}+\cdots+{r-1\brack r-1}_{\v^r} X^{((r-1)r-1,-r+1)}$$
As we expected, this coincides with the triangular basis element $C[1,r-1]$, which also equals to the greedy element $X[1,r-1]$. It specializes to $\frac{x_2^r+(1+x_1^r)^{r-1}}{x_1x_2^{r-1}}$ when setting $q=1$.

\subsection{An example to illustrate the proof of the main theorem}\label{eg:main thm}

Let $r=3$, we illustrate that for $(a_1,a_2)=(4,8)$, $(p,q)=(2,2)$, our strategy to show that the maximal degree of $e(2,2)$ is $\v^D$ where $D=D(p,q)=r(a_1q+a_2p-p^2-rpq-q^2)=3(4\times2+8\times2-2^2-3\times2\times2-2^2)=12$. 

By Lemma \ref{lem:e=a}, we have $w'_1-w_1=a_1=4$ and $w'_2-w_2=a_2-r[a_1]_+=-4$. Letting $w_1=w'_2=0$, we get $w=(0,4,4,0)$; also, $v=([a_1]_+-q,p)=(2,2)$. Note that $(v,w)$ is $l$-dominant and $w\in\Phi^{im}_+$. 

The following $v'$ satisfies that $(v',w)$ is $l$-dominant and $v'\le v$: $(0,0), (1,1),(1,2),(2,1),(2,2)$. We compute

 $d_{v,w}=v_2(w_2'+rv_1-v_2)+v_1(w_1'-v_1)=2(0+3\times 2-2)+2(4-2)=12$, 

$\tilde{d}_{v,w}=v_2(w_2+w_2'+rv_1-v_2)+v_1(w_1+w_1'-v_1)=2(4+0+3\times2-2)+2(0+4-2)=20$.

So \eqref{eq:sumAP-} becomes
\begin{equation}\label{eq:example4822}
\sum_{v'}a_{v,v';w} P_-(v',w)
=t^{-8}{6\brack 2}_{t}{4\brack 2}_{t}=t^4+\textrm{lower degree terms}
\end{equation}

Below is a table for the each term. Note that all except the first one have max-degree $<4$. Then \eqref{eq:example4822} forces the max-degree term of  $a_{v,0;w}$ to be $t^4$. 

\begin{center}
 \begin{tabular}{|c|c|c|c|} 
 \hline
 $v'$ &$a_{v,v';w}$&$P_-(v,w)$ & max-deg of $a_{v,v';w}P_-(v,w)$ \\ 
 \hline
 $(0,0)$ & $t^4+2t^2+4+2t^{-2}+t^{-4}$ & $1$ &$4$\\
 $(1,1)$ & $t+t^{-1}$ & $t^{-1}+3t^{-3}+\cdots+t^{-9}$& $0$\\
 $(1,2)$ & $0$ & $t^{-3}+2t^{-5}+\cdots+t^{-13}$ & $-\infty$\\
 $(2,1)$ & $t+t^{-1}$ & $t^{-3}+2t^{-5}+\cdots+t^{-13}$ & $-2$\\
 $(2,2)$ & $1$ & $t^{-4}+2t^{-6}+\cdots+t^{-20}$ & $-4$\\
  \hline
\end{tabular}
\end{center}

\subsection{An example to Corollary \ref{cor:BBDG support Nakajima}}\label{example for 1.5}

We illustrate Corollary \ref{cor:BBDG support Nakajima} by a special example where the morphism $\pi: {\mytildeF_{v,w}}\to \mathbf{E}_{v,w}$ is not necessarily birational, and we shall examine the generic fiber. 

Consider $w=(0,w'_1,w_2,0)$ with $rw_2\ge w'_1$ and $rw'_1\ge w_2$ (that is, Case 1 in Figure \ref{figure:l-dominant}). We assume $v=(v_1,v_2)$ satisfying $0\le v_1\le w'_1$ and $w_2\le v_2\le rv_1$. Let $v'=\bar{v}=(v_1,w_2)$ (Lemma \ref{lemma:vbar}). By \eqref{a1a2pq}, $a_1=w'_1-rw_2\le0$, $a_2=rv_1-w_2\ge0$, $p=v_2-w_2$, $q=0$. The triangular basis element $C[a_1,a_2]$ is a cluster monomial, so we have the following (for example, by \cite[Corollary 3.4]{LLRZ2}):
$$e(p,0)={a_2\brack p}_{\v^r}={rv_1-w_2\brack v_2-w_2}_{\v^r}$$ 

On the other hand, we compute $a^d_{v,v';w}$ directly by considering the geometry. The morphism $\pi: {\mytildeF_{v,w}}\to \mathbf{E}_{v,w}=\mathbf{E}_{v',w}$ is a $\mathcal{M}$-bundle when restricting over the stratum $\mathbf{E}^\circ_{v',w}$,  
where the generic fiber $\mathcal{M}$ is a 
$Gr(v_2-v'_2,w_2'+rv_1-v'_2)$-bundle over $Gr(v_1-v'_1,w_1'-v'_1)$ by Proposition \ref{prop:fiber} (b). 
These two Grassmannians are $Gr(v_2-w_2,rv_1-w_2)$ and $Gr(0,w'_1-v_1)$ respectively. So $\mathcal{M}$ is isomorphic to $Gr(v_2-w_2,rv_1-w_2)$. 
Since we know that the local systems appeared in the BBDG decomposition must be trivial,  $a^d_{v,v';w}$ are determined by the homology of the fiber $Gr(v_2-w_2,rv_1-w_2)$, more precisely,
$$\sum_d a^d_{v,v';w}t^d={rv_1-w_2\brack v_2-w_2}_t.$$

Thus $e(p,0)={rv_1-w_2\brack v_2-w_2}_{\v^r}=\sum_d a^d_{v,v';w}\v^{rd}$, as predicted by Corollary  \ref{cor:BBDG support Nakajima}.

\section{Some technical proofs}
To ensure that the main theorem's argument is easily readable, the proofs for several statements in sections \S4 and \S5 have been deferred to this section.

\subsection{Proof of Lemma \ref{lem:ab}}
(a) $\myE^\circ_{v',w}\subseteq \myE_w$ is locally closed because it is the difference of closed subsets:
$$\aligned
\myE^\circ_{v',w} 
=&\{({\bf x},{\bf y})\in\mathbf{E}_w\, |\, 
\mathrm{rank}A\le v'_1, \mathrm{rank} B\le v_2' \}
-\{({\bf x},{\bf y})\in\mathbf{E}_w\, |\, 
\mathrm{rank}A\le v'_1-1, \mathrm{rank} B\le v_2' \}\\
&-\{({\bf x},{\bf y})\in\mathbf{E}_w\, |\, 
\mathrm{rank}A\le v'_1, \mathrm{rank} B\le v_2'-1 \}
\endaligned
$$

To show $\myE^\circ_{v',w}$ is nonsingular, we consider the following open covering
\begin{equation}\label{UIJ}
\myE^\circ_{v',w}=\bigcup U_{I,J}
\end{equation}
where $I\subseteq\{1,\dots,w_1'\}$ and $|I|=v_1'$, $J\subseteq\{1,\dots,w_2\}$ and $|J|=v_2'$, $U_{I,J}\subseteq \myE^\circ_{v',w}$ consists of those $({\bf x},{\bf y})$ such that the $v_1'$ rows of $A$ indexed by $I$ are linearly independent, and the $v_2'$ columns of $B({\bf x},{\bf y})$ indexed by $J$ are linearly independent. 
We will show that 
$U_{I,J}$ is isomorphic to an open subset $\tilde{\mathbb{A}}^\circ$ of an affine space $\tilde{\mathbb{A}}$,
hence is nonsingular, which implies that $\myE^\circ_{v',w}$ is nonsingular. 

Without loss of generality, we can assume $I=\{1,\dots,v_1'\}$ and $J=\{1,\dots,v_2'\}$. 
Denote $$\tilde{\mathbb{A}}=\mathbb{A}^{(w'_1-v_1')\times v_1'}\times \mathbb{A}^{v_2'\times (w_2-v_2')}\times\mathbb{A}^{v_1'\times w_1}\times \mathbb{A}^{w_2'\times v_2'}\times (\mathbb{A}^{v_1'\times v_2'})^{\oplus r}$$ and write elements in this affine space as $(P,Q,Z_1',Z_2',C_1,\dots,C_r)$ where each component is a matrix of indicated size. Then define a morphism
$$\aligned
&\varphi: \mathbb{A}^{(w'_1-v_1')\times v_1'}\times \mathbb{A}^{v_2'\times (w_2-v_2')}\times\mathbb{A}^{v_1'\times w_1}\times \mathbb{A}^{w_2'\times v_2'}\times (\mathbb{A}^{v_1'\times v_2'})^{\oplus r} \to \myE_w\\
&(P,Q,Z_1',Z_2',C_1,\dots,C_r)\mapsto 
({\bf x},{\bf y})=\Big(
x_1=\begin{bmatrix}Z_1'\\PZ_1'\end{bmatrix},
x_2=\begin{bmatrix}Z_2'&Z_2'Q\end{bmatrix},
y_i=\begin{bmatrix}C_i&C_iQ\\ PC_i&PC_iQ\end{bmatrix}
\Big).
\endaligned
$$
Note that ${\rm im}\varphi\subseteq\myE_{v',w}$. Indeed, define
$$A'=[Z_1',C_1,\dots,C_r]_\text{hor},\quad B'=[Z'_2,C_1,\dots,C_r]_\text{vert}$$
Then letting
$$x_1=\begin{bmatrix}Z'_1\\PZ'_1\end{bmatrix}, \quad 
x_2=\begin{bmatrix}Z'_2 & Z'_2Q\end{bmatrix}, \quad
y_i=\begin{bmatrix}C_i&C_iQ\\ PC_i& PC_iQ\end{bmatrix} \quad (1\le i\le r),
$$
the corresponding matrices
\begin{equation}\label{AB in ZCMN}
A=[x_1\ y_1\ \cdots y_r]=\begin{bmatrix}Z'_1&C_1&C_1Q&\cdots&C_r&C_rQ\\ PZ'_1&PC_1&PC_1 Q&\cdots&PC_r&PC_rQ
\end{bmatrix},
\quad
B=\begin{bmatrix}x_2\\ y_1\\ \vdots \\y_r \end{bmatrix}
=\begin{bmatrix}Z'_2& Z'_2Q\\ C_1&C_1Q \\ PC_1& PC_1Q \\ \vdots &\vdots\\ C_r& C_rQ \\ PC_r & PC_rQ
\end{bmatrix} 
\end{equation}
satisfy 
${\rm rank}A={\rm rank} A'\le v'_1$, ${\rm rank}B={\rm rank} B'\le v'_2$. 
Therefore ${\rm im}\varphi\subseteq\myE_{v',w}$.

Define $\tilde{\mathbb{A}}^\circ$ to be the following open subset of $\tilde{\mathbb{A}}$:
$$\aligned
\tilde{\mathbb{A}}^\circ=\Bigg\{(P,Q,Z'_1,Z'_2,C_1,\dots,C_r)\in \tilde{\mathbb{A}} \; \Bigg|\;  
&\textrm{$A'=[Z'_1\ C_1\  \cdots \ C_r]_\text{hor}$ has full row rank $v'_1$, and}\\
&\textrm{$B'=[Z'_2,C_1,\cdots, C_r]_\text{vert}$ has full column rank $v'_2$.}\Bigg\}
\endaligned
$$
Then $\varphi$ restricts to a morphism $\varphi|_{\tilde{\mathbb{A}}^\circ}: \ \tilde{\mathbb{A}}^\circ\to U_{I,J}$.

We claim that $\varphi|_{\tilde{\mathbb{A}}^\circ}$ is an isomorphism. To prove this, it suffices to construct the inverse morphism $\varphi|_{\tilde{\mathbb{A}}^\circ}^{-1}:U_{I,J}\to \tilde{\mathbb{A}}^\circ$ by sending $(x_1,x_2,y_1,\dots,y_r)$ to $(P,Q,Z'_1,Z'_2,C_1,\dots,C_r)$ defined as follows. For a matrix $X$, denote $X_{[1,\dots,a;1,\dots,b]}$ the submatrix of $X$ by selecting the first $a$ rows and first $b$ columns. Define $Z'_1={x_1}_{[1,\dots,v'_1;1,\dots,w_1]}$, $Z'_2={x_2}_{[1,\dots,w'_2;1,\dots,v'_2]}$, $C_i={y_i}_{[1,\dots,v'_1;1,\dots,v'_2]}$. By the definition of $U_{I,J}$, the top $v'_1$ rows of $A$ are linearly independent, thus $A'$ is of full rank $v'_1$. Denote $\begin{bmatrix}A' \\ A'' \end{bmatrix}$ the $w'_1\times (w_1+rv'_2)$-submatrix of $A$ with $A'$ as the top block.  Define $P$ to be the unique matrix satisfying $PA'=A''$. Since $A'$ has full row rank, it has a right inverse $A'^T(A'A'^T)^{-1}$. Thus, $P=A''A'^T(A'A'^T)^{-1}$ whose entries are rational functions that are regular on $U_{I,J}$. 
The matrix $Q$ is defined similarly and has similar properties. Therefore $\varphi|_{\tilde{\mathbb{A}}^\circ}^{-1}$ is a morphism. It is easy to see that  $\varphi|_{\tilde{\mathbb{A}}^\circ}^{-1}$ is indeed the inverse of $\varphi|_{\tilde{\mathbb{A}}^\circ}$.
This implies that $\myE^\circ_{v',w}$ is nonsingular. 

\smallskip
The nonemptyness statement is proved in \cite[Proposition 4.23]{Qin}. Here we give an alternative proof. 
First, assume $\myE^\circ_{v',w}$ is nonempty. Then 
\begin{itemize}
\item $w_1'-v'_1\ge0$ follows from  $v'_1={\rm rank}A\le\textrm{(number of rows of $A$)}=w_1'$. 

\item $w_1-v_1'+rv_2'\ge0$ follows from $v_1'={\rm rank}A ={\rm rank} [x_1 \ y_1\ \cdots \ y_r]\le  \textrm{rank}x_1+\sum_{i=1}^r\textrm{rank} y_i\le w_1+\sum_{i=1}^r{\rm rank } B=w_1+rv_2'$.

\item $w_2-v_2'\ge0$ follows from  $v_2'={\rm rank}B\le\textrm{(number of columns of $B$)}=w_2$. 

\item $w_2'-v_2'+rv_1'\ge0$ follows from $v_2'={\rm rank}B\le \textrm{rank}x_2+\sum_{i=1}^r\textrm{rank} y_i\le w'_2+\sum_{i=1}^r{\rm rank } A=w'_2+rv_1'$.
\end{itemize}
Therefore $v'\in\D(w)$. 
Conversely, assume $v'\in\D(w)$. Let $I=\{1,\dots,v_1'\}$ and $J=\{1,\dots,v_2'\}$, we shall show that $U_{I,J}\neq \emptyset$.  A generic choice of $Z_1', Z_2', C_1,\dots,C_r$  will make the matrices $A'$ and $B'$ full-rank (here we used the fact that the base field $\mathbb{C}$ is infinite),   so   $\rank A'=v_1'$ and $\rank B'=v_2'$ thanks to the $l$-dominant assumption. After choosing arbitrary $P,Q$, we obtain a point in $\tilde{\mathbb{A}}^\circ$. Since  $\tilde{\mathbb{A}}^\circ\cong U_{I,J}\subseteq\myE^\circ_{v',w}$, we conclude that $\myE^\circ_{v',w}$ is nonempty.

Now assume that $\myE^\circ_{v',w}$ is nonempty, and we shall prove that this nonsingular variety is irreducible. For this, it suffices to find a common point $({\bf x},{\bf y})$ in all $U_{I,J}$. Indeed, a generic choice of $(P,Q,Z_1', Z_2', C_1,\dots,C_r)$  will give such a point. To see this, let $A_{[I;-]}$ be the submatrix of $A$ by selecting the rows with indices in $I$. Let $\tilde{P}=[I,P]_\text{vert}$. Then
$$A_{[I;-]}=\tilde{P}_{[I;-]}\; [Z'_1, C_1,C_1Q,\dots,C_r,C_rQ]_\text{hor}$$
where the right side is the product of two matrices.
Since $P$ is generic, we can assume $\tilde{P}_{[I;-]}$ is of full rank $v'_1$ (which equals its number of columns). Also $[Z'_1, C_1,C_1Q,\dots,C_r,C_rQ]_\text{hor}$ is of full rank $v'_1$ (which equals its number of rows). It follows that $A_{[I;-]}$ has rank $v'_1$. Similarly, $B_{[-;J]}$, the submatrix of $B$ by selecting the columns with indices in $J$, has rank $v'_2$. Therefore $({\bf x},{\bf y})$ is in $U_{I,J}$.

Since $\myE^\circ_{v',w}$ is covered by open subsets $U_{I,J}$ of affine spaces and is irreducible, it is rational. 
\medskip

(b) To prove that  $\pi^{-1}(\myE^\circ_{v',w})\to \myE^\circ_{v',w}$ is a locally trivial bundle,  we show that it is a pullback of another locally trivial bundle $\mathcal{X}\to \mathcal{F}_{v',w}$ that is easier to deal with. Here is the diagram:
$$\xymatrix{\pi^{-1}(\myE^\circ_{v',w}) \ar[d]^\pi\ar[r]& \mathcal{X}\ar[d]^q\\ \myE^\circ_{v',w}\ar[r]^f & \mathcal{F}_{v',w}}$$
where 
$$\aligned
&\mathcal{X}=\{(Y_1,Y_2,X_1,X_2) | (Y_1,Y_2)\in \mathcal{F}_{v',w}, (X_1,X_2)\in\mathcal{F}_{v,w}, Y_1\subseteq X_1, Y_2\subseteq X_2\}\\
& f: ({\bf x},{\bf y})\mapsto (Y_1,Y_2)=(\text{im}A({\bf x},{\bf y}),\text{im}B({\bf x},{\bf y}))\\
& q: (Y_1,Y_2,X_1,X_2) \mapsto (Y_1,Y_2)
\endaligned
$$ 

We claim that the natural projection $q:\mathcal{X}\to\mathcal{F}_{v',w}$ gives a locally trivial fiber bundle with fibers isomorphic to $\mathcal{M}$. 
To prove the claim, note that $\mathcal{F}_{v',w}$ can be covered by open subsets over which the direct sum of universal subbundles $S_{Gr(v'_1,W'_1)}\oplus S_{Gr(v'_2,W'_2\oplus {W'_1}^{\oplus r})}$ is trivial, so we only need to discuss fiberwisely. For fixed $Y_1,Y_2$, we show that the set $(X_1,X_2)$ satisfying $Y_1\subseteq X_1\subseteq W'_1$ and $Y_2\subseteq X_2\subseteq W'_2\oplus X_1^{\oplus r}$ is the variety $\mathcal{M}$. Indeed, choose  a complement $Y_1^c\subseteq W'_1$ to $Y_1$, then $X_1\mapsto X'_1=X_1\cap Y_1^c$ gives an isomorphism between $\{X_1| Y_1\subseteq X_1\subseteq W'_1\}$ and $Gr(v_1-v'_1,Y_1^c)\cong Gr(v_1-v'_1,w'_1-v'_1)$. Choose a complement $Y_2^c\subseteq W'_2\oplus Y_1^{\oplus r}$ to $Y_2$, then $\widetilde{Y}_2^c=Y_2^c+(0\oplus(X'_1)^{\oplus r})$ is a complement to $Y_2$ as subspaces of $W'_2\oplus X_1^{\oplus r}$, and
$X_2\mapsto X'_2=X_2\cap \widetilde{Y}_2^c$ gives an isomorphism between $\{X_2|Y_2\subseteq X_2\subseteq W'_2\oplus X_1^{\oplus r}\}$ and 
$$Gr(v_2-v'_2,\widetilde{Y}_2^c)\cong Gr(v_2-v'_2, Y_2^c+(0\oplus (X'_1)^{\oplus r}))\cong Gr(v_2-v'_2,\mathbb{C}^{w'_2+rv'_1-v'_2}\oplus (X'_1)^{\oplus r})$$

Now observe that  $\pi^{-1}(\myE^\circ_{v',w})$ is isomorphic to the pullback $f^*(\mathcal{X})$. Indeed, the fiber of $f^*(\mathcal{X})$ over a point $({\bf x},{\bf y})\in\myE^\circ_{v',w}$ is isomorphic to the fiber of $q:\mathcal{X}\to\mathcal{F}_{v',w}$ over $f({\bf x},{\bf y})=(\text{im}A({\bf x},{\bf y}),\text{im}B({\bf x},{\bf y}))$, which by definition of $\mathcal{X}$ is the set of $(X_1,X_2)$ satisfying $(X_1,X_2)\in\mathcal{F}_{v,w}$, $\text{im}A({\bf x},{\bf y})\subseteq X_1,\text{im}B({\bf x},{\bf y})\subseteq X_2$. This coincides with the conditions in the definition of $\mytildeF_{v,w}$.

Since the pullback of a locally trivial fiber bundle is again a locally trivial fiber bundle with the same fiber, (b) is proved.

This completes the proof of Lemma \ref{lem:ab}.

\subsection{Proof of Lemma \ref{lem:transversal slice}}
(1) We will define $\psi$. To fix notations, for a point $q=({\bf x},{\bf y})$ in the domain $U$ of $\psi$, denote $(q^0,q^\perp)=\psi(q)$, and denote 
$q^0=({\bf x}^0,{\bf y}^0)=(x_1^0,x_2^0,y_1^0,\dots,y_r^0)\in U^0$,  
$q^\perp=({\bf x}^\perp,{\bf y}^\perp)=(x_1^\perp,x_2^\perp,y_1^\perp,\dots,y_r^\perp)\in U^\perp$.
Recall that the point $p$ determines two matrices as in \eqref{AB}, denoted $A_p$ and $B_p$. Without loss of generality, assume the first $v_1^0$ row vectors of $A_p$ (resp.~the first $v_2^0$ column vectors of $B_p$) are linearly independent. Then  $A_p$ and $B_p$ must be in the form of
\eqref{AB in ZCMN}, where the the sizes of the blocks are
\begin{center}
 \begin{tabular}{|c|c|c|c|c|c|} 
 \hline
block & $Z_1'$ & $Z_2'$ & $C_i$ & $P$ & $Q$\\
 \hline
size   & $v_1^0\times w_1$ & $w'_2\times v_2^0$ & $v_1^0\times v_2^0$ & $(w_1'-v_1^0)\times v_1^0$ & 
$v_2^0\times (w_2-v_2^0)$\\
 \hline
\end{tabular}
\end{center}

First, we define the open set $U$ and $U^0$. Define the set  $$M=\{1,\dots,w_1\}\cup\bigcup_{i=1}^r \{w_1+(i-1)w_2+1, \dots, w_1+(i-1)w_2+v_2^0\}.$$ 
The submatrix $A'_p=[Z'_1,C_1,  \cdots , C_r]_\text{hor}=(A_p)_{[1,\dots,v_1^0;M]}$ satisfies $\rank A'_p=\rank A_p=v_1^0$, thus there exists a subset  $J\subseteq M$ of cardinality $v_1^0$ and $(A_p)_{[1,\dots,v_1^0;J]}$ is an invertible $v_1^0\times v_1^0$-matrix. 
Similarly, 
define the set $$N=\{1,\dots,w_2'\}\cup\bigcup_{i=1}^r \{w_2'+(i-1)w_1'+1,   \dots, w_2'+(i-1)w_1'+v_1^0\}.$$
There is a subset
$I\subseteq N$
of cardinality $v_2^0$ such that $(B_p)_{[I; 1,\dots,v_2^0]}$ is an invertible $v_2^0\times v_2^0$ -matrix. 
 Define an open subset $U$ of  $\myE_{v,w}$ as
 $$U=\{ q\in \myE_{v,w}\ | \ \textrm{ $(A_q)_{[1,\dots,v_1^0;J]}$ and  $(B_q)_{[I; 1,\dots,v_2^0]}$ are invertible square matrices}\}.$$ 
 Define $U^0=U\cap  \myE_{v^0,w}$ which is an open subset of  $\myE^\circ_{v^0,w}$ because $q\in U$ implies $\rank A_q\ge v_1^0$ and $\rank B_q\ge v_2^0$, and furthermore the equalities must hold by the condition $q\in\myE_{v^0,w}$.
 \medskip
 
Second,  we construct $q^0$ as follows. 
Denote
$$A_q=[x_1\ y_1\ \cdots y_r]=\begin{bmatrix}Z'_1&C_1&C_1'&\cdots&C_r&C_r'\\ Z_1''&C_1''&C_1''' &\cdots&C_r''&C_r'''
\end{bmatrix},
\quad
B_q=\begin{bmatrix}x_2\\ y_1\\ \vdots \\y_r \end{bmatrix}
=\begin{bmatrix}Z'_2& Z''_2\\ C_1&C_1' \\ C_1''& C_1''' \\ \vdots &\vdots\\ C_r& C_r' \\ C_r'' & C_r'''
\end{bmatrix} 
$$
where the sizes of the blocks are:
\begin{center}
 \begin{tabular}{|c|c|c|c|c|} 
 \hline
block & $Z_1'$ & $Z_1''$ &$Z_2'$ &$Z_2''$ \\
 \hline
size   & $v_1^0\times w_1$ & $(w_1'-v_1^0)\times w_1$ 
&$w'_2\times v_2^0$
&$w'_2\times(w_2-v_2^0)$
\\
 \hline
\end{tabular}
\vspace{10pt}

 \begin{tabular}{|c|c|c|c|c|} 
 \hline
block &  $C_i$ &$C_i'$ & $C_i''$ & $C_i'''$\\
 \hline
size 
& $v_1^0\times v_2^0$ & $v_1^0\times(w_2-v_2^0)$ & 
$(w_1'-v_1^0)\times v_2^0 $ &$(w_1'-v_1^0)\times (w_2-v_2^0)$\\
 \hline
\end{tabular}
\end{center}

Since $(A_q)_{[1,\dots,v_1^0;J]}$ and  $(B_q)_{[I; 1,\dots,v_2^0]}$ are invertible, there are unique $(w_1'-v_1^0)\times v_1^0$ -matrix $P$ and $v_2^0\times (w_2-v_2^0)$ -matrix $Q$ such that 
\begin{equation}\label{AJBI}
(A_q)_{[-;J]}=\begin{bmatrix}(A_q)_{[1,\dots,v_1^0;J]}\\P (A_q)_{[1,\dots,v_1^0;J]}\end{bmatrix}, \quad 
(B_q)_{[I;-]}=\bigg[ (B_q)_{[I;1,\dots,v_2^0]}\;\; (B_q)_{[I;1,\dots,v_2^0]}Q\bigg].
\end{equation}
Define $q^0$ such that 
$$A_{q^0}=\begin{bmatrix}
Z'_1&C_1&C_1Q&\cdots&C_r&C_rQ\\ 
PZ_1'&PC_1& PC_1Q &\cdots&PC_r&PC_rQ
\end{bmatrix},
\quad
B_{q^0}=\begin{bmatrix}
Z_2'&Z_2'Q\\
C_1&C_1Q\\
PC_1&PC_1Q\\
\vdots&\vdots\\
C_r&C_rQ\\
PC_r&PC_rQ\\
\end{bmatrix}.$$
That is, 
$x_1^0=\begin{bmatrix}Z'_1\\PZ'_1\end{bmatrix}$,
$x_2^0=\begin{bmatrix}Z'_2&Z'_2Q\end{bmatrix}$,
$y_i^0=\begin{bmatrix}C_i&C_iQ\\PC_i&PC_iQ\end{bmatrix}$ ($1\le i\le r$).
It is easy to see that $q^0$ is in $U^0$. (To check that $q_0\in \myE_{v^0,w}$, note that 
$$\rank A_{q^0}=
\rank( \begin{bmatrix}{\bf I}\\ P\end{bmatrix} \begin{bmatrix}Z_1'& C_1& \cdots & C_r\end{bmatrix} \begin{bmatrix} {\bf I} &0&0&\dots&0&0\\ 0&{\bf I}&Q&\dots&{\bf I}&Q\end{bmatrix})
=\rank [Z_1', C_1, \cdots, C_r]_{\rm hor}\le v_1^0,$$ 
thus the equality holds since the first $v_1^0$ rows of $A_q$ are linearly independent; 
similarly $\rank B_{q^0}=v_2^0$.)

Third,  we construct $q^\perp$ as follows. Let
$$\widetilde{A_q}=[\widetilde{P}x_1,  \widetilde{P}y_1\widetilde{Q}, \cdots, \widetilde{P}y_r\widetilde{Q}]_{\rm hor},
\text{ where }
\widetilde{P}=\begin{bmatrix}{\bf I}_{v_1^0}&0\\ -P&{\bf I}_{w_1'-v_1^0}\end{bmatrix},\quad  
\widetilde{Q}=\begin{bmatrix}{\bf I}_{v_2^0}&-Q\\ 0&{\bf I}_{w_2-v_2^0}\end{bmatrix}. 
$$
Then $\rank \widetilde{A_q}=\rank A_q=u_1$, and the submatrix $(\widetilde{A_q})_{[1,\dots,v_1^0; J]}= (A_q)_{[1,\dots,v_1^0;J]}$, while the submatrix $(\widetilde{A_q})_{[v_1^0+1,\dots,w'_1; J]}=0$. (To see this: for all $j\in J$, let ${\bf v}$ be the $j$-th column of the matrix. If $j\in \{1,\dots,w_1\}$, then ${\bf v}$ is the $j$-column of $Px_1$, and by \eqref{AJBI}, the $j$-column of $x_1$ is of the form $\begin{bmatrix}{\bf u}\\P{\bf u}\end{bmatrix}$, so 
${\bf v}=\widetilde{P}\begin{bmatrix}{\bf u}\\P{\bf u}\end{bmatrix}
=\begin{bmatrix}{\bf I}&0\\ -P&{\bf I}\end{bmatrix}\begin{bmatrix}{\bf u}\\ P{\bf u}\end{bmatrix}
=\begin{bmatrix}{\bf u}\\ 0\end{bmatrix}
$. In the other case, assume $j\in \{w_1+(i-1)w_2+1,\dots,w_1+(i-1)w_2+v_2^0\}$ for some $1\le i\le r$, note that  
$\widetilde{P}y_i\widetilde{Q}
=\begin{bmatrix}{\bf I}&0\\ -P&{\bf I}\end{bmatrix}
\begin{bmatrix} C_i&C_i'\\ C_i''&C_i'''\end{bmatrix}
\begin{bmatrix}{\bf I}&-Q\\ 0&{\bf I}\end{bmatrix}
=\begin{bmatrix}C_i&*\\ -PC_i+C_i''&*\end{bmatrix}
$. Since the $j$-th column of $A_q$ is of the form 
$\begin{bmatrix}{\bf u}\\ P{\bf u}\end{bmatrix}$, 
the vector ${\bf v}$ is of the form 
$\begin{bmatrix}{\bf u}\\ -P{\bf u}+P{\bf u}\end{bmatrix} = \begin{bmatrix}{\bf u}\\ 0\end{bmatrix}$.)

Note that the space spanned by the top $v_1^0$ rows of $\widetilde{A_q}$ has zero intersection with the space spanned by the rest rows by considering the columns indexed by $J$.
Since  $\rank\widetilde{A_q}=\rank A_q\le v_1$, and that 
$\rank (\widetilde{A_q})_{[1,\dots,v_1^0;-]}=v_1^0$ (because its $J$ columns is equal to the invertible matrix $(A_q)_{[1,\dots,v_1^0;J]}$),
we have
$$\rank (\widetilde{A_q})_{[v_1^0+1,\dots,w'_1; -]} = \rank \widetilde{A_q}-\rank (\widetilde{A_q})_{[1,\dots,v_1^0;-]}
=u_1^\perp\le v_1^\perp.$$
 We define $A_q^\perp$ to be 
$(\widetilde{A_q})_{[v_1^0+1,\dots,w_1'; \{1,\dots,w_1+rw_2\}\setminus J]}$ with an appropriate rearrangement of the columns; to be more precise:
$$A_q^\perp=\Big[  (\widetilde{A_q})_{[v_1^0+1,\dots,w_1'; M\setminus J]} \ \vline \  PC_1Q-PC_1'-C_1''Q+C_1''' \ \vline \  \cdots \Big]$$
That is, if we denote $J'=\iota_M^{-1}(J)$  where $\iota_M:\{1,\dots, w_1+rv_2^0\}\to M$ is the order preserving bijection, then 
$$\aligned
&x_1^\perp= \text{obtained from $[-PZ_1'+Z_1'', -PC_1+C_1'', \cdots, -PC_r+C_r'']_{\rm hor}$ }\\
&\hspace{30pt} \text{by deleting  columns $J'$ (which are zero)},\\ 
&y_i^\perp=PC_iQ-PC_i'-C_i''Q+C_i''' \quad (1\le i\le r). 
\endaligned
$$
Note that the column spaces of $A_q^\perp$ and $(\widetilde{A_q})_{[v_1^0+1,\dots,w'_1; -]}$ are the same, which implies
\begin{equation}\label{eq:rank Aqperp}
\rank A_q^\perp= u_1^\perp
\end{equation}

Similarly, we define 
$$\aligned
&x_2^\perp = \text{obtained from $[-Z_2'Q+Z_2'', -C_1Q+C_1', \cdots, -C_rQ+C_r']_{\rm vert}$}\\
&\hspace{30pt} \text{by deleting rows $I'$ (which are zero)},
\endaligned
$$ 
where  $I'=\iota_N^{-1}(I)$  where $\iota_N:\{1,\dots, w_2'+rv_1^0\}\to N$ is the order preserving bijection. 
Like \eqref{eq:rank Aqperp}, we have
\begin{equation}\label{eq:rank Bqperp}
\rank B_q^\perp= u_2^\perp
\end{equation}

This complete the construction of $q^\perp=(x_1^\perp,x_2^\perp,y_1^\perp,\dots,y_r^\perp)$.

Note that $\psi$ is a morphism because it can be expressed in terms of matrix additions, multiplications, and inverses, so can be expressed as rational functions. 

\medskip
\noindent (2) 
We explain that $\psi$ is an isomorphism by constructing its inverse $\psi^{-1}$. Assume $q^0,q^\perp$ are given. 
From $q^0$ we can recover $Z_1', Z_2',C_i, P, Q$ by rational functions. 
From $q^\perp$ we can recover $Z_1'', Z_2'', C_i', C_i'', C_i'''$ by rational functions. 
Thus $\psi^{-1}$ is a morphism and it is easy to see that it is indeed the two-sided inverse of $\psi$. Thus $\psi$ is an isomorphism.

\medskip
\noindent (3) 
We define $\varphi$. Given $(q=({\bf x},{\bf y}),X_1,X_2)\in \pi^{-1}U$, define 
$\varphi(q,X_1,X_2)=(q^0,(q^\perp,X_1^\perp,X_2^\perp))$, where $q^0$ and $q^\perp$ are defined in (1), and $X_1^\perp,X_2^\perp$ are defined as follows:

For ${\bf u}\in X_1$, define
${\bf u}^0 = \begin{bmatrix}{\bf I} &0\\ P&0\end{bmatrix} {\bf u}$  
(where $P$ is determined by $q$),
${\bf u}^\perp = {\bf u}-{\bf u}^0= \begin{bmatrix}0 &0\\ -P&{\bf I}\end{bmatrix} {\bf u} $ has all its top $v_1^0$ entries being 0. Define
$$X_1^0=\{{\bf u}^0\ | \ {\bf u}\in X_1\}, \quad 
X_1^\perp=\{{\bf u}^\perp\ | \ {\bf u}\in X_1\}
=\Big\{\begin{bmatrix} 0\\-P\alpha+\beta\end{bmatrix}\; \Big| \; \begin{bmatrix}{\alpha}\\{\beta}\end{bmatrix}\in X_1\Big\}$$
Then $\dim X_1^0=v_1^0$ (since the top $v_1^0$ rows of $A_q$ are linearly independent which implies $\dim X_1^0\ge v_1^0$; meanwhile,  $\dim X_1^0\le v_1^0$ because $\rank \begin{bmatrix}{\bf I} &0\\ P&0\end{bmatrix}=v_1^0$) , $\dim X_1^\perp=\dim X_1-\dim X_1^0=v_1-v_1^0=v_1^\perp$. Note that $X_1^\perp=X_1\cap (0\oplus \mathbb{C}^{w_1-v_1^0})$ is the set of vectors in $X_1$ with the first $v_1^0$ entries being 0.

We define $X_2^\perp$ as follows. 
$$
X_2^\perp=
\Bigg\{\gamma=\begin{bmatrix}\gamma_1\\ \gamma_2 \\ \vdots \\ \gamma_{w_2'+rw_1'}\end{bmatrix}
\in 
\widetilde{X}_2= P^*X_2,  
\ {\textrm {such that} } \
\gamma_i=0 \; {\rm for }\;  i\in I
\Bigg\}, 
\;
P^*=\begin{bmatrix}
{\bf I}&0&\cdots&0\\0&\widetilde{P}&\cdots& 0\\ \vdots&\vdots&\ddots&\vdots\\0&0&\cdots&\widetilde{P}
\end{bmatrix}
$$
We now show that $\dim X_2^\perp=v_2^\perp$. Note that $\dim \widetilde{X}_2=\dim X_2=v_2$. The $I$-rows of $\widetilde{X}_2$ is the same as the $I$-rows of $X_2$, so they have the full row rank $v_2^0$. So there is a subspace $V$ of $\widetilde{X}_2$  of dimension $v_2^0$ whose $I$-rows are full rank. Then $V\cap X_2^\perp=0$ and $V+X_2^\perp=\widetilde{X}_2$, so $\dim X_2^\perp= \dim \widetilde{X}_2-\dim V=v_2-v_2^0=v_2^\perp$. 

Next we shall show that $X_2^\perp\subseteq W_2'^\perp\oplus (X_1^\perp)^{\oplus r}$, where $W_2'^\perp\cong \mathbb{C}^{w_2'+rv_1^0-v_2^0}$ consists of vectors whose $i$-th coordinates are 0 except for $i\in N\setminus I$. 
Assume $P^*u\in X_2^\perp$ for $u=\begin{bmatrix} u_0\\ u_1\\ \vdots \\u_r\end{bmatrix}\in X_2$ where $u_0\in W'_1$, $u_i\in X_1$ for $i=1,\dots,r$, we have 
$$
P^*u
=\begin{bmatrix}u_0\\ \widetilde{P}u_1\\ \vdots \\ \widetilde{P}u_r \end{bmatrix}
=\begin{bmatrix}u_0\\ \begin{bmatrix} {\bf I} &0\\ 0&0\end{bmatrix} u_1\\ \vdots \\ \begin{bmatrix} {\bf I} &0\\ 0&0\end{bmatrix}  u_r \end{bmatrix}
+
\begin{bmatrix}u_0\\ \begin{bmatrix} 0 &0\\ -P&{\bf I}\end{bmatrix} u_1\\ \vdots \\ \begin{bmatrix} 0 &0\\ -P&{\bf I}\end{bmatrix}  u_r \end{bmatrix}
$$ 
where the second vector is in $(X_1^\perp)^{\oplus r}$; the first vector has zero entries at coordinates except $N\setminus I$, so is in $W_2'^{\perp}$. This proves $X_2^\perp\subseteq W_2'^\perp\oplus (X_1^\perp)^{\oplus r}$.

\medskip
\noindent (4) To show $\varphi$ is an isomorphism, we observe that its inverse $\varphi^{-1}$ can be constructed as follows: 
given $(q^0,(q^\perp,X_1^\perp,X_2^\perp))$, to construct $(q,X_1,X_2)$, we get $q$ by $\psi^{-1}$, and define
$$\aligned
&X_1
=(\text{ column space of } A_{q^0})+X_1^\perp
=(\text{ column space of } \begin{bmatrix}{\bf I}\\ P\end{bmatrix})+X_1^\perp\\
&X_2
=(\text{ column space of } B_{q^0})+(P^*)^{-1}X_2^\perp\\
\endaligned
$$
In the construction of $X_1$ (resp. $X_2$), the sum is an internal direct sum since the first $v_1^0$ entries of vectors in $X_1^\perp$ are 0 (resp. the $I$-entries of vectors in $(P^*)^{-1}X_2^\perp$ are 0). 
From the construction of $X_1^\perp$ and $X_2^\perp$ it is easy to see that $\phi$ and $\phi^{-1}$ are indeed inverse to each other.

\medskip
\noindent (5) To check that the diagram commutes: 
$$\psi\circ\pi(q,X_1,X_2)=\psi(q)=(q^0,q^\perp)=(1\times\pi)(q^0,(q^\perp,X_1^\perp,X_2^\perp))=(1\times\pi)\circ\varphi(q,X_1,X_2)$$

\medskip
\noindent (6) To check $\psi(p)=(p,0)$: let $q=p$, and use the same assumption as above, we need to show $q^0=q$ and $q^\perp=0$. Since $A_q$ has rank $v_1^0$, all other rows are linear combinations of its first $v_1^0$ rows, that is, $A_q=\begin{bmatrix} (A_q)_{[1,\dots,v_1^0;-]}\\  P' (A_q)_{[1,\dots,v_1^0;-]}\\  \end{bmatrix}$ for some matrix $P'$. Comparing with \eqref{AJBI} we see that $P'=P$. Thus $Z_1''=PZ_1'$, $C_i''=PC_i$, $C_i'''=PC_i'$. Argue similarly for $B_q$. We then conclude that $q=q_0$. Next, note that $\widetilde{A_q}=\begin{bmatrix} Z'_1&C_1&0&\cdots &C_r&0\\ 0&0&0&\cdots&0&0\end{bmatrix}$, so $A_q^\perp=0$, and similarly $B_q^\perp=0$. Thus $q^\perp=0$.

The fact that $\psi({\bf E}^\circ_{u,w}\cap U)=U_0\times ({\bf E}^\circ_{u^\perp,w^\perp}\cap U^\perp)$, follows from \eqref{eq:rank Aqperp} and \eqref{eq:rank Bqperp}.

This completes the proof of Lemma \ref{lem:transversal slice}.

\subsection{The second proof of trivial local systems}\label{subsection:2nd proof}
In this subsection we give another proof of  Theorem \ref{thm:trivial local system}. We first study simply connectedness of some varieties.

\begin{lemma}\label{lem:E simply connected}
Let $(v,w)$ be $l$-dominant. Then the quasi-affine variety ${\bf E}^\circ_{v,w}$ is simply connected unless $v=(w'_1,w_2)$, $(w'_1,(w'_1-w_1)/r)$, or $((w_2-w'_2)/r,w_2)$. 
\end{lemma}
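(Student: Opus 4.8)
\emph{Proof idea.} The plan is to realize $\myE^\circ_{v,w}$ as a Zariski-open subset of the smooth simply connected variety $\mytildeF_{v,w}$ and then to control the codimension of the complement. Since $(v,w)$ is $l$-dominant, Proposition~\ref{prop:fiber}(c) gives an isomorphism $\pi^{-1}(\myE^\circ_{v,w})\xrightarrow{\ \sim\ }\myE^\circ_{v,w}$; and as $\rank A\le v_1$ and $\rank B\le v_2$ hold identically on $\mytildeF_{v,w}$, the subset $\pi^{-1}(\myE^\circ_{v,w})=\{({\bf x},{\bf y},X_1,X_2)\in\mytildeF_{v,w}\mid \rank A=v_1,\ \rank B=v_2\}$ is open in $\mytildeF_{v,w}$. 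Thus $\myE^\circ_{v,w}$ is (isomorphic to) an open subset of $\mytildeF_{v,w}$, with complement $Z:=\{\rank A\le v_1-1\}\cup\{\rank B\le v_2-1\}$. Moreover $\mytildeF_{v,w}$ is smooth (Proposition~\ref{prop:fiber}(b)) and simply connected: by Lemma~\ref{EF}(b) it is a vector bundle over $\myF_{v,w}$, which by Lemma~\ref{lem:ab}(b) is a $Gr(v_2,w_2'+rv_1)$-bundle over $Gr(v_1,w_1')$, and an iterated fibration whose base and all fibers are simply connected is simply connected.

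By Lemma~\ref{lem:surjective pi1} it now suffices to estimate $\operatorname{codim}_{\mytildeF_{v,w}}Z$: if it is $\ge 2$, then $\pi_1(\myE^\circ_{v,w})\cong\pi_1(\mytildeF_{v,w})=1$. I would compute this using the stratification of $\mytildeF_{v,w}$ by $(\rank A,\rank B)=(k,l)$: by Lemma~\ref{lem:ab}(b) each nonempty stratum maps onto $\myE^\circ_{(k,l),w}$ (with $(k,l)\in\D(w)$, $(k,l)\le v$) through an iterated Grassmannian bundle, so its dimension is $\tilde d(k,l,w)$ (Proposition~\ref{prop:fiber}) plus the dimension of the relevant $Gr(v_2-l,w_2'+rk-l)$-bundle over $Gr(v_1-k,w_1'-k)$. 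A direct computation then yields $\operatorname{codim}\{\rank A=v_1-1,\ \rank B=v_2\}=w_1-v_1+rv_2+1$ whenever this stratum is nonempty (equivalently $(v_1-1,v_2)\in\D(w)$), and all deeper strata of $\{\rank A\le v_1-1\}$ have strictly larger codimension; similarly, working fiberwise where $B$ is unconstrained, $\{\rank B\le v_2-1\}$ is a determinantal locus of codimension $w_2-v_2+1$. Since $(v,w)$ is $l$-dominant we have $w_1-v_1+rv_2\ge 0$ and $w_2-v_2\ge 0$, so $\operatorname{codim}_{\mytildeF_{v,w}}Z\ge 2$ — and $\myE^\circ_{v,w}$ is simply connected — unless one is in a \emph{boundary case}, i.e.\ $w_1-v_1+rv_2=0$ or $v_2=w_2$ and the corresponding rank-drop locus is genuinely a divisor.

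In a boundary case I would compute $\pi_1$ directly via the topology of divisor complements. Because $\mytildeF_{v,w}$ is simply connected with torsion-free cohomology (it is homotopy equivalent to the iterated Grassmannian bundle $\myF_{v,w}$), the Thom--Gysin sequence identifies $H^1(\myE^\circ_{v,w})=H_1(\myE^\circ_{v,w})$ with the kernel of the class map $\bigoplus_j\ZZ\to H^2(\mytildeF_{v,w})$, $e_j\mapsto[Z_j]$, where $Z_j$ runs over the irreducible codimension-one components of $Z$; and since $\pi_1(\myE^\circ_{v,w})$ is then generated by the (pairwise commuting, general-position) meridians of the $Z_j$, it is abelian and equals $H_1(\myE^\circ_{v,w})$. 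The subtlety is that a rank-drop locus such as $\{\rank A\le v_1-1\}$ can be reducible in the boundary case: a determinant factors through the ambient vector-bundle coordinate, so it splits into a zero-section-type component together with a component pulled back from a Schubert divisor of $\myF_{v,w}$ (for instance $\myE^\circ_{(w_1',w_2),w}\cong\mathrm{Tot}(\mathcal{O}(-1))\setminus(\text{zero section}\cup\text{pullback of a quadric})$ when $w=(0,w_1',w_2,0)$ and $v_1=w_1'$). Computing the classes $[Z_j]$ in $H^2(\mytildeF_{v,w})=H^2(\myF_{v,w})$ — a free abelian group of rank $0$, $1$, or $2$, one copy of $\ZZ$ for each non-degenerate Grassmannian factor — one checks that the class map is injective, hence $\pi_1(\myE^\circ_{v,w})=1$, in every boundary case except $v=(w_1',w_2)$, $(w_1',(w_1'-w_1)/r)$, or $((w_2-w_2')/r,w_2)$. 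In each of these three cases one of the Grassmannians defining $\myF_{v,w}$ collapses to a point, so $\operatorname{rank}H^2(\mytildeF_{v,w})$ drops below the number of codimension-one components of $Z$ and the class map acquires a kernel; concretely, one exhibits a nowhere-vanishing regular function $\myE^\circ_{v,w}\to\CC^\times$ (a ratio of two minors of $A$, respectively of $B$) inducing a surjection $\pi_1(\myE^\circ_{v,w})\twoheadrightarrow\pi_1(\CC^\times)=\ZZ$, so $\myE^\circ_{v,w}$ is not simply connected.

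The main obstacle is the boundary-case analysis of the last paragraph: identifying exactly which codimension-one components each rank-drop locus acquires, pinning down their cohomology classes, and verifying that the three listed $v$ — and only those — make the resulting class map non-injective. The reduction and the codimension count are routine given Proposition~\ref{prop:fiber} and Lemma~\ref{lem:ab}.
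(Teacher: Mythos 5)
Your reduction (identify $\myE^\circ_{v,w}$ with the Zariski-open subset $\pi^{-1}(\myE^\circ_{v,w})$ of the smooth simply connected $\mytildeF_{v,w}$, then estimate the codimension of the complement stratum by stratum, getting $w_1-v_1+rv_2+1$ for the $\rank A$-drop and $w_2-v_2+1$ for the $\rank B$-drop) is exactly the first half of the paper's argument, and it is fine. The genuine gap is in your treatment of the boundary cases where one of these codimensions equals $1$. There you pass from the Gysin-sequence class map $\bigoplus_j\ZZ\to H^2(\mytildeF_{v,w})$ to a statement about $\pi_1$ by asserting that $\pi_1(\myE^\circ_{v,w})$ is generated by ``pairwise commuting, general-position meridians'' and hence abelian. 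That assertion is unproved and cannot be taken for granted: the rank-drop loci are singular determinantal divisors, not normal crossings, and complements of singular divisors in simply connected varieties routinely have nonabelian fundamental groups; meridians only \emph{normally} generate $\pi_1$ in general. So your class-map computation controls $H_1$ (equivalently $H^1$), not $\pi_1$, and injectivity of the class map does not yield simple connectivity without a separate argument. Moreover your claimed converse in the three listed cases (a nowhere-vanishing minor ratio inducing a surjection $\pi_1\twoheadrightarrow\ZZ$) is actually false in general: for $w=(0,1,1,0)$, $r\ge2$, $v=(w'_1,w_2)=(1,1)$ one has $\myE^\circ_{v,w}\cong\CC^r\setminus\{0\}$, which is simply connected (and admits no nonconstant invertible functions). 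The lemma's ``unless'' does not require that converse, but this shows the proposed divisor-complement analysis does not in fact separate the cases as you describe.

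The paper avoids all of this with a symmetry instead of a $\pi_1$-of-divisor-complement computation: it observes that the transpose isomorphism $\mathbf{E}_{v,w}\cong\mathbf{E}_{v^{\rm swap},w^{\rm swap}}$ (with $v^{\rm swap}=(v_2,v_1)$, $w^{\rm swap}=(w'_2,w_2,w'_1,w_1)$) lets one rerun the identical codimension estimate in the swapped presentation, where the failure conditions become ``$w'_2-v_2+rv_1=0$ and $(v_1,v_2-1)$ $l$-dominant'' or ``$v_1=w'_1$''. Simple connectivity then follows whenever at least one of the two presentations has complement of codimension $\ge2$, and a short case analysis shows both fail only for $v=(w'_1,w_2)$, $(w'_1,(w'_1-w_1)/r)$, $((w_2-w'_2)/r,w_2)$, which are exactly the excluded cases. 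If you want to keep your route, you would have to either prove abelianness of $\pi_1$ for these specific determinantal complements or replace that step by something like the paper's swap argument; as written, the step from $H_1$ to $\pi_1$ is a real hole.
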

\begin{proof}
If $v=(0,0)$ then  ${\bf E}^\circ_{v,w}$ is a point, which is simply connected. In the rest, we assume $v\neq (0,0)$.

Consider the following diagram, where $\pi'$ is an isomorphism by Proposition \ref{prop:fiber} (e).
$$
\begin{tikzcd}
\pi^{-1}(\myE^\circ_{v,w})\ar[r, hook,"j"]\ar[d,"\pi' "', "\cong"] & \widetilde{\mathcal{F}}_{v,w} \ar[d,"\pi"]\\
\myE^\circ_{v,w} \ar[r, hook,"i"] & \myE_{v,w} 
\end{tikzcd}
$$
Our idea to study the simply connectedness of ${\bf E}^\circ_{v,w}$ is: first show that $\widetilde{\mathcal{F}}_{v,w}$ is simply connected; next, show that $\pi^{-1}(\myE^\circ_{v,w})$ is simply connected because the complement has codimension greater than 1; last, show that $\myE^\circ_{v,w}$ is simply connected.

We first show that $\widetilde{\mathcal{F}}_{v,w}$ is simply connected. By Proposition \ref{prop:fiber} (b),  ${\mathcal{F}}_{v,w}=\pi^{-1}(0)$ is a fiber bundle over a Grassmannian whose fibers are Grassmannian. It is well known that a complex Grassmannian is simply connected. It follows from the long exact sequence of homotopy groups that ${\mathcal{F}}_{v,w}$ is simply connected.
By Lemma \ref{EF},  $\widetilde{\mathcal{F}}_{v,w}$ is a vector bundle over ${\mathcal{F}}_{v,w}$, so $\widetilde{\mathcal{F}}_{v,w}$ is also simply connected.

Next, we compute the codimension of $\widetilde{\mathcal{F}}_{v,w} \setminus \pi^{-1}(\myE^\circ_{v,w})$ in $\widetilde{\mathcal{F}}_{v,w}$. 
Since 
$$\widetilde{\mathcal{F}}_{v,w} \setminus \pi^{-1}(\myE^\circ_{v,w})=\bigcup_{v'<v, v'\in\D(w)}\pi^{-1}({\bf E}^\circ_{v',w}).$$ 
So the codimension of the complement of $\pi^{-1}(\myE^\circ_{v,w})$ is 
$$\mathrm{Codim}=\min\{\dim\pi^{-1}({\bf E}^\circ_{v',w}) - \dim\pi^{-1}({\bf E}^\circ_{v',w})\}$$

Note the following (where the equalities follow from Proposition \ref{prop:fiber}):
$$\aligned
\dim\pi^{-1}({\bf E}^\circ_{v',w})&=\dim({\bf E}^\circ_{v',w})+\dim Gr(v_2-v'_2,w'_2+rv_1-v'_2) + \dim Gr(v_1-v'_1,w'_1-v'_1)\\
&=\big(v'_2(w_2+w_2'+rv'_1-v'_2)+v'_1(w_1+w_1'-v'_1)\big)\\
&\quad +(v_2-v'_2)(w'_2+rv_1-v_2)
+(v_1-v'_1)(w'_1-v_1)
\\
\endaligned
$$
We define the following function, which satisfies $f(v')=\dim\pi^{-1}({\bf E}^\circ_{v',w})$:
$$\aligned
f(x,y)&=\big(y(w_2+w_2'+rx-y)+x(w_1+w_1'-x)\big)\\
&\quad +(v_2-y)(w'_2+rv_1-v_2)
+(v_1-x)(w'_1-v_1)\\
&=-x^2+rxy-y^2+(w_1+v_1)x+(w_2-rv_1+v_2)y+v_1w'_1+v_2w'_2-v_1^2+rv_1v_2-v_2^2
\endaligned
$$ 
with domain being the closed region 
$$S=\{(x,y)\in\mathbb{R}^2 \ | \  0\le x\le v_1, 0\le y\le v_2, w_1-x+ry\ge 0\}$$ 
Thus 
$$\mathrm{Codim}=\min\{f(v)-f(v')\ |\  v'<v, v'\in \D(w)\}$$

Compute the partial derivatives:
$$f_x=(w_1-x+ry)+(v_1-x)\ge0, \quad  f_y=(w_2-y)+(v_2-y)-r(v_1-x),$$
and note that $w_2\ge v_2\ge y$, and the directional derivative of $f$ along the direction $\langle r,1\rangle$ is 
$$\langle f_x,f_y\rangle \cdot \langle r,1\rangle =r(w_1-x+ry)+(w_2-y)+(v_2-y)\ge0,$$
it is then easy to check that $f$ is strictly increasing in $S$ along each line of slope $0$ or $1/r$, an along the line $x=v_1$.

Next we give a lower bound for $f(v)-f(v')$. Define a path $C$ from $v'$ to $v$ as follows: start $v'$, walk along direction $(r,1)$; if it hits the point $v$ we say the path is of type $1$; if hits the horizontal line $y=v_2$ then walk east to $v$ and we say $C$ is of type 2; if hits the vertical line $x=v_1$ then walk north to $v$ and we say $C$ is of type 3. The fundamental theorem for line integrals asserts
$f(v)-f(v')=\int_C \langle f_x,f_y\rangle \cdot d{\bf r}$.

For a type 2 path $C$, it must contains the point $v''=v-(1,0)$. 
$$f(v)-f(v')\ge f(v)-f(v'')=\int_{v_1-1}^{v_1} f_xdx=w_1-v_1+rv_2+1\ge 1$$
A necessary condition for the equality $f(v)-f(v')=1$ to hold is $w_1-v_1+rv_2=0$ and $(v_1-1,v_2)$ is $l$-dominant. 

For a path $C$ of type 1 or 3, let $v''$ be the point where it hits the vertical line $x=v_1$. Then
$$
\aligned
f(v)-f(v')&=(f(v)-f(v''))+(f(v'')-f(v'))\\
&=\int_{v''_2}^{v_2} (w_2+v_2-2y)dy +\int_{v'_2}^{v''_2} r(w_1-v'_1+rv'_2)+(w_2+v_2-2y) dy\\
&\ge \int_{v'_2}^{v_2} (w_2+v_2-2y) dy\ge \int_{v_2-1}^{v_2} (w_2+v_2-2y) dy=w_2-v_2+1\ge 1\\
\endaligned
$$
For the equality $f(v)-f(v')=1$ to hold, we need $v_2=w_2$; in other cases, $f(v)-f(v')\ge 2$. 





So, $\pi^{-1}({\bf E}^\circ_{v,w})$ is simply connected unless either ``$w_1-v_1+rv_2=0$ and $(v_1-1,v_2)$ is $l$-dominant'' or $v_2=w_2$.

Now we make an important observation: if we simultaneously swap the role of $v_1$ with $v_2$, $w_1$ with $w'_2$, $w_2$ with $w'_1$,  the same argument above would imply that 
$\pi^{-1}({\bf E}^\circ_{v,w})$ is simply connected unless either ``$w'_2-v_2+rv_1=0$ and $(v_1,v_2-1)$ is $l$-dominant'' or $v_1=w'_1$.

Therefore, $\pi^{-1}({\bf E}^\circ_{v,w})$ is simply connected except the following four cases:

Case 1: $w_1-v_1+rv_2=w'_2-v_2+rv_1=0$, and $(v_1-1,v_2)$, $(v_1,v_2-1)$ are $l$-dominant. Then $w_1=w'_2=0$, $r=1$ and $v_1=v_2$. In this case, the $l$-dominant lattices lies in a line segment, a degeneration of the hexagon in Figure \ref{figure:l-dominant} (Left). So neither $(v_1-1,v_2)$ nor $(v_1,v_2-1)$ is $l$-dominant. Thus this case is impossible. 

Case 2: $w_1-v_1+rv_2=0$ and $v_1=w'_1$. In this case $v=(w'_1,(w'_1-w_1)/r)$.

Case 3: $w'_2-v_2+rv_1=0$ and $v_2=w_2$. In this case $v=((w_2-w'_2)/r,w_2)$. 

Case 4: $v=(w'_1,w_2)$. 
\smallskip

Last, assume $\pi^{-1}({\bf E}^\circ_{v,w})$ is simply connected; since it is a fiber bundle over ${\bf E}^\circ_{v,w}$, by the long exact sequence of homotopy groups we conclude that ${\bf E}^\circ_{v,w}$ is also simply connected.
This proves the lemma. 
\end{proof}

Next, we study the simply-connectedness of open determinantal varieties. Let $\mathbb{C}^{m\times n}=\Hom(\mathbb{C}^n,\mathbb{C}^m)$ denote the space of $m\times n$ matrices over $\mathbb{C}$. For nonnegative integers $m,n,t$ satisfying $0\le t\le \min(m,n)$, the open determinantal variety $Y^\circ_t$ (or denoted $Y^\circ_{t;m\times n}$ to specify $m,n$) is defined as
$$Y^\circ_t=Y^\circ_{t;m\times n}=\{ A\in \mathbb{C}^{m\times n} \,|\, {\rm rank }A=t\}.$$  
\begin{lemma}\label{determinantal variety simply connected}
The open determinantal variety $Y^\circ_{t;m\times n}$ is simply connected unless $t=m=n$. 
\end{lemma}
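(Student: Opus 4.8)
The plan is to exhibit $Y^\circ_{t;m\times n}$ as a fiber bundle over a Grassmannian with fiber another open determinantal variety (of strictly smaller size), and then run an induction using the long exact sequence of homotopy groups. Concretely, a rank-$t$ matrix $A$ is determined by its column space $X = \mathrm{im}\,A \in Gr(t,\mathbb{C}^m)$ together with the induced surjection $\mathbb{C}^n \twoheadrightarrow X$; equivalently, sending $A \mapsto \mathrm{im}\,A$ gives a surjective map $Y^\circ_{t;m\times n} \to Gr(t,\mathbb{C}^m)$ whose fiber over $X$ is the set of surjective linear maps $\mathbb{C}^n \to X$, i.e. (after a choice of basis, locally over an open trivializing set of the tautological bundle) the space $Y^\circ_{t;t\times n}$ of $t\times n$ matrices of full row rank $t$. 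This is a Zariski-locally-trivial fiber bundle over $Gr(t,\mathbb{C}^m)$, so in particular a fibration in the topological sense.

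First I would dispose of the degenerate base cases: if $t=0$ then $Y^\circ_0 = \{0\}$ is a point, hence simply connected; if $m=n=t$ then $Y^\circ_{t;t\times t} = GL_t(\mathbb{C})$, which has $\pi_1 \cong \mathbb{Z}$ (it deformation retracts onto $U(t)$, and $\pi_1(U(t))\cong\mathbb{Z}$ via the determinant) — this is exactly the excluded case. Next, assuming $t < \max(m,n)$, I would reduce to the case $m = t$: the fibration above has base $Gr(t,\mathbb{C}^m)$, which is simply connected (a standard fact, and already invoked in the proof of Lemma~\ref{lem:E simply connected}), and fiber $Y^\circ_{t;t\times n}$; the long exact sequence of homotopy groups
$$\pi_1(Y^\circ_{t;t\times n}) \to \pi_1(Y^\circ_{t;m\times n}) \to \pi_1(Gr(t,\mathbb{C}^m)) = 1$$
shows $\pi_1(Y^\circ_{t;m\times n})$ is a quotient of $\pi_1(Y^\circ_{t;t\times n})$, so it suffices to treat $Y^\circ_{t;t\times n}$ with $t < n$ (when $t=n\le m$ but $t<m$, transpose: $Y^\circ_{t;m\times n}\cong Y^\circ_{t;n\times m}$, reducing to $t<\max$ with the roles swapped, and then the argument with $m=t$ applies to the transposed matrix since now the row-dimension is the smaller one).

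For the remaining case $Y^\circ_{t;t\times n}$ with $t<n$ — full-row-rank $t\times n$ matrices — I would use the other projection: $A \mapsto (\ker A)^\perp$ or, more simply, observe this is a single $GL_t(\mathbb{C})$-orbit structure: $Y^\circ_{t;t\times n}$ fibers over the Grassmannian $Gr(t, (\mathbb{C}^n)^*)$ of $t$-dimensional subspaces spanned by the rows, with fiber $GL_t(\mathbb{C})$. That would reintroduce a $\pi_1$, so instead I would argue directly that $Y^\circ_{t;t\times n}$ is simply connected for $t<n$ by a second fibration that peels off one column at a time, or — cleaner — note that $Y^\circ_{t;t\times n}$ is the complement in $\mathbb{C}^{t\times n}$ of the degeneracy locus $\{\mathrm{rank} < t\}$, which is an irreducible closed subvariety of codimension $(t-1)(n-1)+ (n-t) = \dots$; when $t<n$ one checks this codimension is $\ge 2$, so by the same codimension argument used in Lemma~\ref{lem:surjective pi1} (a local system / $\pi_1$ extends across a codimension-$\ge 2$ closed subset, and $\mathbb{C}^{t\times n}$ is simply connected) we conclude $\pi_1(Y^\circ_{t;t\times n}) = 1$.

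\textbf{Main obstacle.} The delicate point is the bookkeeping of which $(m,n,t)$ are genuinely reached by the reductions and verifying that the excluded triple $t=m=n$ is the \emph{only} obstruction — in particular, confirming that the codimension of the rank-$<t$ locus inside $\mathbb{C}^{t\times n}$ is at least $2$ exactly when $t<n$ (it equals $n-t+1$ after the standard determinantal codimension formula specializes, so this is $\ge 2 \iff t \le n-1$), and handling the transpose symmetry carefully so that the cases $t=n<m$ and $t=m<n$ are both covered without circularity. Everything else is a routine application of the homotopy long exact sequence for fibrations plus the simple connectivity of Grassmannians and of affine space.
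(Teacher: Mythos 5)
Your proposal is correct and is essentially the paper's own argument in lightly repackaged form: your bundle of surjections of $\mathbb{C}^n$ onto the tautological subbundle over $Gr(t,m)$ is precisely the open locus $p^{-1}(Y^\circ_t)$ inside the paper's resolution $\tilde{Y}_t=\{(A,X)\mid \im A\subseteq X\}$, and both proofs come down to simple connectivity of the Grassmannian, a codimension-$\ge 2$ complement argument (Lemma \ref{lem:surjective pi1}, with the same threshold $n-t+1\ge 2$, i.e.\ $t<n$), and the transpose symmetry to isolate the single exception $t=m=n$. (Your intermediate codimension expression $(t-1)(n-1)+(n-t)$ is garbled, but the value $n-t+1$ you ultimately use is the correct one, so this is only a cosmetic slip.)
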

\begin{proof}
For simplicity we denote $Y^{m\times n}_t$ by $Y_t$. 
The closure $Y_t$ of $Y^\circ_t\subseteq \mathbb{C}^{m\times n}$ is usually called the determinantal variety, and is equal to 
$$Y_t=\{ A\in \mathbb{C}^{m\times n} \,|\, {\rm rank }A\le t\}=\bigcup_{i=0}^t Y^\circ_i.$$
Consider the well-known desingularization $p: \tilde{Y}_t\to Y_t$ (see \cite[(6.1.1)]{Weyman})
$$\tilde{Y}_t=\{(A,X)\in Y_t\times Gr(t,m) \, | \, {\im}A\subseteq X\},$$
where $p: (A,X)\mapsto A$. Since the projection $p': \tilde{Y}_t\to Gr(t,m), \, (A,X)\mapsto X$ is a vector bundle of rank $nt$, we see that $\tilde{Y}_t$ is nonsingular, simply connected since $Gr(t,m)$ is simply connected. Moreover,
$$\dim Y_t=\dim Y^\circ_t=\dim \tilde{Y}_t=\dim Gr(t,m)+nt=t(m-t)+nt=t(m+n-t).$$ 

The projection $p$ is stratified by $Y_t=\bigcup_{i=0}^t Y^\circ_i$. The fibers over $Y^\circ_i$ are isomorphic to
$\{X\in Gr(t,m) \, | \, \mathbb{C}^r\subseteq X\}\cong Gr(t-i,m-i)$. So
$$\dim p^{-1}(Y^\circ_i)=\dim Y^\circ_i+\dim Gr(t-i,m-i)=i(m+n-i)+(t-i)(m-t).$$
Then the codimension of $p^{-1}(Y^\circ_i)$ in $\tilde{Y}_t$ is 
$$\dim \tilde{Y}_t-\dim p^{-1}(Y^\circ_i)=t(m+n-t)-i(m+n-i)-(t-i)(m-t)=(t-i)(n-i),$$
which is $\ge 2$ for all $i<t$, unless $t-i=n-i=1$. So we conclude that $Y^\circ_t$ is simply connected unless $t=n$. 

Now swap $m$ with $n$ (i.e., transpose $A$) and make a similar argument, we conclude that $Y^\circ_t$ is simply connected unless $t=m$. This completes the proof. 
\end{proof}

For $s$ satisfying $t\le s\le m$, define $$\tilde{Y}_t^s=\{(A,X)\in Y_t\times Gr(s,m) \, | \, {\im}A\subseteq X\}.$$
\begin{lemma}\label{lem:det variety trivial local system}
For $s$ satisfying $t\le s\le m$, the local systems appeared in the decomposition of the natural projection $\pi:\tilde{Y}_t^s\to Y_t$ are all trivial. 
\end{lemma}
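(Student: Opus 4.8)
The statement is about the projection $\pi:\tilde Y_t^s\to Y_t$ where $\tilde Y_t^s=\{(A,X)\mid \operatorname{im}A\subseteq X,\ X\in Gr(s,m)\}$. The plan is to reduce the triviality of local systems to a simple-connectedness statement about the strata of $Y_t$, exactly as in the first proof of Theorem \ref{thm:trivial local system}. First I would record the stratification: $Y_t=\bigcup_{i=0}^t Y_i^\circ$, each stratum $Y_i^\circ$ nonsingular, locally closed, and the restriction $\pi^{-1}(Y_i^\circ)\to Y_i^\circ$ is a Zariski-locally-trivial fibration with fiber $\{X\in Gr(s,m)\mid \mathbb{C}^i\subseteq X\}\cong Gr(s-i,m-i)$ (the argument is the same pullback-of-a-tautological-subbundle construction used for $p:\tilde Y_t\to Y_t$ in Lemma \ref{determinantal variety simply connected}; here one fibers $\tilde Y_t^s$ over $Gr(s,m)$ to see nonsingularity of the source). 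By the BBDG Decomposition Theorem (Theorem \ref{BBDthm}), $\pi_*IC_{\tilde Y_t^s}$ decomposes as a sum of $IC_{\overline{Y_a}}(L_a)[n_a]$, and since $\pi$ is proper with nonsingular source and every relevant $\overline{Y_a}$ must be one of the $\overline{Y_i^\circ}=Y_i$ (a general point of the support lies in some stratum), it suffices to show each local system $L_a$ on the relevant open dense subset of $Y_i$ is trivial.

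The key reduction is the standard one (cf.\ the Remark after the First Proof of Theorem \ref{thm:trivial local system} and Lemma \ref{lem:surjective pi1}): a semisimple local system $L_a$ on a smooth connected variety is determined by the monodromy representation of $\pi_1$ of a Zariski-open dense subset, so if $Y_i^\circ$ is simply connected then $L_a$ is forced to be trivial. By Lemma \ref{determinantal variety simply connected}, $Y_i^\circ$ is simply connected for every $i\le t$ \emph{except} possibly $i=m=n$. Since $i\le t\le\min(m,n)$, the exceptional case $i=m=n$ can only occur when $t=m=n$ as well, which is precisely the case excluded in Lemma \ref{determinantal variety simply connected}; but here the statement of the present lemma makes no such exclusion, so I must handle $t=m=n$ separately — and in that degenerate situation $Y_t=\mathbb{C}^{m\times m}$ is the whole affine space and $\tilde Y_t^s=\{(A,X)\mid \operatorname{im}A\subseteq X\}$; when $s=m$ the Grassmannian $Gr(m,m)$ is a point and $\pi$ is an isomorphism, so the decomposition is trivial, and when $s<m$ one observes $\pi^{-1}(Y_{t}^\circ)=\pi^{-1}(GL_m)$ is empty unless $\operatorname{rank}A\le s$, forcing $t\le s$, which contradicts $t=m>s$; so the exceptional stratum never actually appears in the support. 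Thus in all cases the relevant strata are simply connected, the local systems are trivial, and we conclude $\pi_*IC_{\tilde Y_t^s}=\bigoplus_{i,d}c^d_{i}\,IC_{Y_i}[d]$ with $c^d_i\in\mathbb{Z}_{\ge0}$.

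The main obstacle I anticipate is bookkeeping around which strata $Y_i^\circ$ can actually support a summand: one must verify that only $i$ with $i\le s$ occur (since the fiber $Gr(s-i,m-i)$ is nonempty only then, equivalently $\pi$ is surjective onto $Y_{\min(t,s)}=Y_t$), and that the single excluded case $i=m=n$ in Lemma \ref{determinantal variety simply connected} either does not meet the support or is covered by $s=m$ being an isomorphism. This is a finite case check rather than a real difficulty. A secondary point, also routine, is checking the $\pi_1$-surjectivity hypothesis of Lemma \ref{lem:surjective pi1}: the complement $Y_i\setminus Y_i^\circ=\bigcup_{j<i}Y_j^\circ$ has codimension $\ge i-j\ge1$ in $Y_i$ (and in fact $\ge 2$ except in low-dimensional degenerate cases), so $\pi_1(Y_i^\circ)\twoheadrightarrow\pi_1(Y_i)$ and triviality on the open stratum propagates; combined with simple-connectedness of $Y_i^\circ$ this gives triviality of $L_a$ on all of the relevant locus. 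Once these two bookkeeping steps are in place, the proof is a direct transcription of the First Proof of Theorem \ref{thm:trivial local system} with $\tilde Y_t^s\to Y_t$ in place of $\pi:\tilde{\mathcal F}_{v,w}\to\mathbf E_{v,w}$.
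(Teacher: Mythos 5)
Your overall strategy (stratify $Y_t$ by rank, apply BBDG, kill local systems via simple connectedness of the strata from Lemma \ref{determinantal variety simply connected}, and dispose of the exceptional case $i=m=n$ by noting it forces $t=s=m=n$, where $\pi$ is an isomorphism) has the same outline as the paper's proof, but the step you treat as automatic is exactly where the content lies, and your justification for it does not hold. First, your nonsingularity claim for the source is false in general: fibering $\tilde Y_t^s$ over $Gr(s,m)$, the fiber over $X$ is $\{A\in\Hom(\mathbb{C}^n,X)\mid \rank A\le t\}\cong Y_{t;\,s\times n}$, because once $s>t$ the rank bound is \emph{not} implied by $\im A\subseteq X$ (unlike the case $s=t$ used in Lemma \ref{determinantal variety simply connected}); this fiber is a singular determinantal variety whenever $t<\min(s,n)$, so $\tilde Y_t^s$ is in general singular (it is a special case of the ``possibly singular'' variety $\tilde{\mathcal H}_{v,w}$ of \S6). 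Second, and more seriously, the assertion that every summand of $\pi_*IC_{\tilde Y_t^s}$ has support equal to some $Y_i=\overline{Y_i^\circ}$, with its local system defined on all of $Y_i^\circ$, is given no argument: ``a general point of the support lies in some stratum'' is true of any irreducible closed subset and proves nothing, and knowing that $\pi^{-1}(Y_i^\circ)\to Y_i^\circ$ is a locally trivial $Gr(s-i,m-i)$-bundle is not enough, since $\pi^{-1}$ of an open subset of a stratum is not open in $\tilde Y_t^s$, so this does not yield constructibility of $\pi_*IC_{\tilde Y_t^s}$ with respect to the rank stratification. One needs a local product (transversal slice) structure of the whole map along each stratum. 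This is precisely the input the paper supplies, by identifying $\pi$ with a graded-quiver-variety morphism for $r=1$, $w=(0,m,n,0)$ and invoking the (analytic) Transversal Slice Theorem; an alternative repair is to use that $\pi$ is equivariant for the connected group $GL_m\times GL_n$, whose orbits on $Y_t$ are exactly the strata $Y_i^\circ$, so supports are orbit closures and the local systems are local systems on entire orbits. Without one of these inputs your reduction to simple connectedness does not get started.

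A secondary problem is that your use of Lemma \ref{lem:surjective pi1} points the wrong way. If $L_a$ is a priori only a local system on some Zariski-open dense $V\subsetneq Y_i^\circ$, then the surjection $\pi_1(V)\twoheadrightarrow\pi_1(Y_i^\circ)=1$ says nothing about the monodromy representation of $\pi_1(V)$ on $L_a$; surjectivity is useful only after one knows $L_a$ is locally constant on all of $Y_i^\circ$, which is again the constructibility statement above. Once that statement is in place, the remainder of your argument (simple connectedness of $Y_i^\circ$ except when $i=m=n$, and the degenerate case $t=s=m=n$ where $Gr(m,m)$ is a point and $\pi$ is an isomorphism, so the summand on $Y_m$ is $IC_{Y_m}$ with trivial coefficients) does coincide with the paper's proof.
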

\begin{proof}
Note that $\pi$ is the same as the natural map $\widetilde{\mathcal{F}}_{v,w}\to {\bf E}_{v,w}$ for $r=1$, $w=(0,m,n,0)$, $v=(t,m)$.   Using the analytic version of the Transversal Slice Theorem we see that each summand appeared in the decomposition of $\pi_*(IC_{\tilde{Y}_t^s})$ is of the form $IC_{Y_{t'}}(L)[i]$ where $0\le t'\le t$. If $t'=m=n$ does not hold then $Y^\circ_{t'}$ is simply connected, thus $L$ is trivial. So we only need to consider the case $t'=m=n$, which forces $t'=t=s=m=n$. Since $\pi$ restricts to an isomorphism $\pi^{-1}(Y^\circ_m)\stackrel{\cong}{\to} Y^\circ_m$, the only summand that is supported on $Y_m$ must be $IC_{Y_m}$ with a trivial local system.
\end{proof}

Now we are ready to give the second proof of trivial local systems: 
\begin{proof}[The Second Proof of Theorem \ref{thm:trivial local system}]
Here we only use the analytic version of the Transversal Slice Theorem.  Unlike the First Proof, now we can only conclude that each summand appeared in the decomposition theorem is of the form 
\begin{equation}\label{ICv0}
\textrm{ $IC_{{\bf E}_{v^0,w}}(L)[n]$ for some local system $L$ on ${\bf E}^\circ_{v^0,w}$ }
\end{equation}
because the open neighborhood $U$ in the analytic Transversal Slice Theorem is not necessarily open in Zariski topology. 

To prove $L$ is trivial, by Lemma \ref{lem:E simply connected} we only need to consider the cases when $v^0$ equals $w$, $(w'_1,(w'_1-w_1)/r)$, or  $((w_2-w'_2)/r,w_2)$. We consider two cases:

Case 1: $v^0_1=w'_1$. Since $v^0\le  \bar{v}\le (w'_1,w_2)$, $\bar{v}\le v$,  and $v_1\le w'_1$ (by \eqref{nonempty F}), we must have $v^0_1=\bar{v}_1=v_1=w'_1$, $v^0_2\le \bar{v}_2\le w_2$. Let  $U=\bigcup_{v'} {\bf E}^\circ_{v',w}$ where $v'$ satisfies $v^0\le v'\le \bar{v}$. Consider the following diagram:
$$
\begin{tikzcd}
W=\{({\bf x},{\bf y}, X_2)| \dim X_2=v_2, {\rm im} B({\bf x},{\bf y})\subseteq X_2\}\ar[d,"\pi'' "]&\pi^{-1}(U)\ar[r, hook,"j"]\ar[l,hook',"j' "']\ar[d,"\pi' "] & \widetilde{\mathcal{F}}_{v,w} \ar[d,"\pi"]\\
V=\{({\bf x},{\bf y})| \rank B({\bf x},{\bf y})\le \bar{v}_2\}& U \ar[r, hook,"i"]\ar[l, hook',"i' "'] & \myE_{\bar{v},w} 
\end{tikzcd}
$$
Note that $i'$ is a Zariski open embedding because $V=\bigcup_{v'} {\bf E}^\circ_{v',w}$  where $v'$ satisfies $v'_2\le \bar{v}_2$ (so $V\setminus U=\bigcup_{v'} {\bf E}^\circ_{v',w}$ where $v'$ satisfies $v'_1\le v_1-1$ or $v'\le  v^0-(0,1)$; thus $V\setminus U$ is Zariski closed in $V$). Also note that $j'$ is a Zariski open embedding because for any point $({\bf x},{\bf y},X_1,X_2)\in \pi^{-1}(U)$, we must have $X_1=W'_1$ by the fact that $X_1\subset W'_1$ and $\dim X_1=v_1=w'_1=\dim W'_1$. 

Now $V=\{x_1\in\mathbb{C}^{w_1'w_1}\}\times Y_{\bar{v}_2;(w'_2+rw'_1)\times w_2}$, $W= \{x_1\in\mathbb{C}^{w_1'w_1}\}\times \tilde{Y}^{v_2}_{\bar{v}_2;(w'_2+rw'_1)\times w_2}$. By Lemma \ref{lem:det variety trivial local system}, the locally system appeared in the decomposition of $\pi''_*(IC_W)$ must be trivial, thus the same is true for $\pi'_*(IC_{\pi^{-1}(U)})$. This implies that $L$ is trivial.

Case 2: $v^0_2=w_2$. Since $v^0\le \bar{v}\le v$ and $\bar{v}\le (w'_1,w_2)$, we must have $v^0_2=\bar{v}_2=w_2$, $v_1=\bar{v}_1$, so $\bar{v}=(v_1,w_2)$.  The point $v$ is in the triangle which is the intersection of three half-planes 
\begin{equation}\label{eq:triangle}
\{(x,y)\in\mathbb{R}^2 \ | \ x\le w'_1, y\ge w_2, w'_2-y+rx\ge 0\}
\end{equation}

As seen in \eqref{diagram:FGtoE}, $\pi$ factors as $p_1p_2$ as shown in the following diagram, where all the hooked arrows are Zariski open embedding:
$$
\begin{tikzcd}
\pi^{-1}(U)\ar[d, "p_2|_U "]\ar[r,hook]&\widetilde{\mathcal{F}}_{v,w} \ar[d,"p_2 "]\\
p_1^{-1}(U)\ar[d,"p_1|_U "]\ar[r,hook]&\tilde{\mathcal{H}}_{v,w} \ar[d,"p_1"]\\
U=\{({\bf x},{\bf y}) \in \myE_{\bar{v},w}\  | \  \rank B({\bf x},{\bf y})=w_2\} \ar[r,hook] &\myE_{\bar{v},w} 
\end{tikzcd}
$$
Note that 

$\bullet$ ${\bf E}^\circ_{v^0,w}\subseteq U$ because $v^0_2=w_2$; 

$\bullet$ $p_2|_U$ is a fiber bundle with fiber 
$$\{X_2|\dim X_2=v_2, \im B({\bf x},{\bf y})\subseteq X_2\subseteq W'_2\oplus X_1^{\oplus r}\}\cong Gr(v_2-w_2,w'_2+rv_1-w_2) ;$$

$\bullet$
 $p_1^{-1}(U)\to Gr(v_1,w'_1)$ defined by $({\bf x},{\bf y},X_1)\mapsto X_1$ is a fiber bundle whose fiber is 
$$\aligned
&\{({\bf x},{\bf y})\ | \ \im A({\bf x},{\bf y})\subseteq  X_1, \rank B({\bf x},{\bf y})=w_2\}\\
&\cong \{x_1\in\mathbb{C}^{v_1w_1}\}\times \{B\textrm{ is a $(w'_2+rv_1)\times w_2$ matrix}\ | \ \rank B=w_2\}
\endaligned
$$ so is simply connected unless the exceptional case $w'_2+rv_1=w_2$, by Lemma \ref{determinantal variety simply connected}. Now consider the exceptional case  $w'_2+rv_1=w_2$. In this case, $\bar{v}$ is on the line $w'_2-y+rx=0$. Thus $\bar{v}$ is the left-most point on the horizontal line $y=w_2$ such that $(v^0,w)$ is $l$-dominant, which implies that $\bar{v}=v^0$. Since $v$ is in the triangle \eqref{eq:triangle} and $v_1=\bar{v}_1$, we have $v=\bar{v}=v^0$, and $p_2|_U$ is an isomorphism. So in either situation, the direct summand appeared in the decomposition of $(p_2|_U)_*IC_{\pi^{-1}(U)}$ is (a shift) of $IC_{p_1^{-1}(U)}$, with the trivial local system. 

Next consider $p_1$. Up to an identify map on the affine space factor $\{{\bf x_2}\in\mathbb{C}^{w'_2w_2}\}$, $p_1$ is the same as the map $\tilde{Y}_t\to Y_t$ with $(m,n)=(w'_1, w_1+rw_2)$ and  $t=v_1$. So all direct summand of $(p_1)_*IC_{p_1^{-1}(U)}$ has trivial local system by Lemma \ref{lem:det variety trivial local system}. After restricting to the open subset $U$, we have a similar conclusion for $p_1|_U$. Now composing $p_1|_U$ with $p_2|_U$, we see that all local system appeared in the decomposition of $(\pi|_U)_*IC_{\pi^{-1}(U)}$ are trivial. In particular, the local system $L$ in \eqref{ICv0} must be trivial since ${\bf E}^\circ_{v^0,w}\subseteq U$. 
This completes the proof.
\end{proof}

\end{document}